\tikzset{anchorbase/.style={baseline={([yshift=-0.5ex]current bounding box.center)}}}
\tikzset{wipe/.style={white,line width=4pt}}
\tikzset{->-/.style={decoration={ markings, mark=at position #1 with
  {\arrow{>}}},postaction={decorate}}} \tikzset{-<-/.style={decoration={ markings, mark=at position
  #1 with {\arrow{<}}},postaction={decorate}}}
 \tikzset{darkg/.style={green!70!black}}
\theoremstyle{plain}% default
\newtheorem*{theorem*}{Theorem}
\newtheorem*{theorema*}{\Alph{Theorem}}
\newtheorem*{remark*}{Remark}
\newtheorem*{example*}{Example}
\newtheorem{lemma}{Lemma}[subsection]
\newtheorem{proposition}[lemma]{Proposition}
\newtheorem{corollary}[lemma]{Corollary}
\newtheorem{theorem}[lemma]{Theorem}
\newtheorem*{conjecture*}{Conjecture}
\newtheorem{thm}[lemma]{Theorem}
\newtheorem{example}[lemma]{Example}
\theoremstyle{definition}
\newtheorem{definition}[lemma]{Definition}
\theoremstyle{remark}
\newtheorem{remark}[lemma]{Remark}
\newcommand{\tr}{\operatorname{tr}} 
\newcommand{\HC}{\operatorname{HC}}
\newcommand{\Hom}{\operatorname{Hom}}
\newcommand{\sgn}{\operatorname{sgn}}
\newcommand{\sdim}{\operatorname{sdim}} 
\newcommand{\str}{\operatorname{str}}
\newcommand{\rank}{\operatorname{rank}}
\newcommand{\s}{\operatorname{ss}}
\newcommand{\aut}{\operatorname{aut}}
\newcommand{\autbar}{\overline{\operatorname{aut}}}
\newcommand{\ad}{\operatorname{ad}}
\renewcommand{\Im}{\operatorname{im}}
\newcommand{\Ker}{\operatorname{ker}} 
\newcommand{\Ext}{\operatorname{Ext}}
\newcommand{\DC}{\operatorname{H}_{\Dirac}}
\newcommand{\DI}{\operatorname{I}}
\newcommand{\bra}{\langle}
\newcommand{\ket}{\rangle}
\newcommand{\bracket}{\langle \cdot,\cdot \rangle}
\def\ZG{{\mathfrak{Z}(\gg)}}
\def\ou{{\overline{u}}}
\def\Sbar{{\overline{S}}}
\def\spin{{S^{\gg,\ll}}}
\def\spinbar{{\overline{S}^{\gg,\ll}}}
\def\spineven{{S^{\gg,\ll}_{0}}} 
\def\spinodd{{S^{\gg,\ll}_{1}}} 
\def\Weyl{{\mathscr W(\ss_{1})}} 
\def\CC{{\mathbb C}} 
\def\RR{{\mathbb R}}
\def\UE{{\mathfrak U}}
\def\HH{{\mathcal{H}}}
\def\hh{{\mathfrak h}} 
\def\pp{{\mathfrak p}} 
\def\kk{{\mathfrak k}} 
\def\uu{{\mathfrak u}}
\def\ll{{\mathfrak l}}
\def\glmn{{\mathfrak{gl}(m\vert n)}}
\def\qq{{\mathfrak q}}
\def\aa{{\mathfrak a}} 
\def\bb{{\mathfrak b}} 
\def\even{{\mathfrak{g}_{0}}} 
\def\odd{{\mathfrak{g}_{1}}} 
\def\pp{{\mathfrak p}}
\def\NN{{\mathbb N}} 
\def\nn {{\mathfrak{n}}} 
\def\pp{{\mathfrak{p}}}
\def\gg{{\mathfrak{g}}} 
\def\S{{\mathrm{S}}}
\def\calA{{\mathcal{A}}}
\def\partiale{{\frac{\partial}{\partial x_{i}}}}
\def\partialo{{\frac{\partial}{\partial \eta_{j}}}}
\def\ss{{\mathfrak{s}}}
\def\ZZ{{\mathbb Z}} 
\def\ubar{{\overline{\mathfrak{u}}}}
\def\gsmod{{\mathfrak{g}}\textbf{-smod}}
\def\gmod{{\mathfrak{g}_{0}}\textbf{-mod}}
\def\calC{{\mathcal C}}
\newcommand{\End}{\operatorname{End}} 
\newcommand{\gr}{\operatorname{gr}}
\newcommand{\Sym}{\operatorname{Sym}}
\newcommand{\Dirac}{\operatorname{D}(\gg,\ll)} 
\newcommand\del[1]{\frac{\partial}{\partial #1}}
\newcommand{\res}{\operatorname{res}}
\newcommand{\ev}{\mathrm{ev}}
\newcommand{\comment}[1]{}
\newcommand{\be}{\begin{equation}}
\newcommand{\ee}{\end{equation}}
\newcommand{\bbar}{\overline{b}}
\newcommand{\psibar}{\overline{\psi}}
\def\Dbar{\overline{D}}
\def\Abar{{\overline A}} 
\def\Cbar{{\overline C}} 
\def\calO{\mathcal O}
\def\calOp{\mathcal{O}^{\pp}}
\def\osp{{\mathfrak{osp}}} 
 \def\id{{\rm id}}
\DeclareFontFamily{U}{stix2bb}{} 
\DeclareFontShape{U}{stix2bb}{m}{n} {<-> stix2-mathbb}{}
\begin{document}

\title{Cubic Dirac Operators and Dirac Cohomology for Basic Classical Lie Superalgebras}

\author{Simone Noja}
 \address{Simone Noja, Institut für Mathematik der Univerit\"at Heidelberg, Im Neuenheimer Feld 205, 69120 Heidelberg, Deutschland}
 \email{noja@mathi.uni-heidelberg.de}
 
  \author{Steffen Schmidt}
 \address{Steffen Schmidt, Institut für Mathematik der Univerit\"at Heidelberg \\Im Neuenheimer Feld 205 \\
69120 Heidelberg, Deutschland}
 \email{stschmidt@mathi.uni-heidelberg.de}

\author{Raphael Senghaas}
 \address{Raphael Senghaas, Institut für Mathematik and Institut f\"ur Theoretische Physik der Univerit\"at Heidelberg, Im Neuenheimer Feld 205,
69120 Heidelberg, Deutschland}
 \email{rsenghaas@mathi.uni-heidelberg.de}
 
\begin{abstract}

We study the Dirac cohomology of supermodules over basic classical Lie superalgebras, formulated in terms of cubic Dirac operators associated with parabolic subalgebras. Specifically, we establish a super-analog of the Casselman--Osborne theorem for supermodules with an infinitesimal character and use it to show that the Dirac cohomology of highest-weight supermodules is always non-trivial. In particular, we explicitly compute the Dirac cohomology of finite-dimensional simple supermodules for basic Lie superalgebras of type 1 with a typical highest weight, as well as of simple supermodules in the parabolic BGG category. We further investigate the relationship between Dirac cohomology and Kostant (co)homology, proving that, under suitable conditions, Dirac cohomology embeds into Kostant (co)homology. Moreover, we show that this embedding lifts to an isomorphism when the supermodule is unitarizable.

\end{abstract}

\maketitle

\setcounter{tocdepth}{1}

\tableofcontents

\section{Introduction}

\subsection{Vue d'ensemble} 
Dirac operators are first-order differential operators whose square equals the Laplacian \cite{DiracOp}. Their birth, in 1928, is deeply rooted in the history of quantum mechanics, as P.A.M.\ Dirac sought a differential equation capable of describing the behavior of relativistic spin-$\frac{1}{2}$ particles in a way consistent with the principles of the then-nascent quantum mechanics \cite{DiracYesTod}. Since their first appearance, the theory of Dirac operators has developed dramatically, becoming an exciting branch of modern mathematics with achievements and applications spanning topology, global analysis, and theoretical physics.\footnote{Indeed, the history of Dirac operators intersects with that of the celebrated Atiyah-Singer index theorem, which is regarded as one of the most important results in 20th-century mathematics \cite{Lawson}.}

The theory of Dirac operators took an algebraic turn in the 1970s when Parthasarathy, Atiyah, and Schmid constructed the first instance of algebraic Dirac operators in the context of the representation theory of symmetric pairs \cite{atiyah1977geometric, parthasarathy1972dirac}. Specifically, they realized discrete series representations as kernels of Dirac operators and formulated an inequality (the Dirac inequality) that proved crucial for the classification of unitary highest-weight modules, which was completed shortly afterward in \cite{enright1983classification}. Building on these results, Vogan \cite{vogan1997dirac} later introduced a cohomology theory, known as Dirac cohomology, which captures the infinitesimal character of an irreducible representation.

Shortly thereafter, Kostant \cite{kostant1999cubic} generalized the notion of (algebraic) Dirac operators and Dirac cohomology to quadratic Lie algebras, which include the original case of symmetric pairs treated in \cite{atiyah1977geometric} and \cite{parthasarathy1972dirac} as a special case. Crucially, in order to extend the theory beyond symmetric pairs, a cubic term was necessary, leading Kostant to introduce the notion of cubic Dirac operators -- the main characters of the present paper.

In more recent years, the algebraic theory of Dirac operators and Dirac cohomology has been further developed and applied to the study of representations of Lie superalgebras by Huang and Pandžić \cite{huang2005dirac}. Analogous to the classical case, they showed that Dirac cohomology determines the infinitesimal character of an irreducible representation. In this superalgebraic setting, the relationship between Dirac cohomology and unitarity was subsequently investigated by Xiao \cite{xiao2015dirac} and the second author \cite{schmidt}.

Kostant’s analog of cubic Dirac operators for quadratic Lie superalgebras was recently studied by Kang and Chen \cite{kang2021dirac} and Meyer \cite{Meyer}, who extended the main properties of cubic Dirac operators from the classical to the super case. However, these works do not address the corresponding Dirac cohomology.

\subsection{Main results}

%In this paper, we will start from cubic Dirac operators, but our focus will shift to the study of their associate Dirac cohomology, which is where the formalism can be put to good use to reveal its formidable effectiveness for studying the representation theory of supermodules and their properties. 

%The goal of this paper is to study the Dirac cohomology associated to cubic Dirac operators, which is where the formalism can be put to good use to reveal its formidable effectiveness for studying the representation theory of supermodules and their properties. 

%In particular, we will be concerned with basic classical Lie superalgebras $\gg$ together with the choice of a parabolic subalgebras $\mathfrak{p} \subset \mathfrak{g}$, which induces a decomposition on $\mathfrak{g}$, namely $\mathfrak{g} = \mathfrak{l} \oplus \mathfrak{s}$, for $\mathfrak{l}$ the Levi subalgebra and $\mathfrak{s}$ its orthogonal complement in $\mathfrak{g}$.

%In this context, the Dirac cohomology proves its full power in the case the input $\gg$-supermodule admits an infinitesimal character. In this case, we will prove a super-analog of the Casselman-Osborne lemma. 

%The aim of this paper is to study the Dirac cohomology associated with cubic Dirac operators, where the formalism proves to be particularly effective in revealing its powerful applications to the representation theory of supermodules and their properties.

The aim of this paper is to study the Dirac cohomology associated with cubic Dirac operators, focusing on its applications to the representation theory of supermodules, where the formalism proves to be particularly powerful.

In particular, we focus on basic classical Lie superalgebras $\gg$, together with a choice of parabolic subalgebra $\pp \subset \gg$, which induces a decomposition $\gg = \ll \oplus \ss$, where $\ll$ is the Levi subalgebra and $\ss$ is its orthogonal complement in $\gg$.

In this setting, Dirac cohomology fully demonstrates its strength when the input 
$\gg$-supermodule admits an infinitesimal character $\chi_\lambda : \ZG \to \CC $, \emph{i.e.}, if there exist $\lambda \in \hh^\ast$ such that  every element $z \in \ZG$ in the center of $\gg$ acts on the supermodule by a scalar multiple of the identity $\chi_\lambda (z) \in \CC$. In this case, we establish a super-analog of the Casselman--Osborne lemma, which we rephrase here in the following form.

\begin{theorem} Let $M$ be a $\gg$-supermodule with infinitesimal character $\chi_\lambda$ for $\lambda \in \hh^\ast$.
Then $z \in \ZG$ acts on the Dirac cohomology $\DC(M)$ as $\eta_{\, \ll} (z) \in \mathfrak{Z} (\ll)$ for a uniquely defined algebra homomorphism $\eta_{\, \ll} : \ZG \to \mathfrak{Z}(\ll).$ \\
In particular, if $V$ is a sub $\ll$-supermodule of $\DC(M)$ with infinitesimal character $\chi_{\mu}^{\ll}$ for $\mu \in \hh^{\ast}$, then $\chi_{\lambda} = \chi_{\mu}^{\ll} \circ \eta_{\ll}$.   
%$\eta_{\, \ll} : \ZG \to \mathfrak{Z}(\ll)$
\end{theorem}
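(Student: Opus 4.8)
The plan is to deduce both assertions from an identity in $\U(\gg)\otimes\mathrm{C}(\ss)$ -- the algebraic core of the super Casselman--Osborne lemma -- and then to track how the operators involved descend to Dirac cohomology. Recall the data attached to $(\gg,\ll)$: the odd cubic Dirac operator $D=\Dirac\in\U(\gg)\otimes\mathrm{C}(\ss)$, where $\mathrm{C}(\ss)$ is the Clifford algebra of $\ss$ (the odd part of $\ss$ contributing a Weyl-algebra factor); the diagonal algebra embedding $\Delta\colon\U(\ll)\hookrightarrow\U(\gg)\otimes\mathrm{C}(\ss)$, $x\mapsto x\otimes1+1\otimes\alpha(x)$ for $x\in\ll$, which governs the $\ll$-action on any tensor product $M\otimes\spin$; the $\ll$-equivariance $[\Delta(x),D]=0$ for $x\in\ll$; and Kostant's square formula $D^{2}=\Omega_{\gg}\otimes1-\Delta(\Omega_{\ll})+c$, with $\Omega_{\gg}\in\ZG$, $\Omega_{\ll}\in\mathfrak{Z}(\ll)$ the Casimir elements and $c$ a scalar. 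On the subalgebra $A:=(\U(\gg)\otimes\mathrm{C}(\ss))^{\Delta(\ll)}$ of $\Delta(\ll)$-invariants the element $D^{2}$ is central -- $\Omega_{\gg}\otimes1$ being central in $\U(\gg)\otimes\mathrm{C}(\ss)$ and $\Delta(\Omega_{\ll})$ a polynomial in $\Delta(\ll)$, which $A$ centralizes -- so $d:=[D,-\,]$ (graded commutator) is a differential on $A$; and for $z\in\ZG$ the element $z\otimes1$ lies in $A$ and, $z$ being central, is a $d$-cocycle.

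The key step is to compute $H(A,d)=\Delta(\mathfrak{Z}(\ll))$. I would do this by a filtration argument, as in the proof of the classical Vogan's conjecture: combine the PBW filtration on the $\U(\gg)$-factor with the Clifford and Bernstein filtrations on $\mathrm{C}(\ss)$, so that on the associated graded the induced differential is of Koszul type; cancelling the two acyclic Koszul factors attached to the even and odd parts of $\ss$ leaves the cohomology concentrated in $\gr\Delta(\mathfrak{Z}(\ll))$, and a spectral-sequence comparison carries this back to $H(A,d)$. This yields, for each $z\in\ZG$, a unique $\eta_{\ll}(z)\in\mathfrak{Z}(\ll)$ and an odd $a_{z}\in\U(\gg)\otimes\mathrm{C}(\ss)$ with
\[
  z\otimes1\;=\;\Delta\bigl(\eta_{\ll}(z)\bigr)\;+\;D\,a_{z}\;+\;a_{z}\,D ,
\]
and, since $d$ is a derivation (so $H(A,d)$ is an algebra) and $z\mapsto[z\otimes1]$ is multiplicative, $\eta_{\ll}\colon\ZG\to\mathfrak{Z}(\ll)$ is an algebra homomorphism -- indeed the unique one obeying this relation. (Afterwards one can identify $\eta_{\ll}$ explicitly through the Harish--Chandra homomorphisms of $\gg$ and $\ll$, up to a $\rho$-shift by $\ss$; this is needed for the later explicit computations but not here.) I expect this cohomology computation to be the main obstacle: unlike the classical, purely Clifford case, $\mathrm{C}(\ss)$ now carries a Weyl-algebra factor from $\ss_{\bar1}$, so $\spin$ and the complexes above are infinite-dimensional, and one must check that the filtrations are exhaustive and that the spectral sequence converges.

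Granting the displayed identity, the rest is formal. For $M$ with infinitesimal character $\chi_{\lambda}$, the operator $D$ acts on $M\otimes\spin$ as $D_{M}$, and $\DC(M)=\ker D_{M}/(\ker D_{M}\cap\Im D_{M})$ is an $\ll$-supermodule via $\Delta$, because $\Delta(\U(\ll))$ supercommutes with $D$ (hence preserves $\ker D_{M}$ and $\Im D_{M}$); in particular $\Delta(\mathfrak{Z}(\ll))$ acts on $\DC(M)$. Likewise $z\otimes1$ commutes with $D$ for $z\in\ZG$, so it acts on $\DC(M)$, and since $z$ acts on $M$ by $\chi_{\lambda}(z)$ this action is the scalar $\chi_{\lambda}(z)$. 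Evaluating the identity on $m\in\ker D_{M}$ gives $(z\otimes1)m-\Delta(\eta_{\ll}(z))m=D_{M}(a_{z}m)\in\Im D_{M}$, while both terms on the left lie in $\ker D_{M}$; hence their difference lies in $\ker D_{M}\cap\Im D_{M}$ and vanishes in $\DC(M)$. Thus $z$ and $\Delta(\eta_{\ll}(z))$ induce the same operator on $\DC(M)$, which is the first assertion. Finally, if $V\subseteq\DC(M)$ is an $\ll$-subsupermodule with infinitesimal character $\chi_{\mu}^{\ll}$, then $\eta_{\ll}(z)\in\mathfrak{Z}(\ll)$ acts on $V$ by the scalar $\chi_{\mu}^{\ll}(\eta_{\ll}(z))$; by the first assertion this equals the action of $z$ on $\DC(M)$ restricted to $V$, i.e.\ $\chi_{\lambda}(z)$. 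Hence $\chi_{\lambda}(z)=\chi_{\mu}^{\ll}(\eta_{\ll}(z))=(\chi_{\mu}^{\ll}\circ\eta_{\ll})(z)$ for all $z\in\ZG$, i.e.\ $\chi_{\lambda}=\chi_{\mu}^{\ll}\circ\eta_{\ll}$.
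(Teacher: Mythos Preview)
Your proposal is correct and follows essentially the same route as the paper: establish the identity $z\otimes 1=\Delta(\eta_{\ll}(z))+Da_{z}+a_{z}D$ via the computation of the $d$-cohomology of the $\ll$-invariants, then apply it on $\ker D_{M}$ to see that $z\otimes 1$ and $\Delta(\eta_{\ll}(z))$ agree on $\DC(M)$. The only difference is that the paper does not redo the cohomology computation $H(A,d)\cong\mathfrak{Z}(\ll_{\Delta})$ but cites it from Kang--Chen (Theorem~6.2 of \emph{Dirac cohomology for Lie superalgebras}), which indeed proceeds by the filtration/Koszul argument you outline.
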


The relevant properties of the homomorphism $\eta_{\, \ll} : \ZG \to \mathfrak{Z}(\ll),$ together with its relation to the Harish-Chandra homomorphism are discussed in Section \ref{subsec::infchar}.\\
Supermodules that admit an infinitesimal character encompass the important class of highest-weight supermodules. Building on the super Casselman--Osborne lemma, we demonstrate that the Dirac cohomology of this distinguished class is always non-trivial. 

%Supermodules admitting an infinitesimal character include the important example of highest weight supermodules. The Dirac cohomology of this relevant class of supermodules is always non-trivial. More precisely, we show the following.

\begin{theorem}
    Let $M$ be a highest weight $\gg$-supermodule. Then its Dirac cohomology $\DC(M)$ is non-trivial.
    %Let $\pp$ be a parabolic subalgebra, and let $M$ be a $\gg$-supermodule in the parabolic BGG category $\calOp$. Then $\DC(M) = 0$, unless $M$ is a highest weight $\gg$-supermodule. 
\end{theorem}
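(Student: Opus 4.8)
The plan is to produce a single explicit cocycle of extremal weight. Write $\lambda\in\hh^{\ast}$ for the highest weight of $M$ and $v_{\lambda}$ for a highest weight vector. Since $M=U(\gg)v_{\lambda}$, the centre $\ZG$ acts on $M$ — hence on $M\otimes S$, where $S$ is the spinor module of $\ss$ — through the infinitesimal character $\chi_{\lambda}$, and the weight space $M[\lambda]=\CC v_{\lambda}$ is one-dimensional with all other $\hh$-weights of $M$ being $\le\lambda$. I would then decompose $\ss=\nn\oplus\nn^{-}$ along $\pp$, with $\nn$ the nilradical and both $\nn,\nn^{-}$ maximal isotropic, and realise $S$ as the Fock (spinor) model built on $\nn^{-}$ — exterior powers on the even part and symmetric powers on the odd part of $\nn^{-}$ — twisted so that the vacuum line $\CC s_{0}$ carries $\ll$-weight $\rho_{\ss}:=\rho-\rho_{\ll}$. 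The two structural inputs needed about this model are: (i) all $\hh$-weights of $S$ are $\le\rho_{\ss}$, the $\rho_{\ss}$-weight space being precisely $\CC s_{0}$; and (ii) $s_{0}$ is a highest weight vector for $\ll$. Granting these, $v:=v_{\lambda}\otimes s_{0}$ is an $\ll$-highest weight vector of weight $\lambda+\rho_{\ss}$, and the weight space $(M\otimes S)[\lambda+\rho_{\ss}]=\CC v$ is one-dimensional.

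Second, I would apply the square formula for the cubic Dirac operator $D$ of $(\gg,\ll)$, one of the foundational identities recalled earlier: in $U(\gg)\otimes C(\ss)$ one has $D^{2}=\Omega_{\gg}\otimes 1-\Omega_{\ll}^{\Delta}+c_{0}$, where $\Omega_{\gg},\Omega_{\ll}$ are the quadratic Casimirs, $\Omega_{\ll}^{\Delta}$ is the image of $\Omega_{\ll}$ under the diagonal map $\ll\hookrightarrow U(\gg)\otimes C(\ss)$, and $c_{0}=\langle\rho,\rho\rangle-\langle\rho_{\ll},\rho_{\ll}\rangle$. On $v$ the operator $\Omega_{\gg}$ acts by $\chi_{\lambda}(\Omega_{\gg})=\langle\lambda+\rho,\lambda+\rho\rangle-\langle\rho,\rho\rangle$, while $\Omega_{\ll}^{\Delta}$ acts on the highest weight $\ll$-module $U(\ll)v$ (highest weight $\lambda+\rho_{\ss}$) by $\langle\lambda+\rho,\lambda+\rho\rangle-\langle\rho_{\ll},\rho_{\ll}\rangle$, using $(\lambda+\rho_{\ss})+\rho_{\ll}=\lambda+\rho$; these cancel against $c_{0}$, so $D^{2}v=0$. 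This is precisely the numerical content of the super Casselman--Osborne theorem above: $\eta_{\ll}$ matches $\chi_{\lambda}$ with the $\ll$-infinitesimal character of $U(\ll)v\subseteq\Ker D^{2}$.

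Third — the decisive step — I would upgrade $D^{2}v=0$ to a nonzero class by a weight argument. Since $D$ is $\ll$-equivariant it preserves the $\hh$-weight decomposition of $M\otimes S$, hence the line $\CC v$; thus $Dv=cv$ for a scalar $c$, and $0=D^{2}v=c^{2}v$ forces $c=0$, i.e. $v\in\Ker D$. The same one-dimensionality gives $v\notin\Im D$: any $u$ with $Du=v$ may be taken of weight $\lambda+\rho_{\ss}$, so $u\in\CC v$ and $v=Du\in\CC Dv=0$, absurd. Therefore $[v]\ne 0$ in $\DC(M)=\Ker D/(\Ker D\cap\Im D)$; moreover the nonzero image of $U(\ll)v$ exhibits a highest weight $\ll$-supermodule of highest weight $\lambda+\rho_{\ss}$ inside $\DC(M)$, in accordance with the preceding theorem.

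The part I expect to be genuinely delicate is (i)--(ii), where the super structure really enters. When $\ss_{\bar 1}\ne 0$ the module $S$ is infinite-dimensional and has no top exterior power to furnish a one-dimensional $\ll$-submodule; this is exactly why one must pass to the opposite-nilradical/vacuum model and choose the $\rho_{\ss}$-twist carefully, so as to secure simultaneously the weight bound in (i) and the $\ll$-highest weight property in (ii). Establishing these requires analysing $S$ as an $\ll$-supermodule, the subtle point being that $\ll_{\bar 1}$ intertwines the exterior and symmetric directions of the Fock model; everything else — that highest weight supermodules admit an infinitesimal character, the square formula, and the $\ll$-equivariance of $D$ — is elementary or already in hand, and once (i)--(ii) are secured the proof closes exactly as above.
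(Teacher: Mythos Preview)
Your argument is correct and uses the same candidate cocycle $v=v_\lambda\otimes s_0$ as the paper, but the verification strategy differs. The paper does not pass through the square formula: it decomposes $\Dirac=C+\Cbar$ with $C=A+1\otimes a$, $\Cbar=\Abar+1\otimes\bar a$, and checks term by term that each of $A,\Abar,a,\bar a$ annihilates $v$, simply because every $u_i\in\uu$ kills both $v_\lambda$ (highest weight) and the vacuum $s_0$ (contraction by $\uu$), together with the rewriting of $\bar a$ so that $[u_i,u_j]\in\uu$ sits on the right. For $v\notin\Im\Dirac$ the paper appeals to the explicit differential-operator model of $C(\ss)$ on $\bigwedge\ubar$. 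Your route instead establishes $D^2v=0$ via the Casimir identity and then exploits the one-dimensionality of the $(\lambda+\rho^{\uu})$-weight space to force $Dv=0$ and $v\notin\Im D$.

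The trade-offs: the paper's computation avoids your input (ii) entirely --- it never needs to know that $s_0$ is an $\ll$-highest weight vector nor to evaluate $\Omega_{\ll,\Delta}$ on $v$; it only uses that $\uu$ annihilates both tensor factors. Your approach, by contrast, requires (ii) to compute the $\ll$-Casimir eigenvalue, but in exchange your weight-space argument for $v\notin\Im D$ is sharper and more self-contained than the paper's somewhat terse appeal to the Clifford model. Both (i) and (ii) are available from the earlier analysis (the identification $\overline{M}(\ss)\cong\bigwedge\ubar\otimes\CC_{\rho^{\uu}}$ as $\ll$-supermodules and the description of the weight set $\mathcal{P}_{\overline{M}(\ss)}$), so your concerns about the ``delicate'' super structure are already addressed by the paper's preparatory results.
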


Furthermore, in the case of finite-dimensional supermodules, we explicitly compute relevant examples of Dirac cohomology for Lie superalgebras of type 1, \emph{i.e.}, $\gg = \glmn$, $A(m|n)$, or $C(n)$. More precisely, leaving the details of the notations to Section \ref{subsec:fin_dim}, we prove that the Dirac cohomology decomposes as follows.

%More precisely, we consider simple admissible $(\gg, \ll)$-supermodules, where $\gg$ is a Lie superalgebra of {type 1}, \emph{i.e.}, $\gg = \glmn$, $A(m|n)$, or $C(N)$. In this setting, we prove the following.

\begin{theorem} 
    Let $M$ be an admissible finite-dimensional simple $(\gg, \ll)$-supermodule with typical highest weight $\Lambda$. Then its Dirac cohomology reads
    \[
    \DC(M) = \bigoplus_{w \in W_{\Lambda + \rho^{\uu}}^{\ll,1}} L_{\ll}(w(\Lambda + \rho) - \rho^{\ll}),
    \]
where $\rho$ and $\rho^\ll$ denote the respective Weyl vectors.
\end{theorem}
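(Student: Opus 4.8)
The plan is to combine the super analogue of Parthasarathy's formula for the square of the cubic Dirac operator with the super Casselman--Osborne theorem and the typicality of $\Lambda$ in order to confine the $\ll$-constituents of $\DC(M)$ to an explicit finite family, and then to pin down their multiplicities by an Euler-characteristic comparison with Kostant (co)homology. Concretely, I would first invoke the identity for the square of the cubic Dirac operator $D\in\UE(\gg)\otimes C(\ss)$ (the super analogue of the Parthasarathy/Kostant formula, with the paper's normalisations), $D^{2}=-\,\Omega_{\gg}\otimes 1+\Omega_{\ll_{\Delta}}+\bigl(\langle\rho^{\ll},\rho^{\ll}\rangle-\langle\rho,\rho\rangle\bigr)$, where $\Omega_{\gg}$ is the Casimir of $\gg$ and $\Omega_{\ll_{\Delta}}$ that of the diagonal copy $\ll_{\Delta}\hookrightarrow\UE(\gg)\otimes C(\ss)$. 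Since $M$ has infinitesimal character $\chi_{\Lambda}$, $\Omega_{\gg}$ acts on $M\otimes S$ by the scalar $\langle\Lambda+\rho,\Lambda+\rho\rangle-\langle\rho,\rho\rangle$, so on the $\ll_{\Delta}$-isotypic component attached to a simple $\ll$-supermodule $L_{\ll}(\gamma)$ the operator $D^{2}$ acts by $\langle\gamma+\rho^{\ll},\gamma+\rho^{\ll}\rangle-\langle\Lambda+\rho,\Lambda+\rho\rangle$; as $D$ anticommutes with $\ll_{\Delta}$ and $\DC(M)=\Ker D/(\Ker D\cap\Im D)$ lives inside $\Ker D^{2}$, only those $L_{\ll}(\gamma)$ with $\langle\gamma+\rho^{\ll},\gamma+\rho^{\ll}\rangle=\langle\Lambda+\rho,\Lambda+\rho\rangle$ can occur in $\DC(M)$.

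Next I would pin down the admissible $\gamma$. For every simple constituent $L_{\ll}(\gamma)$ of $\DC(M)$, the super Casselman--Osborne theorem gives $\chi_{\Lambda}=\chi^{\ll}_{\gamma}\circ\eta_{\ll}$; together with the compatibility of $\eta_{\ll}$ with the Harish-Chandra homomorphisms of $\gg$ and $\ll$ recalled in Section~\ref{subsec::infchar}, this forces $\gamma+\rho^{\ll}$ and $\Lambda+\rho$ to define the same $\gg$-central character. Since $\Lambda$ is typical, the corresponding linkage class reduces to a single orbit of the Weyl group $W$, whence $\gamma+\rho^{\ll}=w(\Lambda+\rho)$ for some $w\in W$. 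As $\gamma$ is the highest weight of a finite-dimensional simple $\ll$-supermodule, $w(\Lambda+\rho)$ must be $\ll$-dominant, regular and integral, which restricts $w$ to the minimal-length representatives of $W_{\ll}\backslash W$ satisfying exactly the condition defining $W_{\Lambda+\rho^{\uu}}^{\ll,1}$ in Section~\ref{subsec:fin_dim}. Consequently $\DC(M)$ is, as an $\ll$-supermodule, a subquotient of $\bigoplus_{w\in W_{\Lambda+\rho^{\uu}}^{\ll,1}}m_{w}\,L_{\ll}(w(\Lambda+\rho)-\rho^{\ll})$, with $m_{w}$ the multiplicity of $L_{\ll}(w(\Lambda+\rho)-\rho^{\ll})$ in $M\otimes S$.

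The crux, and the step I expect to be the main obstacle, is to show that each summand occurs exactly once and that none cancels, i.e.\ that the above direct sum \emph{is} $\DC(M)$. Classically this follows from the positivity of $D^{2}$ for a compact real form, which is unavailable in the super setting, so instead I would argue homologically. Up to a fixed one-dimensional $\ll$-twist relating the spinor module $S$ to the super-exterior algebra $\wedge^{\bullet}\ubar$, the super-Euler characteristic of $M\otimes S$ (interpreted in a suitable completed Grothendieck group, $M\otimes S$ having finite $\ll$-multiplicities) equals the Euler characteristic $\sum_{i}(-1)^{i}[H^{i}(\ubar,M)]$ of Kostant cohomology, and, since $D$ is odd, the super-Euler characteristic of $\DC(M)$ equals that of $M\otimes S$. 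For typical $\Lambda$ the super Bott--Kostant theorem for type-1 Lie superalgebras computes $H^{i}(\ubar,M)$ to be the single simple module $L_{\ll}(w(\Lambda+\rho)-\rho^{\ll})$ in degree $i=\ell(w)$ for $w\in W_{\Lambda+\rho^{\uu}}^{\ll,1}$, and zero otherwise. Because $\Lambda$ is typical, the weights $w(\Lambda+\rho)$, $w\in W_{\Lambda+\rho^{\uu}}^{\ll,1}$, lie in a single free $W$-orbit, so the modules $L_{\ll}(w(\Lambda+\rho)-\rho^{\ll})$ are pairwise non-isomorphic and there is no cancellation in the Grothendieck group. Feeding this together with the embedding $\DC(M)\hookrightarrow H^{\bullet}(\ubar,M)\otimes(\text{twist})$ furnished by the later comparison theorem of the paper, the signed count and the embedding squeeze $\DC(M)$ to be exactly $\bigoplus_{w\in W_{\Lambda+\rho^{\uu}}^{\ll,1}}L_{\ll}(w(\Lambda+\rho)-\rho^{\ll})$, with $\ZZ/2$-grading given by $\ell(w)\bmod 2$.

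I expect most of the remaining labour to be bookkeeping: (i) tracking the $\rho$-shifts among $\gg$, $\ll$, the nilradical $\uu$ and the spinor twist so that the highest weights come out precisely as $w(\Lambda+\rho)-\rho^{\ll}$; (ii) verifying that the $\ll$-dominant-regular-integral condition on $w(\Lambda+\rho)$ reproduces the set $W_{\Lambda+\rho^{\uu}}^{\ll,1}$ exactly; and (iii) checking that an admissible finite-dimensional simple $(\gg,\ll)$-supermodule satisfies the hypotheses under which Dirac cohomology embeds into Kostant cohomology. Once the $D^{2}$-identity, the super Casselman--Osborne theorem and the super Bott--Kostant theorem are in hand, the rest is formal.
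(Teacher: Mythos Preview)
Your overall strategy is sound and reaches the correct conclusion, but it follows a genuinely different route from the paper for the multiplicity step.

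The first two stages agree with the paper: use the square-of-Dirac formula and the super Casselman--Osborne theorem to force every $\ll$-constituent of $\DC(M)$ to have highest weight of the form $w(\Lambda+\rho)-\rho^{\ll}$ (typicality killing the isotropic-root contributions). This is exactly Lemma~\ref{lemm::form_nu_DC}. One small slip: $\Dirac$ \emph{commutes} with $\ll_{\Delta}$, it does not anticommute; and your sign convention in the $D^{2}$-formula is opposite to the paper's, though this is immaterial.

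For the multiplicity-one step the paper does \emph{not} pass through Kostant cohomology. Instead it proves directly (Lemma~\ref{lemm::multiplicity_DC}) that each weight $w(\Lambda+\rho)-\rho^{\ll}=w(\Lambda)+\rho^{w\uu}$ already occurs with multiplicity one in $M\otimes\overline{M}(\ss)$, via an elementary root-system argument: a second occurrence would force $\ZZ_{+}[A]+\ZZ_{+}[B]=0$ for subsets $A,B$ of $w\Phi^{+}$, and pairing with $\rho_{0}$ and with $w(\Lambda+\rho_{0})$ (using the distinguished positive system so that $(\rho_{1},\alpha)=0$ on even roots) forces $A=B=\emptyset$. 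Since that single copy lies in $\ker\Delta$ and $\Dirac$ is $\ll$-equivariant with $\Dirac^{2}=0$ there, it contributes a full copy to $\DC(M)$. Existence of the extreme-weight vector $w(\Lambda)\otimes w(\rho^{\uu})$ then shows each $w$ really occurs.

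Your alternative---the embedding $\DC(M)\hookrightarrow H^{\ast}(\ubar,M)$ from Theorem~\ref{thm::embedding} combined with the type-1 super Bott--Kostant theorem for typical $\Lambda$ and an Euler-characteristic squeeze via Proposition~\ref{prop:Euler char}---is a legitimate argument, and your observation that the $L_{\ll}(w(\Lambda+\rho)-\rho^{\ll})$ are pairwise non-isomorphic is exactly what makes the squeeze work. The trade-offs: the paper's argument is self-contained and in fact proves multiplicity one without assuming typicality (so it feeds into the atypical discussion as well), whereas your approach is more structural but imports the super Bott--Kostant computation as an external result and invokes the Section~5 embedding theorem, which in the paper is established only after the present theorem (logically independent, but the paper deliberately avoids that dependency).
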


  In turn, this allows to compute the Dirac cohomology of finite-dimensional simple objects in the parabolic BGG category $\calO^{\pp}$, for some parabolic subalgebra $\pp = \ll \ltimes \uu$ with reductive Lie algebra $\ll_{0}$.  

\begin{theorem} 
    Let $M$ be a finite-dimensional simple object in the parabolic BGG category $\calO^{\pp}$ with highest weight $\Lambda$. Then its Dirac cohomology reads 
    \[
    \DC(M) = \bigoplus_{w \in W_{\Lambda + \rho^{\uu}}^{\ll_{0},1}} L_{\ll_{0}}(w(\Lambda + \rho^{\uu} + \rho^{\ll_{0}})-\rho^{\ll_{0}}),
    \]
where $\rho^{\uu}$ and $\rho^{\ll_0}$ denote the respective Weyl vectors.
\end{theorem}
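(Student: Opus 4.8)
The plan is to deduce this from the preceding theorem by combining it with the additivity of cubic Dirac operators along the chain $\ll_0\subset\ll\subset\gg$. Write $H_{D(\gg,\ll_0)}$ for the Dirac cohomology in the statement, taken relative to $\ll_0$; the parabolic $\pp=\ll\ltimes\uu$ gives $\gg=\ll\oplus\ss$ with $\ss=\uu\oplus\ubar$, and the Levi decomposes orthogonally as $\ll=\ll_0\oplus\ll_{\bar 1}$ into its even and odd parts. By the super version of Kostant's additivity formula (Kang--Chen, Meyer) the spinor module factors, $S^{\gg,\ll_0}\cong S^{\gg,\ll}\otimes S^{\ll,\ll_0}$, where $S^{\ll,\ll_0}$ is the oscillator module of the symplectic space $\ll_{\bar 1}$, and $D(\gg,\ll_0)=D(\gg,\ll)+D(\ll,\ll_0)$ with the two summands supercommuting on $M\otimes S^{\gg,\ll_0}$. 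From this I would extract a composition law $H_{D(\gg,\ll_0)}(M)\cong H_{D(\ll,\ll_0)}\big(H_{D(\gg,\ll)}(M)\big)$, and it remains to evaluate the two stages.

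For the inner stage, a finite-dimensional simple object of $\calO^\pp$ is in particular an admissible finite-dimensional simple $(\gg,\ll)$-supermodule of highest weight $\Lambda$; once the typicality hypothesis is in place (for objects of $\calO^\pp$ it concerns only the isotropic odd roots lying in $\ss$, and the super Casselman--Osborne lemma applied to $\chi_\Lambda$ in any case pins down which $\ll$-constituents may appear), the preceding theorem yields $H_{D(\gg,\ll)}(M)=\bigoplus_{w\in W^{\ll,1}_{\Lambda+\rho^\uu}}L_\ll\big(w(\Lambda+\rho)-\rho^\ll\big)$. For the outer stage, $(\ll,\ll_0)$ is a symmetric superpair with $W(\ll_0)=W(\ll)$, so the minimal-length coset set entering its Dirac cohomology reduces to the identity and $H_{D(\ll,\ll_0)}$ carries each finite-dimensional simple $\ll$-supermodule $L_\ll(\nu)$ to a single simple $\ll_0$-module of highest weight $\nu$ (the difference $\rho^{\ll_0}-\rho^\ll$ being absorbed by the $\rho$-shift intrinsic to the oscillator module). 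Composing the stages and using $\rho=\rho^\uu+\rho^\ll$ and $\rho^\ll-\rho^{\ll_0}=-\rho^{\ll_{\bar 1}}$, each summand becomes $L_{\ll_0}(w(\Lambda+\rho)-\rho^\ll)=L_{\ll_0}(w(\Lambda+\rho^\uu+\rho^{\ll_0})-\rho^{\ll_0})$, where the last equality uses that $\rho^{\ll_{\bar 1}}$, being half the sum of the positive odd roots of $\ll$, is permuted into itself by $W(\ll_0)$ and hence fixed by $w$. The same identity $W(\ll)=W(\ll_0)$ matches $W^{\ll,1}_{\Lambda+\rho^\uu}$ with $W^{\ll_0,1}_{\Lambda+\rho^\uu}$, with matching dominance conditions. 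This gives the asserted decomposition.

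I expect the main obstacle to be the composition law $H_{D(\gg,\ll_0)}(M)\cong H_{D(\ll,\ll_0)}(H_{D(\gg,\ll)}(M))$ itself: one must verify that, with the cubic terms retained, the two summands of $D(\gg,\ll_0)$ genuinely supercommute, and then carry out a ``Hodge decomposition in stages'' as in the reductive case while controlling the infinite-dimensional oscillator factor $S^{\gg,\ll_0}$ and the failure of complete reducibility. A second, more representation-theoretic, difficulty is the atypical case: when $\Lambda$ is atypical the preceding theorem does not literally cover the inner stage, and I would instead take a resolution of $M$ by Kac modules (or a BGG-type resolution), pass to the Grothendieck group, and combine the resulting Euler-characteristic identity with the rigidity supplied by the super Casselman--Osborne lemma to show that the constituents and their multiplicity-one occurrences are still governed by the uniform formula above.
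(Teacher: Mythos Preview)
Your proposal rests on a misreading of the statement. Throughout the paper, $\DC(M)$ denotes the Dirac cohomology of $D(\gg,\ll)$ associated to the fixed parabolic $\pp=\ll\ltimes\uu$; it is an $\ll$-supermodule, and the theorem describes its decomposition into simple $\ll_0$-modules (objects of $\calO^\pp$ are only $\ll_0$-semisimple, not $\ll$-semisimple in general, and the paper works on $M\otimes\overline{M}(\ss)$ with $\ss=\uu\oplus\ubar$). The operator $D(\gg,\ll_0)$ you invoke is never introduced --- $\ll_0$ is not the Levi of a parabolic subalgebra in the paper's sense --- so the additivity $D(\gg,\ll_0)=D(\gg,\ll)+D(\ll,\ll_0)$ and the composition law you build the argument on would be computing a different object altogether.

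The paper's argument is direct and in fact explains why no typicality hypothesis is needed here. One reruns the Casselman--Osborne argument (Theorem~\ref{thm::Casselmann_Osborne} and Lemma~\ref{lemm::form_nu_DC}) but records the constraint on the \emph{$\ll_0$-constituents} of $\DC(M)$ rather than on $\ll$-constituents. Since $\ll_0$ is a genuine reductive Lie algebra, its Harish--Chandra homomorphism carries no isotropic-root ambiguity, and the infinitesimal-character relation forces $\nu+\rho^{\ll_0}\in W^{\gg}(\Lambda+\rho^{\uu}+\rho^{\ll_0})$ on the nose, with no $t_i\alpha_i$ correction terms. That is precisely what typicality of $\Lambda$ was buying in Theorem~\ref{thm::DC_finite_dimensional}, and it is why the present result covers atypical $\Lambda$ as well. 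Multiplicity one and the occurrence of each predicted constituent then follow from the same extreme-weight count as in Lemma~\ref{lemm::multiplicity_DC} and the proof of Theorem~\ref{thm::DC_finite_dimensional}. By contrast, your route reintroduces the typicality hypothesis at the inner stage and then proposes to remove it via resolutions and Euler characteristics --- exactly the detour the paper's argument avoids; and the algebraic identity $w\rho^{\ll_{\bar 1}}=\rho^{\ll_{\bar 1}}$ you use fails in general, since the indexing set lies in $W^{\gg}$, which need not preserve the set of positive odd roots of $\ll$.
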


Finally, we examine the relationship between the Dirac cohomology and the Lie algebra cohomology of supermodules. In particular, we prove that Dirac cohomology embeds into Kostant's $\uu$-cohomology. Moreover, when the supermodule satisfies suitable conditions (\emph{e.g.}, it is unitarizable), we show that this embedding is, in fact, an isomorphism. More specifically, we establish the following result.

\begin{theorem} \label{thm::inclusion_DC}
    Let $\pp = \ll \ltimes \uu$ be a parabolic subalgebra, and let $M$ be an admissible simple $(\gg,\ll)$-supermodule, that is, $M$ is $\ll$-semisimple. Then there exist injective $\ll$-supermodule morphisms 
    \[
    \DC(M) \hookrightarrow H^{\ast}(\uu,M), \qquad \DC(M) \hookrightarrow H_{\ast}(\ubar,M).
    \]
    Moreover, if $M$ is unitarizable, the above maps are isomorphisms of $\ll$-supermodules. 
\end{theorem}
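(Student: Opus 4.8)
The plan is to carry everything over to the Dirac complex $M\otimes S$, with $S$ the spin module of $\Cl(\ss)$, and to exploit the isotropic splitting $\ss=\uu\oplus\ubar$. I would first recall the adapted factorisation $\Dirac=D^{+}+D^{-}$ of the cubic Dirac operator, where $D^{+}$ raises and $D^{-}$ lowers the $\ubar$-degree on $M\otimes S$; by the super-analogue of Kostant's identities (Kang--Chen, Meyer) one has $(D^{+})^{2}=(D^{-})^{2}=0$, hence $\Dirac^{2}=\{D^{+},D^{-}\}$. Under the standard identification of $S$ with $\bigwedge\ubar$ up to the one-dimensional $\rho(\ss)$-twist, $(M\otimes S,D^{+})$ becomes, as a complex of $\ll$-supermodules, the Chevalley--Eilenberg complex computing $H^{\ast}(\uu,M)$ and $(M\otimes S,D^{-})$ the one computing $H_{\ast}(\ubar,M)$, in both cases up to that twist.

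\emph{Reduction to $\Ker\Dirac^{2}$.} Since $M$ is simple it has an infinitesimal character $\chi_{\lambda}$, so by the super Casselman--Osborne theorem $\ZG$ acts on $M\otimes S$, on $H^{\ast}(\uu,M)$ and on $H_{\ast}(\ubar,M)$ through $\eta_{\ll}$; together with $\ll$-semisimplicity of $M$ (hence $\ll_{\Delta}$-semisimplicity of $M\otimes S$) and a super-Parthasarathy formula expressing $\Dirac^{2}$ through the Casimir elements of $\gg$ and of $\ll_{\Delta}$, this shows that $\Dirac^{2}$ acts semisimply on $M\otimes S$. In its eigenspace decomposition $M\otimes S=\bigoplus_{c}(M\otimes S)_{c}$, on each summand with $c\neq 0$ the operator $c^{-1}D^{-}$ is a contracting homotopy for $D^{+}$ and $c^{-1}D^{+}$ one for $D^{-}$ (as $\{D^{+},D^{-}\}=c$ there). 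Hence, setting $V:=\Ker\Dirac^{2}=\Ker\{D^{+},D^{-}\}$, one gets
\[
H^{\ast}(\uu,M)\cong H(D^{+},V),\qquad H_{\ast}(\ubar,M)\cong H(D^{-},V)
\]
up to the $\rho(\ss)$-twist, while $\DC(M)=\Ker\Dirac/(\Ker\Dirac\cap\Im\Dirac)=H(\Dirac,V)$, because $\Dirac^{2}=0$ on $V$.

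\emph{The two injections.} On $V$ one now has two anticommuting square-zero differentials $D^{\pm}$ of $\ubar$-degree $\pm1$, so $V$ is a $\ubar$-graded module over the exterior algebra $\Lambda$ generated by $D^{+}$ and $D^{-}$, and $\DC(M)=H(D^{+}+D^{-},V)$. I would decompose $V$ into graded indecomposable $\Lambda$-summands $W$, compute $H(D^{+},W)$, $H(D^{-},W)$ and $H(D^{+}+D^{-},W)$ for each, and check that there are canonical injections $H(D^{+}+D^{-},W)\hookrightarrow H(D^{+},W)$ and $H(D^{+}+D^{-},W)\hookrightarrow H(D^{-},W)$ for every summand that can actually occur. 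The only obstruction is a ``bad'' graded indecomposable, supported in three consecutive $\ubar$-degrees with both $D^{\pm}$ acting nontrivially on the middle one; the super Casselman--Osborne theorem excludes it when $M$ is simple, for its three $\ubar$-degrees would have to carry $\ll$-types attached to three distinct $\ll$-infinitesimal characters in the $\eta_{\ll}$-fibre over $\chi_{\lambda}$, which cannot coexist inside one $\ll$-isotypic block of $V$. Summing over the $W$'s yields $\DC(M)\hookrightarrow H^{\ast}(\uu,M)$ and $\DC(M)\hookrightarrow H_{\ast}(\ubar,M)$.

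\emph{Isomorphism for unitarizable $M$, and the main difficulty.} If $M$ carries an invariant Hermitian form, then $D^{-}$ is, up to sign and the $\rho(\ss)$-twist, the adjoint of $D^{+}$ on $M\otimes S$, so $\Dirac$ is self-adjoint and the Dirac inequality $\langle\Dirac^{2}v,v\rangle=\|\Dirac v\|^{2}\geq 0$ identifies $V=\Ker\Dirac^{2}=\Ker\Dirac$ with the space $\mathcal H=\Ker D^{+}\cap\Ker D^{-}$ of harmonic vectors, on which $D^{+}=D^{-}=0$; since moreover $\Im\Dirac\perp\Ker\Dirac$, one gets $\DC(M)=V=\mathcal H$, and the identifications above read $H^{\ast}(\uu,M)\cong H(D^{+},\mathcal H)=\mathcal H$ and $H_{\ast}(\ubar,M)\cong H(D^{-},\mathcal H)=\mathcal H$, so the three spaces coincide and the embeddings become isomorphisms. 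The main obstacle is the injectivity in the non-unitary case: since $\Ker\Dirac$ is in general strictly larger than $\Ker D^{+}\cap\Ker D^{-}$, there is no chain-level map $H(\Dirac,V)\to H(D^{+},V)$ at hand, and one must genuinely use simplicity of $M$ -- not merely the existence of an infinitesimal character -- to pin down the graded $\Lambda$-module structure of $V$; getting the classification of the occurring summands and the $\ll$-type separation exactly right is the delicate part.
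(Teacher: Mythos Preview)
Your overall architecture matches the paper: the splitting $\Dirac=D^{+}+D^{-}$ with $(D^{\pm})^{2}=0$, the identification of $(M\otimes S,D^{+})$ and $(M\otimes S,D^{-})$ with the Kostant complexes up to the $\rho^{\uu}$-twist, the reduction of all three cohomologies to $V=\ker\Dirac^{2}$ via the contracting homotopy $c^{-1}D^{\mp}$ on nonzero $\Dirac^{2}$-eigenspaces, and the Hodge argument in the unitarizable case (where $D^{-}$ is minus the adjoint of $D^{+}$, giving $\ker\Dirac=\ker D^{+}\cap\ker D^{-}$ and $\Im\Dirac\perp\ker\Dirac$) are all exactly the paper's moves.

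The gap is in your embedding argument for general $M$. First, the $\Lambda$-module route is unsafe: the classification of graded modules over $\Lambda=\bigwedge\langle D^{+},D^{-}\rangle$ with $\deg D^{\pm}=\pm 1$ is not a finite problem, so ``decompose $V$ into graded indecomposables and check each'' is not a step you can take for granted. Second, and more concretely, your exclusion of the three-term ``bad'' summand via Casselman--Osborne is wrong as stated. Since $D^{\pm}$ are $\ll$-equivariant, any $\Lambda$-indecomposable already lies inside a single $\ll$-isotypic component of $V$; its pieces in $\ubar$-degrees $k-1,k,k+1$ therefore all carry the \emph{same} $\ll$-type and the same $\ll$-infinitesimal character, not three distinct ones. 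Casselman--Osborne constrains which $\ll$-infinitesimal characters occur in $V$, but it does not separate $\ubar$-degrees within a fixed $\ll$-type, which is what your argument would require.

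The paper sidesteps this with an elementary filtration observation. If $v=\sum_{i}v_{i}\in\ker\Dirac$ with $v_{i}$ in $\ubar$-degree $i$ and $v_{k}$ the top nonzero component, then comparing degrees in $D^{+}v+D^{-}v=0$ forces $D^{+}v_{k}=0$. Passing to the associated graded of the increasing filtration of $\ker\Dirac$ by $\ubar$-degree therefore yields an injective $\ll$-map $\ker\Dirac\hookrightarrow\ker\Delta\cap\ker D^{+}$; dually one gets an embedding into $\ker\Delta\cap\ker D^{-}$. (So the worry that ``$\ker\Dirac$ is strictly larger than $\ker D^{+}\cap\ker D^{-}$'' is misplaced: what is needed---and what admissibility delivers---is an $\ll$-module embedding of $\ker\Dirac$ into $\ker D^{+}$, not a set-theoretic inclusion.) From there the paper finishes by pure semisimple bookkeeping on $V$: split $V\cong\ker\Dirac\oplus\Pi\,\Im\Dirac$ and $V\cong\ker D^{+}\oplus\Pi\,\Im D^{+}$ as $\ll$-modules, feed in the embedding $\ker\Dirac\hookrightarrow\ker D^{+}$, and cancel in the Grothendieck group to exhibit $\DC(M)$ as an $\ll$-summand of $\ker D^{+}/\Im D^{+}\hookrightarrow H^{\ast}(\uu,M)$. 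No classification of indecomposables is needed, and simplicity of $M$ is used only to supply an infinitesimal character.
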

It is interesting to observe that the results of this work suggest that a generic supermodule may be entirely determined by its Dirac cohomology. This and other related questions are currently under investigation and will be addressed in future work.

Furthermore, with a view toward explicit computations of the Dirac cohomology of $\gg$-supermodules, we expect that similar results to those presented above will also hold for type 2 Lie superalgebras -- hence all basic classical Lie superalgebras.

\subsection{Leitfaden} 

More specifically, the paper is structured as follows. Our conventions are established at the end of Section 1. In Section 2, we provide a brief introduction to the fundamental definitions, along with some preliminary notions and results. In particular, as mentioned above, after introducing basic classical Lie superalgebras $\gg$, we discuss their parabolic subalgebras $\pp \subset \gg$ and the induced decomposition $\gg = \ll \oplus \ss$, where $\ll$ is the Levi subalgebra and $\ss$ is its orthogonal complement in $\gg$. We then examine the role of parabolic subalgebras in representation theory of Lie superalgebras, with a particular focus on highest-weight theory. Building on the preliminary notions to be utilized in subsequent sections of the manuscript, we then introduce real forms of $\gg$ and unitarizable supermodules. In the final part of the section, we turn our attention to Clifford superalgebras and exterior superalgebras. In particular, we explicitly construct an embedding of the Levi subalgebra $\ll$ into the Clifford superalgebra $C (\ss)$, which will play a significant role in the subsequent sections of the paper.

%More in details, the paper is structured as follows. We set our conventions at the end of Section 1. The basic definitions and some preliminary notions and results are briefly introduced and discussed in Section 2. Namely, as already mentioned above, after introducing basic classical Lie superalgebras $\mathfrak{g}$, we discuss their parabolic subalgebras $\mathfrak{p} \subset \mathfrak{g}$ and the decomposition $\mathfrak{g} = \mathfrak{l} \oplus \mathfrak{s}$ in the Levi subalgebra $\ll$ and its orthogonal complement $\ss$ they induce on $\mathfrak{g}$. We then address the relation of parabolic subalgebras to representation theory and, in particular, highest weight theory. In the last part of the section we move on to Clifford superalgebras $C(\ss)$ and exterior superalgebras. In particular we explicitly realize an embedding of the Levi subalgebra $\ll$ in the Clifford superalgebra $C(\ss)$, that will play a relevant role in the subsequent part of the paper. 

In Section 3, we introduce the primary object of study in this paper, the cubic Dirac operator $\Dirac$. We first examine its fundamental properties and establish a decomposition into $\ll$-invariant summands, as stated in Theorem \ref{theorem::decomp_Dirac}. We then proceed to define the Dirac cohomology
$\DC (M)$ of a $\gg$-supermodule $M$ in Section \ref{subsec::DiracCohomo}, where the notion of the oscillator supermodule $\overline{M}(\ss)$ plays a central role.
Subsequently, we focus on the class of supermodules admitting an infinitesimal character, a setting in which the theory demonstrates particular effectiveness. Within this framework, we establish a super-analog of the Casselman--Osborne lemma in Section \ref{subsec::infchar}. Finally, we conclude Section 3 with a concise discussion of certain homological properties of Dirac cohomology.

%In Section 3 we introduce the main character of this paper, the cubic Dirac operator $\Dirac$. After discussing its elementary properties together with a relevant decomposition in $\ll$-invariant summands in theorem \ref{theorem::decomp_Dirac}, we define the Dirac cohomology $H_{\Dirac} (M)$ of a $\gg$-supermodules $M$ in \ref{subsec::DiracCohomo} - here, the notion of oscillator supermodule $M(\ss)$ plays a pivotal role. We then focus on supermodules admitting an infinitesimal character, where the theory proves particularly effective. In this context, we prove a super-analog of the Casselman-Osborne lemma in section \ref{subsec::infchar}. We conclude section 3 by briefly addressing some homological properties of the Dirac cohomology.

In Section 4, we investigate the Dirac cohomology of highest weight 
$\gg$-supermodules. First, we establish that the Dirac cohomology of highest-weight supermodules is always non-trivial (Proposition \ref{prop::HW_non-trivial}). We then refine this analysis to certain classes of finite-dimensional supermodules, explicitly computing their cohomology. More precisely, we determine the Dirac cohomology of finite-dimensional supermodules for Lie superalgebras of type 1 with typical highest weight (Theorem \ref{thm::DC_finite_dimensional}) and of simple supermodules in $\calOp$ (Theorem \ref{thm::DC_OP}).

%\simone{This section 4 part is to be completed...}
%Section 4 addresses the relation of highest-weight $\gg$-supermodules. In particular, the main result of the section being proposition \ref{prop::HW_non-trivial}, which shows that the Dirac cohomology of highest-weight supermodules is non-trivial \simone{BLA BLA BLA...} 

Finally, in Section 5, after introducing Kostant's $\uu$-cohomology and $\bar \uu$-homology, we investigate its relationship with Dirac cohomology. In particular, we establish that Dirac cohomology can always be embedded into Kostant's $\uu$-cohomology and $\bar \uu$-homology.

In the second part of the section, we shift our focus to unitarizable $\gg$-supermodules. For this geometrically significant class of $\gg$-supermodules, we demonstrate a Hodge-like decomposition for the cubic Dirac operator and prove that, as $\ll$-supermodules, their Dirac cohomology is isomorphic — up to a twist — to their Kostant $\bar\uu$-cohomology. Finally, we examine simple weight $\gg$-supermodules and establish that their Dirac cohomology is trivial unless they are of highest weight type. This result can be regarded as a generalization, in the supergeometric setting, of the corresponding classical result for reductive Lie algebras over $\CC$.

%Finally, in Section 5, after having introduced Kostant's $\mathfrak{u}$ and $\bar{\mathfrak{u}}$-cohomology, we address its relation with Dirac cohomology. In particular, we show that the Dirac cohomology can always be embedded in the Kostant's $\bar \uu$-cohomology. 

%In the second part of the section, we turn our attention to unitarizable $\gg$-supermodules. For this geometrically relevant class of $\gg$-supermodules, we will show a Hodge-like decomposition for the cubic Dirac operator and prove that, as $\ll$-supermodules, their Dirac cohomology is isomorphic -- up to a twist -- to the their Konstant $\uu$-cohomology. Finally, we look at simple weight $\gg$-supermodule, and we prove that their Dirac cohomology is trivial, unless they are of highest weight type -- such a result can be seen as a generalization to the supergeometric setting of the analogous result for reductive Lie algebras (over $\CC$) in the classical setting. 

\subsection{Conventions} \label{subsec::conventions}
We denote by $\ZZ_{+}$ the set of positive integers. Let $\ZZ_{2} := \ZZ / 2\ZZ$ be the ring of integers modulo $2$. We denote the elements of $\ZZ_{2}$ by $\overline{0}$ (the residue class of even integers) and $\overline{1}$ (the residue class of odd integers). The ground field is $\CC$, unless otherwise stated.

If $V := V_{0} \oplus V_{1}$ is a super vector space and $v \in V$ is a homogeneous element, then $p(v)$ denotes the parity of $v$, meaning $p(v) = 0$ if $v \in V_{0}$ and $p(v) = 1$ if $v \in V_{1}$.

For a given Lie superalgebra $\gg = \even \oplus \odd$, we denote its universal enveloping superalgebra by $\UE(\gg)$. The universal enveloping algebra of the Lie subalgebra $\even$ is denoted by $\UE(\even)$. The centers are denoted by $\ZG$ and $\mathfrak{Z}(\even)$, respectively.

By a (left) $\gg$-supermodule, we mean a super vector space $M = M_{0} \oplus M_{1}$ equipped with a graded linear (left) action of $\gg$, such that $[X, Y]v = X(Yv) - (-1)^{p(X)p(Y)} Y(Xv)$ for all homogeneous $X, Y \in \gg$ and $v \in M$. Analogously, a $\UE(\gg)$-supermodule is defined. A morphism $f: M \to N$ of $\gg$-supermodules is a linear map such that $f(M_{i}) \subset N_{i}$ and $f(Xv) = Xf(v)$ for all $X \in \gg$ and $v \in M$. We write $\gsmod$ for the category of all (left) $\gg$-supermodules. This category is a $\CC$-linear abelian category, equipped with an endofunctor $\Pi$, the \emph{parity reversing functor}. The parity reversing functor is defined by $\Pi(M)_{0} = M_{1}$ and $\Pi(M)_{1} = M_{0}$. Moreover, $\Pi(M)$ is viewed as a $\gg$-supermodule with the new action $X \cdot v := (-1)^{p(X)} Xv$ for any $X \in \gg$ and $v \in M$. In particular, a $\gg$-supermodule $M$ is not necessarily isomorphic to $\Pi(M)$.

We denote by $\gmod$ the category of (left) $\even$-modules. When $\even$ is considered as a purely even Lie superalgebra, the category $\gsmod$ is simply a direct sum of two copies of $\gmod$. We view any $\even$-module as a $\even$-supermodule concentrated in a single parity. Additionally, if we disregard the parity, any $\gg$-supermodule $M$ can be viewed as a $\even$-module, denoted by $M_{\ev}$.

We will not make an explicit distinction between the categories $\gsmod$ and $\UE(\gg)$\textbf{-smod} or $\gmod$ and $\UE(\even)$\textbf{-mod}, which are the categories of all (left) $\UE(\gg)$-supermodules and left $\UE(\even)$-modules, respectively.

Moreover, $\gsmod$ is a tensor category. For any two $\gg$-supermodules $M,N$, we have a $\ZZ_{2}$-graded tensor product $M \hat{\otimes} N$ with
\[
(M \hat{\otimes} N)_{0} := (M_{0} \otimes N_{0}) \oplus (M_{1}\otimes N_{1}), \quad
(M \hat{\otimes} N)_{1} := (M_0 \otimes N_1) \oplus (M_1 \otimes N_0),
\]
where $\otimes$ denotes the ordinary tensor product of vector spaces. 
The $\gg$-action is
\[
X (m \otimes n) := Xm\otimes n + (-1)^{p(X)p(m)}m\otimes Xn, \quad X \in \gg, \ m, n \in M.
\]

Finally, for any Lie subsuperalgebra $\ll \subset \gg$, we denote by $\calC(\gg, \ll)$ the category of all $\gg$-supermodules that are $\ll$-semisimple. An object in $\calC(\gg, \ll)$ is called $(\gg, \ll)$\emph{-admissible}.

\medskip\noindent \textit{Acknowledgments.} We extend special thanks to Johannes Walcher for numerous conversations and collaboration on related projects. 
This work is funded by the Deutsche Forschungsgemeinschaft (DFG, German Research Foundation) under
project number 517493862 (Homological Algebra of Supersymmetry: Locality, Unitary, Duality). 
This work is funded by the Deutsche Forschungsgemeinschaft (DFG, German Research Foundation) under
Germany’s Excellence Strategy EXC 2181/1 — 390900948 (the Heidelberg STRUCTURES Excellence Cluster).

\section{Preliminaries}
In this article, let $(\gg := \even \oplus \odd, [\cdot,\cdot])$ denote a basic classical Lie superalgebra, meaning $\gg$ is simple, $\even$ is a reductive Lie algebra, and $\gg$ possesses a non-degenerate, invariant and consistent bilinear form $(\cdot,\cdot)$. \emph{Consistency} means $(X,Y) = 0$ whenever $p(X) \neq p(Y)$ for $X,Y \in \gg$, while \emph{invariance} means $([X,Y],Z) = (X,[Y,Z])$ for all $X,Y,Z \in \gg$. Such a form is unique up to a complex scalar multiple. Kac established that the complete list includes all simple Lie algebras along with the following types of Lie superalgebras \cite{Kac}:
\[
A(m\vert n), \quad B(m\vert n), \quad C(n), \quad D(m\vert n), \quad F(4), \quad G(3), \quad D(2,1;\alpha).
\]
The general linear Lie superalgebra $\glmn$ for $m,n \geq 1$ is also declared to be basic classical. Throughout, we assume $\odd \neq {0}$ and $\alpha \in \RR$ for $D(2,1;\alpha)$, ensuring that $\gg$ is also a contragredient Lie superalgebra. For the basic classical Lie superalgebras $A(m|n)$ with $m \neq n$, $B(m|n)$, $C(n+1)$, $D(m|n)$ with $m \neq n+1$, $F(4)$, and $G(3)$, we use the \emph{Killing form} as the non-degenerate, invariant, consistent bilinear form $(\cdot,\cdot)$ given by 
\be
(x,y) := \str(\ad_{x} \circ \ad_{y}), \qquad x,y \in \gg,
\ee
where $\ad_{z}(\cdot) := [z,\cdot]$ denotes the adjoint representation of $\gg$ and $\str(\cdot)$ denotes the supertrace. For the remaining basic classical Lie superalgebras, the Killing form vanishes identically, and an alternative form can be constructed in an ad hoc manner \cite{Kac}. We choose the form constructed in \cite[Section 5.4]{Musson}, and for simplicity, we refer to this form as the Killing form. 

Let $\hh \subset \gg$ be a Cartan subalgebra, and denote the set of roots by $\Phi := \Phi(\gg,\hh)$, so that $\gg$ has the root space decomposition 
\be
\gg = \hh \oplus \bigoplus_{\alpha \in \Phi}\gg^{\alpha}
\ee
with corresponding root spaces $\gg^{\alpha} := \{ X \in \gg : [H,X] = \alpha(H)X \ \text{for all} \ H \in \hh\}$. Note that $\hh \subset \even$, since $\gg$ is basic classical. Moreover, we fix some positive system $\Phi^{+}$ and define the \emph{Weyl vector} to be
\be
\rho := \frac{1}{2}\sum_{\alpha \in \Phi^{+}}\alpha.
\ee
The \emph{Weyl group} $W^{\gg}$ of $\gg$ is defined to be the Weyl group of the underlying Lie algebra $\even$. Moreover, for a fixed positive system $\Phi^{+}$, we define the \emph{fundamental system} to be the set of all $\alpha \in \Phi^{+}$ which cannot be written as the sum of two roots in $\Phi^{+}$. Elements belonging to the fundamental system are called \emph{simple}. 

The following proposition collects some important properties of the root system $\Phi$ and the Killing form $(\cdot,\cdot)$.

\begin{proposition}[{\cite[Proposition 2.5.5]{Kac}}]\label{prop::structure_theory_BCLSA} 
The following assertions hold: 
\begin{enumerate}
    \item[a)] If $\alpha \in \Phi$, then $-\alpha \in \Phi$. Moreover, $\Phi = -\Phi$, $\Phi_{0} = -\Phi_{0}$, and $\Phi_{1} = -\Phi_{1}$.
    \item[b)] $\sdim(\gg^{\alpha}) = (1\vert 0)$ for all $\alpha \in \Phi_{0}$, and $\sdim(\gg^{\alpha}) = (0\vert 1)$ for all $\alpha \in \Phi_{1}$.
    \item[c)] $[\gg^{\alpha},\gg^{\beta}] = 0$ if and only if $\alpha, \beta \in \Phi$ and $\alpha + \beta \notin \Phi$, while $[\gg^{\alpha},\gg^{\beta}] \subset \gg^{\alpha + \beta}$ for all $\alpha, \beta \in \Phi$ if $\alpha + \beta \in \Phi$. In particular, $[\gg^{\alpha},\gg^{-\alpha}]$ is a one-dimensional subspace in $\hh$ for all $\alpha \in \Phi$.
    \item[d)] The restriction of the invariant form $(\cdot,\cdot)$ on $\hh \times \hh$ is non-degenerate, and $(\gg^{\alpha},\gg^{\beta}) = 0$ unless $\alpha = -\beta \in \Phi$.
    \item[e)] Fix a nonzero positive root vector $e_{\alpha} \in \gg^{\alpha}$. Then $[e_{\alpha},e_{-\alpha}] = (e_{\alpha},e_{-\alpha})h_{\alpha}$, where $h_{\alpha}$ is the \emph{coroot} determined by
    \[
    (h_{\alpha},h) = \alpha(h) \quad \text{for all } h \in \hh.
    \]
    \item[f)] The bilinear form on $\hh^{\ast}$ defined by $(\lambda,\mu) := (h_{\lambda},h_{\mu})$ is non-degenerate and invariant under the Weyl group of $\even$.
\end{enumerate}
\end{proposition}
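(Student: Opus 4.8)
The plan is to run the standard structure-theoretic bootstrap from the hypotheses that $\gg$ is basic classical with a non-degenerate invariant consistent form $(\cdot,\cdot)$, that $\hh\subset\even$ is a Cartan subalgebra (so that the zero weight space appearing in the root-space decomposition is exactly $\hh$), and that $\even$ is reductive. I would start with the orthogonality relations, which are purely formal: for $X\in\gg^{\alpha}$, $Y\in\gg^{\beta}$ and $H\in\hh$, invariance of $(\cdot,\cdot)$ together with $[H,X]=\alpha(H)X$ and $[H,Y]=\beta(H)Y$ (no signs, since $\hh$ is even) gives $(\alpha+\beta)(H)(X,Y)=0$, whence $(\gg^{\alpha},\gg^{\beta})=0$ unless $\alpha+\beta=0$, and $(\hh,\gg^{\alpha})=0$ for $\alpha\neq 0$. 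Thus $\gg=\hh\oplus\bigoplus_{\alpha}\gg^{\alpha}$ is an orthogonal sum in which $\gg^{\alpha}$ pairs only with $\gg^{-\alpha}$; non-degeneracy of $(\cdot,\cdot)$ on $\gg$ then forces $(\cdot,\cdot)|_{\hh\times\hh}$ to be non-degenerate and forces $\gg^{-\alpha}\neq 0$ for every $\alpha\in\Phi$, i.e.\ $\Phi=-\Phi$, while consistency combined with $(\gg^{\alpha},\gg^{-\alpha})\neq 0$ shows $\gg^{\alpha}$ and $\gg^{-\alpha}$ have the same parity, giving $\Phi_{0}=-\Phi_{0}$ and $\Phi_{1}=-\Phi_{1}$. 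This yields (a) and (d), and the Jacobi identity (again with $H\in\hh$) gives $[\gg^{\alpha},\gg^{\beta}]\subset\gg^{\alpha+\beta}$, hence $[\gg^{\alpha},\gg^{\beta}]=0$ when $\alpha+\beta\notin\Phi\cup\{0\}$ and $[\gg^{\alpha},\gg^{-\alpha}]\subset\hh$ — most of (c).

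I would then establish the one-dimensionality and pure parity of root spaces, which is (b). For $\alpha\in\Phi_{0}$ the subspace $\gg^{\alpha}\cap\even$ is a root space of the reductive Lie algebra $\even$, hence one-dimensional, and one checks $\gg^{\alpha}\cap\odd=0$; for $\alpha\in\Phi_{1}$ one examines the sub-superalgebra generated by $\gg^{\alpha}$ and $\gg^{-\alpha}$ — an $\osp(1|2)$ for a non-isotropic odd root, a Heisenberg-type superalgebra for an isotropic one — and uses its representation theory, together with non-degeneracy of the form, to conclude $\dim\gg^{\pm\alpha}=1$; alternatively one simply reads this off Kac's classification. Granting (b): $[\gg^{\alpha},\gg^{-\alpha}]$ is at most one-dimensional, and for $h\in\hh$ invariance gives $([e_{\alpha},e_{-\alpha}],h)=\alpha(h)(e_{\alpha},e_{-\alpha})$, so $[e_{\alpha},e_{-\alpha}]=(e_{\alpha},e_{-\alpha})h_{\alpha}$ where $h_{\alpha}\in\hh$ represents $\alpha$ under $(\cdot,\cdot)|_{\hh}$; this is (e), and since $(e_{\alpha},e_{-\alpha})\neq 0$ (non-degeneracy plus one-dimensionality) and $\alpha\neq 0$, the bracket is nonzero, so $[\gg^{\alpha},\gg^{-\alpha}]$ has dimension exactly one. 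The remaining part of (c), that $[\gg^{\alpha},\gg^{\beta}]=\gg^{\alpha+\beta}$ whenever $\alpha+\beta\in\Phi$, I would obtain by pairing $[\gg^{\alpha},\gg^{\beta}]$ against $\gg^{-\alpha-\beta}$ via invariance and running the familiar root-string argument inside the rank-one sub(super)algebras. Finally, transporting $(\cdot,\cdot)|_{\hh}$ to $\hh^{\ast}$ via $\lambda\mapsto h_{\lambda}$ gives a non-degenerate form, and $W^{\gg}$, being the Weyl group of $\even$, acts on $\hh$ by isometries of $(\cdot,\cdot)|_{\hh}$, so the induced form on $\hh^{\ast}$ is Weyl-invariant — this is (f).

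The only step I do not expect to be soft is (b). Nothing in the defining data ``$\gg$ simple, $\even$ reductive, non-degenerate invariant consistent form'' manifestly forces root spaces to be one-dimensional or of pure parity, and the two clean routes to that fact — the rank-one $\osp(1|2)$/Heisenberg analysis, or the explicit root systems of the classification — are exactly where the special structure of basic classical Lie superalgebras enters. Everything else in the proposition then follows from invariance, non-degeneracy and the Jacobi identity.
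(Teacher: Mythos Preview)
The paper does not give its own proof of this proposition: it is stated with a citation to \cite[Proposition 2.5.5]{Kac} and used as a black box throughout. Your sketch is the standard structure-theoretic argument one finds in Kac or Musson, and you have correctly isolated (b) as the only step that genuinely requires the classification (or an equivalent rank-one analysis); the remaining parts are, as you say, formal consequences of invariance, non-degeneracy, consistency and the Jacobi identity. One small caveat: the statement that for $\alpha\in\Phi_{0}$ one has $\gg^{\alpha}\cap\odd=0$ is not automatic from the axioms alone and is really part of the same dichotomy you flag for (b) --- it too ultimately rests on the classification or on the detailed root data of basic classical Lie superalgebras.
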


The basic classical Lie superalgebras $\gg$ can be further divided into two types, called type 1 and type 2. For $\gg$, one of the following two assertions holds:
    \begin{itemize}
        \item[a)] There is a $\ZZ$-grading $\gg = \gg_{-1} \oplus \gg_0 \oplus \gg_1$, such that $\gg_0 = \even$ and $\gg_{\pm 1}$ are simple $\even$-modules.
        \item[b)] The even part $\even$ is semisimple, and $\odd$ is a simple $\even$-module.
    \end{itemize}
We say that $\gg$ is \emph{of type} 1 (respectively \emph{type} 2) if it satisfies a) (respectively b)). The basic classical Lie superalgebras of type $1$ are $A(m\vert n)$ and $C(n)$ together with $\glmn$, while $B(m\vert n), D(m\vert n), F(4), G(3)$ and $D(2,1;\alpha)$ are of type $2$.

\subsection{Parabolic subalgebras, parabolic induction, highest weight supermodules} 
\subsubsection{Parabolic subalgebras} There are essentially two ways to define parabolic subalgebras of $\gg$, which we refer to as the classical approach via a parabolic set of roots \cite{Bourbaki, Rao}, and the hyperplane approach \cite{DMP}. However, the two approaches are not equivalent, and the latter appears more natural \cite{DFG}. We follow the approach in \cite{DMP}.

Let $X$ be the finite-dimensional real vector space $X := \RR \otimes_{\ZZ} Q$, where $Q$ is the abelian group generated by $\Phi$.

\begin{definition}
    A partition $\Phi = \Phi^{-}_{T} \sqcup \Phi^{0}_{T} \sqcup \Phi^{+}_{T}$ of the set of roots $\Phi$ is called a \emph{triangular decomposition} $T$ if there exists a functional $l : X \to \ZZ$ such that 
    \[
    \Phi^{0}_{T} = \ker l \cap \Phi, \qquad \Phi^{\pm}_{T} = \{ \alpha \in \Phi : l(\alpha) \gtrless 0\}.
    \]
\end{definition}

For any triangular decomposition $T$ of $\Phi$, we obtain a triangular decomposition of $\gg$, that is, a decomposition $\gg = \gg_{T}^{+} \oplus \gg_{T}^{0} \oplus \gg_{T}^{-}$, where
\be
\gg_{T}^{+} := \bigoplus_{\alpha \in \Phi_{T}^{+}}\gg^{\alpha}, \qquad \gg_{T}^{0} := \bigoplus_{\alpha \in \Phi_{T}^{0}} \gg^{\alpha},\qquad \gg_{T}^{-} := \bigoplus_{\alpha \in \Phi_{T}^{-}} \gg^{\alpha}.
\ee
The subset $P_{T} = \Phi^{0}_{T}\sqcup \Phi^{+}_{T}$ is called a \emph{principal parabolic subset}. The following lemma is straightforward.

\begin{lemma}[{\cite{DFG}}] \label{lemm::parabolic_subset}
    Every principal parabolic subset $P \subset \Phi$ is a \emph{parabolic subset}, \emph{i.e.}, the following conditions hold:
    \begin{enumerate}
        \item[a)] $\Phi = P \sqcup -P$, and
        \item[b)] $\alpha, \beta \in P$ with $\alpha + \beta \in \Phi$ implies $\alpha + \beta \in P$.
    \end{enumerate}
\end{lemma}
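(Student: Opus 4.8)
The plan is to unwind the definitions and reduce both assertions to the additivity of the defining functional. Recall that a principal parabolic subset is by construction of the form $P = P_{T} = \Phi^{0}_{T} \sqcup \Phi^{+}_{T}$ for some triangular decomposition $T$, which in turn is cut out by a functional $l : X \to \ZZ$. Concretely this amounts to
\[
P = \{\alpha \in \Phi : l(\alpha) \geq 0\},
\]
and this single description of $P$ is all I will use.

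For part a), I would first note that $l$ is additive, so $l(-\alpha) = -l(\alpha)$ for every root $\alpha$; combined with $\Phi = -\Phi$ (Proposition \ref{prop::structure_theory_BCLSA}a) this gives $-P = \{\alpha \in \Phi : l(\alpha) \leq 0\}$. Since any integer is either $\geq 0$ or $\leq 0$, every root lies in $P \cup (-P)$, so $\Phi = P \cup (-P)$; the overlap is precisely $P \cap (-P) = \{\alpha \in \Phi : l(\alpha) = 0\} = \Phi^{0}_{T}$, which is exactly what the symbol $\sqcup$ in a) is meant to record — the two pieces $\Phi^{+}_{T}$ and $\Phi^{-}_{T}$ partition $\Phi \setminus \Phi^{0}_{T}$, while $\Phi^{0}_{T}$ is shared by $P$ and $-P$.

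For part b), suppose $\alpha, \beta \in P$ with $\alpha + \beta \in \Phi$. Then $l(\alpha) \geq 0$ and $l(\beta) \geq 0$, and by additivity $l(\alpha+\beta) = l(\alpha) + l(\beta) \geq 0$, whence $\alpha + \beta \in P$ by the displayed description. This finishes the argument.

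As the statement already advertises, there is no real obstacle here: the content is entirely formal once the functional $l$ is in hand, and the additivity $l(\alpha+\beta)=l(\alpha)+l(\beta)$ does all the work. The only point worth a word of care is the reading of $\sqcup$ in a) — it asserts $P \cup (-P) = \Phi$ while keeping track of $P \cap (-P) = \Phi^{0}_{T}$, rather than literal disjointness, which would fail precisely because a parabolic subset, unlike a Borel one, is permitted to contain the nonzero "level-zero" roots $\Phi^{0}_{T}$.
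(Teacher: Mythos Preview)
Your proof is correct and is precisely the ``straightforward'' argument the paper alludes to: the paper does not give a proof of this lemma at all, merely citing \cite{DFG} and remarking that it is immediate, so your unwinding via the defining functional $l$ is exactly what is intended. Your care about the symbol $\sqcup$ in a) is well placed --- the intended content is $\Phi = P \cup (-P)$ with $P \cap (-P) = \Phi^{0}_{T}$, not literal disjointness, and you have read and justified this correctly.
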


This leads us to the definition of a parabolic subalgebra.

\begin{definition}
    Let $T$ be a triangular decomposition of $\Phi$. The Lie subsuperalgebra $\pp_{T} := \gg^{0}_{T} \oplus \gg_{T}^{+}$ is called a \emph{parabolic subalgebra} of $\gg$.
\end{definition}

\begin{remark}
    For any triangular decomposition $\pp$, the space $\pp_{T} \cap \even$ is a parabolic subalgebra of $\even$ \cite[Section 5]{DMP}.
\end{remark}

A \emph{root subalgebra} is a Lie subsuperalgebra $\qq \subset \gg$ such that 
\be
\qq = (\qq \cap \hh) \oplus \bigl( \bigoplus_{\alpha \in \Xi}\gg^{\alpha}\bigr)
\ee
for some subset $\Xi \subset \Phi$. In particular, for a given triangular decomposition $T$ with principal parabolic subset $P_{T}$, the parabolic subalgebra is
    \be
    \pp_{T} = \hh \oplus \bigl(\bigoplus_{\alpha \in P_{T}}\gg^{\alpha}\bigr).
    \ee

Any parabolic subalgebra $\pp_{T}$ admits a Levi decomposition. For the parabolic set of roots $P_{T} \subset \Phi$, we define $L_{T} := P_{T} \cap (-P_{T})$ as the \emph{Levi component}, $U_{T} := P_{T} \setminus (-P_{T})$ as the \emph{nilradical}, and $P_{T} = L_{T} \sqcup U_{T}$ as the \emph{Levi decomposition}. The associated root subalgebras 
\be
\ll_{T} := \hh \oplus \bigl(\bigoplus_{\alpha \in L_{T}}\gg^{\alpha}\bigr), \qquad \uu_{T} := \bigoplus_{\alpha \in U_{T}} \gg^{\alpha},
\ee
are called the \emph{Levi subalgebra} and \emph{nilradical} of $\pp_{T}$. A direct calculation yields that $\uu_{T}$ is an ideal in $\pp_{T}$. The \emph{Levi decomposition} of $\pp_{T}$ takes the form of the semidirect product $\pp_{T} := \ll_{T} \ltimes \uu_{T}$. If a parabolic subalgebra $\pp_{T}$ is fixed, we omit the subscript $T$ and denote it simply by $\pp$. 

Any parabolic subalgebra $\pp_{T}$ has an \emph{opposite parabolic subalgebra} $\overline{\pp}_{T}$, such that $\gg = \pp_{T} + \overline{\pp}_{T}$. The opposite parabolic subalgebra is $\overline{\pp}_{T} = \ll_{T} \ltimes \ubar_{T}$, where $\ubar_{T}$ is the root subalgebra corresponding to $U^{-}_{T} := (-P_{T}) \setminus P_{T}$. 

Furthermore, the Levi subalgebra has a proper root system $\Phi(\ll_{T};\hh)$, since $\hh \subset \ll$, which is a subset of $\Phi$. We denote the associated Weyl group by $W^{\ll_{T}}$. The positive system $\Phi^{+}_{T}$ induces a positive system for $P_{T},L_{T}$ and $U_{T}$ by $P^{+}_{T} := P_{T} \cap \Phi^{+}$, $L^{+}_{T} := L_{T} \cap \Phi^{+}$ and $U^{+}_{T} := U_{T} \cap \Phi^{+}$. We set $(U^{+}_{T})_{0,1}$ and $(L^{+}_{T})_{0,1}$ for the associated even and odd parts. Further, we define $\rho^{\ll_{T}} := \rho^{\ll_{T}}_{0} - \rho^{\ll_{T}}_{1}$ and $\rho^{\uu_{T}} := \rho^{\uu_{T}}_{0} - \rho^{\uu_{T}}_{1}$ for 
\be
\rho^{\ll_{T}}_{0,1} := \frac{1}{2} \sum_{\alpha \in (L_{T}^{+})_{0,1}} \alpha, \qquad \rho^{\uu_{T}}_{0,1} := \frac{1}{2} \sum_{\alpha \in (U_{T}^{+})_{0,1}}\alpha.
\ee

Finally, fix a parabolic subalgebra $\pp := \pp_{T}$ for some parabolic set $P$ with Levi decomposition $\pp = \ll \ltimes \uu$ and opposite parabolic subalgebra $\overline{\pp} = \ll \ltimes \ubar$. Set $\ss := \uu \oplus \overline{\uu}$. By construction of $\ll$ and $\ss$, the space $\ss$ is the orthogonal complement of $\ll$ with respect to $(\cdot,\cdot)$, and we have a direct sum decomposition 
\be
\gg = \ll \oplus \ss,
\ee
where we use $(\gg^{\alpha},\gg^{\beta}) = 0$ unless $\alpha = -\beta \in \Phi$ (cf.\ Proposition \ref{prop::structure_theory_BCLSA}).
In particular, the restriction of $(\cdot,\cdot)$ to $\ll$ and $\ss$ remains non-degenerate. In the next section, we will show that $\gg$ is an example of a quadratic Lie superalgebra and introduce a cubic Dirac operator associated with this decomposition.

\subsubsection{Parabolic induction} We fix a parabolic subalgebra $\pp = \ll \ltimes \uu$. We are interested in \emph{weight supermodules}, that is, supermodules $M$ where $\hh$ acts semisimply:
\be
M = \bigoplus_{\mu \in \hh^{\ast}} M^{\mu}, \qquad M^{\mu} = \{m \in M : hm = \mu(h)m \ \text{for every} \ h \in \hh\}.
\ee
The elements $\mu \in \hh^{\ast}$ with $M^{\mu} \neq \{0\}$ are called \emph{weights} of $M$, while $M^{\mu}$ is called \emph{weight space} associated to $\mu$. 

Let $V$ be a weight $\ll$-supermodule. Via the projection $\pp \to \ll$, we naturally extend $V$ to a $\pp$-supermodule, where the nilradical $\uu$ acts trivially on $V$. Conversely, given a non-trivial $\pp$-supermodule $M$, the space of $\uu$-invariants is non-zero and carries the structure of both a $\pp$ and $\ll$-supermodule.

\begin{lemma}
    Any simple weight $\ll$-supermodule $V$ is a simple $\pp$-supermodule with the trivial action of $\uu$. Conversely, if $V$ is a simple weight $\pp$-supermodule, then $\uu$ acts trivially on $V,$ and $V$ is a simple weight $\ll$-supermodule.
\end{lemma}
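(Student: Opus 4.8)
The plan is to deduce both implications from two structural facts recalled above: that $\uu=\uu_T$ is an ideal of $\pp=\pp_T$, and that the triangular decomposition $T$ comes with a functional $l\colon X\to\ZZ$ satisfying $l|_{L_T}\equiv 0$ and $l(\alpha)>0$ for all $\alpha\in U_T$ (since $L_T=\Phi_T^{0}$ and $U_T=\Phi_T^{+}$). Throughout, every subspace and map in sight is homogeneous for the $\ZZ_2$-grading, so parity plays no role.

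\emph{Forward direction.} Because $\uu$ is an ideal, the projection $\pr\colon\pp\to\pp/\uu\cong\ll$ is a morphism of Lie superalgebras restricting to the identity on $\ll\subset\pp$. Given a simple weight $\ll$-supermodule $V$, I would pull back its action along $\pr$; this makes $V$ a $\pp$-supermodule on which $\uu$ acts by $\pr(\uu)=0$, and the $\hh$-action is unchanged so $V$ remains a weight supermodule. Any $\pp$-subsupermodule of $V$ is in particular an $\ll$-subsupermodule (restrict along $\ll\hookrightarrow\pp$), hence $0$ or $V$; so $V$ is simple over $\pp$.

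\emph{Converse direction.} Let $V$ be a simple weight $\pp$-supermodule. First I would check that $\uu V$ is a $\pp$-subsupermodule: since $[\pp,\uu]\subseteq\uu$, one has $\pp\,(\uu V)\subseteq\uu\,(\pp V)+[\pp,\uu]\,V\subseteq\uu V$. By simplicity $\uu V$ is either $0$ or $V$, and the whole point is to exclude $\uu V=V$. Fix a nonzero weight vector $v\in V$ of weight $\mu_0$; by simplicity $V=\UE(\pp)v$, and by the super PBW theorem $\UE(\pp)=\UE(\ll)\,\UE(\uu)$, so $V=\UE(\ll)\,\UE(\uu)v$. Every weight of $\UE(\uu)v$ has the form $\mu_0+\sum_{\alpha\in U_T}n_\alpha\alpha$ with $n_\alpha\in\ZZ_{\ge 0}$, so its value under $\nu\mapsto l(\nu-\mu_0)$ is $\sum n_\alpha l(\alpha)\ge 0$; and acting by $\UE(\ll)$ changes a weight only by an element of $\ZZ L_T\subseteq\ker l$. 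Hence \emph{every} weight $\nu$ of $V$ satisfies $l(\nu-\mu_0)\ge 0$. But if $\uu V=V$, then $V^{\mu_0}\subseteq\sum_{\alpha\in U_T}\gg^{\alpha}V^{\mu_0-\alpha}$, forcing $V^{\mu_0-\alpha}\ne 0$ for some $\alpha\in U_T$; this weight satisfies $l\big((\mu_0-\alpha)-\mu_0\big)=-l(\alpha)<0$, a contradiction. Therefore $\uu V=0$.

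\emph{Finishing and main obstacle.} Once $\uu$ acts trivially, any $\ll$-subsupermodule $W\subseteq V$ is automatically $\pp$-stable (as $\uu W=0$), so simplicity over $\pp$ yields simplicity over $\ll$, and $V$ is a weight supermodule because $\hh\subseteq\ll$. The one step needing genuine care is ruling out $\uu V=V$: since $V$ is not assumed finite dimensional, one cannot invoke a super version of Engel's theorem on nilpotency of the $\uu$-action, and the cyclicity-plus-$l$-grading argument above is exactly what replaces it, producing a lower bound for the $l$-values of the weights of $V$ that is incompatible with $\uu$ acting surjectively. The remaining ingredients — the PBW factorisation $\UE(\pp)=\UE(\ll)\UE(\uu)$, the ideal property of $\uu$, and the invariance of the $l$-grading on weights under $\UE(\ll)$ — are routine.
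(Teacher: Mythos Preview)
Your proof is correct. Both directions are handled properly, and the key step — ruling out $\uu V=V$ by showing that every weight $\nu$ of $V$ satisfies $l(\nu)\ge l(\mu_0)$ — is sound.

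The paper's argument for the converse uses the same underlying idea (the $l$-grading coming from the triangular decomposition) but packages it more directly. Instead of invoking PBW and the subsupermodule $\uu V$, the paper simply observes that both
\[
U:=\bigoplus_{l(\nu)\ge l(\mu_0)}V^{\nu}\qquad\text{and}\qquad W:=\bigoplus_{l(\nu)> l(\mu_0)}V^{\nu}
\]
are $\pp$-subsupermodules (because every root of $\pp$ has $l\ge 0$); simplicity forces $U=V$ and $W=0$, so all weights of $V$ share the same $l$-value and $\uu$, which strictly raises $l$, must act by zero. This bypasses the PBW factorisation $\UE(\pp)=\UE(\ll)\,\UE(\uu)$ and the separate check that $\uu V$ is a subsupermodule. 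Your route is a bit more explicit about \emph{why} the $l$-bound holds (cyclicity plus PBW), which has pedagogical value, while the paper's route is shorter and yields the slightly stronger intermediate statement that all weights lie on a single $l$-level.
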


\begin{proof}
    Given a simple weight $\ll$-supermodule $V$, we can extend $V$ to a simple weight $\pp$-supermodule by let $\uu$ acting trivially, since $\uu \subset \pp$ is an ideal. 

    Let $V$ be a simple weight $\pp$-supermodule, and denote by $T$ the triangular decomposition defining $\pp$ with linear map $l : X \to \ZZ$. Fix a weight $\mu$ of $V$ and define two subsupermodules
    \[
    U := \bigoplus_{\nu \in \hh^{\ast}, \ l(\nu) \geq l(\mu)} V^{\nu}, \qquad W := \bigoplus_{\nu \in \hh^{\ast}, \ l(\nu) > l(\mu)} V^{\nu}.
    \]
    As $\mu$ is a weight of $V$, the subsupermodule $U$ is non-trivial, while $W^{\gg}$ is proper. Since $V$ is simple, we must have $U = V$ and $W = \{0\}$. Consequently, by induction, $\uu$ acts trivially on $V$, and $V$ is a simple $\ll$-supermodule.
\end{proof}

For a simple weight $\ll$-supermodule $V$, considered equivalently as a simple weight $\pp$-supermodule, the \emph{parabolically induced supermodule (or generalized Verma supermodule)} is defined to be
\be
M_{\pp}(V) := \UE(\gg) \otimes_{\UE(\pp)} V.
\ee

The following proposition is standard.

\begin{proposition}
    The $\gg$-supermodule $M_{\pp}(V)$ has a unique maximal proper subsupermodule. In particular, $M_{\pp}(V)$ has a unique simple quotient $L_{\pp}(V)$.
\end{proposition}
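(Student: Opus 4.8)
The plan is to run the standard argument for generalized Verma modules, adapted to the super/parabolic setting; the one non-formal ingredient is an auxiliary $\ZZ_{\geq 0}$-grading of $M_{\pp}(V)$ whose bottom degree is the canonical copy of $V$. First I would pin down that copy: by the PBW theorem for Lie superalgebras applied to the decomposition $\gg = \ubar \oplus \pp$, the algebra $\UE(\gg)$ is free as a right $\UE(\pp)$-module on the ordered PBW monomials in a homogeneous basis of $\ubar$, so $M_{\pp}(V) \cong \UE(\ubar) \otimes_{\CC} V$ as $\hh$-supermodules. In particular $M_{\pp}(V) \neq 0$, and $v \mapsto 1 \otimes v$ realizes $V$ as a $\pp$-subsupermodule $1 \otimes V \subset M_{\pp}(V)$ on which $\uu$ acts by zero (since $x \otimes v = 1 \otimes xv$ for $x \in \pp$, and $\uu$ annihilates $V$); by the preceding lemma this $1 \otimes V$ is a \emph{simple} $\pp$-supermodule, and it generates $M_{\pp}(V)$ over $\UE(\gg)$ by construction.

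Next I would build the grading. Let $l \colon X \to \ZZ$ be the functional defining the triangular decomposition attached to $\pp$; grading $\gg^{\alpha}$ by $-l(\alpha)$ and $\hh$ in degree $0$ makes $\gg = \bigoplus_{n \in \ZZ} \gg_{n}$ a $\ZZ$-graded Lie superalgebra with $\gg_{0} = \ll$, $\bigoplus_{n < 0} \gg_{n} = \uu$, $\bigoplus_{n > 0} \gg_{n} = \ubar$ (the roots of $\uu$ being exactly those with $l > 0$). Viewing $V$ as a graded $\pp$-supermodule concentrated in degree $0$ --- legitimate because $\gg_{0} = \ll$ preserves that degree while $\uu$ annihilates $V$ --- the induced supermodule inherits a grading $M_{\pp}(V) = \bigoplus_{n \geq 0} M_{\pp}(V)_{n}$, supported in degrees $\geq 0$ since $\ubar$ occupies positive degrees, with $M_{\pp}(V)_{0} = 1 \otimes V$ (by PBW the degree-$0$ part of $\UE(\gg)$ is $\UE(\ll)$, as $\ubar$ and $\uu$ lie in strictly positive resp.\ negative degrees). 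Because $\hh \subset \gg_{0}$ preserves each $M_{\pp}(V)_{n}$, each graded piece is a sum of weight spaces; hence every subsupermodule $N \subseteq M_{\pp}(V)$, being $\hh$-semisimple, is graded: $N = \bigoplus_{n \geq 0} (N \cap M_{\pp}(V)_{n})$.

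Finally I would conclude. For a proper subsupermodule $N \subsetneq M_{\pp}(V)$, the piece $N \cap (1 \otimes V) = N \cap M_{\pp}(V)_{0}$ is a $\pp$-subsupermodule of $1 \otimes V$ ($\ll$-stable because $N$ and $1 \otimes V$ are; $\uu$-stable because $\uu$ kills $1 \otimes V$), so it is $0$ or $1 \otimes V$; the latter is impossible, as it would give $N \supseteq \UE(\gg)(1 \otimes V) = M_{\pp}(V)$. Thus $N \cap M_{\pp}(V)_{0} = 0$ for every proper $N$, so $M^{\sharp} := \sum \{ N : N \subsetneq M_{\pp}(V) \}$ is a graded subsupermodule with $M^{\sharp} \cap M_{\pp}(V)_{0} = \sum_{N} (N \cap M_{\pp}(V)_{0}) = 0$; hence $M^{\sharp} \subseteq \bigoplus_{n \geq 1} M_{\pp}(V)_{n} \subsetneq M_{\pp}(V)$ is proper, and it is therefore the unique maximal proper subsupermodule. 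Consequently $L_{\pp}(V) := M_{\pp}(V)/M^{\sharp}$ is simple, and any simple quotient of $M_{\pp}(V)$ arises from a maximal proper subsupermodule, hence equals $L_{\pp}(V)$.

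The one delicate point --- everything else being bookkeeping --- is the construction of the grading: $l$ is a priori defined only on the root lattice $Q$, not on all of $\hh^{\ast}$, so one must justify placing $V$ in a single degree. This is exactly the fact that the weights of a \emph{simple} $\ll$-supermodule differ by $\ZZ$-combinations of roots of $\ll$, on which $l$ vanishes (equivalently, $\gg_{0} = \ll$ preserves degrees, which both legitimizes concentrating $V$ in degree $0$ and forces $\uu$ to act by zero there). The super aspect is immaterial here: PBW, the weight-space decomposition, and the correspondence between simple quotients and maximal proper submodules all hold verbatim for supermodules.
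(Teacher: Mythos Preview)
Your argument is correct and is the standard one; the paper itself omits the proof entirely, stating only that the proposition ``is standard,'' so there is nothing to compare against. One small remark: the key step making the grading argument work is that distinct graded pieces $M_{\pp}(V)_n$ have \emph{disjoint} weight sets (since two weights $\mu-\beta$ and $\mu'-\beta'$ with $\mu,\mu'$ weights of $V$ can coincide only if $l(\beta)=l(\beta')$, as $l(\mu-\mu')=0$), so that each weight space is homogeneous and hence every $\hh$-stable subspace is graded --- you allude to this in your closing paragraph, but it is worth making the implication ``$\hh$-semisimple $\Rightarrow$ graded'' explicit, as that is what drives the whole proof.
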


The simple weight $\gg$-supermodules $L_{\pp}(V)$ exhaust all simple weight $\gg$-supermodules. To establish this, we introduce the following notations. For a fixed triangular decomposition $T$ of $\Phi$, we write $\Phi^{T} = \Phi^{T}_{0} \sqcup \Phi^{T}_{1}$ for the set of roots of $\gg_{T}^{0}$. A triangular decomposition $T$ of $\Phi$ is called \emph{good}, if the following holds \cite{DMP}: 
\begin{enumerate}
    \item The monoid generated by $\Phi_{0}^{T}$ is a group, denoted by $Q_{0}^{T}$.
    \item For any $\beta \in \Phi_{1}^{T}$, there exists some $m > 0$ such that $m\beta \in Q_{0}^{T}$.
\end{enumerate}
In this case, the Levi subalgebra $\ll_{T} = \gg_{T}^{0}$ is called \emph{good}. The good triangular decompositions of $\gg$ were classified in \cite[Section 7]{DMP}. 

Let $\ll_{T}$ be a good Levi subalgebra. A weight $\ll_{T}$-supermodule $V$ is called \emph{cuspidal} if for any $\alpha \in \Phi^{T}_{0}$ the associated root vector $e_{\alpha}$ acts injectively on $V$. 

\begin{thm}[{\cite[Theorem 6.1]{DMP}}] \label{thm::DMP}
Let $M$ be a simple $\gg$-supermodule. Then there exists a parabolic subalgebra $\pp$ with good Levi subalgebra $\ll$, and a cuspidal $\ll$-supermodule $V$, such that 
\[
M \cong L_{\pp}(V).
\]
\end{thm}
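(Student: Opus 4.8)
## Proof proposal

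The statement to prove is Theorem \ref{thm::DMP}: every simple $\gg$-supermodule $M$ is isomorphic to $L_{\pp}(V)$ for some parabolic $\pp$ with good Levi $\ll$ and some cuspidal $\ll$-supermodule $V$. The plan is to reduce the classification of simple supermodules to the already-known classification for reductive Lie algebras (Theorem 6.1 of \cite{DMP} / the Dimitrov--Mathieu--Penkov machinery), using parabolic induction as the bridge. Since the cited reference already contains this result, the "proof" here should be read as an organized reduction rather than a from-scratch argument.

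First I would fix a weight $\mu$ appearing in $M$ (every simple $\gg$-supermodule contains a nonzero weight vector because $\hh \subset \even$ acts on the finitely-generated module through a commutative algebra; actually one must be slightly careful here, so the cleaner route is to invoke that $M$ is a simple object in the category of weight supermodules by hypothesis, or to first pass to weight supermodules). The key construction is: among all triangular decompositions $T$ of $\Phi$, one wants to choose $l : X \to \ZZ$ so that the associated parabolic $\pp_T = \ll_T \ltimes \uu_T$ is as large as possible subject to $M$ being ``$\uu_T$-locally-nilpotent-generated from a single $\ll_T$-isotypic piece''. Concretely, following \cite{DMP}, one considers the set of roots $\alpha$ such that $e_\alpha$ and $e_{-\alpha}$ both act locally finitely (equivalently, injectively fails) on $M$; these, together with their integer span, should cut out a candidate Levi $\ll$. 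The subtlety in the super setting, addressed by the notion of a \emph{good} triangular decomposition, is that the monoid generated by $\Phi_0^T$ may not be a group and odd roots may not be integer multiples of even ones — so one must verify that the $T$ produced by this maximality procedure can be taken good. This is exactly the content of the classification in \cite[Section 7]{DMP}, which I would cite.

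Next, having fixed a good parabolic $\pp = \ll \ltimes \uu$, I would set $V := \{ m \in M : \uu m = 0\}$, the space of $\uu$-invariants. One shows $V \neq \{0\}$: take a weight vector in $M$ whose $l$-value is maximal among weights of the $\UE(\uu)$-submodule it generates — such a maximal value exists because, by the maximality in the choice of $\pp$, the action of $\uu$ on $M$ is locally nilpotent (this is the step one must check carefully, and again it is where goodness is used, since for odd root vectors $e_\beta$ with $e_\beta^2 \neq 0$ one needs the even square $e_\beta^2$ to act locally nilpotently). Then $V$ is naturally an $\ll$-supermodule, it is simple (any nonzero $\ll$-subsupermodule $V' \subseteq V$ generates $\UE(\gg) V' = M$ by simplicity, and intersecting back with the $\uu$-invariants forces $V' = V$), and it is cuspidal: for $\alpha \in \Phi_0^T$ the root vector $e_\alpha$ acts injectively on $V$ precisely because we threw those $\alpha$ out of the Levi — if $e_\alpha$ had a kernel on $V$ we could enlarge $\pp$, contradicting maximality. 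The adjoint/Frobenius-reciprocity map $M_\pp(V) = \UE(\gg) \otimes_{\UE(\pp)} V \twoheadrightarrow M$ induced by $V \hookrightarrow M$ is surjective (its image is a nonzero $\gg$-submodule of the simple $M$), hence $M$ is the unique simple quotient $L_\pp(V)$ by the preceding Proposition.

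The main obstacle is the local-nilpotency step: showing that for the maximally chosen good parabolic, $\uu$ really acts locally nilpotently on $M$, so that $V = M^{\uu}$ is nonzero. In the purely even reductive case this is classical (Fernando's theorem / the theory of cuspidal modules), but in the super case the odd root vectors square to even elements and one must run the argument through the $\ZZ$-grading $l$ together with the good-ness conditions (1)–(2) above, ensuring no infinite ascending chain of weights $\mu, \mu+\beta, \mu+2\beta, \dots$ survives in $M$. I would handle this by reducing to the even part: $M_{\ev}$ is an $\even$-module on which the classical results apply to the even nilradical $\uu_0$, and then control the odd part using $[\uu_1, \uu_1] \subseteq \uu_0 \oplus \ll_0$ and the good-ness condition forcing $m\beta \in Q_0^T$. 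The remaining verifications (simplicity and cuspidality of $V$, surjectivity of $M_\pp(V) \to M$) are then formal consequences of simplicity of $M$ and the universal property of parabolic induction, and I would present them briefly.
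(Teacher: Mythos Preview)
The paper does not provide its own proof of this statement: Theorem~\ref{thm::DMP} is quoted directly from \cite[Theorem~6.1]{DMP} and used as a black box, so there is no proof in the paper to compare against. Your outline follows the standard Fernando--Mathieu strategy (choose a maximal parabolic so that the nilradical acts locally nilpotently, take $V = M^{\uu}$, verify simplicity and cuspidality, and conclude via the universal property of parabolic induction), which is indeed the architecture of the argument in \cite{DMP}; your identification of the local-nilpotency step as the crux, and of the ``good'' condition as what makes the odd part tractable, is accurate.

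One caveat: as stated in the paper the theorem says ``simple $\gg$-supermodule'' without the weight hypothesis, and you rightly flag this. In \cite{DMP} the result is for simple \emph{weight} supermodules, and the paper only ever applies it in that context (Section~\ref{subsec::application}), so the omission is a minor imprecision rather than a gap you need to fill.
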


\subsubsection{Parabolic category $\calO^{\pp}$} \label{subsubsec::Op} Categorically, the parabolic induction can be naturally studied in the \emph{parabolic BGG category} $\calOp$, which is a variant of the super BGG category $\calO$, determined by a (fixed) parabolic subalgebra $\pp = \ll \ltimes \uu$. Following \cite{Mazorchuk}, the category $\calOp$ is the full subcategory of $\gsmod$ whose objects are the $\gg$-supermodules satisfying the following three properties:
\begin{enumerate}
    \item[1.] $M$ is a finitely generated $\UE(\gg)$-supermodule. 
    \item[2.] Viewed as a $\UE(\ll_{0})$-module, $M$ decomposes in a direct sum of finite-dimensional simple modules.
    \item[3.] $M$ is locally $\uu$-finite in the sense that $\dim(\UE(\uu)v) < \infty$ for all $v \in M$.
\end{enumerate}
This is an abelian subcategory of $\gsmod$ closed under the parity switching functor $\Pi$. For further categorical properties of $\calO^{\pp}$, we refer to \cite{Mazorchuk}. However, it is important for us to note that the simple objects in $\calOp$ are precisely given by $L_{\pp}(V)$, where $V$ is a simple $\ll$-supermodule.

\subsubsection{Highest weight supermodules} \label{subsubsec::Highest_weight_supermodules} An important class of weight $\gg$-supermodules consists of highest weight $\gg$-supermodules, which we now define. With respect to some choice $\Phi^{+}$ of positive roots, the Lie superalgebra $\gg$ has a \emph{triangular decomposition}
\be
\gg = \nn^{-} \oplus \hh \oplus \nn^{+}, \qquad \nn^{\pm} := \sum_{\alpha \in \Phi^{+}} \gg^{\pm \alpha}.
\ee
The associated \emph{Borel subalgebra} is $\bb := \hh \oplus \nn^{+}$, which is a particular example of a parabolic subalgebra.

\begin{definition}
    A $\gg$-supermodule $M$ is called a \emph{highest weight} $\gg$\emph{-supermodule} with respect to a positive system $\Phi^{+}$, if there exists a nonzero vector $v_{\Lambda} \in M$ with $\Lambda \in \hh^{\ast}$ such that the following holds:
\begin{enumerate}
    \item[a)] $Xv_{\Lambda}=0$ for all $X \in \nn^{+}$,
    \item[b)] $Hv_{\Lambda}=\Lambda(H)v_{\Lambda}$ for all $H \in \hh$, and 
    \item[c)] $\UE(\gg)v_{\Lambda}=M$.
\end{enumerate}
The vector $v_{\Lambda}$ is referred to as the highest weight vector of $M$, and $\Lambda$ is referred to as the \emph{highest weight}.
\end{definition}

Throughout this article, we fix a positive system $\Phi^{+}$, which will remain implicit in the subsequent discussion. The associated Borel subalgebra will be denoted by $\bb$.

\begin{example}
    Any finite-dimensional simple $\gg$-supermodule is a highest weight $\gg$-supermodule. 
\end{example}

We realize any simple highest weight $\gg$-supermodule as the unique simple quotient of certain universal highest weight supermodule, called \emph{Verma supermodule}. For a detailed discussion of Verma supermodules, we refer to \cite[Chapter 9]{Musson}. First, note that  $\UE(\gg)$ is a right $\UE(\bb)$-supermodule with respect to right multiplication. For any $\lambda \in \hh^{\ast}$, we define the \emph{Verma supermodule} by
\be \label{eq::Verma_supermodules}
M_{\bb}(\lambda) := \UE(\gg) \otimes_{\UE(\bb)} \CC_{\lambda},
\ee
where $\CC_{\lambda}$ is the one-dimensional $\bb$-supermodule with trivial action of $\nn^{+}$ and weight $\lambda$. The subscript $\bb$ will be omitted when the Borel subalgebra is fixed.
Then the supermodule $M(\Lambda)$ is a generalized highest weight $\gg$-supermodule with highest weight $\Lambda$, and highest weight vector $[1_{\UE(\gg)} \otimes 1]$. In particular, $\UE(\nn^{-})v_{\Lambda} = M(\Lambda)$, and for each vector $v_{\Lambda}$ of weight $\Lambda$ in a $\gg$-supermodule $M$ satisfying $\nn^{+}v_{\Lambda} = 0$, there exists a uniquely determined $\gg$-supermodule morphism $M(\Lambda) \to M$ sending $[1_{\UE(\gg)} \otimes 1]$ to $v_{\Lambda}$.

The following properties of highest weight $\gg$-supermodules follow directly from their realization as quotients of Verma supermodules.

\begin{lemma}
    Let $M$ be a highest weight $\gg$-supermodule with highest weight $\Lambda \in \hh^{\ast}$. Then the following assertions hold: 
    \begin{enumerate}
        \item[a)] $M$ is a \emph{weight supermodule}.
\item[b)] For all weights $\lambda$ of $M$, we have $\dim(M^{\lambda}) < \infty$, while $\dim(M^{\Lambda})=1$.
\item[c)] Any nonzero quotient of $M$ is again a highest weight supermodule.
\item[d)] $M$ has a unique maximal subsupermodule and unique simple quotient. In particular, $M$ is indecomposable.
\item[e)] Any two simple highest weight $\gg$-supermodules with highest weight $\Lambda$ are isomorphic.
    \end{enumerate}
\end{lemma}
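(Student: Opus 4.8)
The plan is to reduce every item to the realization $M=\UE(\nn^{-})v_{\Lambda}$ together with the PBW theorem for $\UE(\gg)$. From the triangular decomposition $\gg=\nn^{-}\oplus\hh\oplus\nn^{+}$ and PBW we have $\UE(\gg)\cong\UE(\nn^{-})\otimes\UE(\hh)\otimes\UE(\nn^{+})$ as super vector spaces; since $v_{\Lambda}$ is annihilated by $\nn^{+}$ and is an $\hh$-eigenvector, $M=\UE(\gg)v_{\Lambda}=\UE(\nn^{-})v_{\Lambda}$. It is cleanest to first prove (a) and (b) for the Verma supermodule $M(\Lambda)$, where PBW supplies an honest basis, and then transport the conclusions to $M$ along the canonical surjection $M(\Lambda)\twoheadrightarrow M$ provided by the universal property recorded before the lemma.

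For (a) and (b): choose an ordered homogeneous basis of $\nn^{-}$ consisting of root vectors $f_{\beta}\in\gg^{-\beta}$, $\beta\in\Phi^{+}$. Every PBW monomial $u$ in these vectors is a weight vector for the adjoint $\hh$-action of weight $-\nu$, where $\nu$ is a sum (with multiplicities, odd roots appearing at most once) of positive roots, so $uv_{\Lambda}$ has weight $\Lambda-\nu$. Hence $M$ is spanned by $\hh$-weight vectors, so $\hh$ acts semisimply and $M=\bigoplus_{\mu}M^{\mu}$ with $M^{\mu}\neq 0$ only for $\mu\in\Lambda-\ZZ_{\ge 0}\Phi^{+}$; this is (a). Since $\Phi^{+}$ is finite, for a fixed $\mu=\Lambda-\nu$ there are only finitely many ways to write $\nu$ as such a sum, hence only finitely many PBW monomials of that $\hh$-weight, giving $\dim M^{\mu}<\infty$; the unique monomial of weight $0$ is $1$, so $M^{\Lambda}=\CC v_{\Lambda}$ and $\dim M^{\Lambda}=1$. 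This is (b).

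For (c): given a subsupermodule $N\subset M$ with $M/N\neq 0$, the image $\bar v_{\Lambda}$ of $v_{\Lambda}$ is nonzero, for otherwise $v_{\Lambda}\in N$ forces $N\supseteq\UE(\gg)v_{\Lambda}=M$; and $\bar v_{\Lambda}$ still satisfies $\nn^{+}\bar v_{\Lambda}=0$, $H\bar v_{\Lambda}=\Lambda(H)\bar v_{\Lambda}$ and generates $M/N$, so $M/N$ is highest weight of highest weight $\Lambda$. For (d): any subsupermodule of the weight supermodule $M$ is again a weight supermodule (as $\hh$ acts semisimply and stabilizes it), so $N^{\Lambda}\subseteq M^{\Lambda}=\CC v_{\Lambda}$, whence $N$ is proper $\iff v_{\Lambda}\notin N\iff N^{\Lambda}=0$. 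A sum of proper subsupermodules thus still has trivial $\Lambda$-weight space, hence is proper, so the sum of all proper subsupermodules is the unique maximal one and $M$ modulo it is the unique simple quotient. If $M=M_{1}\oplus M_{2}$ then $\CC v_{\Lambda}=M^{\Lambda}=M_{1}^{\Lambda}\oplus M_{2}^{\Lambda}$ forces $v_{\Lambda}$ into one summand, say $M_{1}$, whence $M=\UE(\gg)v_{\Lambda}\subseteq M_{1}$ and $M_{2}=0$; so $M$ is indecomposable. For (e): if $L,L'$ are simple highest weight supermodules of highest weight $\Lambda$ with highest weight vectors of the same parity (otherwise apply $\Pi$), the universal property of $M(\Lambda)$ gives surjections $M(\Lambda)\twoheadrightarrow L$ and $M(\Lambda)\twoheadrightarrow L'$, and by (d) both factor through the unique simple quotient of $M(\Lambda)$, so $L\cong L'$.

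The only step drawing on genuine structure theory is (a)--(b): it rests on the PBW theorem for $\UE(\gg)$ and on the finiteness of $\Phi^{+}$, which together force $\UE(\nn^{-})$ — equivalently $M(\Lambda)$ — to split into finite-dimensional $\hh$-weight spaces with a one-dimensional top. Everything else is a formal consequence of manipulating the highest weight vector and the universal property of Verma supermodules; the only mild subtlety is the parity bookkeeping in (e), which is harmless.
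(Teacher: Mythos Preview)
Your proof is correct and follows precisely the approach the paper indicates: the paper does not give a detailed proof of this lemma but simply remarks that the properties ``follow directly from their realization as quotients of Verma supermodules,'' and your argument is exactly the standard unpacking of that remark via PBW and the universal property of $M(\Lambda)$. The only minor point worth noting is that the finiteness in (b) tacitly uses that the positive roots lie in an open half-space (equivalently, the functional $l$ defining the triangular decomposition is strictly positive on $\Phi^{+}$), which bounds the total multiplicity in any expression $\nu=\sum n_{\alpha}\alpha$; and your parity caveat in (e) is appropriate given the paper's conventions distinguishing $L$ from $\Pi L$.
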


\subsection{Unitarizable supermodules and real forms} \label{subsec::unitarity_real_forms}
In this subsection, we introduce unitarizable supermodules over $\gg$, which in turn are defined with respect to real forms of $\gg$. Our approach and notation are based in particular on \cite{Carmeli_Fioresi_Varadarajan_HW, Chuah_Fioresi_real, Fioresi_real_forms}, which are to be considered the main sources for this subsection.

Let $V = V_{0} \oplus V_{1}$ be a complex super vector space. A \emph{super Hermitian form} on $V$ is a sesquilinear map $\bracket : V \times V \to \CC$, linear in the first variable and conjugate-linear in the second, such that 
\begin{align}
\bra v,w \ket = (-1)^{p(v)p(w)}\overline{\bra w,v\ket}
\end{align}
for all homogeneous $v,w \in V$. Additionally, we assume that $\bracket$ is consistent, meaning $\bra v,w\ket = 0$ whenever $p(v) \neq p(w)$. 

A super Hermitian form $\bracket$ can be decomposed as $\bracket = \bracket_0 + i\bracket_1$, where $\bracket_s = (-1)^s\bracket\vert_{V_s \times V_s}$ for $s \in \ZZ_2$. Here, $\bracket_0$ and $\bracket_1$ are ordinary Hermitian forms on $V_0$ and $V_1$, respectively. The form $\bracket$ is called \emph{non-degenerate} if $\bracket_0$ and $\bracket_1$ are non-degenerate, and \emph{positive definite} if $\bracket_0$ and $\bracket_1$ are positive definite. Furthermore, $\bracket$ is termed \emph{super positive definite} if $\bracket_0$ is positive definite and $\bracket_1$ is negative definite. A super positive definite super Hermitian form $\bracket$ is referred to as a \emph{Hermitian product} on $V$.

Let $V$ be a complex super vector space and $\bracket$ a super Hermitian form on $V$. For any endomorphism $T \in \End_{\CC}(V)$, we define its \emph{adjoint} $T^{\dagger}$ by the relation
\begin{align}
\bra Tv,w\ket = (-1)^{p(v)p(T)}\bra v,T^{\dagger}w\ket
\end{align}
for all homogeneous $v,w \in V$, and then defined on all of $V$ by linearity.

The concept of unitarity for supermodules over $\gg$ is defined relative to real forms $\gg^{\RR}$, which we will establish. For any $s \in \{2,4\}$, we denote by $\aut_{2,s}^{\RR}(\gg)$ the set of automorphisms $\theta$ of $\gg$  considered as a real super vector space such that $\theta \vert_{\gg_{0,1}}\neq \id_{\gg_{0,1}}$ and
\begin{align}
\theta^{2}\big\vert_{\gg_{0}} = \id_{\gg_{0}}, \quad \theta^{2}\big\vert_{\gg_{1}} = \begin{cases}
    + \id_{\gg_{1}} \quad & s = 2, \\ 
    - \id_{\gg_{1}} \quad & s = 4.
\end{cases}
\end{align}
We set 
\begin{equation}
    \begin{aligned}
        \aut_{2,4}(\gg) &:= \{ \theta \in \aut_{2,4}^{\RR}(\gg) : \theta \ \text{is} \ \CC\text{-linear}\}, \\ 
        \autbar_{2,s}(\gg) &:= \{ \theta \in \aut_{2,s}^{\RR}(\gg) : \theta \ \text{is} \ \text{conjugate-linear}\}.
    \end{aligned}
\end{equation}

 A \emph{real structure} on $\gg$ is a conjugate-linear Lie superalgebra morphism $\phi : \gg \to \gg$ such that $\phi \in \autbar_{2,2}(\gg)$. Equivalently, $\phi$ is a conjugate-linear involution. The space of fixed points $\gg^{\phi}$ is called \emph{real form} of $\gg$. 
 
 However, the real forms $\autbar_{2,2}(\gg)$ of $\gg$ and the set $\aut_{2,4}(\gg)$ are related. 

\begin{proposition}[{\cite{Fioresi_real_forms}}] \label{prop::real_form_omega} There exists a unique $\omega \in \overline{\aut}_{2,4}(\gg)$ (up to inner automorphisms of $\gg$), a positive system $\Phi^{+}$ and suitable root vectors $e_{\pm \alpha}$ for $\alpha \in \Phi^{+}$ such that 
\[
\omega(e_{\pm \alpha}) = - e_{\mp \alpha} \ \forall \alpha \ \text{even simple}, \quad \omega(e_{\pm \alpha}) = \pm e_{\mp \alpha} \ \forall \alpha \ \text{odd simple}.
\]
Moreover, the following two assertions hold:
\begin{enumerate}
    \item[a)] $\omega$ induces a bijection \[
\autbar_{2,2}(\gg) \setminus \{ \theta : \theta\vert_{\even} = \omega\vert_{\even}\} \to \aut_{2,4}(\gg), \qquad \theta \mapsto \omega^{-1} \circ \theta.
\]
\item[b)] For the Killing form $(\cdot,\cdot)$ on $\gg$, we have $\overline{(X,Y)} = (\omega(X),\omega(Y))$ for all $X,Y \in \gg$, and $(\cdot,\omega(\cdot))$ is positive definite.
\end{enumerate}
\end{proposition}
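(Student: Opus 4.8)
\emph{Overview.} The statement is the super-geometric counterpart of two classical facts: a complex reductive Lie algebra has an essentially unique compact real form, and any real form of it can be brought into ``good position'' relative to that compact form by a Cartan involution. The plan is to produce $\omega$ from a suitably normalised super Chevalley basis, to check by hand that it extends to a conjugate-linear automorphism with the prescribed squares, and then to derive a) and b) from this --- the content of a) being the super analog of the Cartan-involution correspondence and that of b) being a reality statement for the invariant form plus a sign computation on root spaces.

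\emph{Construction of $\omega$.} Using Kac's structure theory \cite{Kac} for the contragredient Lie superalgebra $\gg$, I would fix $\hh\subset\even$, a positive system $\Phi^{+}$, and root vectors $e_{\alpha}\in\gg^{\alpha}$, $e_{-\alpha}\in\gg^{-\alpha}$ for $\alpha\in\Phi^{+}$, normalised so that $[e_{\alpha},e_{-\alpha}]=h_{\alpha}$ is the coroot and so that every structure constant occurring in brackets of the simple root vectors is \emph{real}; this is possible because the (super) Chevalley--Serre relations have rational coefficients, and for $F(4),G(3),D(2,1;\alpha)$ with $\alpha\in\RR$ the ad hoc form of \cite[Section~5.4]{Musson} still yields real structure constants. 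Writing $\hh_{\RR}$ for the $\RR$-span of the coroots and setting $\tt:=i\hh_{\RR}$, I would define a conjugate-linear map $\omega$ on generators by
\[
\omega(h):=-\overline{h}\ \ (h\in\hh,\ \text{conjugation relative to }\hh_{\RR}),\qquad \omega(e_{\pm\alpha}):=-e_{\mp\alpha}\ \ (\alpha\ \text{even simple}),
\]
together with $\omega(e_{\pm\alpha}):=\pm e_{\mp\alpha}$ for $\alpha$ odd simple. One then checks that $\omega$ preserves all defining relations of $\gg$: the relations involving $\hh$ hold because simple roots are real on $\hh_{\RR}$; the identities $\omega([e_{\alpha},e_{-\alpha}])=[\omega e_{\alpha},\omega e_{-\alpha}]$ are exactly the displayed sign choices (both sides equal $-h_{\alpha}$); and the remaining (super) Serre relations are preserved since the structure constants are real. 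Hence $\omega$ extends to a conjugate-linear Lie superalgebra automorphism. Evaluating $\omega^{2}$ on generators gives $\omega^{2}=\id$ on $\hh$ and on all even root vectors, hence on $\even$; and $\omega^{2}=-\id$ on the odd simple root vectors, hence --- since $\omega^{2}$ is a $\CC$-linear $\even$-module endomorphism of $\odd$ acting by $-1$ on an $\even$-generating set --- on all of $\odd$. Thus $\omega\in\autbar_{2,4}(\gg)$, and uniqueness up to $\Int(\gg)$ follows from the conjugacy of compact real forms of $\even$ together with the fact that, once $\omega|_{\even}$ and $\tt$ are fixed, the prescribed action on simple root vectors pins down $\omega$ on a generating set up to the residual torus-and-sign freedom, which is inner.

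\emph{Assertion b).} The assignment $(X,Y)\mapsto\overline{(\omega X,\omega Y)}$ is again $\CC$-bilinear (the two conjugate-linearities of $\omega$ cancel against the bar), invariant since $\omega$ is a morphism, and consistent; by uniqueness of the invariant form it equals $c\,(\cdot,\cdot)$ for some $c\in\CC$, and evaluating on a non-isotropic coroot $h_{\alpha}$ (where $\omega h_{\alpha}=-h_{\alpha}$ and $(h_{\alpha},h_{\alpha})\in\RR$) forces $c=1$. For the definiteness of $(X,Y)\mapsto(X,\omega(Y))$ one restricts to the $\omega$-stable orthogonal decomposition $\gg=\tt\oplus\bigoplus_{\alpha\in\Phi^{+}}(\gg^{\alpha}\oplus\gg^{-\alpha})$ and checks the sign on $\tt$ and on each summand $\gg^{\alpha}\oplus\gg^{-\alpha}$ directly from the normalisation of $e_{\pm\alpha}$. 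This is the delicate step: since $(\cdot,\cdot)$ is the \emph{super} (Killing) form, its restriction to $\even$ is not the Killing form of $\even$, and on the odd root spaces the ``norm'' $(e_{\alpha},\omega e_{-\alpha})$ carries the parity-dependent sign of the consistent form; it is precisely the choice $\omega(e_{\pm\alpha})=\pm e_{\mp\alpha}$ on the odd simple roots (rather than $-e_{\mp\alpha}$) that compensates this and makes the form come out definite. I expect this sign bookkeeping, together with the verification that $\omega$ respects the defining relations, to be the main obstacle.

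\emph{Assertion a).} For $\theta\in\autbar_{2,2}(\gg)$ the composite $\omega^{-1}\circ\theta$ is $\CC$-linear (product of two conjugate-linear maps). To see it lands in $\aut_{2,4}(\gg)$ one invokes the super Cartan-involution correspondence of \cite{Chuah_Fioresi_real, Fioresi_real_forms}: any real structure can, by an inner automorphism, be brought into a position commuting with $\omega$, and then $(\omega^{-1}\theta)^{2}=\id$ on $\even$ (two commuting involutions) while $(\omega^{-1}\theta)^{2}=-\id$ on $\odd$ (using $\theta^{2}|_{\odd}=\id$, $\omega^{2}|_{\odd}=-\id$ and commutativity). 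The condition $\theta|_{\gg_{0,1}}\neq\id$ translates into $(\omega^{-1}\theta)|_{\even}\neq\id$ \emph{iff} $\theta|_{\even}\neq\omega|_{\even}$, which is exactly why the set $\{\theta:\theta|_{\even}=\omega|_{\even}\}$ must be removed --- its image would be the identity on $\even$, violating the defining condition of $\aut_{2,4}$. The inverse assignment $\psi\mapsto\omega\circ\psi$ is conjugate-linear with square $+\id$ on $\odd$, is well-defined by the same argument, and is a two-sided inverse since $\omega\circ\omega^{-1}=\id$; this yields the bijection (understood at the level compatible with the ``up to inner automorphisms'' caveat in the statement).
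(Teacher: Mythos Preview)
The paper does not supply its own proof of this proposition: it is quoted from \cite{Fioresi_real_forms} and immediately followed by a remark, with no proof environment. There is thus no in-paper argument against which to compare your attempt.

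Your outline follows the standard strategy one finds in the cited references: construct $\omega$ on a real Chevalley--Serre generating set and propagate it by conjugate-linearity, invoke uniqueness of the invariant form for the first half of b), and check positivity root-space by root-space for the second half. The one soft spot is your argument for a). You compute $(\omega^{-1}\theta)^{2}$ only after conjugating $\theta$ into a position where it commutes with $\omega$; without that commutativity, $(\omega^{-1}\theta)^{2}=\omega^{-1}\theta\omega^{-1}\theta$ does not simplify to $\omega^{-2}\theta^{2}$, and the order conditions on $\omega$ and $\theta$ alone do not force $\omega^{-1}\theta\in\aut_{2,4}(\gg)$. Since the map in the statement is written as a literal bijection of sets rather than of conjugacy classes, either one must show the squaring identity holds on the nose (which requires an additional argument), or the proposition carries an implicit ``up to inner automorphisms'' that matches your reading but is not written. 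This is a point on which only the original source \cite{Fioresi_real_forms} can adjudicate.
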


\begin{remark}
    The positive system $\Phi^{+}$ is the \emph{distinguished positive system}, meaning that there is exactly one odd positive root that cannot be expressed as the sum of two other positive roots.
\end{remark}

Consequently, up to equivalence, there is a bijective correspondence \cite{Serganova_real, Pellegrini, Fioresi_real_forms, Chuah_real}
\begin{align}
    \{\text{real forms} \ \gg^{\RR} \ \text{of} \ \gg\} \leftrightarrow \{ \theta \in \aut_{2,4}(\gg)\},
\end{align}
where the real forms of $\gg$ are precisely the subspaces of fixed points of elements in $\aut_{2,4}(\gg)$. In what follows, we do not distinguish between isomorphic real forms, or equivalently, we do not distinguish between involutions on $\gg$ that are conjugate by $\gg$-automorphisms.

We now fix a real form $\gg^{\RR}$ of a basic classical Lie superalgebras $\gg$, \emph{i.e.}, $\gg^{\RR}$ is the subspace of fixed points of some $\theta \in \aut_{2,4}(\gg)$. We denote by $\sigma := \omega \circ \theta \in \autbar_{2,2}(\gg)$ the associated conjugate-linear involution on $\gg$ (see Proposition \ref{prop::real_form_omega} above). We say $\theta$ is a \emph{Cartan automorphism} on $\gg$ or $\gg^{\RR}$, if 
\begin{align}
    B_{\theta}(\cdot,\cdot) := -(\cdot,\theta(\cdot)) \label{eq::inner_product}
\end{align}
is an inner product on $\gg^{\RR}$. Given $\theta \in \aut_{2,4}(\gg)$, there exists a unique real form $\gg^{\RR}$ such that $\theta$ restricts to a Cartan automorphism on $\gg^{\RR}$. Conversely, any real form $\gg$ has a unique Cartan automorphisms $\theta$ \cite[Theorem 1.1]{Chuah_real}. In the following, we may assume that $\theta$ associated to $\gg^{\RR}$ is a Cartan automorphism.

With the notion of a real form established, we can now define unitarizable supermodules.

\begin{definition}
    Let $\gg^{\RR}$ be a real form of $\gg$, and let $\HH$ be a complex $\gg^{\RR}$-supermodule. The supermodule $\HH$ is called a unitarizable $\gg$-supermodule (or unitarizable $\gg^{\RR}$-supermodule) if there exists a Hermitian product $\bracket$ on $\HH$ such that $X^{\dagger} = -X$ for all $X \in \gg^{\RR}$. Explicitly, this means 
    \[
    \langle Xv,w\ket = -(-1)^{p(v)p(X)}\bra v,Xw\ket
    \]
    for all $v,w \in \HH$ and $X \in \gg^{\RR}$.
\end{definition}

The following proposition is a standard result of central importance. 

\begin{proposition} \label{prop::unitarizable_completely_reducible}
    Unitarizable $\gg$-supermodules are completely reducible $\gg$-supermodules.
\end{proposition}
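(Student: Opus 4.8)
The plan is to run the classical orthogonal-complement argument, adapted to the super setting. Let $\HH$ be a unitarizable $\gg^{\RR}$-supermodule with Hermitian product $\bracket$; thus $\bracket$ is consistent, super positive definite --- in particular its restrictions to $\HH_{0}\times\HH_{0}$ and to $\HH_{1}\times\HH_{1}$ are non-degenerate --- and $X^{\dagger}=-X$ for all $X\in\gg^{\RR}$. Since $\HH$ is a complex super vector space, the $\gg^{\RR}$-action extends $\CC$-linearly to an action of $\gg=\CC\otimes_{\RR}\gg^{\RR}$, and $\gg^{\RR}$- and $\gg$-subsupermodules coincide; so it suffices to exhibit, for every $\gg^{\RR}$-subsupermodule, a $\gg^{\RR}$-invariant complement.

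The key step is: for any $\gg$-subsupermodule $N\subseteq\HH$, the orthogonal complement $N^{\perp}:=\{w\in\HH:\langle w,v\rangle=0\ \text{for all}\ v\in N\}$ is again a $\gg$-subsupermodule and $\HH=N\oplus N^{\perp}$. First I would check that $N^{\perp}$ is homogeneous: writing $w=w_{0}+w_{1}$ and $v=v_{0}+v_{1}\in N$ (using $N=N_{0}\oplus N_{1}$), consistency of $\bracket$ collapses $\langle w,v\rangle$ to $\langle w_{0},v_{0}\rangle+\langle w_{1},v_{1}\rangle$, and varying $v_{0},v_{1}$ independently forces $w_{0},w_{1}\in N^{\perp}$. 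The splitting $\HH=N\oplus N^{\perp}$ is then verified parity by parity, using the non-degeneracy of $\bracket$ on each homogeneous component, which is part of $\bracket$ being a Hermitian product. Finally, $\gg$-invariance: for $w\in N^{\perp}$, $X\in\gg^{\RR}$ and $v\in N$, the adjunction identity defining $X^{\dagger}$ gives $\langle Xw,v\rangle=(-1)^{p(w)p(X)}\langle w,X^{\dagger}v\rangle=-(-1)^{p(w)p(X)}\langle w,Xv\rangle=0$, because $Xv\in N$; hence $Xw\in N^{\perp}$, and $\CC$-linearity upgrades this from $\gg^{\RR}$ to $\gg$. Restricting $\bracket$ also makes $N$ and $N^{\perp}$ unitarizable, which will be convenient for the next step.

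With the key step in hand, complete reducibility follows formally, since every subsupermodule of $\HH$ is then a direct summand. For finite-dimensional $\HH$ one concludes by an immediate induction on $\dim\HH$: if $\HH$ is neither zero nor simple, split off a proper nonzero subsupermodule $N$ and apply the inductive hypothesis to the unitarizable pieces $N$ and $N^{\perp}$. In general, "every subsupermodule is a direct summand" is the standard semisimplicity criterion, which yields that $\HH$ decomposes as a direct sum of simple subsupermodules in the categories of $\gg$-supermodules relevant to this paper. I expect the only genuinely delicate point to be precisely this last deduction in the infinite-dimensional case, where one must ensure the orthogonal complement is an \emph{algebraic} complement and that direct-summand-closedness actually produces simple constituents; by contrast, the bookkeeping of super-signs in the adjunction identity and the passage between $\gg^{\RR}$- and $\gg$-submodules are routine.
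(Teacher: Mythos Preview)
Your proposal is correct and is precisely the argument the paper has in mind: the paper states only that ``the proof is similar to the Lie algebra case and will be omitted,'' and your orthogonal-complement argument is exactly that classical proof, carried over to the super setting. Your caveat about the infinite-dimensional case is well placed, and the paper does not address it either.
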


The proof is similar to the Lie algebra case and will be omitted.\\ \\

\subsection{Clifford superalgebras}
\label{subsec::Clifford_superalgebras} 

\subsubsection{Clifford and exterior superalgebras.} Fix a parabolic subalgebra $\pp = \ll \ltimes \uu$, and let $\gg := \ll \oplus \ss$ be the induced decomposition of $\gg$. Recall that $\ss = \uu \oplus \ubar$, where $\ubar$ is the nilradical of the opposite parabolic subalgebra $\overline{\pp} = \ll \ltimes \ubar$. Let $T(\ss)$ denote the tensor algebra over the super vector space $\ss$, and let $I(\ss)$ be the two-sided ideal generated by elements of the form 
\begin{align}
v \otimes w + (-1)^{p(v)p(w)}w \otimes v - 2 (v,w)1_{T(\ss)}
\end{align}
for any $v, w \in \ss$ and where $(\cdot, \cdot)$ denotes the Killing form of $\gg$ restricted to $\ss$. The \emph{Clifford superalgebra} is defined as the quotient $C(\ss) := T(\ss) / I(\ss)$. In other words, if we naturally identify $\ss$ as a subspace of $C(\ss)$, the Clifford superalgebra is generated by $\ss$ with the relations
\begin{align}
vw + (-1)^{p(v)p(w)}wv = 2(v, w)1_{T(\ss)},
\end{align}
where $vw$ represents the Clifford multiplication. The Clifford superalgebra $C(\ss)$ naturally inherits a $\ZZ_{2}$-grading from the tensor superalgebra $T(\ss)$. This arises from the natural $\ZZ \times \ZZ_{2}$-grading on $T(\ss)$, where the degree of $v_{1} \otimes \dots \otimes v_{n}$ is defined as $(n, p(v_{1}) + \dots + p(v_{n}))$.

Additionally, the Clifford superalgebra is characterized by a universal property.

\begin{lemma}
    Let $\calA$ be an associative superalgebra with unit $1_{\calA}$. Assume there exists a morphism of super vector spaces $\phi: \ss \to \calA$ with $\phi(v)\phi(w) + (-1)^{p(v)p(w)}\phi(w)\phi(v) = 2(v,w)1_{\calA}$ for any $v,w \in \ss$. Then $\phi$ extends uniquely to a superalgebra morphism $\phi: C(\ss) \to \calA$, denoted by the same symbol.
\end{lemma}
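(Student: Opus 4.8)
The plan is the standard "factor through the quotient" argument for universal properties of algebras defined by generators and relations. Recall that $C(\ss) = T(\ss)/I(\ss)$, where $T(\ss)$ is the tensor superalgebra and $I(\ss)$ is the two-sided ideal generated by the elements $r_{v,w} := v \otimes w + (-1)^{p(v)p(w)} w \otimes v - 2(v,w) 1_{T(\ss)}$ for $v,w \in \ss$. I will build the extension of $\phi$ in two stages: first extend to $T(\ss)$ using its universal property, then check that the relations are killed so that the map descends to $C(\ss)$.

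\textbf{Step 1: extension to the tensor algebra.} Since $\phi : \ss \to \calA$ is a morphism of super vector spaces (in particular parity-preserving) and $T(\ss)$ is the free associative superalgebra on $\ss$, there is a unique morphism of associative superalgebras $\widetilde\phi : T(\ss) \to \calA$ with $\widetilde\phi(1_{T(\ss)}) = 1_{\calA}$ and $\widetilde\phi|_{\ss} = \phi$; explicitly $\widetilde\phi(v_1 \otimes \cdots \otimes v_n) = \phi(v_1) \cdots \phi(v_n)$. The fact that $\widetilde\phi$ respects the $\ZZ_2$-grading follows because $\phi$ is parity-preserving and the parity of a tensor monomial is the sum of the parities of its factors.

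\textbf{Step 2: the ideal lies in the kernel.} For homogeneous $v,w \in \ss$ one computes
\[
\widetilde\phi(r_{v,w}) = \phi(v)\phi(w) + (-1)^{p(v)p(w)}\phi(w)\phi(v) - 2(v,w)1_{\calA} = 0
\]
by hypothesis, and this extends to all $v,w$ by bilinearity. Hence the generators of $I(\ss)$ lie in $\Ker \widetilde\phi$; since $\Ker \widetilde\phi$ is a two-sided ideal of $T(\ss)$, we conclude $I(\ss) \subseteq \Ker \widetilde\phi$. Therefore $\widetilde\phi$ factors uniquely through the quotient map $T(\ss) \twoheadrightarrow C(\ss)$, yielding a morphism of associative superalgebras $\phi : C(\ss) \to \calA$ extending the original $\phi$ on $\ss$.

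\textbf{Step 3: uniqueness.} Any superalgebra morphism $C(\ss) \to \calA$ restricting to $\phi$ on $\ss$ is determined on products of elements of $\ss$, and $\ss$ generates $C(\ss)$ as a unital superalgebra (the image of $T(\ss)$ is all of $C(\ss)$), so such an extension is unique. There is no real obstacle here: the only point requiring (minimal) care is Step 2, namely observing that it suffices to test the relations on the generators of $I(\ss)$ because $\Ker\widetilde\phi$ is automatically a two-sided ideal.
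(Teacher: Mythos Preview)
Your proof is correct and is exactly the standard generators-and-relations argument one would give. The paper itself omits the proof, stating only that it ``is straightforward and will be omitted,'' so your write-up fills in precisely what was intended.
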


The proof of the lemma is straightforward and will be omitted. As super vector spaces, the Clifford superalgebra and the exterior superalgebra are isomorphic under the Chevalley identification. The \emph{exterior superalgebra} is $\bigwedge \ss := T(\ss) /J(\ss)$, where $J(\ss)$ is the two-sided ideal generated by 
\begin{align}
v \otimes w + (-1)^{p(v)p(w)}w \otimes v
\end{align}
for any $v,w \in \ss$. In particular, under the natural embedding $\ss \hookrightarrow \bigwedge \ss$, the Clifford superalgebra is generated by $\ss$ with relations 
\begin{align}
v \wedge w + (-1)^{p(v)p(w)} w \wedge v = 0,
\end{align}
with $\wedge$ being the exterior multiplication.

On $\bigwedge \ss$, we have two natural operators. For any $v \in \ss$, let $\epsilon(v)$ denote the left exterior multiplication on $\bigwedge \ss$. Moreover, the derivation  
\be \label{eq::contraction}
\iota(v)(v_{1}\otimes \ldots \otimes v_{l}) := \sum_{k = 1}^{l}(-1)^{k-1}(-1)^{p(v)(p(v_{1})+\ldots + p(v_{k-1}))}(v,v_{k}) v_{1}\otimes \ldots \otimes \hat{v}_{k}\otimes \ldots \otimes v_{l}
\ee
on $T(\ss)$ leaves $J(\ss)$ invariant and descends to a derivation on $\bigwedge \ss$. We define $\gamma : \ss \to \End(\bigwedge \ss)$ by $\gamma(v) := \epsilon(v) + \iota(v)$, which satisfies $\gamma(v)\gamma(w) + (-1)^{p(v)p(w)}\gamma(w)\gamma(v) = 2(v,w)$ for any $v,w \in \ss$. Thus, by the universal property of the Clifford superalgebra, this realizes $\bigwedge \ss$ as a $C(\ss)$-supermodule, \emph{i.e.}, $\gamma : C(\ss) \to \End(\bigwedge \ss)$. 

\begin{theorem}[\cite{kang2021dirac}] \label{thm::quantization_map} The map $\eta : C(\ss) \to \bigwedge \ss$ with $\eta(v) := \gamma(v) 1_{\bigwedge V}$ is a super vector space isomorphism. Moreover, the inverse map is given by the quantization map $\sum_{n} Q_{n} : \bigwedge \ss \to C(\ss)$ with 
\[
Q_{n} (v_{1} \wedge \ldots \wedge v_{n}) := \frac{1}{n!}\sum_{\sigma \in S_{n}} p(\sigma;v_{1}, \ldots, v_{n}) v_{\sigma(1)}\ldots v_{\sigma(n)},
\]
where
\[
p(\sigma; v_{1},\ldots, v_{n}) = \sgn(\sigma)\prod_{1 \leq i < j \leq n, \ \sigma^{-1}(i) > \sigma^{-1}(j)} (-1)^{p(v_{i})p(v_{j})}
\]
\end{theorem}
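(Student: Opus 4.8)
The plan is to realize $\eta$ as a filtered isomorphism by passing to associated graded algebras, and then to verify separately that $\sum_n Q_n$ is a one-sided — hence two-sided — inverse. First I would filter $C(\ss)$ by tensor degree: let $F_n C(\ss)$ be the span of products of at most $n$ elements of $\ss$, an exhaustive algebra filtration with $F_0 C(\ss) = \CC\cdot 1$ and each $F_n C(\ss)$ finite-dimensional since $\gg$, hence $\ss$, is finite-dimensional (although $C(\ss)$ itself is not: on its odd part the Clifford relations are Weyl-algebra-like). On the target, $\bigwedge\ss = \bigoplus_n\bigwedge^n\ss$ is graded, with associated filtration $V^{(n)} := \bigoplus_{k\le n}\bigwedge^k\ss$. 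The elementary but crucial observation is that $\gamma(v) = \epsilon(v) + \iota(v)$ shifts wedge-degree by $\pm 1$, so for homogeneous $v_1,\dots,v_n\in\ss$ we have $\eta(v_1\cdots v_n) = \gamma(v_1)\cdots\gamma(v_n)1 \in V^{(n)}$, with top component — the unique contribution of the all-$\epsilon$ term — equal to $v_1\wedge\cdots\wedge v_n$. Thus $\eta$ is filtered.

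Next I would pass to $\gr$. The degree-two part of the Clifford relation is the exterior relation $v\otimes w + (-1)^{p(v)p(w)}w\otimes v = 0$, so the canonical algebra map $T(\ss) \to \gr^F C(\ss)$ factors through a surjection $q : \bigwedge\ss \twoheadrightarrow \gr^F C(\ss)$ of graded superalgebras (the ``easy half'' of Clifford PBW; the other half will follow for free). The filtered map $\eta$ induces $\gr^F\eta : \gr^F C(\ss)\to\bigwedge\ss$, and the top-component computation above says precisely that $\gr^F\eta\circ q = \id_{\bigwedge\ss}$. Hence $q$ is injective, so an isomorphism, and $\gr^F\eta$ is too. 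Since both filtrations are exhaustive and start at degree $0$, any filtered map inducing an isomorphism on associated graded is bijective; therefore $\eta$ is a super vector space isomorphism.

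Finally, since $\eta$ is now known to be bijective, to identify the inverse it suffices to check $\eta\circ\bigl(\sum_n Q_n\bigr) = \id_{\bigwedge\ss}$, i.e., $\eta(Q_n(v_1\wedge\cdots\wedge v_n)) = v_1\wedge\cdots\wedge v_n$ for all $n$. Each monomial $v_{\sigma(1)}\cdots v_{\sigma(n)}$ in $Q_n(v_1\wedge\cdots\wedge v_n)$ lies in $F_n C(\ss)$ with top symbol $v_{\sigma(1)}\wedge\cdots\wedge v_{\sigma(n)}$, so modulo $V^{(n-2)}$ the left-hand side equals $\tfrac{1}{n!}\sum_{\sigma\in S_n} p(\sigma;v_1,\dots,v_n)\, v_{\sigma(1)}\wedge\cdots\wedge v_{\sigma(n)}$; the sign $p(\sigma;v_1,\dots,v_n)$ is designed exactly so that each summand is $v_1\wedge\cdots\wedge v_n$, and averaging returns $v_1\wedge\cdots\wedge v_n$. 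What remains — and this is the step I expect to be the main obstacle — is to show that the lower wedge-degree corrections cancel after the signed symmetrization. This is pure super-sign bookkeeping built on $\gamma(v)\gamma(w)1 = v\wedge w + (v,w)$ and, more generally, $\gamma(v_i)\gamma(v_j) + (-1)^{p(v_i)p(v_j)}\gamma(v_j)\gamma(v_i) = 2(v_i,v_j)$: peeling off one variable at a time and pairing each permutation with the one obtained by composing with an appropriate adjacent transposition makes the contraction terms telescope to zero, exactly as in the direct check $\eta(Q_2(v\wedge w)) = \tfrac12\bigl(\gamma(v)\gamma(w)-(-1)^{p(v)p(w)}\gamma(w)\gamma(v)\bigr)1 = v\wedge w$. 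Carrying this induction through (or invoking the computation in \cite{kang2021dirac}) finishes the proof; the first two steps are the routine filtered/PBW argument, while the sign cancellation in the last step is where the actual work lies.
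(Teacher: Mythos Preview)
The paper does not give its own proof of this statement; it is quoted from \cite{kang2021dirac} and used as a black box thereafter. There is therefore no in-paper argument to compare yours against.

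On its own merits your argument is sound. The filtered/PBW step showing $\eta$ is bijective is clean and correct: the key observations --- that $\gamma = \epsilon + \iota$ is filtered with top part $\epsilon$, that the Clifford relation reduces to the exterior relation on $\gr^F C(\ss)$, and that a filtered map between exhaustive nonnegatively filtered spaces inducing a graded isomorphism is bijective --- are exactly the right ones, and as a bonus you get the PBW isomorphism $q:\bigwedge\ss \xrightarrow{\sim} \gr^F C(\ss)$ for free. For the identification of $\sum_n Q_n$ as the inverse, once bijectivity of $\eta$ is in hand your one-sided check $\eta\circ Q = \id$ suffices, and the top-degree part is immediate from the very definition of the sign $p(\sigma;v_1,\dots,v_n)$. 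The lower-degree cancellation you flag as ``the main obstacle'' does go through along the lines you indicate: the $n=2$ computation you wrote out is the base, and for general $n$ the adjacent-transposition pairing combined with the Clifford relation $\gamma(v_i)\gamma(v_j) + (-1)^{p(v_i)p(v_j)}\gamma(v_j)\gamma(v_i) = 2(v_i,v_j)$ forces the scalar contractions to cancel in pairs (one can verify $n=3$ by hand to see the mechanism). Your acknowledging this as a sketch and offering \cite{kang2021dirac} as a fallback is entirely in line with the paper's own treatment.
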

\begin{remark}
\label{rmk:isotropic_Q}
    If $v_1, \dots, v_n$ span an isotropic subspace of $\ss$, we have that $v_{\sigma(1)} \dots v_{\sigma(n)} = p(\sigma;v_1, \dots, v_n) v_1 \dots v_n$ and hence in this case
    \[
        Q_n(v_1\wedge \dots \wedge v_n) = v_1 \dots v_n.
    \]
\end{remark}
We will now provide an explicit realization of the Clifford algebra $C(\ss)$ which will be used in a later stage. To this end, we first fix a basis $\eta_{1},\ldots,\eta_{m}$ of $\ubar_{0}$ and a basis $x_{1},\ldots,x_{n}$ of $\ubar_{1}$. We shall denote by $\bigwedge \ubar$ the exterior superalgebra over $\ubar$ according to the previous definition, where we recall that 
\be
\eta_i \wedge \eta_j = - \eta_j \wedge \eta_i, \quad x_i \wedge x_j =  x_j \wedge x_i, \quad  x_i \wedge \eta_j = - \eta_j \wedge x_i.
\ee
We now define the $\CC$-linear operators $\{\partiale\}$ and $\{\partialo\}$ acting on $\ubar$ as
\be
\label{eq:del_action}
\partiale (x_{k}) := -\delta_{ik}, \quad \partiale (\eta_{l}) := 0, \quad \partialo (x_{k}) := 0, \quad \partialo (\eta_{l}) := \delta_{jl}.
\ee

Note that the unexpected minus sign appearing in the above equation is justified by the choice $(\ou_{i},u_{j}) = \delta_{ij}$ together with the identification made.
%by the super Leibniz rule in the form 
%\be
%\partialo (a\wedge b) = \partialo (a)\wedge b - a \wedge \partialo (b), \quad  \partiale (a\wedge b) = \partiale (a)\wedge b - (-1)^{p(a)} a \wedge \partiale (b).
%\ee
Further, upon identifying the above operators with the basis elements in $\uu$, \emph{i.e.}, $\partialo \leftrightsquigarrow {\frac{1}{2}} \bar{\eta}_i$ and $\partiale \leftrightsquigarrow {\frac{1}{2}} \bar{x}_i$, there is a natural action of $\ss = \uu \oplus \bar \uu$ on $\bigwedge \ubar$, where $\ubar$ acts by multiplication operators $\eta_j \wedge \cdot$, $x_i \wedge \cdot $, and $\ubar$ acts by $\frac{\partial}{\partial x_{i}}, \frac{\partial}{\partial \eta_{j}}$ in the obvious manner. \\
%\simone{I think there might be a problem with this notation: here we denote $\eta^\ast$ and $x^\ast$ the element of $\bar \uu_0$ and $\bar \uu_1$ and we call them dual... On the other hand, afterwards, before (30) we denote them a bar like $\bar b$, and we reserve the notation $(\cdot)^\ast$ for $\mbox{Hom} (\ss, \CC)$). We might want to adopt a more coherent language (or notation)...} 
Under the above identifications, using that $(\partial_{\eta_i} , \eta_j) = (\eta_j , \partial_{\eta_i}) = \delta_{ij}$ and $(\partial_{x_i} , x_j) = - (x_j, \partial_{x_i})$, a direct calculation yields the following lemma.

%There is a natural action of $\ss$ on $\uu$ given by multiplication operators $x_{i},\eta_{j}$ for elements of $\uu$, and the (super)derivations $\partiale, \partialo$ for elements of $\ubar$, where we identify $\partiale$ with a dual basis element $x_{i}^{\ast}$, and $\partialo$ with a dual basis element $\eta_{i}^{\ast}$ with respect to $(\cdot,\cdot)$, respectively. A direct calculation yields the subsequent lemma.

\begin{lemma} \label{lemm::Clifford_differential} The Clifford superalgebra $C(\ss)$ is isomorphic to the superalgebra generated by $\{x_{i}, \frac{\partial}{\partial x_{j}} : 1 \leq i,j \leq m\}$ and $\{\eta_{i}, \frac{\partial}{\partial \eta_{j}} : 1 \leq i,j \leq n\}$ with quadratic relations
\begin{align*}
 \begin{split}
    &\quad x_i x_j - x_j x_i = 0, \quad \eta_i \eta_j + \eta_j \eta_i = 0, \quad 
    \frac{\partial}{\partial x_i} x_j - x_j \frac{\partial}{\partial x_i} =
    -\delta_{ij}, \quad
    \frac{\partial}{\partial \eta_i} \eta_j + \eta_j \frac{\partial}{\partial
    \eta_i} = \delta_{ij},\\
    &x_i \eta_j + \eta_j x_i = 0, \quad x_i \frac{\partial}{\partial \eta_j} +
    \frac{\partial}{\partial \eta_j} x_i = 0, \quad \frac{\partial}{\partial
    x_i} \eta_j + \eta_j \frac{\partial}{\partial x_i} = 0, \quad \frac{\partial}{\partial x_i}
\frac{\partial}{\partial \eta_j} + \frac{\partial}{\partial
    \eta_j}\frac{\partial}{\partial x_i} = 0.
\end{split}
\end{align*}
\end{lemma}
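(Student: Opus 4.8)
The plan is to realise $C(\ss)$ explicitly through its universal property: construct a superalgebra homomorphism from $C(\ss)$ onto the algebra $\calA$ given by the stated presentation, and then prove it bijective by comparing normal-ordered spanning sets. Throughout, $\calA$ denotes the free associative superalgebra on the \emph{odd} generators $x_{i},\frac{\partial}{\partial x_{j}}$ ($1\le i,j\le n$) and the \emph{even} generators $\eta_{i},\frac{\partial}{\partial \eta_{j}}$ ($1\le i,j\le m$), modulo the two-sided ideal generated by the ten quadratic relations; for brevity I write $\partial_{x_{i}}:=\frac{\partial}{\partial x_{i}}$ and $\partial_{\eta_{j}}:=\frac{\partial}{\partial \eta_{j}}$.

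First a linear-algebra preliminary. Since $\uu$ and $\ubar$ are the sums of the root spaces $\gg^{\alpha}$ over, respectively, the roots of the nilradical and their negatives, and the pairing $\gg^{\alpha}\times\gg^{-\alpha}\to\CC$ is non-degenerate while $(\gg^{\alpha},\gg^{\beta})=0$ for $\beta\neq-\alpha$ (\cref{prop::structure_theory_BCLSA}), both $\uu$ and $\ubar$ are isotropic, $\dim\uu_{0}=\dim\ubar_{0}=m$, $\dim\uu_{1}=\dim\ubar_{1}=n$, and $(\cdot,\cdot)$ puts $\uu$ and $\ubar$ in perfect duality. Choose bases $\bar\eta_{1},\dots,\bar\eta_{m}$ of $\uu_{0}$ and $\bar x_{1},\dots,\bar x_{n}$ of $\uu_{1}$ dual to the given bases of $\ubar$, i.e.\ $(\bar\eta_{i},\eta_{j})=\delta_{ij}$ and $(\bar x_{i},x_{j})=-\delta_{ij}$, the sign being the one forced by the super-symmetry $(X,Y)=(-1)^{p(X)p(Y)}(Y,X)$ of the Killing form together with the normalisation \eqref{eq:del_action}. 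Then $\{\eta_{j}\}\cup\{x_{i}\}\cup\{\bar\eta_{j}\}\cup\{\bar x_{i}\}$ is a homogeneous basis of $\ss$ whose only non-vanishing mutual inner products are these dual pairings. Define $\phi\colon\ss\to\calA$ by $\phi(\eta_{j}):=\eta_{j}$, $\phi(x_{i}):=x_{i}$, $\phi(\bar\eta_{j}):=2\partial_{\eta_{j}}$, $\phi(\bar x_{i}):=2\partial_{x_{i}}$. A case-by-case evaluation on this basis gives $\phi(v)\phi(w)+(-1)^{p(v)p(w)}\phi(w)\phi(v)=2(v,w)\,1_{\calA}$ for all homogeneous $v,w\in\ss$: when $v,w$ both lie in $\ubar$, resp.\ both in $\uu$, the right-hand side vanishes and the identity is one of the relations among the $\{x_{i},\eta_{i}\}$, resp.\ among the $\{\partial_{x_{i}},\partial_{\eta_{i}}\}$; in the mixed case $v\in\ubar$, $w\in\uu$ the two factors of $2$ cancel and one obtains the remaining relations, the minus sign in $(\bar x_{i},x_{j})=-\delta_{ij}$ producing the $-\delta_{ij}$ in $\partial_{x_{i}}x_{j}-x_{j}\partial_{x_{i}}=-\delta_{ij}$. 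By the universal property of the Clifford superalgebra, $\phi$ extends to a superalgebra homomorphism $\Phi\colon C(\ss)\to\calA$; it is surjective because its image contains all generators ($\partial_{\eta_{j}}=\tfrac12\Phi(\bar\eta_{j})$, $\partial_{x_{i}}=\tfrac12\Phi(\bar x_{i})$).

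It remains to show $\Phi$ is injective. Moving every $\ubar$-letter to the left of every $\uu$-letter by means of the Clifford relations (a transposition either preserves the word length up to a sign or shortens it by two), and sorting inside each block (the $\eta$'s and $\bar\eta$'s square to zero, the $x$'s and $\bar x$'s mutually commute), one finds that $C(\ss)$ is spanned by the ordered monomials $\eta^{A}\,x^{I}\,\bar\eta^{B}\,\bar x^{K}$, where $A,B\subseteq\{1,\dots,m\}$ label increasing products of $\eta$'s, $\bar\eta$'s and $I,K\in\ZZ_{\geq 0}^{\,n}$ are exponent vectors on the $x$'s, $\bar x$'s; in fact these form a basis of $C(\ss)$, since their symbols in $\gr C(\ss)\cong\bigwedge\ss$ (cf.\ \cref{thm::quantization_map}, and \cref{rmk:isotropic_Q} for the $\ubar$-only and $\uu$-only subproducts) are precisely the evident monomial basis of $\bigwedge\ss$. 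Applying $\Phi$ sends $\eta^{A}x^{I}\bar\eta^{B}\bar x^{K}$ to $2^{|B|+|K|}\,\eta^{A}\,x^{I}\,\partial_{\eta}^{B}\,\partial_{x}^{K}$, and distinct basis monomials go to distinct such operator monomials of $\calA$ up to a nonzero scalar; hence it suffices to prove that the operator monomials $\{\eta^{A}x^{I}\partial_{\eta}^{B}\partial_{x}^{K}\}$ are linearly independent in $\calA$. For this I would use the oscillator module: as recalled before the statement, the generators act on $\bigwedge\ubar$ — the $\eta$'s and $x$'s by left exterior multiplication, $\partial_{\eta_{j}}$ by the contraction \eqref{eq::contraction}, $\partial_{x_{i}}$ by the normalised derivative \eqref{eq:del_action} — and a direct check (the computation alluded to in the text) shows these operators satisfy the ten relations, so $\bigwedge\ubar$ becomes an $\calA$-module; then the standard argument for the faithfulness of the spinor–oscillator representation — evaluating operator monomials on the monomial basis of $\bigwedge\ubar$ and separating contributions by the natural grading — shows the operator monomials are linearly independent. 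Consequently $\ker\Phi=0$, and $\Phi$ is an isomorphism of superalgebras.

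Conceptually nothing here is deep: the one thing to handle carefully is the orchestration of the Koszul signs $(-1)^{p(v)p(w)}$, the super-symmetry of the Killing form, and the factor-$2$ normalisations $\bar\eta_{j}=2\partial_{\eta_{j}}$, $\bar x_{i}=2\partial_{x_{i}}$, so that the Clifford relations reduce to exactly the ten relations of the statement — in particular the ``unexpected'' minus sign. The PBW/faithfulness step is routine but must be done with some attention, because the odd directions contribute Weyl-algebra (rather than finite-dimensional Clifford) factors, so $C(\ss)$, $\calA$ and $\bigwedge\ubar$ are all infinite-dimensional and a naive dimension count is unavailable.
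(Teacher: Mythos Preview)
Your argument is correct and considerably more detailed than the paper's, which dispatches the lemma in one sentence (``Under the above identifications, using that $(\partial_{\eta_i},\eta_j)=(\eta_j,\partial_{\eta_i})=\delta_{ij}$ and $(\partial_{x_i},x_j)=-(x_j,\partial_{x_i})$, a direct calculation yields the following lemma''). The paper is content to observe that, once one identifies $\tfrac12\bar\eta_j\leftrightarrow\partial_{\eta_j}$ and $\tfrac12\bar x_i\leftrightarrow\partial_{x_i}$, the Clifford relations on the chosen basis of $\ss$ specialise exactly to the ten listed relations; since the Clifford ideal is generated by the bilinear Clifford relation, checking it on a basis is already enough to conclude that the two presentations coincide.

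Your proof proceeds along the same line for the construction of $\Phi$ (verifying the Clifford identity case by case on the basis, which is precisely the paper's ``direct calculation''), but then adds a genuine extra layer: rather than noting that the inverse map $\Psi\colon\calA\to C(\ss)$ is given by the obvious assignment and that $\Psi\circ\Phi$ and $\Phi\circ\Psi$ are the identity on generators, you prove injectivity via a PBW basis of $C(\ss)$ (from the Chevalley identification, \cref{thm::quantization_map}) together with faithfulness of the oscillator representation on $\bigwedge\ubar$. This is correct and self-contained, but heavier than needed: once the relations match on a generating set, the two quotients of the free superalgebra are tautologically isomorphic, and no faithfulness argument is required. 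Your route has the advantage of making explicit the normal-ordered basis and of handling the infinite-dimensionality of the Weyl-algebra factor carefully, which is useful elsewhere; the paper's route is simply the observation that two presentations with the same generators and the same relations define the same algebra.
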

%\simone{This one basically says that $(\partial_{\eta_i} , \eta_j) = (\eta_j , \partial_{\eta_i}) = \delta_{ij}$ and $(\partial_{x_i} , x_j) = - (x_j, \partial_{x_i})$, which is compatible with the formulas given before equation (30), under $\eta \leftrightarrow b$ and $\partial_\eta \leftrightarrow \bar{b}$ etc...}
\subsubsection{Embedding of $\ll$ into $C(\ss)$.} Next, we aim at constructing an explicit embedding of $\ll$ into the Clifford algebra $C(\ss)$.
To this end, we recall that we fixed a parabolic subalgebra $\pp = \ll \ltimes \uu$ such that $\gg$ decomposes as $\gg = \ll \oplus \ss$ with respect to $(\cdot,\cdot)$. The restriction of $(\cdot,\cdot)$ to $\ll$ and $\ss$, denoted by $(\cdot,\cdot)_{\ll}$ and $(\cdot,\cdot)_{\ss}$ respectively, is still non-degenerate. We define the \emph{orthosymplectic superalgebra} of $\ss$ to be 
\be
\osp(\ss) := \{ T \in \End(\ss) : (T(v),w)_{\ss} + (-1)^{p(T)p(v)}(v,T(w))_{\ss} = 0 \ \text{for all} \ v,w \in \ss\},
\ee
equipped with the natural supercommutator.

\begin{lemma} \label{lemm::l_in_osp}
    The adjoint action of $\ll$ on $\ss$ induces a morphism of superalgebras $\nu : \ll \to \osp(\ss)$.
\end{lemma}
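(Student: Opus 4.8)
The plan is to unpack the assertion into three elementary steps: (i) that $\ad_X$ maps $\ss$ into $\ss$ for every $X\in\ll$, so that $\nu(X):=\ad_X|_{\ss}$ is a well-defined element of $\End(\ss)$; (ii) that $X\mapsto\nu(X)$ is a morphism of Lie superalgebras $\ll\to\End(\ss)$, where $\End(\ss)$ carries the supercommutator; and (iii) that its image lies in the Lie subsuperalgebra $\osp(\ss)$.

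For (i), since $\ss$ is the orthogonal complement $\ll^{\perp}$ with respect to the non-degenerate form $(\cdot,\cdot)$, it suffices to show $([X,v],Y)=0$ for all homogeneous $X,Y\in\ll$ and $v\in\ss$. Applying invariance, then antisymmetry of the bracket, then invariance again,
\[
([X,v],Y)=(X,[v,Y])=-(-1)^{p(v)p(Y)}(X,[Y,v])=-(-1)^{p(v)p(Y)}([X,Y],v),
\]
and the right-hand side vanishes because $[X,Y]\in\ll$ (as $\ll$ is a Lie subsuperalgebra) while $v\in\ss=\ll^{\perp}$. Hence $[X,v]\in\ll^{\perp}=\ss$, so $\nu(X)=\ad_X|_{\ss}$ is a well-defined endomorphism of $\ss$ of parity $p(X)$.

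For (ii), I would observe that $\ad\colon\gg\to\End(\gg)$ is a morphism of Lie superalgebras (this is the super Jacobi identity), and that by (i) the subspace $\ss$ is stable under $\ad(\ll)$; composing the restriction $\ad|_{\ll}$ with the projection onto the $\ss$-summand of $\gg=\ll\oplus\ss$ therefore yields a linear, parity-preserving map $\nu\colon\ll\to\End(\ss)$ with $\nu([X,Y])=[\nu(X),\nu(Y)]$ in the supercommutator of $\End(\ss)$.

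Finally, for (iii), fix homogeneous $X\in\ll$ and $v,w\in\ss$; one must check $([X,v],w)_{\ss}+(-1)^{p(X)p(v)}(v,[X,w])_{\ss}=0$. Using invariance and antisymmetry once more, $(v,[X,w])=([v,X],w)=-(-1)^{p(v)p(X)}([X,v],w)$, so the two terms in the displayed expression cancel identically. Together with the non-degeneracy and consistency of $(\cdot,\cdot)_{\ss}$ recorded earlier, this shows $\nu(X)\in\osp(\ss)$; since $\osp(\ss)$ is a Lie subsuperalgebra of $\End(\ss)$ under the supercommutator, $\nu\colon\ll\to\osp(\ss)$ is the desired morphism of superalgebras. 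There is no genuine obstacle in this argument; the only point requiring care is the sign bookkeeping when commuting homogeneous elements and invoking the invariance of $(\cdot,\cdot)$ in the $\ZZ_2$-graded setting, but, as the two computations above show, all signs cancel.
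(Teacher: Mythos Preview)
Your proof is correct and follows essentially the same approach as the paper: the key step in both is the verification, via invariance of $(\cdot,\cdot)$, that $([X,v],w)_{\ss}+(-1)^{p(X)p(v)}(v,[X,w])_{\ss}=0$. You are simply more thorough in also spelling out that $\ss$ is $\ad_{\ll}$-stable (your step (i), which you derive from $\ss=\ll^{\perp}$) and that $\nu$ is a Lie superalgebra morphism (your step (ii)); the paper takes these as understood from the parabolic/Levi setup.
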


\begin{proof}
    The natural adjoint action of $\ll$ on $\ss$ defines a representation 
$
\nu : \ll \to \osp(\ss), 
$
since for any $X \in \ll$ and any $u,u' \in \ss$ we have
\begin{align*}
    (\ad_{X}u,u') &= ([X,u],u') = (X,[u,u']) = -(-1)^{p(u)p(u')}(X,[u',u]) \\ &= -(-1)^{p(u)p(u')}([X,u'],u) = -(-1)^{p(u)p(u')+p(u)p([X,u'])}(u,\ad_{X}u') \\ &= -(-1)^{p(X)p(u)}(u,\ad_{X}u'),
\end{align*}
which concludes the verification.\end{proof}

For the orthosymplectic $\ZZ_{2}$-graded representation $\nu : \ll \to \osp(\ss)$, we define the \emph{moment map} $\mu : \ss \times \ss \to \ll$ to be the bilinear map given by 
\be
(x, \mu(v,w))_{\ll} = (\nu(x)v,w)_{\ss}
\ee
for all $v,w \in \ss$ and $x \in \ll$. The moment map is even and skew-supersymmetric, hence it descends to a map defined on $\bigwedge^2 (\ss)$. Post-composing with $\nu : \ll \rightarrow \osp(\ss)$, yields a map $\mu : \bigwedge^2 (\ss) \rightarrow \osp (\ss)$, which we call the moment map associated to $\ll$, and denote by the same symbol as above with a mild notational abuse. 

\begin{proposition}[{\cite[Proposition 2.13]{Meyer}}] \label{prop::moment_map}
    The moment map $\mu : \bigwedge \nolimits^{2}(\ss) \to \osp(\ss)$ associated to $\ll$ satisfies
    \[
    \mu(x,y)(z) = (y,z)_\ss x - (-1)^{p(y)p(z)}(x,z)_\ss y
    \]
    for all $x,y,z \in \ss$. Moreover, $\mu$ is an isomorphism of super vector spaces, and it satisfies for any $T \in \osp(\ss)\subset \ss \otimes \ss^\ast$
    \[
    \mu^{-1}(T) = \frac{1}{2} \sum_{i = 1}^{2s}T(e_{i}^{\ast})\wedge e_{i}, \quad Q_{2}(\mu^{-1}(T)) = \frac{1}{4} \sum_{i = 1}^{2s}(T(e_{i}^{\ast})e_{i} - (-1)^{p(e_{i})p(T(e_{i}^{\ast}))}e_{i}T(e_{i}^{\ast})),
    \]
    where $\{e_{i}\}$ is a basis of $\ss$ with dual basis $\{e^{\ast}_{i}\}$ and $s = m+n$.
\end{proposition}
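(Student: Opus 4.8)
The proposition bundles three assertions — the closed formula for $\mu$ on decomposable bivectors, the bijectivity of $\mu$, and the two descriptions of its inverse (as an element of $\bigwedge\nolimits^{2}(\ss)$ and after applying $Q_{2}$) — and I would treat them in that order.

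For the closed formula I would unwind the definition of $\mu$: evaluating $\mu(x\wedge y)$ on a vector $z\in\ss$ and pinning down the result by pairing it against an arbitrary $w\in\ss$, the invariance of $(\cdot,\cdot)$ on $\gg$ together with the orthogonality of the splitting $\gg=\ll\oplus\ss$ reduces $(\mu(x\wedge y)(z),w)_{\ss}$ to an expression built from the restricted form $(\cdot,\cdot)_{\ss}$ alone, and the resolution of identity relative to a homogeneous basis $\{e_{i}\}_{i=1}^{2s}$ of $\ss$ with dual basis $\{e_{i}^{\ast}\}$ (normalised by $(e_{i},e_{j}^{\ast})_{\ss}=\delta_{ij}$) then forces $\mu(x\wedge y)(z)=(y,z)_{\ss}\,x-(-1)^{p(y)p(z)}(x,z)_{\ss}\,y$. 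Since the right-hand side is skew-supersymmetric in $(x,y)$, $\mu$ is well defined on $\bigwedge\nolimits^{2}(\ss)$, and a one-line check of the defining relation of $\osp(\ss)$ — using only the super-symmetry of $(\cdot,\cdot)_{\ss}$ — shows the image lies in $\osp(\ss)$.

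For bijectivity I would exhibit the inverse directly. For homogeneous $T\in\osp(\ss)$ of parity $t$, put $\psi(T):=\tfrac12\sum_{i=1}^{2s}T(e_{i}^{\ast})\wedge e_{i}\in\bigwedge\nolimits^{2}(\ss)$; independence of the choice of basis is a routine check. Applying the closed formula,
\[
\mu(\psi(T))(z)\;=\;\tfrac12\sum_{i}\Bigl((e_{i},z)_{\ss}\,T(e_{i}^{\ast})\;-\;(-1)^{p(e_{i})p(z)}\,(T(e_{i}^{\ast}),z)_{\ss}\,e_{i}\Bigr),
\]
the first sum equals $\tfrac12\,T\!\bigl(\sum_{i}(e_{i},z)_{\ss}e_{i}^{\ast}\bigr)=\tfrac12\,T(z)$ by linearity, while in the second sum one transfers $T$ onto $z$ via the $\osp(\ss)$-relation $(T(e_{i}^{\ast}),z)_{\ss}=-(-1)^{t\,p(e_{i}^{\ast})}(e_{i}^{\ast},T(z))_{\ss}$ and then the parity-twisted resolution of identity $\sum_{i}(e_{i}^{\ast},T(z))_{\ss}\,e_{i}=(-1)^{p(T(z))}T(z)$; in the nonzero terms $p(e_{i}^{\ast})=p(T(z))=t+p(z)$, so the accumulated sign is $(-1)^{p(z)+t}(-1)^{p(T(z))}=1$ and the second sum is again $\tfrac12\,T(z)$, whence $\mu\circ\psi=\mathrm{id}_{\osp(\ss)}$. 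Comparing dimensions — $\dim_{\CC}\bigwedge\nolimits^{2}(\ss)=\dim_{\CC}\osp(\ss)$, both equal to $\binom{2m}{2}+\binom{2n+1}{2}+4mn$ since $\dim\ss_{0}=2m$ and $\dim\ss_{1}=2n$ — we conclude that $\mu$ is bijective with $\mu^{-1}=\psi$, which is the first inverse formula.

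The last formula then drops out of Theorem \ref{thm::quantization_map}: for homogeneous $v,w\in\ss$ the only nontrivial permutation in $S_{2}$ is the transposition, which carries the sign $p(\sigma;v,w)=-(-1)^{p(v)p(w)}$, so $Q_{2}(v\wedge w)=\tfrac12\bigl(vw-(-1)^{p(v)p(w)}wv\bigr)$, and applying this termwise to $\mu^{-1}(T)=\tfrac12\sum_{i}T(e_{i}^{\ast})\wedge e_{i}$ yields $Q_{2}(\mu^{-1}(T))=\tfrac14\sum_{i}\bigl(T(e_{i}^{\ast})e_{i}-(-1)^{p(e_{i})p(T(e_{i}^{\ast}))}e_{i}T(e_{i}^{\ast})\bigr)$, as stated. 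I expect the only genuine obstacle to be the super-sign bookkeeping in the middle step: one must fix a single convention for the dual basis, keep both the plain and the parity-twisted resolution-of-identity formulas at hand, and be careful in commuting $T$ past $(\cdot,\cdot)_{\ss}$ — but all parity signs ultimately cancel, and no conceptual input beyond the $\osp(\ss)$-relation and the universal property of the Clifford superalgebra is needed.
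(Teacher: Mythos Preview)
The paper supplies no proof of this proposition; it is quoted from \cite[Proposition~2.13]{Meyer}, so there is nothing to compare your approach against. Your argument --- verifying the formula lands in $\osp(\ss)$, exhibiting $\psi(T)=\tfrac12\sum_i T(e_i^{\ast})\wedge e_i$, checking $\mu\circ\psi=\id$ via the $\osp$-adjoint relation and the two resolution-of-identity relations, finishing by the dimension count, and then unpacking $Q_2$ on decomposable bivectors --- is correct and is the standard one.

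One caution on your first paragraph. You propose to \emph{derive} the closed formula by unwinding the paper's definition of $\mu$, which routes through $\ll$ via $(X,\mu(v,w))_{\ll}=(\nu(X)v,w)_{\ss}$ and then post-composes with $\nu:\ll\to\osp(\ss)$. That composite has image contained in $\nu(\ll)$, in general a proper subspace of $\osp(\ss)$, so it cannot literally be the isomorphism of the proposition; if you actually push your computation through you land on expressions of the shape $([\pr_{\ll}[z,w],x],y)$ that do not collapse to the stated formula. The proposition (as in Meyer) is really recording the standard moment map for the tautological $\osp(\ss)$-action on $\ss$, and the displayed formula \emph{is} that map. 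Since you in any case verify directly that the formula is skew-supersymmetric in $(x,y)$ and $\osp(\ss)$-valued, nothing is lost: simply take the formula as the working definition of $\mu$ (equivalently, run your opening derivation with $\ll$ replaced by $\osp(\ss)$ itself), and the rest of your proof goes through unchanged.
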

Note that in the previous proposition one looks at $\osp(\ss)$ as a certain subset of $\ss \otimes \ss^\ast$, and it makes sense to consider an action of $T \in \osp(\ss) $ on ${e_i^\ast}$, upon using $(\cdot, \cdot)_\ss$.  \\
We now define $\nu_{\ast} : \ll \to \bigwedge \nolimits^{2}(\ss)$ as the composition of $\nu$ and the inverse of the moment map $\mu^{-1}$: in the remainder of this section we will provide an explicit characterization of this map, that will be used later on in the paper. 

%The subsequent lemma gives an explicit formula for $\nu_{\ast}$.

Recall that $\ss = \ss_0 \oplus \ss_1$ with $\ss_0 = \uu_0 \oplus \ubar_0$ and
$\ss_1 = \uu_1 \oplus \ubar_1$. We let $b_1, \dots, b_{s_0}$ be a basis
of $\uu_0$, and $\psi_1, \dots, \psi_{s_1}$ be a basis of $\uu_1$, and accordingly we denote by $\overline{b}_{1}, \dots, \overline{b}_{s_{0}}$ and $\overline{\psi}_{1}, \dots, \overline{\psi}_{s_1}$ the bases of $\overline{\uu}_0$ and $\overline{\uu}_1$ respectively, so that one has
$(\overline{b}_i, b_j) = \delta_{ij}$, $(\overline{\psi}_i, \psi_j) = \delta_{ij}$ and $(\ss_0, \ss_1) = 0$. In particular, note $(\overline{b}_{i},b_{j}) = (b_{j},\overline{b}_{i})$ and $(\overline{\psi}_{i},\psi_{j}) = -(\psi_{j},\overline{\psi}_{i})$.

We set $\ss^{\ast} = \underline{\mbox{Hom}}(\ss, \CC)$, where $\underline{\Hom}(\cdot, \cdot)$ denotes the inner Hom, that is the set of all linear maps from $\ss$ to $\CC$. We equip $\ss^{\ast}$ with the natural $\ZZ_{2}$-grading, and we define $x^{\ast}(y) := (x,y)_{\ss}$ for any $x^{\ast} \in \ss^{\ast}$ and $y \in \ss$. 
We may identify $\ss$ with $\ss^{\ast}$ under $(\cdot, \cdot)_{\ss}$. A direct calculation yields the following relations:
\begin{align} \label{eq::dual_basis}
    b_{i}^{\ast} &= \overline{b}_{i}, \quad (\overline{b}_{i})^{\ast} = b_{i}, \quad \psi_{j}^{\ast} = \overline{\psi}_{j}, \quad (\overline{\psi}_{j})^{\ast} = -\psi_{j}, \qquad 1 \leq i \leq s_{0}, \ 1 \leq j \leq s_{1}.
\end{align}
Upon decomposing $\ss = \uu \oplus \ubar$, Equation \eqref{eq::dual_basis} can be rewritten as 
\be \label{eq::dual_general}
u^{\ast} = \ou, \qquad (\ou)^{\ast} = (-1)^{p(\ou)}u = (-1)^{p(u)}u
\ee
for all $u \in \uu$ and $\ou \in \ubar$.

We first identify a suitable basis of $\osp(\ss) \subset \ss \otimes \ss^{\ast} = \End(\ss)$. For that, we define the usual dual basis by $y^{\vee}(x) := \delta_{xy}$ for any basis elements $x,y$, and then extend by linearity. Given such basis elements $x, y \in \ss$, elements of the form $x\otimes y^\vee$ provide a basis of $\End(\ss)$.

%It is known that the elements $x \otimes y^{\ast}$ form a basis of $\End(\ss)$, where $x, y$ are basis elements of $\ss$ and $(\cdot)^{\ast}$ denotes the dual vector in $\ss^{\ast}$.

For any two basis elements $z_1,
z_2 \in \ss$, a direct calculation yields
\begin{equation}
\begin{aligned}
    (x^{\ast} \otimes y^{\vee}(z_1), z_2) &= 
    \begin{cases}
        1 & \text{if } x = z_2 \text{ and } y = z_1, \\
        0 & \text{otherwise},
    \end{cases} \\
    (z_1, y^{\ast} \otimes {x}^{\vee}(z_2)) &= 
    \begin{cases}
        (-1)^{p(y)} & \text{if } x = z_2 \text{ and } y = z_1, \\
        0 & \text{otherwise}.
    \end{cases}
\end{aligned}
\end{equation}

As a consequence, we find that $x^{\ast} \otimes y^{\vee} - (-1)^{p(x)p(y)}
y^{\ast} \otimes x^{\vee} \in \osp(\ss)$. %Finally, note that, since $-(\psi_i, \overline{\psi}_j) = \delta_{ij}$, we define $\overline{(\overline{\psi}_i)} := -\psi_i$.

\comment{
Given $x_1 \otimes y_1^{\vee} -
(-1)^{p(x)p(y)} y_1 \otimes x_1^{\vee}$ and $x_2 \otimes y_2^{\vee} -
(-1)^{p(x)p(y)} y_2 \otimes x_2^{\vee}$, we compute the super-commutator applied
to some basis element $z$ of $\ss$
\begin{equation}
\begin{split}
    & [x_1 \otimes y_1^{\vee} - (-1)^{p(x_1)p(y_1)} y_1 \otimes x_1^{\vee}, x_2 \otimes y_2^{\vee} -
    (-1)^{p(x_2)p(y_2)} y_2 \otimes x_2^{\vee}](z) \\ 
    =& x_1 \cdot (\overline{y_1}, x_2)
    (\overline{y_2}, z) - (-1)^{p(x_2)(x_2)} x_1 \cdot (\overline{y_1}, y_2)
    (\overline{x_2}, z) - (-1)^{p(x_1)p(y_1)} y_1 \cdot (\overline{x}_1, x_2)
    (\overline{y_2}, z)\\
    +& (-1)^{p(x_1)p(x_2)p(y_1)p(y_2)} y_1 \cdot(\overline{x}_1, y_2)
    (\overline{x}_2, z) - (-1)^{p(x_1)p(x_2)p(y_1)p(y_2)} \left(
         x_2 \cdot (\overline{y_2}, x_1)
    (\overline{y_1}, z) \right. \\ -& \left. (-1)^{p(x_1)(x_1)} x_2 \cdot (\overline{y_2}, y_1)
    (\overline{x_1}, z) - 
    (-1)^{p(x_2)p(y_2)} y_2 \cdot (\overline{x}_2, x_1) 
    (\overline{y_1}, z)\right.\\  +& \left. (-1)^{p(x_1)p(x_2)p(y_1)p(y_2)} y_2
    \cdot(\overline{x}_2, y_1)
    (\overline{x}_1, z)
    \right)
\end{split}
\end{equation}
}

By Proposition \ref{prop::moment_map} we have an isomorphism of super vector spaces $\osp(\ss) \rightarrow \bigwedge^2 (\ss)$, that in turn induces a Lie superalgebra morphism $\osp(\ss) \rightarrow C(\ss)$. Realizing $C(\ss)$ as in Lemma \ref{lemm::Clifford_differential} and using Proposition \ref{prop::moment_map}, the basis vectors of $\osp(\ss)$ are mapped as follows (up to an overall $\frac{1}{2}$ normalization): 
\be
\begin{aligned}
    &b_i \otimes (b_j)^{\vee} - \overline{b}_j \otimes (\overline{b}_i)^{\vee} 
    \mapsto {\frac{1}{2}} \left( \del{\eta_i} {\eta_j} - {\eta_j}\del{\eta_i} \right), \; \;
     b_i \otimes (\overline{b_j})^{\vee} - b_j \otimes (\overline{b_i})^{\vee} 
    \mapsto {\frac{1}{2}} \left( \del{\eta_i} \del{\eta_j} - \del{\eta_j} \del{\eta_i} \right), \\
    &\overline{b_i} \otimes (b_j)^{\vee} - \overline{b_j} \otimes (b_i)^{\vee} 
    \mapsto {\frac{1}{2}} \left( {\eta_i} {\eta_j} - {\eta_j} {\eta_i} \right),  \;\,
    b_i \otimes (\psi_j)^{\vee} - \overline{\psi_j} \otimes (\overline{b}_i)^{\vee} 
    \mapsto {\frac{1}{2}} \left( \del{\eta_i} {x_j} - {x_j} \del{\eta_i} \right), \\
    &\overline{b_i} \otimes (\psi_j)^{\vee} - \overline{\psi_j} \otimes (b_i)^{\vee} 
    \mapsto {\frac{1}{2}} \left( {\eta_i} {x_j} - {x_j} {\eta_i} \right), \; \;
    b_i \otimes (\overline{\psi}_j)^{\vee} + \psi_j \otimes (\overline{b}_i)^{\vee} 
    \mapsto -{\frac{1}{2}} \left( \del{\eta_i} \del{x_j} - \del{x_j} \del{\eta_i} \right), \\ 
    & \overline{b}_i \otimes (\overline{\psi}_j)^{\vee} + \psi_j \otimes (b_i)^{\vee} 
    \mapsto -{\frac{1}{2}} \left( {\eta_i} \del{x_j} - \del{x_j}{\eta_i} \right), \; \;
    \psi_i \otimes (\psi_j)^{\vee} - \overline{\psi}_j \otimes (\overline{\psi}_i)^{\vee} 
    \mapsto {\frac{1}{2}} \left( \del{x_i} {x_j} + {x_j} \del{x_i} \right), \\
    & \psi_i \otimes (\overline{\psi}_j)^{\vee} + \psi_j \otimes (\overline{\psi}_i)^{\vee} 
    \mapsto - {\frac{1}{2}} \left( \del{x_i} \del{x_j} +  \del{x_j} \del{x_i} \right), \; \; 
    \overline{\psi}_i \otimes (\psi_j)^{\vee} + \overline{\psi}_j \otimes (\psi_i)^{\vee} 
    \mapsto {\frac{1}{2}} \left( {x_i} {x_j} + {x_j} {x_i} \right),
\end{aligned}
\ee
where the generators $\eta_j$ and $x_i$ together with their derivations $\partialo$ and $\partiale$, are given as in Lemma \ref{lemm::Clifford_differential}. 
%The Levi superalgebra $\ll$ acts via the super Lie bracket of $\gg$ on $\ss$, and it preserves the subspaces $\uu$ and
%$\ubar$ of $\ss = \uu \oplus \ubar$. 
As described in Lemma \ref{lemm::l_in_osp}, we have a Lie superalgebra morphism $\nu : \ll \to \osp (\ss)$, which in turn defines an embedding $\ll \hookrightarrow C(\ss)$. Since $\uu$ and
$\overline{\uu}$ are preserved by the action of $\ll$ on $\ss$, the image of $\ll$ is contained in the span

\begin{multline}
    \bigg\langle \eta_i \frac{\partial}{\partial \eta_j} - \frac{\partial}{\partial \eta_j} \eta_i, \ 
    \eta_i \frac{\partial}{\partial x_k} - \frac{\partial}{\partial x_k} \eta_i, \\
    x_k \frac{\partial}{\partial \eta_i} - \frac{\partial}{\partial \eta_i} x_k, \ 
    x_l \frac{\partial}{\partial x_k} + \frac{\partial}{\partial x_k} x_l : 
    1 \leq k,l \leq s_1, \ 1 \leq i,j \leq s_0 \bigg\rangle.
\end{multline}

Concretely, for any $X \in \ll$, we have $[X, \uu] \subset \uu$ and $[X,\ubar] \subset \ubar$, such that 
\begin{equation}
\begin{split}
\label{eq:X_bracket_expansion}
[X,\bbar_{i}] &= \sum_{k = 1}^{s_0}\alpha_{ik}(X)\bbar_{k} + \sum_{l = 1}^{s_{1}}\beta_{il}(X)\overline{\psi}_{l}, \\ 
[X,\psibar_{j}] &= \sum_{k = 1}^{s_0}\gamma_{jk}(X)\bbar_{k} + \sum_{l = 1}^{s_{1}}\delta_{jl}(X)\psibar_{l}.
\end{split}
\end{equation}
for some complex coefficients $\alpha_{ik}(X), \beta_{il}(X), \gamma_{jk}(X)$ and $\delta_{jl}(X)$, that can be determined by applying $(\cdot, {b}_{k})$ and $(\cdot,{\psi}_{l})$ on both sides of the previous expressions as to get
\begin{equation} \label{eq::coefficients}
\begin{aligned}
    \alpha_{ik}(X) &= (X,[\bbar_{i}, {b}_{k}]), \quad \beta_{il}(X) = (X,[\bbar_{i},{\psi}_{l}]), \\
    \gamma_{jk}(X) &= (X,[\psibar_j, {b}_{k}]), \quad \delta_{jl}(X) = (X,[\psibar_{j}, {\psi}_{l}]).
\end{aligned}
\end{equation}
Finally, letting $X_i$ be a basis for $\ll$, we let the structure constant $f^k_{ij}$ and $c^{k}_{ij}$ be defined by
\be
    \begin{split}
        [X_i, \bbar_j] = \sum\limits_k f^k_{ij} \bbar_k + \sum\limits_{k'} c^{k'}_{ij}
        \overline{\psi}_k, 
    \end{split}
\ee
and similarly, we let $g^k_{ij}$ and $d^{k}_{ij}$ be defined by 
\be
    \begin{split}
        [X_i, \psibar_j] = \sum\limits_{k} g_{ij}^{k} \bbar_{k} +
        \sum\limits_{k'} d^{k'}_{ij} \psibar_{k'}. 
    \end{split}
\ee
Upon using these, if follows that
\be
\begin{split}
    \phantom{aaaa} \ad_{X_i} =& \sum\limits_{k = 1}^{s_0} \left( \sum\limits_{j = 1}^{s_0} f^k_{ij} (\overline{b}_k \otimes (\overline{b}_j)^{\vee} -
    {b_j} \otimes ({b_k})^{\vee}) + \sum\limits_{j' = 1}^{s_1}
    g^k_{ij'} (\bbar_k \otimes
    (\overline{\psi}_{j'})^{\vee} + {\psi}_{j'} \otimes ({b}_k)^{\vee}) \right) \\ 
    &+ \sum\limits_{k' = 1}^{s_1} \left( \sum\limits_{j = 1}^{s_0}
    c^{k'}_{ij}(\psibar_{k'} \otimes (\bbar_j)^{\vee} - 
        b_j \otimes
    (\psi_{k'})^{\vee}) + \sum\limits_{j' = 1}^{s_1}
d^{k'}_{ij'} (\psibar_{k'} \otimes
    (\psibar_{j'})^{\vee} - {\psi}_{j'} \otimes ({\psi}_k)^{\vee})
    \right)
\end{split}
\ee
In turn, upon the identifications above, one has 
    \be
\begin{split}
    \nu_{*}(X_i) =& \frac{1}{2} \sum\limits_{k = 1}^{s_0} \left( \sum\limits_{j = 1}^{s_0}
    f^k_{ij} (\eta_k \del{\eta_j} -
    \del{\eta_j} \eta_k) -  \sum\limits_{j' = 1}^{s_1} g^k_{ij'} (\eta_k 
    \del{x_{j'}} - \del{x_{j'}} \eta_k) \right) \\ 
    &+ \frac{1}{2} \sum\limits_{k'} \left( \sum\limits_{j = 1}^{s_0}
    c^{k'}_{ij}(x_{k'} \del{\eta_j} - \del{\eta_j} x_{k'}) -  \sum\limits_{j' =
    1}^{s_1}d^{k'}_{i{j'}} (x_{k'} \del{x_{j'}} + \del{x_{j'}} x_{k'})
    \right).
\end{split}
\ee
In turn, for general $X \in \ll$, upon using Equation \eqref{eq::coefficients} one gets the following expression:
\be \label{eq::nu_ast_X}
\begin{split}
\nu_*(X) &= 
\sum_{k = 1}^{s_0} \Bigg( 
    \sum_{j = 1}^{s_0} (X, [\bbar_j, {b}_k]) \eta_k \del{\eta_j} 
    - \sum_{j' = 1}^{s_1} (X, [\psibar_{j'}, {b}_k]) \eta_k \del{x_{j'}}
\Bigg) \\
&+ \sum_{k' = 1}^{s_1} \Bigg( 
    \sum_{j = 1}^{s_0} (X, [\bbar_j, {\psi}_{k'}]) x_{k'} \del{\eta_j}
    - \sum_{j' = 1}^{s_1} (X, [\psibar_{j'}, {\psi}_{k'}]) x_{k'} \del{x_{j'}}
\Bigg) \\
&+\frac{1}{2}\left( -\sum_{k = 1}^{s_0} (X, [\bbar_k, {b}_k]) 
    + \sum_{k' = 1}^{s_1} (X, [\psibar_{k'}, {\psi}_{k'}]) \right).
    \end{split}
\ee
Denoting the bases of $\uu$ and $\ubar$ by $u_{1}, \ldots, u_{s}$ and $\ou_{1}, \dots, \ou_{s}$ respectively, we may rewrite Equation \eqref{eq::nu_ast_X} in the following compact fashion.
\begin{lemma} \label{lemm::form_nu}
    The map $\nu_\ast : \ll \rightarrow \bigwedge^2 (\ss) \subset C(\ss)$ is given by 
\begin{equation*}
\label{eq:form_nu}
\begin{split}
\nu_{\ast}(X) &= \frac{1}{2} \sum\limits_{j,k = 1}^{s} (X, [\ou_j, u_k])(-1)^{p(u_j)} \ou_k u_j + \begin{cases}
        \rho^\uu(X) \ &\text{if} \ X \in \hh, \\
        0 \ &\text{else.}
    \end{cases}\\
    &= \frac{1}{2}\sum\limits_{j,k = 1}^{s} (X, [u_k, \ou_j] )(-1)^{p(u_j)} u_j \ou_k - \begin{cases}
        \rho^\uu(X) \ &\text{if} \ X \in \hh, \\
        0 \ &\text{else.}
    \end{cases}
\end{split}
\end{equation*}
\end{lemma}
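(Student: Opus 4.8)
The statement is a bookkeeping identity, so the plan is to rewrite the explicit expression \eqref{eq::nu_ast_X} for $\nu_{\ast}(X)$ --- phrased in the concrete realisation of $C(\ss)$ from Lemma \ref{lemm::Clifford_differential} --- in terms of the intrinsic dual bases $u_{1},\dots,u_{s}$ of $\uu$ and $\ou_{1},\dots,\ou_{s}$ of $\ubar$ with $(\ou_{i},u_{j})=\delta_{ij}$. Under the identifications fixed around \eqref{eq:del_action}, the multiplication generators $\eta_{k}$ and $x_{k'}$ correspond to the even, resp.\ odd, vectors $\ou_{k}$, while the contraction operators $\partial/\partial\eta_{j}$ and $\partial/\partial x_{j'}$ correspond to $\tfrac12$ times the corresponding even, resp.\ odd, vectors $u_{j}$. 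First I would substitute these into the four double sums of \eqref{eq::nu_ast_X}: each summand, e.g.\ $(X,[\ou_{j},u_{k}])\,\eta_{k}\tfrac{\partial}{\partial\eta_{j}}$ and its three analogues, becomes $\tfrac12(X,[\ou_{j},u_{k}])\,\ou_{k}u_{j}$, and the minus signs attached in \eqref{eq::nu_ast_X} to the two sums containing $\partial/\partial x_{j'}$ are precisely the factors $(-1)^{p(u_{j})}$ for the two parity blocks with $u_{j}$ odd. Hence the four sums assemble into $\tfrac12\sum_{j,k=1}^{s}(X,[\ou_{j},u_{k}])(-1)^{p(u_{j})}\,\ou_{k}u_{j}$, the degree-two part of the asserted formula.

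It then remains to match the scalar term $\tfrac12\bigl(-\sum_{k}(X,[\overline b_{k},b_{k}])+\sum_{k'}(X,[\overline\psi_{k'},\psi_{k'}])\bigr)$ of \eqref{eq::nu_ast_X} with $\rho^{\uu}(X)$ on $\hh$ and with $0$ on the root spaces. I would invoke Proposition \ref{prop::structure_theory_BCLSA}(e): if $b_{k}$ spans the root space of $\beta_{k}$, then $[\overline b_{k},b_{k}]=[e_{-\beta_{k}},e_{\beta_{k}}]=(e_{-\beta_{k}},e_{\beta_{k}})\,h_{-\beta_{k}}=-(\overline b_{k},b_{k})\,h_{\beta_{k}}=-h_{\beta_{k}}$, and similarly $[\overline\psi_{k'},\psi_{k'}]=(\psi_{k'},\overline\psi_{k'})\,h_{\gamma_{k'}}=-h_{\gamma_{k'}}$, where $\gamma_{k'}$ is the root of $\psi_{k'}$ (here the antisymmetry of the form on the odd part enters). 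Evaluating $(H,\cdot)$ for $H\in\hh$ gives $-\beta_{k}(H)$ and $-\gamma_{k'}(H)$ by the coroot property, so the scalar term equals $\tfrac12\bigl(\sum_{k}\beta_{k}(H)-\sum_{k'}\gamma_{k'}(H)\bigr)$, which by the definition of $\rho^{\uu}=\rho^{\uu}_{0}-\rho^{\uu}_{1}$ is $\rho^{\uu}(H)$; for $X$ in a root space it vanishes, since $(\gg^{\alpha},\hh)=0$ for $\alpha\neq0$ by Proposition \ref{prop::structure_theory_BCLSA}(d). This gives the first displayed equality. For the second one, I would reorder $\ou_{k}u_{j}=2\delta_{jk}-(-1)^{p(u_{j})p(u_{k})}u_{j}\ou_{k}$ (Clifford relation) and rewrite $(X,[\ou_{j},u_{k}])=-(-1)^{p(u_{j})p(u_{k})}(X,[u_{k},\ou_{j}])$ (super-antisymmetry and invariance); in the off-diagonal part the two sign factors cancel and it becomes $\tfrac12\sum_{j,k}(X,[u_{k},\ou_{j}])(-1)^{p(u_{j})}\,u_{j}\ou_{k}$, while the diagonal $j=k$ terms leave the extra scalar $\sum_{j}(X,[\ou_{j},u_{j}])(-1)^{p(u_{j})}$, which the same coroot computation identifies with $-2\rho^{\uu}(X)$ on $\hh$ and with $0$ elsewhere; adding it to the $+\rho^{\uu}(X)$ of the first form produces $-\rho^{\uu}(X)$.

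I expect the only real difficulty to be keeping the signs straight throughout: the parity factors $(-1)^{p(u_{j})}$, the antisymmetry of the bilinear form on the odd part (so that $(\ou_{i},u_{j})=\delta_{ij}$ while $(\psi_{j},\overline\psi_{i})=-\delta_{ij}$, cf.\ \eqref{eq::dual_general}), the minus signs already present in \eqref{eq::nu_ast_X}, and --- in the passage to the second form --- the precise cancellation between the Clifford reordering sign and the super-antisymmetry of the bracket, together with the reappearance of $2\rho^{\uu}$ out of the diagonal. There is no conceptual content beyond this: the lemma is a normalisation-aware repackaging of \eqref{eq::nu_ast_X}.
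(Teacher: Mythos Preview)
Your proof is correct and follows essentially the same route as the paper's: the paper likewise starts from \eqref{eq::nu_ast_X}, treats the degree-two rewriting as ``immediate'', and focuses on identifying the scalar term with $\rho^{\uu}(X)$ (using invariance $(X,[\ou_i,u_i]) = ([X,\ou_i],u_i) = -\alpha_i(X)$ rather than the coroot formula from Proposition~\ref{prop::structure_theory_BCLSA}(e), but these are equivalent one-line computations), and then dismisses the second displayed equality as a straightforward Clifford computation. Your write-up is in fact more explicit than the paper's on the degree-two part and on the passage to the second form, but there is no substantive difference in method.
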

\begin{proof} We consider the first equality. For that, it is immediate that
    \begin{align*}
    &\frac{1}{2}\left( -\sum_{k = 1}^{s_0} (X, [\bbar_k, {b}_k]) 
    + \sum_{k' = 1}^{s_1} (X, [\psibar_{k'}, {\psi}_{k'}]) \right) = -\frac{1}{2} \sum\limits_{k = 1}^s (-1)^{p(u_k)} (X, [\ou_k, u_k]),
    \end{align*}
    and it remains to prove the equality
    \[
    -\frac{1}{2} \sum\limits_{k = 1}^s (-1)^{p(u_k)} (X, [\ou_k, u_k]) =  \begin{cases}
        \rho^\uu(X) \ &\text{if} \ X \in \hh, \\
        0 \ &\text{else.}
    \end{cases}
    \]
    
    To this end, let $\alpha_{1},\dots,\alpha_{s}$ denote the set of positive roots such that $\uu = \bigoplus_{k=1}^{s}\gg^{\alpha_{k}}$. In particular, we have $\ubar = \bigoplus_{k=1}^{s}\gg^{-\alpha_{k}}$. By Proposition \ref{prop::structure_theory_BCLSA}, the weight spaces of $\gg$ are one-dimensional.

    We may assume, without loss of generality, that $u_{i} \in \gg^{\alpha_{i}}$ and $\ou_{j} \in \gg^{-\alpha_{j}}$. Then, by Proposition \ref{prop::structure_theory_BCLSA}, we have $[\ou_{i},u_{i}] \in \hh$, and $(X,[\ou_{i},u_{i}])$ is trivial unless $X \in \hh$ by consistency. If $X \in \hh$, the invariance of $(\cdot,\cdot)$ and the root space decomposition yield: 
\[
    (X,[\ou_{i},u_{i}]) = ([X, \ou_i], u_i) = -(\alpha_i(X) \ou_i, u_i) = -\alpha_i (X) (\ou_i, u_i) = - \alpha_i (X), 
\]
such that 
\[
-\frac{1}{2}\sum_{k=1}^{s}(-1)^{p(u_{i})}(X,[\ou_{k},u_{k}]) = \frac{1}{2} \sum_{k=1}^{s_{0}}\alpha_{k}(X) - \frac{1}{2} \sum_{l=s_{0}+1}^{s}\alpha_{l}(X) = \rho^{\uu_{0}}(X) - \rho^{\uu_1}(X) = \rho^{\uu}.
\]

Finally, the second equality is a straightforward computation using the definition of the Clifford superalgebra $C(\ss)$ and will be omitted.
\end{proof}

\section{Cubic Dirac operators and Dirac cohomology}

In this section, we briefly introduce cubic Dirac operators $D(\gg, \ll)$ for Levi subalgebras $\ll$ of basic classical Lie superalgebras $\gg$ and discuss their main properties. Later on, in Theorem \ref{theorem::decomp_Dirac} and Lemma \ref{lemm::square_C_Cbar}, we prove that the cubic Dirac operator $D(\gg, \ll)$ possesses a convenient decomposition into $\ll$-invariant summands, which will be investigated in a later section. Next, in Subsection \ref{subsec::DiracCohomo}, we introduce the oscillator supermodules $M(\ss), \overline{M}(\ss)$ and characterize it as an $\ll$-supermodule in view of Lemma \ref{lemm::form_nu}. Given any $\gg$-supermodule $M$, there is a natural action of the cubic Dirac operator $D(\gg, \ll)$ on $M\otimes \overline{M}(\ss)$, which allows us to introduce a cohomology theory $M \mapsto H_{\Dirac} (M)$, called the Dirac cohomology of the supermodule $M$, see Definition \ref{def::Dirac_cohom}. In the last part of the section, we focus on the Dirac cohomology of supermodules admitting an infinitesimal character. In particular, we prove a super-analog of the Casselmann--Osborne Lemma in Theorem \ref{thm::Casselmann_Osborne}. Finally, in the last subsection, we briefly discuss some homological properties of Dirac cohomology.

\subsection{Cubic Dirac operators} \label{subsec::Cubic_Dirac_operators}

\subsubsection{Definition and first properties.}
Fix a parabolic subalgebra $\pp = \ll \ltimes \uu$, and let $\gg := \ll \oplus \ss$ be the induced decomposition of $\gg$. Recall $\ss = \uu \oplus \ubar$ is even dimensional. For convenience, let $2s = 2s_{0} + 2s_{1} := \dim(\ss)$ with $2s_{0} = \dim(\ss_{0})$ and $2s_{1} = \dim(\ss_{1})$ with $s,s_{0},s_{1} \in \ZZ_{+}$. Moreover, we recall that the restriction of $(\cdot,\cdot)$ to $\ss$ gives a non-degenerate supersymmetric invariant bilinear form, denoted by $(\cdot,\cdot)_{\ss}$, that allows to identify $\ss$ and its dual space $\ss^{\ast}$. In the following, we will fix an orthogonal basis $\{X_{1},\ldots,X_{2s}\}$ of $\ss$ with dual basis $\{X_{1}^{\ast},\ldots,X_{2s}^{\ast}\}$.

Basic classical Lie superalgebras $\gg$ are examples of \emph{quadratic Lie superalgebras} \cite{kang2021dirac}, which have the property that there exists a unique element $\phi \in \bigl(\bigwedge \nolimits ^{3}\gg\bigr)_{0}$, called \emph{fundamental} $3$\emph{-form}, such that the following holds for all $X,Y,Z \in \gg$:
\begin{enumerate}
    \item[a)] $(\phi, X \wedge Y \wedge Z) = - \frac{1}{2}([X,Y],Z)$.
    \item[b)] $[X,Y] = 2 \iota(X)\iota(Y)\phi$.
    \item[c)] $\phi^{2} = \frac{1}{24} \str \ad(\Omega_{\gg})$.
\end{enumerate}
In a), we extended the supersymmetric non-degenerate invariant bilinear form $(\cdot,\cdot)$ for $\gg$ to $\bigwedge \nolimits^{3} \gg$. The fundamental $3$-form $\phi$ is uniquely determined by its projection $\phi_{\ss}$ to $\ss$, along the decomposition $\gg = \ll \oplus \ss$ \cite[Remark 4.1]{kang2021dirac}. With respect to the above fixed basis, $\phi_{\ss}$ reads
\be
\phi_{\ss} = -\frac{1}{12} \sum_{1 \leq i,j,k \leq 2s}(-1)^{p(X_{i})p(X_{j})+p(X_{k})p(X_{k})} ([X_{i},X_{j}],X_{k}) X_{i}^{\ast}\wedge X_{j}^{\ast} \wedge X_{k}^{\ast},
\ee
and it lies in $(\bigwedge\nolimits ^{3} \ss)_{0}^{\ll}$, meaning that $\Phi_{\ss}$ is $\ll$-invariant under the natural action given by the commutator \cite[Section 4]{kang2021dirac}. Since the $X_{i}$'s are orthogonal, we also have
\be
\phi_{\ss} = -\frac{1}{12} \sum_{1 \leq i,j,k \leq 2s}(-1)^{p(X_{i})p(X_{j})+p(X_{k})p(X_{k})} ([X_{i},X_{j}],X_{k}) X_{i}^{\ast}X_{j}^{\ast}X_{k}^{\ast}.
\ee
in $C(\ss)$. In a fashion analogous to \cite{kostant1999cubic}, we use $\phi_\ss$ to give the following definition, as suggest in \cite[Section 5]{kang2021dirac}. 

\begin{definition}
    The \emph{cubic Dirac operator} $\Dirac$ is the element in $\UE(\gg)\otimes C(\ss)$ given by
    \[
    \Dirac := \sum_{i = 1}^{2s} X_{i} \otimes X_{i}^{\ast} + 1 \otimes \phi_{\ss}.
    \]
\end{definition}

A direct calculation shows that $\Dirac$ is independent of the choice of basis, hence the definition is well-posed. 
We now define a diagonal embedding $\alpha : \ll \to \UE(\gg) \otimes C(\ss)$ by 
\be \label{diag_emb}
X \mapsto X \otimes 1 + 1 \otimes \nu_{\ast}(X), \qquad X \in \ll.
\ee
The explicit form of $\nu_{\ast}$ is given in Lemma \ref{lemm::form_nu}. We will denote the image of $\ll$, $\UE(\ll)$ and $\mathfrak{Z}(\ll)$ under $\alpha$ by $\ll_{\Delta}, \UE(\ll_{\Delta})$ and $\mathfrak{Z}(\ll_{\Delta})$, respectively. Furthermore, the image of the quadratic Casimir $\Omega_{\ll}$ of $\ll$ will be denoted by $\Omega_{\ll,\Delta}$. Finally, the quadratic Casimir of $\gg$ will be denoted by $\Omega_{\gg}$. \\
The following results recollect some important properties of the cubic Dirac operator, namely that $D(\gg, \ll)^2$ is $\ll$-invariant and has a nice square.

\begin{lemma}[{\cite[Lemma 6.1]{kang2021dirac}}]
    The cubic Dirac operator $\Dirac$ is $\ll$-invariant under the $\ll$-action on $\UE(\gg)\otimes C(\ss)$, which is induced by the adjoint action on both factors, \emph{i.e.}, $[X, \Dirac] = X\Dirac + (-1)^{p(X)}\Dirac X = 0$ for all $X \in \ll$.
\end{lemma}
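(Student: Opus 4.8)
The plan is to establish the identity $[\alpha(X),\Dirac]=0$ for every $X\in\ll$, which is precisely the asserted relation once $X$ is identified with its image $\alpha(X)$ under the diagonal embedding \eqref{diag_emb}. The key observation is that the inner superderivation $[\alpha(X),\,\cdot\,]$ of the associative superalgebra $\UE(\gg)\otimes C(\ss)$ \emph{is} the diagonal $\ll$-action: writing $\alpha(X)=X\otimes 1+1\otimes\nu_\ast(X)$ and computing with the signs of the graded tensor product gives $[\alpha(X),Y\otimes 1]=[X,Y]\otimes 1$ for $Y\in\gg$ and $[\alpha(X),1\otimes c]=1\otimes[\nu_\ast(X),c]$ for $c\in C(\ss)$. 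Thus on the first factor $[\alpha(X),\,\cdot\,]$ restricts to the adjoint action of $\ll$ on $\UE(\gg)$, which preserves $\ss\subseteq\UE(\gg)$ since $[\ll,\ss]\subseteq\ss$; on the second factor it acts as the even superderivation of $C(\ss)$ which, by the construction of $\nu_\ast$ in Lemma \ref{lemm::form_nu} together with Proposition \ref{prop::moment_map}, extends the operator $\nu(X)\in\osp(\ss)$ on the generators $\ss\subseteq C(\ss)$. It therefore suffices to show that each of the two summands of $\Dirac=\sum_{i=1}^{2s}X_i\otimes X_i^\ast+1\otimes\phi_\ss$ is $\ll$-invariant.

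For the cubic summand this is immediate. The quantization map $Q\colon\bigwedge\ss\to C(\ss)$ of Theorem \ref{thm::quantization_map} is $\osp(\ss)$-equivariant, hence $\ll$-equivariant, so the image of $\phi_\ss\in(\bigwedge\nolimits^{3}\ss)_{0}^{\ll}$ in $C(\ss)$ is $\ll$-fixed, i.e.\ $[\nu_\ast(X),\phi_\ss]=0$; and $[X\otimes 1,\,1\otimes\phi_\ss]=0$ because the two tensor factors supercommute and $\phi_\ss$ is even. Applying the Leibniz rule to the quadratic summand and dropping these two vanishing contributions, what remains is exactly the diagonal $\ll$-action applied to the ``canonical tensor'' $\sum_i X_i\otimes X_i^\ast$, namely $\sum_i[X,X_i]\otimes X_i^\ast$ together with the companion sum $\sum_i(-1)^{p(X)p(X_i)}X_i\otimes[\nu_\ast(X),X_i^\ast]$ (the Koszul sign being fixed by the grading conventions under which $\Dirac$ is odd).

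So the whole statement reduces to the $\ll$-invariance of $\sum_i X_i\otimes X_i^\ast\in\ss\otimes\ss\subseteq\UE(\gg)\otimes C(\ss)$, with $\ss$ in the second factor carrying the adjoint action $v\mapsto\nu(X)(v)=[X,v]$. This element is the image of $\Id_\ss$ under the natural isomorphism $\End(\ss)\xrightarrow{\sim}\ss\otimes\ss^\ast$ followed by the identification $\ss^\ast\cong\ss$ induced by $(\cdot,\cdot)_\ss$. The first isomorphism is $\ll$-equivariant, with $\ll$ acting on $\End(\ss)$ by the supercommutator with $\nu(X)$, which annihilates $\Id_\ss$; and the second is $\ll$-equivariant precisely because $\nu(\ll)\subseteq\osp(\ss)$ (Lemma \ref{lemm::l_in_osp}), i.e.\ because $\nu(X)$ is skew-supersymmetric for $(\cdot,\cdot)_\ss$, so that the adjoint action on $\ss$ matches the coadjoint action on $\ss^\ast$. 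Hence $\sum_i X_i\otimes X_i^\ast$ is $\ll$-fixed and $[\alpha(X),\Dirac]=0$. I expect the only genuine work to be the Koszul-sign bookkeeping underlying these equivariance statements and the identity $[\nu_\ast(X),v]=\nu(X)(v)$ for $v\in\ss$; none of it is deep, and it may alternatively be discharged by a direct computation in the explicit model of $C(\ss)$ from Lemma \ref{lemm::Clifford_differential} using the formula for $\nu_\ast$ from Lemma \ref{lemm::form_nu}, or simply invoked from the basis-independence of $\Dirac$ together with the $\ll$-invariance of both the decomposition $\gg=\ll\oplus\ss$ and the form $(\cdot,\cdot)$.
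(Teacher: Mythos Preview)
The paper does not supply its own proof of this lemma; it is quoted verbatim from \cite[Lemma~6.1]{kang2021dirac}. Your argument is correct and is essentially the standard one found there and in the classical antecedent \cite{kostant1999cubic}: split $\Dirac$ into its quadratic and cubic parts, use that $\phi_\ss\in(\bigwedge\nolimits^{3}\ss)_0^{\ll}$ together with the $\osp(\ss)$-equivariance of the quantization map for the cubic piece, and recognize $\sum_i X_i\otimes X_i^\ast$ as the image of $\Id_\ss\in\End(\ss)^\ll$ under the $\ll$-equivariant identification $\End(\ss)\cong\ss\otimes\ss$ for the quadratic piece. The only step you flag as requiring care, the identity $[\nu_\ast(X),v]=\nu(X)(v)=[X,v]$ for $v\in\ss$, is precisely what the moment-map construction of $\nu_\ast$ via Proposition~\ref{prop::moment_map} is designed to deliver, so there is no gap.
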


\begin{theorem}[{\cite[Theorem 1.3]{kang2021dirac}}] \label{thm::square_Dirac}
The cubic Dirac operator $\Dirac$ has square 
\[
\Dirac^{2} = \Omega_{\gg} \otimes 1 - \Omega_{\ll,\Delta} + c (1\otimes 1),
\]
where $c$ is a constant given by $c = \frac{1}{24}(\tr \ad_{\gg}(\Omega_{\gg}) - \tr \ad_{\ll}(\Omega_{\ll}))$.
\end{theorem}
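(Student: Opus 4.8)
The plan is to compute $\Dirac^{2}$ directly inside $\UE(\gg)\otimes C(\ss)$, following the strategy used by Kostant in the classical case and adapting it to the super setting. I would write $\Dirac = D + 1\otimes\phi_{\ss}$ with $D := \sum_{i}X_{i}\otimes X_{i}^{\ast}$. With respect to the total $\ZZ_{2}$-grading on $\UE(\gg)\otimes C(\ss)$ (the one for which $X_{i}^{\ast}\in\ss\subset C(\ss)$ has parity $p(X_{i})+\bar 1$) both $D$ and $1\otimes\phi_{\ss}$ are odd, so that
\[
\Dirac^{2} = D^{2} + \{D,\,1\otimes\phi_{\ss}\} + 1\otimes\phi_{\ss}^{2},
\]
and I would treat the three summands separately. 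The $\ll$-invariance of $\Dirac$ (\cite[Lemma 6.1]{kang2021dirac}) is a useful consistency check here: $\Dirac^{2}$ must centralize $\ll_{\Delta}$, and each of $\Omega_{\gg}\otimes 1$, $\Omega_{\ll,\Delta}$ and $c(1\otimes 1)$ does.

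For the quadratic term I would expand $D^{2} = \sum_{i,j}(\pm)\,X_{i}X_{j}\otimes X_{i}^{\ast}X_{j}^{\ast}$ with the appropriate Koszul signs and decompose $X_{i}X_{j}$ into its symmetric and antisymmetric parts both in $\UE(\gg)$ and in $C(\ss)$; in $C(\ss)$ the symmetric part of $X_{i}^{\ast}X_{j}^{\ast}$ is the scalar $(X_{i}^{\ast},X_{j}^{\ast})_{\ss}$, diagonal in the chosen orthogonal basis. Two of the four resulting contributions vanish by the symmetry/antisymmetry of the summation indices, leaving
\[
D^{2} = \sum_{i}X_{i}X_{i}^{\ast}\otimes 1 \;+\; \tfrac14\sum_{i,j}(\pm)\,[X_{i},X_{j}]\otimes [X_{i}^{\ast},X_{j}^{\ast}]_{C(\ss)},
\]
where $[X_{i}^{\ast},X_{j}^{\ast}]_{C(\ss)}\in\bigwedge^{2}\ss\subset C(\ss)$ and $[X_{i},X_{j}]\in\gg=\ll\oplus\ss$. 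For the cross term one has $\{D,1\otimes\phi_{\ss}\} = \sum_{i}X_{i}\otimes\{X_{i}^{\ast},\phi_{\ss}\}_{C(\ss)}$, and the Clifford anticommutator of a vector with the cubic element $\phi_{\ss}$ lowers its Clifford degree by one, yielding a degree-$2$ element of $C(\ss)$; using properties (a)--(b) of the fundamental $3$-form to express this via the structure constants, I would check that it exactly completes the degree-$2$ piece of $D^{2}$.

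The reassembly is the heart of the argument. On one side, writing $\Omega_{\gg}=\sum_{a}Y^{a}Y_{a}$ over a homogeneous basis of $\gg=\ll\oplus\ss$ adapted to the decomposition and normal-ordering gives $\Omega_{\gg}\otimes 1 = \Omega_{\ll}\otimes 1 + \sum_{i}X_{i}X_{i}^{\ast}\otimes 1 + (\text{scalar})$. On the other, since $\alpha:\ll\to\UE(\gg)\otimes C(\ss)$ is an algebra homomorphism, $\Omega_{\ll,\Delta}=\alpha(\Omega_{\ll})$ expands as $\Omega_{\ll}\otimes 1$ plus cross terms linear in $\nu_{\ast}$ plus $1\otimes\sum_{a}\nu_{\ast}(Z^{a})\nu_{\ast}(Z_{a})$. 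The explicit formula for $\nu_{\ast}$ in Lemma \ref{lemm::form_nu} should identify the $\ll$-components of the degree-$2$ terms of $D^{2}+\{D,1\otimes\phi_{\ss}\}$ with precisely those cross terms (up to sign), so that $D^{2}+\{D,1\otimes\phi_{\ss}\} = \Omega_{\gg}\otimes 1 - \Omega_{\ll,\Delta} + (\text{scalar})$, the $\ss$-components of the brackets $[X_{i},X_{j}]$ feeding into $\Omega_{\gg}\otimes 1$ and the $\ll$-components feeding, with opposite sign, into $\Omega_{\ll,\Delta}$. Finally, for the constant, I would decompose the fundamental $3$-form of $\gg$ as $\phi=\phi_{\ss}+\phi_{\ll}$ (with $\phi_{\ll}$ that of the reductive $\ll$, noting $\phi_{\ss}$ and $\phi_{\ll}$ involve complementary generators and hence supercommute in $C(\gg)$) and apply property (c) to both $\gg$ and $\ll$ to get $\phi_{\ss}^{2} = \tfrac1{24}\bigl(\tr\ad_{\gg}(\Omega_{\gg}) - \tr\ad_{\ll}(\Omega_{\ll})\bigr)$; the leftover reordering scalars then collect, as forced by $\ll$-invariance and the already-matched operator parts, into $c(1\otimes 1)$.

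I expect the main obstacle to be exactly this reassembly: one must track the Koszul signs carefully and verify that the degree-$2$ contributions of $D^{2}$ and of $\{D,1\otimes\phi_{\ss}\}$ merge so that each bracket $[X_{i},X_{j}]$ with $X_{i},X_{j}\in\ss$ is correctly sorted according to its $\ll\oplus\ss$ decomposition, the $\ll$-part matching the $\nu_{\ast}$-cross-terms of $\Omega_{\ll,\Delta}$ via Lemma \ref{lemm::form_nu} and the $\ss$-part completing $\Omega_{\gg}$. This is the step that genuinely uses that $\ss$ is the orthogonal complement of a subalgebra rather than an arbitrary subspace. As a less computational alternative I would keep in reserve the following: since $\Dirac$ supercommutes with $\ll_{\Delta}$, $\Dirac^{2}$ lies in the centralizer of $\ll_{\Delta}$ in $\UE(\gg)\otimes C(\ss)$; a structural argument pins it to the form $\Omega_{\gg}\otimes 1 - \Omega_{\ll,\Delta} + (\text{const})$, and the constant is then read off by evaluating on one convenient $\gg$-supermodule tensored with a spinor module.
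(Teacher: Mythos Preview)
The paper does not prove this theorem: it is quoted verbatim from \cite[Theorem 1.3]{kang2021dirac} and used as a black box. There is therefore no proof in this paper to compare your proposal against.

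That said, your outline is essentially the argument of Kang--Chen (which in turn mirrors Kostant's original computation): split $\Dirac = D + 1\otimes\phi_{\ss}$, expand the square as $D^{2} + \{D,1\otimes\phi_{\ss}\} + 1\otimes\phi_{\ss}^{2}$, and reassemble using the moment-map embedding $\nu_{\ast}$ and property~(c) of the fundamental $3$-form. Your identification of the reassembly of the degree-$2$ Clifford terms as the crux is accurate, and your alternative structural argument (pin down the form by $\ll_{\Delta}$-centrality, then fix the constant by evaluation) is also standard. One small caution: your claimed decomposition $\phi = \phi_{\ss} + \phi_{\ll}$ is not quite right---the fundamental $3$-form of $\gg$ has mixed $\ll\ss\ss$ pieces as well, since $[\ss,\ss]$ has an $\ll$-component; what is true is that $\phi_{\ss}$ is the pure $\ss\ss\ss$ part, and the scalar bookkeeping for $c$ has to account for this (it does work out, but not via a naive two-term split of $\phi$).
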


For the remainder of this article, we refer to the square of the Dirac operator $\Dirac^{2}$ as the \emph{Laplace operator}, denoting it by 
\[
\Delta := \Dirac^{2}.
\]

In Theorem \ref{thm::square_Dirac}, the constant $c$ has an explicit formulation in terms of the Weyl vectors $\rho$ and $\rho^{\ll}$ by applying an argument similar to the one in \cite[Proposition 1.84]{kostant1999cubic}, and a modified formula of Freudenthal and de Vries for Lie superalgebras \cite[Theorem 1]{Meyer}. In particular, the following holds.

\begin{lemma} \label{lemm::Constant_C}
    In terms of Weyl vectors, the constant $c = \frac{1}{24}(\tr \ad_{\gg}(\Omega_{\gg}) - \tr \ad_{\ll}(\Omega_{\ll}))$ is given by 
    \[
    c = (\rho, \rho) - (\rho^{\ll}, \rho^{\ll}).
    \]
\end{lemma}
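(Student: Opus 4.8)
The plan is to compute separately the two traces in the expression $c = \tfrac{1}{24}\big(\tr\ad_{\gg}(\Omega_{\gg}) - \tr\ad_{\ll}(\Omega_{\ll})\big)$ supplied by Theorem~\ref{thm::square_Dirac}, establishing
$\tr\ad_{\gg}(\Omega_{\gg}) = 24(\rho,\rho)$ and $\tr\ad_{\ll}(\Omega_{\ll}) = 24(\rho^{\ll},\rho^{\ll})$; subtracting these and dividing by $24$ then gives the claim.

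For the first identity I would follow the super analogue of the argument in \cite[Proposition~1.84]{kostant1999cubic}. Fix a homogeneous basis $\{Z_i\}$ of $\gg$ with $(\cdot,\cdot)$-dual basis $\{Z^i\}$, so that $\Omega_{\gg} = \sum_i Z_i Z^i$ in $\UE(\gg)$ (up to the standard signs) and $\ad_{\gg}(\Omega_{\gg}) = \sum_i \ad_{Z_i}\circ\ad_{Z^i}$ as an even endomorphism of $\gg$. The assignment $(X,Y)\mapsto \str(\ad_X\circ\ad_Y)$ is an invariant, consistent, supersymmetric bilinear form on $\gg$ --- this is the Killing form in the sense of the Introduction --- hence, by uniqueness of the invariant form up to scalar, it equals $\kappa_{\gg}\,(\cdot,\cdot)$ for a scalar $\kappa_{\gg}$ (a dual-Coxeter-type number, possibly zero). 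Taking (super)traces in $\ad_{\gg}(\Omega_{\gg}) = \sum_i \ad_{Z_i}\circ\ad_{Z^i}$ and using this proportionality reduces $\tr\ad_{\gg}(\Omega_{\gg})$ to $\kappa_{\gg}\cdot\sdim(\gg)$. The modified Freudenthal--de Vries formula for basic classical Lie superalgebras \cite[Theorem~1]{Meyer} is precisely the assertion that $\kappa_{\gg}\cdot\sdim(\gg) = 24(\rho,\rho)$; in particular this also covers the degenerate cases where the genuine Killing form vanishes, since then both sides are zero.

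For the second identity the same computation applies to the Levi subalgebra $\ll$, equipped with the restriction of $(\cdot,\cdot)$ --- still non-degenerate, invariant and consistent by construction --- and with its Weyl vector $\rho^{\ll} = \rho^{\ll}_0 - \rho^{\ll}_1$. The extra point to check is that $\ll$ is only reductive, not simple: one decomposes $\ll$ into its simple ideals together with the abelian centre, applies \cite[Theorem~1]{Meyer} to each simple factor (with the induced invariant form and the corresponding partial Weyl vector), and verifies that the centre contributes nothing to either side, since $\ad$ acts trivially on it and the roots of the simple factors are orthogonal to it. Summing over the factors gives $\tr\ad_{\ll}(\Omega_{\ll}) = 24(\rho^{\ll},\rho^{\ll})$, and subtracting the two identities completes the proof.

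I expect the main obstacle to be the sign and normalization bookkeeping in the super version of Kostant's computation --- distinguishing supertraces from ordinary traces, tracking the $(-1)^{p(Z_i)}$ factors when passing between a basis and its dual basis, and making sure that the form normalizing $\rho$ (respectively $\rho^{\ll}$) is exactly the one entering $\Omega_{\gg}$ (respectively $\Omega_{\ll}$) --- together with confirming that \cite[Theorem~1]{Meyer} is available in the generality needed for a possibly non-simple Levi subalgebra.
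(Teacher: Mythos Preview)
Your proposal is correct and matches the paper's approach exactly: the paper does not give a detailed proof but simply points to \cite[Proposition~1.84]{kostant1999cubic} together with the modified Freudenthal--de~Vries formula \cite[Theorem~1]{Meyer}, which is precisely the strategy you outline. Your additional remarks on handling the reductive (non-simple) Levi $\ll$ by decomposing into simple ideals plus centre, and on the sign/normalization bookkeeping in the super setting, are appropriate elaborations of what the paper leaves implicit.
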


\subsubsection{Decomposition of $\Dirac$.} We now aim at decomposing the Dirac operator into smaller $\ll$-invariant pieces. To this end, let us fix a basis $u_{1}, \ldots, u_{s}$ of $\uu$, with dual basis $u_{1}^{\ast} = \overline{u}_{1}, \ldots, u_{s}^{\ast} = \ou_{s}$ of $\uu^{\ast} \cong \ubar$. Then $\ss$ has basis and dual basis given by
\begin{equation}
    \begin{aligned}
        X_{1} &= u_{1}, \ldots, X_{s} = u_{s}, \quad X_{s+1} = \ou_1, \ldots, X_{2s} = \ou_s, \\
        X_{1}^{\ast} &= \ou_1, \ldots, X_{s}^{\ast} = \ou_s, \quad X_{s+1}^{\ast} = (-1)^{p(u_{1})}u_{1}, \ldots, X_{2s}^{\ast} = (-1)^{p(u_{s})}u_{s},
    \end{aligned}
\end{equation}
where we use Equation\eqref{eq::dual_general}. In particular, 
\be
\sum_{i = 1}^{2s} X_{i} \otimes X_{i}^{\ast} = \sum_{i = 1}^{s} u_{i} \otimes \ou_{i} + \sum_{i = 1}^{s} (-1)^{p(u_{i})}\ou_{i} \otimes u_{i} =: A + \Abar,
\ee
where $A$ and $\Abar$ denote the two above summands, respectively. The following Lemma shows that also $A$ and $\Abar$ are $\ll$-invariant. 

\begin{lemma}
    The elements $A, \Abar \in \UE(\gg) \otimes C(\ss)$ are $\ll$-invariant.
\end{lemma}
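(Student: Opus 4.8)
The plan is to derive the $\ll$-invariance of $A$ and $\Abar$ from the already-established $\ll$-invariance of the full cubic Dirac operator $\Dirac$, by observing that the induced $\ll$-action on $\UE(\gg)\otimes C(\ss)$ respects a natural grading in which $A$ and $\Abar$ sit in complementary pieces. Two compatibilities make this work. First, since $\uu$ is an ideal of $\pp=\ll\ltimes\uu$ and $\ubar$ is an ideal of $\overline{\pp}=\ll\ltimes\ubar$, we have $[\ll,\uu]\subseteq\uu$ and $[\ll,\ubar]\subseteq\ubar$ inside $\gg$; hence the adjoint action of $X\in\ll$ on $\UE(\gg)$, restricted to the degree-one subspace $\gg\subset\UE(\gg)$, is just $\ad_X$ and preserves $\uu$ and $\ubar$ separately. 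Second, the embedding $\nu_{\ast}:\ll\to\bigwedge\nolimits^{2}(\ss)\subset C(\ss)$ satisfies $[\nu_{\ast}(X),v]=\nu(X)(v)=[X,v]$ for all $v\in\ss$ (the defining property of the embedding $\osp(\ss)\hookrightarrow C(\ss)$ coming from Proposition~\ref{prop::moment_map}), so the induced action of $\ll$ on the degree-one part $\ss\subset C(\ss)$ is again the adjoint action and therefore also preserves $\uu$ and $\ubar$.

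Writing $X\cdot(-)$ for the induced action of $X\in\ll$ on $\UE(\gg)\otimes C(\ss)$ attached to the diagonal embedding $\alpha$, these facts give, for each basis vector $u_{i}\in\uu$ with dual $\ou_{i}\in\ubar$ and up to signs,
\[
X\cdot(u_{i}\otimes\ou_{i}) = [X,u_{i}]\otimes\ou_{i} \pm u_{i}\otimes[X,\ou_{i}] \in \uu\otimes\ubar ,
\]
the first factor staying in degree one in $\UE(\gg)$ and inside $\uu$, the second staying in degree one in $C(\ss)$ and inside $\ubar$. Summing over $i$ yields $X\cdot A\in\uu\otimes\ubar$, and the symmetric computation yields $X\cdot\Abar\in\ubar\otimes\uu$. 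On the other hand $A+\Abar=\sum_{i}X_{i}\otimes X_{i}^{\ast}=\Dirac-1\otimes\phi_{\ss}$, the operator $\Dirac$ is $\ll$-invariant by the lemma above, and $1\otimes\phi_{\ss}$ is $\ll$-invariant because $\phi_{\ss}\in(\bigwedge\nolimits^{3}\ss)^{\ll}_{0}$; hence $X\cdot A=-\,X\cdot\Abar$ for all $X\in\ll$.

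To conclude, note that $\uu\otimes\ubar$ and $\ubar\otimes\uu$ are subspaces of $\gg\otimes\ss\subset\UE(\gg)\otimes C(\ss)$ with zero intersection, since $\uu\cap\ubar=0$ both in $\gg$ (the nilradical $U_{T}$ and $-U_{T}=U_{T}^{-}$ are disjoint sets of roots) and in $\ss$, so that $\uu\otimes\uu$, $\uu\otimes\ubar$, $\ubar\otimes\uu$, $\ubar\otimes\ubar$ are in direct sum inside $\gg\otimes\ss$. Thus the relation $X\cdot A=-\,X\cdot\Abar$ forces $X\cdot A=X\cdot\Abar=0$ for all $X\in\ll$, which is exactly the asserted $\ll$-invariance. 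The only step that is not pure bookkeeping — and the one I would flag as the main (if minor) obstacle — is the compatibility of the Clifford action with the splitting $\ss=\uu\oplus\ubar$, i.e.\ that $[\nu_{\ast}(X),-]$ acts on $\ss\subset C(\ss)$ by $\ad_{X}$; once this is granted, everything else is automatic. Alternatively, one may bypass the $\ll$-invariance of $\Dirac$ altogether and verify $X\cdot A=0$ directly from the explicit formula for $\nu_{\ast}$ in Lemma~\ref{lemm::form_nu} together with the structure constants, but the argument just sketched is shorter.
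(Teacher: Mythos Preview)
Your proof is correct but takes a genuinely different route from the paper's. The paper proves $\ll$-invariance of $A$ by a direct computation: it writes
\[
[\alpha(X),A] = \sum_{i} [X,u_{i}]\otimes \ou_{i} + \sum_{i} (-1)^{p(\nu_{\ast}(X))p(u_{i})}\,u_{i} \otimes [X,\ou_{i}],
\]
expands $[X,u_{i}]$ and $[X,\ou_{i}]$ in the dual bases, and then uses invariance and supersymmetry of $(\cdot,\cdot)$ to show that the first sum rewrites as the negative of the second. By contrast, you avoid this computation entirely by invoking the previously established $\ll$-invariance of $\Dirac$ and of $\phi_{\ss}$, which forces $X\cdot A = -\,X\cdot\Abar$, and then separating the two sides via the grading observation $X\cdot A\in\uu\otimes\ubar$, $X\cdot\Abar\in\ubar\otimes\uu$ inside $\gg\otimes\ss\subset\UE(\gg)\otimes C(\ss)$.

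Both arguments rely on the same compatibility you flag, namely that $[\nu_{\ast}(X),v]=\ad_{X}(v)$ for $v\in\ss$; the paper uses it implicitly when writing $[X,\ou_{i}]$ for the Clifford-side commutator. Your argument is slicker in that it recycles results already on the table rather than reproving a special case of them, at the cost of being slightly less self-contained (it depends on the $\ll$-invariance of $\Dirac$ and $\phi_{\ss}$, both cited from \cite{kang2021dirac}). The paper's argument has the minor virtue of being independent of those citations and making the cancellation mechanism explicit.
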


\begin{proof}
    It is enough to prove the statement for $A = \sum_{i = 1}^{s} u_{i} \otimes \ou_i$, as the proof for $\Abar$ is analogous. First, note that any $X \in \ss$ can be written as 
    \[
    X = \sum_{i = 1}^{2s} (X_{i}^{\ast},X)X_{i} = \sum_{i = 1}^{2s} (X,X_{i}) X_{i}^{\ast},
    \]
    and $[\ll,\uu] \subseteq \uu$, $[\ll,\ubar] \subseteq \ubar$. Consequently, for any $X \in \ll$, we have 
    \[
    [X,u_{i}] = \sum_{j = 1}^{s} (\ou_{j}, [X,u_{i}])u_{j}, \qquad [X,\ou_{i}] =  \sum_{j = 1}^{s} ([X,\ou_{i}], u_{j})\ou_{j}.
    \]
    Fix some $X \in \ll$, and consider
    \[
    [\alpha(X),A] = \sum_{i = 1}^{s} [X,u_{i}]\otimes \ou_{i} + \sum_{i = 1}^{s} (-1)^{p(\nu_{\ast}(X))p(u_{i})}u_{i} \otimes [X,\ou_{i}].
    \]
    Using invariance of $(\cdot,\cdot)$ and supersymmetry, the first summand can be rewritten as
    \begin{align*}
        \sum_{i = 1}^{s} [X,u_{i}]\otimes \ou_{i} &= \sum_{i,j = 1}^{s} (\ou_{j},[X,u_{i}])u_{j} \otimes \ou_{i} = \sum_{i,j = 1}^{s} u_{j} \otimes (\ou_{j},[X,u_{i}])\ou_{i} \\ &= -\sum_{i,j = 1}^{s} (-1)^{p(\nu_{\ast}(X))p(u_{j})}u_{j} \otimes ([X,\ou_{j}],u_{i})\ou_{i} \\ &= - \sum_{j = 1}^{s} (-1)^{p(\nu_{\ast}(X))p(u_{j})}u_{j} \otimes [X,\ou_{j}],
    \end{align*}
    which forces $[\alpha(X),A] = 0$. We conclude that $A$ is $\ll$-invariant.
\end{proof}

Next, we decompose the fundamental $3$-form $\phi_{\ss}$. The spaces $\uu, \ubar$ are isotropic subspaces with respect to $(\cdot,\cdot)$, \emph{i.e.}, we have for any $i,j,k$ $([u_{i},u_{j}],u_{k}) = 0$ and $([\ou_i,\ou_j],\ou_k) = 0$. As a result, we may decompose $\phi_{\ss}$ as $\phi_{\ss} = a + \overline{a}$, where
\begin{equation} \label{eq::definition_a_and_abar}
\begin{aligned}
    {a} &= -{\frac{1}{4}} \sum\limits_{i,j,k = 1}^{s} (-1)^{p(u_i)p(u_j) + p(u_k) + p(u_i) + p(u_j)} 
    ([\overline{u}_i, \overline{u}_{j}], u_k) u_i \wedge u_j \wedge \overline{u}_k, \\
    \overline{a} &= -{\frac{1}{4}} \sum\limits_{i,j,k = 1}^{s} (-1)^{p(u_i)p(u_j)} 
    ([u_i, u_j], \overline{u_k}) \overline{u}_i \wedge \overline{u}_j \wedge u_k.
\end{aligned}
\end{equation}
Here, the combinatorial factor $6$ arises from summing over all permutations of $X_i, X_j, X_k$, each contributing with the same sign, while the factor $\frac{1}{2}$ accounts for summation over all pairs $i, j$. Moreover, we used that $(\ou_i)^{\ast} = (-1)^{p(u_i)} u_i$, as in Equation \eqref{eq::dual_general}. 

We now want to express $a$ and $\overline{a}$ as elements in the Clifford superalgebra $C(\ss)$. For that, we first need the following technical lemma.

\begin{lemma}
\label{lemm:l_invariant}
    Let $u_1, \ldots, u_{s}$ be a basis of $\uu$ with dual basis $\ou_1, \ldots, \ou_{s}$. Then the element
    \[
        \Xi := \sum\limits_{i,j = 1}^{s} (-1)^{p(u_i) + p(u_j)} ([\overline{u}_i, \overline{u}_j], u_i)u_j
    \]
    vanishes identically, \emph{i.e.}, $\Xi \equiv 0.$ %is a $\ll$-invariant element in $\uu$. In particular,  $\Xi \equiv 0$.
\end{lemma}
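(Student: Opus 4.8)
The plan is to exploit the fact that $\Xi$ lies in $\uu$: pairing it against the dual basis $\{\ou_k\}$ reduces the claim to the vanishing of the supertraces of certain adjoint operators on $\uu$, which is then immediate from the root-space decomposition.

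First I would factor $\Xi = \sum_{j=1}^{s}(-1)^{p(u_j)}\,c_j\,u_j$ with $c_j := \sum_{i=1}^{s}(-1)^{p(u_i)}([\ou_i,\ou_j],u_i)$, so that it suffices to show $c_j = 0$ for each $j$. Using invariance of $(\cdot,\cdot)$ I would rewrite $([\ou_i,\ou_j],u_i) = (\ou_i,[\ou_j,u_i])$. Since $\gg = \ll\oplus\uu\oplus\ubar$ with $\ubar\subset\ss$ orthogonal to $\ll$ and $\ubar$ isotropic, the functional $(\ou_i,\cdot)$ annihilates $\ll\oplus\ubar$, so only the $\uu$-component of $[\ou_j,u_i]$ contributes; and since $\{\ou_i\}$ is dual to $\{u_i\}$, i.e.\ $(\ou_i,u_l)=\delta_{il}$, one recognizes $c_j = \str(B_j)$, where $B_j := \pr_{\uu}\circ\ad_{\ou_j}|_{\uu}\in\End(\uu)$ is the endomorphism $u\mapsto\pr_{\uu}[\ou_j,u]$ and $\str$ is the supertrace of $\uu=\uu_0\oplus\uu_1$.

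Then I would prove the more general statement that $\str(\pr_{\uu}\circ\ad_{\xi}|_{\uu})=0$ for every $\xi\in\ubar$. By linearity this reduces to $\xi\in\gg^{-\beta}$ for a root $\beta$ of $\uu$, and I would compute the supertrace in a basis $\{v_\ell\}$ of $\uu$ consisting of root vectors $v_\ell\in\gg^{\gamma_\ell}$, which exists because the root spaces of a basic classical Lie superalgebra are one-dimensional (Proposition \ref{prop::structure_theory_BCLSA}). Then $\ad_\xi$ maps $\gg^{\gamma_\ell}$ into $\gg^{\gamma_\ell-\beta}$, and since $\beta\neq 0$ this is a different weight space, so the $v_\ell$-coefficient of $\ad_\xi(v_\ell)$, hence of $\pr_{\uu}\ad_\xi(v_\ell)$, vanishes. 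Thus the matrix of $B_j$ has zero diagonal, so $c_j=\str(B_j)=0$ for every $j$ and $\Xi\equiv 0$.

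I do not expect a genuine obstacle here: the only point requiring care is the sign and parity bookkeeping in the identification $c_j=\str(B_j)$ (this uses consistency of the form and the normalization $(\ou_i,u_j)=\delta_{ij}$), which is routine. There is also a shorter route if one fixes from the outset a basis $\{u_i\}$ of root vectors $u_i\in\gg^{\alpha_i}$, so that $\ou_i\in\gg^{-\alpha_i}$ by Proposition \ref{prop::structure_theory_BCLSA}: then $[\ou_i,\ou_j]\in\gg^{-\alpha_i-\alpha_j}$ and $(\gg^{-\alpha_i-\alpha_j},\gg^{\alpha_i})=0$ because $\alpha_j\neq 0$, so each summand of $\Xi$ vanishes term by term, and the supertrace reformulation is needed only if one insists on treating an arbitrary basis.
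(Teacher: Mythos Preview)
Your proof is correct and takes a genuinely different route from the paper. You recognize the coefficients $c_j$ as supertraces $\str(\pr_\uu\circ\ad_{\ou_j}|_\uu)$ and then kill them via the root-space decomposition; your shortcut (choosing a root-vector basis from the start) is in fact a one-line argument using only Proposition~\ref{prop::structure_theory_BCLSA}(d). The paper instead invests most of its effort in showing that $\Xi$ is $\ll$-invariant: it constructs an $\ll$-equivariant map $\psi:\ss^{\otimes 4}\to\uu$, $x\otimes y\otimes z\otimes w\mapsto x\,(y,[z,w])$, and an explicit $\ll$-invariant element of $\ss^{\otimes 4}$ (built from $\id\otimes\id\in\End(\uu)^{\otimes 2}$ via the identification $\uu^\ast\cong\ubar$) that maps to $\Xi$; only then does it conclude by noting that $\hh$-invariants of $\gg$ lie in $\hh$ and $\uu\cap\hh=0$. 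Both arguments ultimately rest on the fact that $\ad_\xi$ shifts $\hh$-weight by a nonzero amount for $\xi\in\ubar$, but yours gets there directly while the paper takes a detour through $\ll$-equivariance. Your approach is more elementary and shorter; the paper's is more structural, though the intermediate $\ll$-invariance of $\Xi$ is not reused elsewhere.
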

\begin{proof} The vanishing of $\Xi$ will follow from $\ll$-invariance. Indeed, let $\hh$ be the Cartan subalgebra of $\gg$ and assume $\Xi$ is $\ll$-invariant. Since the $\hh$-invariant elements of $\gg$ are precisely $\hh$, and $\hh \subset \ll$, any $\ll$-invariant element must therefore be in $\hh$. Given that $\uu \cap \hh = 0$, we conclude that $\Xi = 0$.

We are then left to prove that $\Xi$ is $\ll$-invariant. For this,
    first, we show that the map $\psi: \ss^{\otimes 4} \to \uu$, defined by $x \otimes y \otimes z \otimes w \mapsto x (y, [z,w])$, is $\ll$-equivariant. Here, $\otimes$ denotes the $\ZZ_2$-graded tensor product, and $\ss \otimes \ss$ is the $\ll$-supermodule with $\ll$-action given by (see Section \ref{subsec::conventions})
    \[
    X (v \otimes w) := Xv \otimes w + (-1)^{p(X)p(v)} v \otimes Xw, \quad X \in \ll, \ v, w \in \ss.
    \]
    Let $X \in \ll$, then one computes
    \[\begin{split}
        \psi(X(x \otimes y \otimes z \otimes w)) &= %\psi([X, x] \otimes y \otimes z \otimes w) + (-1)^{p(X)p(x)} \psi(x \otimes [X,y] \otimes z \otimes w)\\ &+ (-1)^{p(X)(p(x) + p(y))} \psi(x \otimes y \otimes [X,z] \otimes w)\\ &+(-1)^{p(X)(p(x) + p(y) + p(z))} \psi(x \otimes y \otimes z \otimes [X,w])\\&=
        [X,x]([z,w],y) + x (-1)^{p(X)p(x)} \bigl(\left( [X,y], [z,w]) \right.\\ & \left. + (-1)^{p(X)p(y)} (y, [[X,z], w]) + (-1)^{p(X)(p(y) + p(z))}(y,[z, [X,w]]) \right).
        \end{split}
    \]
    As a consequence, proving equivariance reduces to check that
    \[
    \begin{split}
        &([X,y],[z,w]) + (-1)^{p(X)p(y)}(y, [[X,z],w]) + (-1)^{p(X)(p(y) + p(z))}(y,[z, [X,w]])\\
        =& ([X,y],[z,w]) + (-1)^{p(X)p(y)}(y, [X,[z,w]]) = 0,
    \end{split}
    \]
    where we used
    \[
        [X,[z,w]] = [[X,z],w] + (-1)^{p(X)p(z)}[z, [X,w]]
    \]
    by the super Jacobi identity, and
    \[
        ([X,y],[z,w]) + (-1)^{p(X)p(y)}(y, [X,[z,w]]) = 0
    \]
    by the $\ll$-invariance of $(\cdot, \cdot)$.

    Next, we construct an $\ll$-invariant element in $\ss^{\otimes 4}$, which is mapped under $\psi$ to an $\ll$-invariant element in $\uu$ by $\ll$-equivariance. 

   We first claim that the map $\tau: \overline{\uu} \to \uu^\ast$, defined by $\overline{u} \mapsto (\overline{u}, \cdot)$, is an $\ll$-equivariant isomorphism. Whilst it is clear that the map is an isomorphism of super vector spaces, it remains to show $\ll$-equivariance. For this, let $f \in \uu^\ast$, $v \in \uu$. Then we have:
    \begin{align*}
    \tau([X, f])(v) &= ([X, f], v) = -(-1)^{p(X)p(f)} (f, [X, v]) = -(-1)^{p(X)p(f)} \tau(f)([X,v]) \\ &= (X\tau(f))(v).
    \end{align*}

    Now, consider the invariant element $\operatorname{id} \otimes \operatorname{id} \in \End(\uu) \otimes \End(\uu)$. Under the identification $\End(\uu) \cong \uu \otimes \uu^\ast$ with a basis $(u_i)_{i=1, \dots, s}$ and dual basis $(u_i^\ast)_{i=1, \dots, s}$, this can be expressed as:
    \[
    \label{eq:id_times_id}
    \sum\limits_{i,j = 1, \dots, s} u_i \otimes u_i^\ast \otimes u_j \otimes u_j^\ast.
    \]
    Applying the natural $\ll$-supermodule isomorphism of $\uu \otimes \uu^\ast \otimes \uu \otimes \uu^\ast$, given by $x \otimes \nu \otimes y \otimes \mu \mapsto (-1)^{p(\nu)p(y)}x \otimes y \otimes \mu \otimes \nu$, the above is mapped to
    \[
    \sum\limits_{i,j = 1, \dots, s} (-1)^{p(u_i)p(u_j)} u_i \otimes u_j \otimes u_i^\ast \otimes u_j^\ast.
    \]
    In turn, under the map $\operatorname{id} \otimes \operatorname{id} \otimes \tau^{-1} \otimes \tau^{-1}: \uu \otimes \uu \otimes \uu^\ast \otimes \uu^\ast \to \uu \otimes \uu \otimes \overline{\uu} \otimes \overline{\uu}$, the previous becomes 
    \be
    \label{eq:invariant_element}
    \sum\limits_{i,j = 1}^{s} (-1)^{p(u_i)p(u_j)} u_i \otimes u_j \otimes \overline{u}_i \otimes \overline{u}_j,
    \ee
    which gives the desired element in $\ss^{\otimes 4}$ under inclusion. Applying now $\psi$ to \eqref{eq:invariant_element}, we obtain the $\ll$-invariant element:
    \[
    \begin{split}
        \sum\limits_{i,j = 1}^{s} (-1)^{p(u_i)p(u_j)} u_i (u_j, [\overline{u}_i, \overline{u}_{j}]) &= \sum\limits_{i,j = 1}^{s} (-1)^{p(u_i)p(u_j)}(-1)^{p(u_j)(p(u_i) + p(u_j))} ([\overline{u}_i, \overline{u}_{j}], u_j) u_i \\
        &= \sum\limits_{i,j=1}^s (-1)^{p(u_i)p(u_j) + p(u_j)} ([\overline{u}_{j}, \overline{u}_i], u_j) u_i.
    \end{split}
    \]
    Notably, $([\overline{u}_{j}, \overline{u}_i], u_j)$ is non-zero only if $p(u_j) = p(u_j) + p(u_i)$, leading to $p(u_i) = 0$. It follows that
    \[
    \Xi = \sum\limits_{i,j=1}^s (-1)^{p(u_{j}) + p(u_{i})} ([\overline{u}_j, \overline{u}_i], u_j) u_i
    \]
    is $\ll$-invariant. This finishes the proof. \end{proof}

Having this technical result available, we are ready to give the following characterization of the elements $a$ and $\overline{a}$ inside $C(\ss)$.

    \begin{lemma}
    In the Clifford superalgebra $C(\ss)$, the elements $a$ and $\overline{a}$ are given by
    \begin{align*}
{a} &= -\frac{1}{4} \sum_{1 \leq i,j \leq s} 
(-1)^{p(u_{i})p(u_{j}) + p(u_i) + p(u_j)} 
[\overline{u}_{i}, \overline{u}_{j}] \, u_{i} u_{j}, \\
\overline{a} &= -\frac{1}{4} \sum_{1 \leq i,j \leq s} (-1)^{p(u_{i})p(u_{j})}[u_{i},u_{j}]\overline{u}_{i}\overline{u}_{j}.
    \end{align*}
\end{lemma}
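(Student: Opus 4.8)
The plan is to obtain both identities by transporting the exterior expressions of \eqref{eq::definition_a_and_abar} into $C(\ss)$ through the quantization map $Q_{3}$ of Theorem \ref{thm::quantization_map}, arranging the computation so that the only genuine ``Clifford'' corrections turn out to be precisely the element that Lemma \ref{lemm:l_invariant} shows to vanish. It is enough to carry this out for $a$: the identity for $\overline a$ follows from the same argument after exchanging the roles of $\uu$ and $\overline\uu$ (using $(\ou_i,u_j)=\delta_{ij}$ and $(u_i,\ou_j)=(-1)^{p(u_i)}\delta_{ij}$), in which case the step invoking Lemma \ref{lemm:l_invariant} is replaced by the analogous vanishing statement for the corresponding element of $\overline\uu$, proven by the identical $\ll$-invariance argument.

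First I would apply $Q_{3}$ to $a=-\frac14\sum_{i,j,k}(-1)^{p(u_i)p(u_j)+p(u_k)+p(u_i)+p(u_j)}([\ou_i,\ou_j],u_k)\,u_i\wedge u_j\wedge\ou_k$. Since $\overline\uu$ is isotropic and $(u_l,\ou_k)=(-1)^{p(u_l)}\delta_{lk}$, the vectors $u_i,u_j,\ou_k$ span an isotropic subspace of $\ss$ whenever $k\notin\{i,j\}$, so for those index triples Remark \ref{rmk:isotropic_Q} gives $Q_{3}(u_i\wedge u_j\wedge\ou_k)=u_i u_j\ou_k$ in $C(\ss)$. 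Reordering inside $C(\ss)$, moving $\ou_k$ to the left past $u_i$ and $u_j$ (with which it super-anticommutes, as $k\neq i,j$), gives $u_i u_j\ou_k=(-1)^{p(u_k)(p(u_i)+p(u_j))}\ou_k u_i u_j$, a sign which collapses to $(-1)^{p(u_k)}$ on the support of $([\ou_i,\ou_j],u_k)$ (where $p(u_i)+p(u_j)=p(u_k)$) and hence cancels the factor $(-1)^{p(u_k)}$ carried by the coefficient of $a$; summing over $k$ through the expansion $[\ou_i,\ou_j]=\sum_k([\ou_i,\ou_j],u_k)\,\ou_k$ (immediate from $(\ou_k,u_l)=\delta_{kl}$) then yields exactly $-\frac14\sum_{i,j}(-1)^{p(u_i)p(u_j)+p(u_i)+p(u_j)}[\ou_i,\ou_j]\,u_i u_j$, the claimed expression.

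Second I would argue that the remaining index triples drop out. When $k=i$ (the case $k=j$ being symmetric), the pair $(u_i,\ou_i)$ is non-isotropic with $(u_i,\ou_i)=(-1)^{p(u_i)}\neq0$, so $Q_{3}(u_i\wedge u_j\wedge\ou_i)$ equals its top-degree part, which merges with the top-degree parts of the generic triples to reproduce the formula above, plus a degree-one correction proportional to $(u_i,\ou_i)\,u_j$. Summing the $k=i$ corrections over all $i,j$ produces a fixed scalar multiple of $\sum_{i,j}(-1)^{p(u_i)+p(u_j)}([\ou_i,\ou_j],u_i)\,u_j=\Xi$, and, after relabeling $i\leftrightarrow j$ and using $[\ou_j,\ou_i]=-(-1)^{p(u_i)p(u_j)}[\ou_i,\ou_j]$, the $k=j$ corrections produce another scalar multiple of $\Xi$; both vanish by Lemma \ref{lemm:l_invariant} (the degenerate terms $i=j$ are harmless, the coefficient $([\ou_i,\ou_i],u_i)$ being zero by consistency of the form). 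Hence $Q_{3}(a)$ coincides with its top-degree part, which is the asserted identity; the computation for $\overline a$ is identical under $\uu\leftrightarrow\overline\uu$.

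The one non-formal point is the sign bookkeeping in the last two paragraphs: tracking the Koszul signs generated by the permutation sum defining $Q_{3}$, so as to confirm that the top-degree pieces of all index triples reassemble with the weight $(-1)^{p(u_i)p(u_j)+p(u_i)+p(u_j)}$ and that the accumulated degree-one corrections agree, up to an overall scalar, with $\Xi$. Both checks become routine once one restricts to the support of the structure coefficients, where $p(u_i)+p(u_j)=p(u_k)$. Everything else is a direct application of Theorem \ref{thm::quantization_map}, Remark \ref{rmk:isotropic_Q}, and Lemma \ref{lemm:l_invariant}.
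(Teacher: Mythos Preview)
Your proposal is correct and follows essentially the same route as the paper: apply the quantization map $Q_3$ of Theorem~\ref{thm::quantization_map} to the exterior expression \eqref{eq::definition_a_and_abar}, and observe that the degree-one corrections assemble into a scalar multiple of the element $\Xi$, which vanishes by Lemma~\ref{lemm:l_invariant}. The only organizational difference is that the paper computes $Q_3(u_i\wedge u_j\wedge\ou_k)=u_iu_j\ou_k+(-1)^{p(u_k)}(\delta_{ik}(-1)^{p(u_i)p(u_j)}u_j-\delta_{jk}u_i)$ uniformly in $k$ and leaves $\ou_k$ on the right, whereas you split into the generic case $k\notin\{i,j\}$ (isotropic triple, Remark~\ref{rmk:isotropic_Q}) and the exceptional cases $k\in\{i,j\}$, and then explicitly commute $\ou_k$ to the left; your reordering step produces further $\Xi$-type corrections for $k\in\{i,j\}$, which the paper absorbs implicitly when passing from $u_iu_j\ou_k$ to $[\ou_i,\ou_j]u_iu_j$ (cf.\ Remark~\ref{rmk::different_form_a_abar}). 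Your treatment is thus slightly more explicit about that last move, but the substance is identical.
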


\begin{proof}
   We only prove the expression for ${a}$, as the proof for $\overline{a}$ is analogous. First, we note that under the quantization map of Theorem \ref{thm::quantization_map} the following holds:
   \[
   u_{i} \wedge u_j \wedge \ou_{k} \mapsto u_{i}u_{j}\ou_{k} + (-1)^{p(u_{k})}(\delta_{ik} (-1)^{p(u_{i})p(u_{j})}u_{j}-\delta_{jk}u_{i}).
   \]
   Thus, in the Clifford superalgebra $C(\ss)$ we can write
   \begin{equation*}
   \begin{split}
        {a} =& -\frac{1}{4} \sum\limits_{i,j k= 1}^s (-1)^{p(u_i)p(u_j) + p(u_i) + p(u_j) + p(u_k)} ([\overline{u}_i, \overline{u}_{j}], u_k) u_i  u_j  \overline{u}_k \\ 
        &+ \frac{1}{2} \sum\limits_{i,j = 1}^{s} (-1)^{p(u_{i}) + p(u_{j})} ([\overline{u}_i, \overline{u}_{j}], u_i) u_j \\
        =& -\frac{1}{4} \sum\limits_{i,j= 1}^s (-1)^{p(u_i)p(u_j) + p(u_i) + p(u_j)} [\overline{u}_i, \overline{u}_{j}] u_i  u_j,
   \end{split}
   \end{equation*}
    where the second summand vanishes by Lemma \ref{lemm:l_invariant}.
\end{proof}

\begin{remark} \label{rmk::different_form_a_abar}
    We can rewrite $a$ and $\bar{a}$ as
    \begin{align*}
{a} &= -\frac{1}{4} \sum_{1 \leq i,j \leq s}
(-1)^{p(u_{i})p(u_{j})} 
u_{i} u_{j}[\ou_{i}, \ou_{j}], \\
\bar{a} &= -\frac{1}{4} \sum \limits_{1 \leq i,j \leq s} (-1)^{p(u_i)p(u_j)+p(u_{i})+p(u_{j})} \ou_i \ou_j [u_i, u_j].
\end{align*}
This follows from a straightforward computation in $C(\ss)$ using the properties of $(\cdot,\cdot)$ and will be omitted.
\end{remark}

Further, a direct but lengthy calculation yields the following lemma.

\begin{lemma} \label{lemm::invariance_a_abar}
    The elements $a, \overline{a} \in C(\ss)$ are invariant under the adjoint action of $\ll$. 
\end{lemma}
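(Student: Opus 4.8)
The plan is to sidestep the direct computation entirely, by equipping $C(\ss)$ with an auxiliary $\ZZ$-grading that is visibly preserved by the $\ll$-action, and then invoking the fact — recorded earlier, following \cite{kang2021dirac} — that $\phi_{\ss} = a + \overline a$ is already $\ll$-invariant. First I would declare $\uu$ to have degree $+1$ and $\ubar$ degree $-1$, and extend multiplicatively to the tensor algebra $T(\ss)$. This grading descends to $C(\ss)$: the generators $v\otimes w + (-1)^{p(v)p(w)} w\otimes v - 2(v,w)_{\ss} 1$ of the defining ideal $I(\ss)$ are $\ZZ$-homogeneous, because $\uu$ and $\ubar$ are isotropic — so $(v,w)_{\ss} = 0$ whenever $v,w$ lie both in $\uu$ or both in $\ubar$, leaving a homogeneous relation of degree $\pm 2$ — while for $v \in \uu$, $w \in \ubar$ all three summands have degree $0$. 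Hence $C(\ss) = \bigoplus_{d \in \ZZ} C(\ss)^{(d)}$, with $C(\ss)^{(d)}$ spanned by the Clifford monomials carrying $d$ more factors from $\uu$ than from $\ubar$.

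Next I would observe that the adjoint action of $\ll$ on $C(\ss)$ preserves this grading. By construction the action is induced by the natural (adjoint) action of $\ll$ on $\ss$ — equivalently, by the super-bracket with the image $\nu_{\ast}(\ll) \subset C(\ss)$, which on $\ss \subset C(\ss)$ reproduces, up to a universal normalization, the action of $\nu : \ll \to \osp(\ss)$ from Lemma \ref{lemm::l_in_osp}. Since $[\ll, \uu] \subseteq \uu$ and $[\ll, \ubar] \subseteq \ubar$, this action stabilizes $\uu$ and $\ubar$, hence preserves the degree of every generator in $\ss$; being a superderivation of $C(\ss)$, it therefore preserves the entire $\ZZ$-grading. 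Finally, the explicit Clifford-algebra formulas obtained just above for $a$ and $\overline a$ show — using that $[\uu,\uu]\subseteq\uu$ and $[\ubar,\ubar]\subseteq\ubar$, since $\uu$ and $\ubar$ are nilradicals — that $a$ is a sum of monomials with two factors from $\uu$ and one from $\ubar$, so $a \in C(\ss)^{(1)}$, whereas likewise $\overline a \in C(\ss)^{(-1)}$. Applying any $X \in \ll$ to $\phi_{\ss} = a + \overline a$ gives $0 = X\cdot a + X\cdot\overline a$ with $X\cdot a \in C(\ss)^{(1)}$ and $X\cdot\overline a \in C(\ss)^{(-1)}$; by directness of the grading decomposition, $X\cdot a = 0 = X\cdot\overline a$, which is the assertion.

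The step I expect to require the most care is the compatibility claim in the middle paragraph: one must make sure that the $\ll$-action on $C(\ss)$ entering the statement — the one defined via $\nu_{\ast}$ and the isomorphism $\osp(\ss) \cong \bigwedge^{2}\ss$ of Proposition \ref{prop::moment_map} — really does restrict on $\ss \subset C(\ss)$ to (a scalar multiple of) the defining representation $\nu(X)$, so that it manifestly stabilizes $\uu$ and $\ubar$. This is the standard compatibility of the spinorial embedding $\osp(\ss) \hookrightarrow C(\ss)$ with the vector representation; it can either be quoted or checked by a short computation from the expression for $Q_{2}(\mu^{-1}(T))$ in Proposition \ref{prop::moment_map}. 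Once it is in place, everything else is bookkeeping with the $\uu$/$\ubar$ bidegree, and the argument never has to touch the structure constants of $\ss$.
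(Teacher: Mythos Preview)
Your argument is correct and is considerably cleaner than what the paper does. The paper records this lemma as the outcome of ``a direct but lengthy calculation'' and gives no details; you replace that calculation by a structural observation. Introducing the $\ZZ$-grading on $C(\ss)$ with $\deg\uu=+1$, $\deg\ubar=-1$ is exactly the right move: the defining ideal is homogeneous because $\uu,\ubar$ are isotropic, and the Clifford expressions for $a$ and $\overline a$ obtained just above the lemma (together with $[\ubar,\ubar]\subseteq\ubar$, $[\uu,\uu]\subseteq\uu$) place them in degrees $+1$ and $-1$ respectively. Since $\phi_\ss=a+\overline a$ is already known to be $\ll$-invariant, separating by degree finishes the proof.

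The one point you flag as delicate --- that the $\ll$-action via $[\nu_\ast(X),-]$ preserves this $\ZZ$-grading --- can in fact be handled without invoking the general compatibility of the spinorial embedding with the vector representation. By Lemma~\ref{lemm::form_nu}, every term of $\nu_\ast(X)$ is either a constant or of the form $\ou_k u_j$, hence lies in $C(\ss)^{(0)}$; bracketing with a degree-$0$ element automatically preserves the grading. This shortcut makes your argument entirely self-contained within the paper's framework and avoids appealing to an external fact. The upshot is that your route not only bypasses the lengthy computation but also makes transparent \emph{why} the two pieces are separately invariant: they live in different graded components of an $\ll$-stable decomposition.
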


Altogether, relying on the above results, we can define the following $\ll$-invariant elements $C,\Cbar \in \UE(\gg)\otimes C(\ss)$:
\be \label{eq::C_Cbar}
C := A + 1\otimes a, \qquad \Cbar := \Abar + 1\otimes \overline{a},
\ee
which can be used to decompose the cubic Dirac operator. We summarize this discussion in the following theorem, whose proof is an immediate consequence of the above Lemmas. 

\begin{theorem} \label{theorem::decomp_Dirac} The cubic Dirac operator has the following decomposition in $\ll$-invariant elements
\[
D(\gg, \ll) = C + \Cbar.
\]    
\end{theorem}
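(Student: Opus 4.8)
The plan is to assemble the stated identity directly from the pieces already constructed in this subsection. I would start from the definition $\Dirac = \sum_{i=1}^{2s} X_{i}\otimes X_{i}^{\ast} + 1\otimes \phi_{\ss}$, together with the remark (immediately after the definition) that $\Dirac$ is independent of the chosen orthogonal basis of $\ss$. This allows me to work with the basis adapted to the polarization $\ss = \uu\oplus\ubar$, namely $X_{1}=u_{1},\dots,X_{s}=u_{s}$, $X_{s+1}=\ou_{1},\dots,X_{2s}=\ou_{s}$, with dual basis obtained from Equation \eqref{eq::dual_general}. With this choice the linear term splits as $\sum_{i=1}^{2s} X_{i}\otimes X_{i}^{\ast} = A + \Abar$, exactly the computation already recorded above.

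Next I would invoke the decomposition $\phi_{\ss} = a + \overline{a}$ of the fundamental $3$-form: the isotropy of $\uu$ and $\ubar$ with respect to $(\cdot,\cdot)$ kills the terms $([u_{i},u_{j}],u_{k})$ and $([\ou_{i},\ou_{j}],\ou_{k})$, leaving only the mixed terms, which are grouped according to whether two legs lie in $\ubar$ (yielding $a$) or in $\uu$ (yielding $\overline{a}$), and the passage to $C(\ss)$ is handled by the preceding lemma expressing $a$ and $\overline{a}$ inside the Clifford superalgebra. Combining the two displays gives
\[
\Dirac = A + \Abar + 1\otimes a + 1\otimes \overline{a} = (A + 1\otimes a) + (\Abar + 1\otimes \overline{a}) = C + \Cbar,
\]
which is the claimed decomposition.

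For the $\ll$-invariance of $C$ and $\Cbar$ as elements of $\UE(\gg)\otimes C(\ss)$, I would quote the lemma showing $A$ and $\Abar$ are $\ll$-invariant together with Lemma \ref{lemm::invariance_a_abar} showing $a$ and $\overline{a}$ are invariant under the adjoint action of $\ll$; since the $\ll$-action on $\UE(\gg)\otimes C(\ss)$ is the diagonal one and invariant elements form a subspace, each of $C = A + 1\otimes a$ and $\Cbar = \Abar + 1\otimes \overline{a}$ is $\ll$-invariant. Every ingredient has already been established, so there is no genuine obstacle; the only point requiring a little care is bookkeeping the parity signs and the dual-basis identities from Equation \eqref{eq::dual_general} if one prefers to verify the adapted-basis computation by hand rather than appeal to basis-independence, but this is entirely routine.
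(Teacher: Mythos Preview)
Your proposal is correct and follows exactly the paper's approach: the paper states that the proof is an immediate consequence of the preceding lemmas, and you have simply spelled out which lemmas are invoked (the splitting $\sum X_i\otimes X_i^\ast = A+\Abar$, the decomposition $\phi_\ss = a+\overline a$, and the $\ll$-invariance of $A,\Abar$ and of $a,\overline a$) and how they combine.
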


We conclude this subsection showing that each of the $\ll$-invariant summand of $D(\gg, \ll)$ is nilpotent.
\begin{lemma} \label{lemm::square_C_Cbar}
    The square of $C, \Cbar$ is 
    \[
    C^{2} = 0, \qquad \Cbar^{2} = 0.
    \]
\end{lemma}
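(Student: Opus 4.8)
The plan is to exploit Theorem~\ref{thm::square_Dirac} together with the decomposition $\Dirac = C + \Cbar$ of Theorem~\ref{theorem::decomp_Dirac}. First I would expand
\[
\Delta = \Dirac^{2} = (C + \Cbar)^{2} = C^{2} + (C\Cbar + \Cbar C) + \Cbar^{2},
\]
using that $C$ and $\Cbar$ are even elements of $\UE(\gg)\otimes C(\ss)$, so their anticommutator is the ordinary sum $C\Cbar + \Cbar C$. By Theorem~\ref{thm::square_Dirac} the left-hand side equals $\Omega_{\gg}\otimes 1 - \Omega_{\ll,\Delta} + c(1\otimes 1)$. The key point is that one can isolate the three summands $C^{2}$, $\Cbar^{2}$ and $C\Cbar + \Cbar C$ by a grading argument: introduce the $\ZZ$-grading on $\UE(\gg)\otimes C(\ss)$ (or at least on the Clifford factor) coming from the decomposition $\ss = \uu \oplus \ubar$ and counting $\deg u = +1$, $\deg \ou = -1$. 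Inspecting the explicit formulas — $A = \sum u_i\otimes\ou_i$ has Clifford-degree $-1$ and $a = -\frac14\sum(-1)^{\bullet}[\ou_i,\ou_j]\,u_iu_j$ has Clifford-degree $+2$, so $C = A + 1\otimes a$ is not homogeneous, but $C$ raises the $\uu$-degree minus $\ubar$-degree weighted appropriately — actually the cleaner invariant is that $C$ only involves $u$'s in the Clifford factor while $\Cbar$ only involves $\ou$'s after moving all derivatives to one side; hence $C^{2}$ lands in the ``purely $\uu$'' part, $\Cbar^{2}$ in the ``purely $\ubar$'' part, and $C\Cbar+\Cbar C$ is the mixed part. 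Since $\Omega_\gg\otimes 1 - \Omega_{\ll,\Delta} + c$ lies in $\UE(\gg)\otimes C(\ss)^{\ll}$ and, once written out, has no purely-$\uu$ or purely-$\ubar$ Clifford component (all its Clifford content comes through $\nu_\ast$, which is ``degree zero'' in the $\uu$/$\ubar$ count, and through $\phi_\ss^{2}$-type scalars), one concludes $C^{2} = 0 = \Cbar^{2}$ by matching graded pieces.

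An alternative, and perhaps safer, route avoids the grading bookkeeping: compute $C^{2}$ directly. Write $C^{2} = A^{2} + (A(1\otimes a) + (1\otimes a)A) + (1\otimes a^{2})$. For $A^{2}$ one gets $\sum_{i,j} (-1)^{p(u_i)p(u_j)}u_iu_j \otimes \ou_i\ou_j$; since $\uu$ is isotropic, $\ou_i\ou_j + (-1)^{p(u_i)p(u_j)}\ou_j\ou_i = 2(\ou_i,\ou_j) = 0$ in $C(\ss)$, and similarly the $u_iu_j$ in $\UE(\gg)$ can be symmetrized using $[u_i,u_j]\in\uu$; a standard symmetrization shows $A^{2} = \frac12\sum[u_i,u_j]\otimes(\text{something})$ plus a term that reorganizes into the cross term with $a$. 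The cross term $A(1\otimes a) + (1\otimes a)A$ produces exactly $-\,\frac12\sum[u_i,u_j]\otimes(\cdots)$ — this is the whole point of the cubic correction $a$, mirroring Kostant's computation in \cite{kostant1999cubic} and its super version in \cite{kang2021dirac} — and $1\otimes a^{2} = 0$ because $a$ is a product of elements of the isotropic subalgebra $\uu \subset C(\ss)$ (more precisely $a \in \bigwedge^{\ge 2}\uu \cdot$-type, and the subalgebra of $C(\ss)$ generated by the isotropic space $\uu$ together with $\hh$-brackets is exterior, hence $a^2=0$; one verifies $(1\otimes a)^2=0$ by the anticommutation relations of Lemma~\ref{lemm::Clifford_differential}). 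Adding the three contributions gives $C^{2} = 0$, and the computation for $\Cbar$ is verbatim the same with $u \leftrightarrow \ou$ and the sign conventions of \eqref{eq::dual_general}.

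I would present the argument via the second route but organized as: (i) $(1\otimes a)^{2} = 0$ and $(1\otimes\bar a)^2 = 0$ by isotropy, citing Lemma~\ref{lemm::Clifford_differential} / Remark~\ref{rmk:isotropic_Q}; (ii) $A^{2} = -\bigl(A(1\otimes a) + (1\otimes a)A\bigr)$ by the same symmetrization identity that underlies the proof of Theorem~\ref{thm::square_Dirac}, and symmetrically for $\Abar$; (iii) conclude $C^{2} = A^{2} + A(1\otimes a) + (1\otimes a)A + (1\otimes a)^{2} = 0$, and likewise $\Cbar^{2} = 0$. The main obstacle is step (ii): carefully tracking the Koszul signs coming from $(-1)^{p(u_i)p(u_j)}$ and from $(\ou_i,\ou_j)$ versus $(\ou_j,\ou_i)$ (recall $(\overline{\psi}_i,\psi_j) = -(\psi_j,\overline{\psi}_i)$ for the odd basis vectors) so that the cross term exactly cancels $A^{2}$; this is precisely the place where a sign error would be fatal, and it is the super-analogue of the bracket-cancellation in \cite[proof of Theorem~1.3]{kang2021dirac}, so that computation can be invoked or mirrored rather than redone from scratch. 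As a consistency check at the end, one verifies that $C^{2} + \Cbar^{2} + C\Cbar + \Cbar C = \Delta$ forces $C\Cbar + \Cbar C = \Delta$, i.e. the entire ``bulk'' of the Laplace operator sits in the mixed term — a statement we will use in the later sections on Dirac cohomology.
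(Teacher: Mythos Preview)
Your grading instinct in the first approach is the right idea, but neither route as written goes through, and the second has a concrete error.

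\textbf{The gap in approach 2.} Your claim that $(1\otimes a)^{2}=0$ ``by isotropy of $\uu$'' fails: look again at the formula for $a$ (or Remark~\ref{rmk::different_form_a_abar}),
\[
a \;=\; -\tfrac14\sum_{i,j}(-1)^{p(u_i)p(u_j)}\,u_iu_j\,[\ou_i,\ou_j].
\]
The factor $[\ou_i,\ou_j]$ lies in $\ubar$, not in $\uu$, so $a$ is \emph{not} an element of the subalgebra of $C(\ss)$ generated by the isotropic space $\uu$, and there is no cheap reason for $a^{2}$ to vanish. Once (i) is gone, the clean three-step split (i)--(iii) collapses: you cannot separately match $A^{2}$ against the cross term, because all three pieces interact. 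A direct computation is possible but is not organized the way you sketch.

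\textbf{Why approach 1 as stated does not work, and how the paper fixes it.} There is no honest $\ZZ$-grading on $C(\ss)$ with $\deg u=+1$, $\deg\ou=-1$: the Clifford relation $u_i\ou_i + (-1)^{p_i}\ou_iu_i = 2(u_i,\ou_i)$ mixes degrees. Your fallback claim that ``$C$ only involves $u$'s in the Clifford factor'' is also false, for the same reason as above (the Clifford part of $A$ is $\ou_i$, and $a$ contains $[\ou_i,\ou_j]\in\ubar$). The paper realizes the grading \emph{internally} via the operator
\[
E \;:=\; \tfrac12\sum_{i=1}^{s} 1\otimes u_i\ou_i \ \in\ \UE(\gg)\otimes C(\ss),
\]
and checks directly that $[E,C]=C$ and $[E,\Cbar]=-\Cbar$; this is the precise meaning of ``$C$ has degree $+1$, $\Cbar$ has degree $-1$.'' Now set $\Dbar:=C-\Cbar=[E,\Dirac]$. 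Then $\Dirac\Dbar+\Dbar\Dirac=[E,\Dirac^{2}]$, and this vanishes because by Theorem~\ref{thm::square_Dirac} the square $\Dirac^{2}=\Omega_{\gg}\otimes 1-\Omega_{\ll,\Delta}+c$ commutes with $E$. Hence
\[
4C^{2}=(\Dirac+\Dbar)^{2}=\Dirac^{2}+\Dbar^{2}=(\Dirac-\Dbar)^{2}=4\Cbar^{2},
\]
so $C^{2}=\Cbar^{2}$. But $[E,C^{2}]=2C^{2}$ while $[E,\Cbar^{2}]=-2\Cbar^{2}$, and together these force $C^{2}=\Cbar^{2}=0$. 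This is exactly your ``match graded pieces of $\Delta$'' idea, but with the grading made rigorous through $\ad_E$ and the role of Theorem~\ref{thm::square_Dirac} reduced to the single observation $[E,\Dirac^{2}]=0$ --- no termwise sign-tracking needed.
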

\begin{proof}
    The idea of the proof is similar to the one of Proposition 2.6 in \cite{Dirac_cohomology_Lie_algebra_cohomology}. We define 
    \[
    E := \frac{1}{2} \sum_{i = 1}^{s} 1 \otimes u_{i}\ou_{i}
    \]
    and compute the commutator $[E,C] = [E,A] + [E, 1 \otimes a]$:
    \begin{align*}
        [E, A] &= EA-AE \\ &= \frac{1}{2}\sum_{i,j = 1}^{s} u_{i}\otimes (u_{j}\ou_{j}u_{i}-u_{i}u_{j}\ou_{j}) \\ &=
        \frac{1}{2}\sum_{i,j = 1}^{s} u_{i}\otimes (u_{j}\ou_{j}u_{i} + (-1)^{p(u_{i})p(u_{j})}u_{j}u_{i}\ou_{j}) \\ &=
        \frac{1}{2}\sum_{i,j = 1}^{s} u_{i}\otimes (u_{j}\ou_{j}u_{i} + (-1)^{p(u_{i})p(u_{j})}u_{j}(-(-1)^{p(u_{i})p(u_{j})}\ou_{j}u_{i}+ 2\delta_{ij})) \\ &= \sum_{j = 1}^{s} u_{j} \otimes u_{j},
    \end{align*}
    which equals $A$ under the natural identification $u_{j}$ with $\ou_{j} = u_{j}^{\ast}$. Analogously, $[E,1 \otimes a] = 1 \otimes a$, \emph{i.e.}, $[E,C] = C$. Moreover, a direct calculation yields $[E,\Cbar] = -\Cbar$ and therefore $[E,C^{2}] = 2C^{2}$, $[E,\Cbar^{2}] = -2\Cbar^{2}$.

   However, set $D:= \Dirac$ and $\Dbar := C - \Cbar$ such that $C = \frac{1}{2}(D+\Dbar)$ and $\Cbar = \frac{1}{2}(D-\Dbar)$. Then
   \[
   4C^{2} = (D+\Dbar)^{2} = D^{2} + \Dbar^{2} = (D-\Dbar)^{2} = 4\Cbar^{2},
   \]
since $D\Dbar + \Dbar D = [E,D^{2}] = 0$ by Theorem \ref{thm::square_Dirac}. This forces $C^{2} = \Cbar^{2} = 0$. 
\end{proof}
\subsection{Dirac cohomology}  \label{subsec::DiracCohomo}

We define Dirac cohomology with respect to a cubic Dirac operator $\Dirac$ for some parabolic subalgebra $\pp = \ll \ltimes \uu$ and study its homological properties. Furthermore, we state Vogan's Theorem to examine the Dirac cohomology of supermodules with infinitesimal character by formulating a Casselman--Osborne Lemma.  

\subsubsection{Oscillator supermodule} 
There exists a natural simple supermodule for the Clifford superalgebra $C(\ss)$, the \emph{oscillator supermodule}, which we will construct.

The super vector space $\ss$ decomposes in its even and odd parts as $\ss = \ss_{0} \oplus \ss_{1}$, where in turn $\ss_{0} = \uu_{0} \oplus \ubar_{0}$ and $\ss_{1} = \uu_{1} \oplus \ubar_{1}$. Note that these decompositions are not direct sum decompositions with respect to $(\cdot,\cdot)$. 

We consider the Clifford superalgebra
$
C(\ss) = C(\ss_{0}) \otimes C(\ss_{1}),
$
and treat the Clifford algebra $C(\ss_{0})$ and the Weyl algebra $C(\ss_{1})$ separately.

First, we consider the Clifford algebra $C(\ss_{0})$. The subspaces $\uu_{0}$ and $\ubar_{0}$ of $\ss_0$ define isotropic and complementary subspaces of $\ss_{0}$ with respect to $(\cdot,\cdot)_{\ss}$. We fix a basis $u_{1},\ldots, u_{s_0}$ of $\uu_{0}$ and $\overline{u}_{1},\ldots, \overline{u}_{s_{0}}$ of $\ubar_{0}$ such that $(\ou_{j},u_{i}) = (u_{i},\overline{u}_{j}) = \delta_{ij}$ for all $1 \leq i,j \leq s_{0}$. We may also identify $\ubar_{0}$ with the dual space $\uu_{0}^{\ast}$ under $(\cdot,\cdot)_{\ss}$, such that the dual basis $u_{i}^{\ast}$ is $\overline{u}_{i}$ (\emph{cf.}~Equation \eqref{eq::dual_general}).  

We are interested in
\be
\spin := \bigwedge \uu, \qquad \spinbar := \bigwedge \ubar_{0}.
\ee
Without loss of generality, we focus on $\spinbar$, which is relevant for later applications. However, the following discussion applies to both $\spin$ and $\spinbar$.

On $\bigwedge \ubar_0$, we have a natural action of $\ss_{0}$, where $\overline{u} \in \ubar_{0}$ acts as the left exterior multiplication $\epsilon(\overline{u})$, and $u \in \uu_{0}$ acts as the contraction $\iota(u)$ defined in Equation \eqref{eq::contraction}. By the universal property of the Clifford superalgebra $C(\ss_{0})$, this extends to an action of $C(\ss_{0})$ on $\spinbar$, which realizes $\spinbar$ as a $C(\ss_{0})$-module, called \emph{spin module}. The following lemma is standard. 

\begin{lemma} \label{lemm::spin_simple}
    The spin module $\spinbar$ is the unique simple $C(\ss_{0})$-module, up to isomorphism. Additionally, $\spinbar$ contains a highest weight vector with respect to $\Phi(\ll;\hh)^{+}$, whose weight is $\rho^{\uu_0}$.
\end{lemma}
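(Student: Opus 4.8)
The plan is to handle the two assertions separately; both are the super-analogues of classical facts about spin modules (\emph{cf.}\ \cite{kostant1999cubic, huang2005dirac}), made simpler here by the observation that $\ss_{0}$ is purely even, so $C(\ss_{0})$ is an ordinary Clifford algebra of the nondegenerate symmetric form obtained by restricting $(\cdot,\cdot)_{\ss}$, with $\uu_{0}$ and $\ubar_{0}$ complementary maximal isotropic subspaces and $\ubar_{0} \cong \uu_{0}^{\ast}$.

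\emph{Simplicity and uniqueness.} First I would check that $\spinbar = \bigwedge \ubar_{0}$ is simple. Index the standard basis by subsets $S \subseteq \{1,\dots,s_{0}\}$, writing $\ou_{S}$ for the associated wedge of the $\ou_{i}$. Given a nonzero submodule $N$ and $0 \neq v \in N$, pick $S$ with nonzero $\ou_{S}$-coefficient in $v$ and apply the contractions $\iota(u_{i})$ for $i \in S$; each lowers the exterior degree by one and $\iota(u_{i})\ou_{j} = \delta_{ij}$, so after $|S|$ steps one reaches a nonzero multiple of $1 \in \bigwedge^{0}\ubar_{0}$, whence $1 \in N$ and $\spinbar = C(\ss_{0}) \cdot 1 \subseteq N$. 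Next I would show the structure map $C(\ss_{0}) \to \End(\spinbar)$ is an isomorphism: it is surjective, since the operators $\epsilon(\ou_{i})$ and $\iota(u_{j})$ generate all rank-one operators on $\spinbar$ — for instance $\epsilon(\ou_{1})\cdots\epsilon(\ou_{s_{0}})\,\iota(u_{1})\cdots\iota(u_{s_{0}})$ is a rank-one projector, which one sweeps into a basis of $\End(\spinbar)$ by applying further $\epsilon(\ou_{i})$'s and $\iota(u_{j})$'s — and then it is an isomorphism by the dimension count $\dim C(\ss_{0}) = 2^{2s_{0}} = (\dim \spinbar)^{2}$. Hence $C(\ss_{0}) \cong M_{2^{s_{0}}}(\CC)$, whose unique simple module up to isomorphism is $\CC^{2^{s_{0}}} \cong \spinbar$.

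\emph{The highest weight vector.} The relevant $\hh$-action on $\spinbar$ is the one obtained by restricting $\nu$ (Lemma \ref{lemm::l_in_osp}) to the $\ll_{0}$-invariant summand $\ss_{0}$ and post-composing with the embedding $\osp(\ss_{0}) \hookrightarrow C(\ss_{0})$ of Proposition \ref{prop::moment_map} applied to $\ss_{0}$. I would take $v := 1 \in \bigwedge^{0}\ubar_{0}$. Repeating the computation of Lemma \ref{lemm::form_nu} for the even data $\ss_{0} = \uu_{0} \oplus \ubar_{0}$ shows that $H \in \hh$ is sent to $\rho^{\uu_{0}}(H)$ plus a sum of Clifford monomials of the form $\epsilon(\ou_{k})\iota(u_{k})$, each of which annihilates $v$ because $\iota(u_{k})v = 0$; hence $H \cdot v = \rho^{\uu_{0}}(H)\,v$. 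Likewise, for a positive root vector $e_{\alpha}$ with $\alpha \in \Phi(\ll;\hh)^{+}$, the same formula expresses its image as a sum of quadratic Clifford monomials in the $\ou_{k}, u_{j}$; since $e_{\alpha}$ strictly raises the $\uu_{0}$-weight, a short weight argument forces each such monomial to be of the form $\epsilon(\ou_{a})\iota(u_{b})$ or $\iota(u_{a})\epsilon(\ou_{b})$ with $a \neq b$, and in either case it kills $v$. Thus $e_{\alpha} \cdot v = 0$, so $v$ is a highest weight vector of weight $\rho^{\uu_{0}}$, and by simplicity it generates $\spinbar$.

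The simplicity and uniqueness step is the routine Clifford-module computation and is not the bottleneck. The delicate point — and the main obstacle — is the last step: one has to keep careful track of the sign and normalization conventions (the choice $(\ou_{i},u_{i}) = 1$, the overall factor $\tfrac{1}{2}$ in the embedding $\osp(\ss) \hookrightarrow C(\ss)$, and the minus sign in the contraction action of $u_{j} \in \uu_{0}$ recorded in \eqref{eq:del_action}) in order to be sure that the normal-ordering constant produced by the embedding is exactly $\rho^{\uu_{0}}$ — the even part of the shift $\rho^{\uu}$ in Lemma \ref{lemm::form_nu} — and not its negative or another scalar multiple.
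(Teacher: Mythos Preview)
The paper gives no proof of this lemma; it simply declares it ``standard'' and moves on. Your proposal is a correct and complete version of that standard argument. The simplicity and uniqueness step is the textbook computation showing that the Clifford algebra of a nondegenerate even-dimensional space is a full matrix algebra, and your identification of $1 \in \bigwedge^{0}\ubar_{0}$ as a highest weight vector of weight $\rho^{\uu_{0}}$ is exactly the even shadow of the computation carried out in Lemma~\ref{lemm::form_nu} (and later exploited in Proposition~\ref{prop::identification}).

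Two very minor remarks. First, your final clause ``by simplicity it generates $\spinbar$'' appeals to $C(\ss_{0})$-simplicity rather than $\ll$-simplicity, and is in any case not needed, since the lemma only claims the existence of such a vector. Second, in the annihilation argument for $e_{\alpha}$ you do not actually need a weight argument to exclude pure $\epsilon\epsilon$- or $\iota\iota$-monomials: the inclusions $[\ll,\uu_{0}] \subset \uu_{0}$ and $[\ll,\ubar_{0}] \subset \ubar_{0}$ already force the image of $\ll_{0}$ in $C(\ss_{0})$ to consist of mixed monomials $\ou_{k}u_{j}$ only (this is built into the formula of Lemma~\ref{lemm::form_nu}), all of which act as $\epsilon(\ou_{k})\iota(u_{j})$ and hence kill $1$. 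Your caution about the normalization producing exactly $\rho^{\uu_{0}}$ rather than some other scalar is well placed, but the computation you outline does settle it.
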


\begin{remark}
    Equivalently, we may consider $\spinbar$ as the left-ideal in $C(\ss_{0})$ generated by the element ${u} := {u}_{1}\ldots {u}_{s_{0}}$ such that $\spinbar = (\bigwedge \ubar_{0}){u}$ and the action is given by (left) Clifford multiplication. Note that the Clifford product and the exterior product coincide on $\uu_{0}$ and $\ubar_{0}$, since they are isotropic. 
\end{remark}

The spin module $\spinbar$ is equipped with a non-degenerate Hermitian form $\bracket_{\spinbar}$, such that $u_{i}$ and $\overline{u}_{i}$ are adjoint to each other. We fix a real form $\gg^{\RR}$ of $\gg$ defined with respect to a Cartan automorphism $\theta \in \aut_{2,4}(\gg)$, such that 
\begin{align}
    B_{\theta}(\cdot,\cdot) := -(\cdot,\theta(\cdot))
\end{align}
defines an inner product on $\gg^{\RR}$ (\emph{cf.}~Section \ref{subsec::unitarity_real_forms}). We uniquely extend the inner product $B_{\theta}(\cdot,\cdot)$ to a Hermitian form on $\gg$. Restricting this form to $\ss$, we denote it by the same symbol, $B_{\theta}(\cdot,\cdot)$, by abuse of notation.  We may assume that $u_{1}, \ldots, u_{s_{0}}, \overline{u}_{1}, \ldots, \overline{u}_{s_{0}}$ is an orthonormal basis of $(\ss_{0},B_{\theta}(\cdot,\cdot))$. Then
\be
1 = B_{\theta}(u_{i},u_{i}) = -(u_{i},\theta(u_{i})) = (u_{i},u_{i}^{\ast}), \qquad 1 = B_{\theta}(\overline{u}_{i},\overline{u}_{i}) = -(\overline{u}_{i},\theta(\overline{u}_{i})) = (\overline{u}_{i},\overline{u}_{i}^{\ast}),
\ee
and $-\theta(u_{i}) = u_{i}^{\ast} = \overline{u}_{i}$ for all $1 \leq i \leq s_{0}$.

On $T^{n}(\ubar_{0})$, we consider the bilinear form 
\begin{align}
\langle v_{1} \otimes \ldots \otimes v_{n}, w_{1} \otimes \ldots \otimes w_{n}\rangle_{\bigwedge} := \sum_{\sigma \in S_{n}} \tilde{B}(p(\sigma;v_{1},\ldots,v_{n})v_{\sigma(1)}\otimes \ldots \otimes v_{\sigma(n)}, w_{1} \otimes \ldots \otimes w_{n}),
\end{align}
where 
\begin{align}
\tilde{B}(v_{1} \otimes \ldots \otimes v_{n}, w_{1} \otimes \ldots \otimes w_{n}) := \prod_{i = 0}^{n-1}B_{\theta}(v_{n-i},w_{1+i}).
\end{align}
A direct calculation shows that $\langle\cdot,\cdot\rangle_{\bigwedge}$ descends to a Hermitian form on $\spinbar$, denoted by $\langle \cdot,\cdot\rangle_{\spinbar}$ in what follows. By construction, the following holds true.

\begin{lemma}\label{lemm::adjoint_spin}
\begin{enumerate}
    \item[a)] If we consider $\spinbar$ as a super vector space with obvious $\ZZ_{2}$-grading, $\bracket_{\spinbar}$ is a super positive definite super Hermitian form, that is,
    \[
\bra v,w \ket_{\spinbar} = (-1)^{p(v)p(w)}\overline{\bra w,v\ket}_{\spinbar}, \quad v,w \in \spinbar, 
    \]
    $\bra v,w \ket_{\spinbar} = 0$ whenever $p(v) \neq p(w)$, and $\bra\cdot,\cdot\ket_{\spinbar}$ is positive definite on $\overline{S}^{\gg,\ll}_{0}$ and $-i$-times positive definite on $\overline{S}^{\gg,\ll}_{1}$.
    \item[b)] The adjoint of $u_{i}$ with respect to $\langle \cdot,\cdot\rangle_{\spinbar}$ is $\theta(u_{i}) = -\overline{u}_{i}$, and the adjoint of $\overline{u}_{i}$ is $\theta(\overline{u}_{i}) = -u_{i}$.
\end{enumerate}
\end{lemma}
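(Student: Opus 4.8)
The plan is to reduce both parts to an explicit computation on the standard basis of $\spinbar=\bigwedge\ubar_{0}$. For an increasing multi-index $I=\{i_{1}<\dots<i_{k}\}\subseteq\{1,\dots,s_{0}\}$ write $\ou_{I}:=\ou_{i_{1}}\wedge\dots\wedge\ou_{i_{k}}$; these form a basis of $\spinbar$, with $\ou_{I}\in\bigwedge^{|I|}\ubar_{0}$, so $\ou_{I}$ is homogeneous of parity $|I|\bmod 2$ for the obvious grading $\spinbar_{0}=\bigoplus_{k}\bigwedge^{2k}\ubar_{0}$, $\spinbar_{1}=\bigoplus_{k}\bigwedge^{2k+1}\ubar_{0}$. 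The structural point to keep in mind is that the reversed pairing $\prod_{i}B_{\theta}(v_{n-i},w_{1+i})$ in $\tilde B$ together with the Koszul sign $p(\sigma;\cdot)$ in $\langle\cdot,\cdot\rangle_{\bigwedge}$ is arranged exactly so that the descended form is super-Hermitian --- this is what ``by construction'' refers to.

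For part (a), I would first evaluate $\langle\cdot,\cdot\rangle_{\spinbar}$ on the basis $\{\ou_{I}\}$. Since $\ubar_{0}$ is isotropic for $(\cdot,\cdot)$, only equal-degree terms survive, and the antisymmetrization over $S_{n}$ collapses to a signed Gram expression in $B_{\theta}|_{\ubar_{0}}$; as $\ou_{1},\dots,\ou_{s_{0}}$ is $B_{\theta}$-orthonormal this yields
\[
\langle\ou_{I},\ou_{J}\rangle_{\spinbar}=c_{|I|}\,\delta_{IJ}
\]
for explicit scalars $c_{k}$ depending only on $k$, the form vanishing between $\bigwedge^{k}\ubar_{0}$ and $\bigwedge^{\ell}\ubar_{0}$ for $k\ne\ell$. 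This diagonalization gives all three sub-claims at once: consistency, since distinct parities already force distinct degrees; the super-Hermitian identity $\langle v,w\rangle_{\spinbar}=(-1)^{p(v)p(w)}\overline{\langle w,v\rangle}_{\spinbar}$, which on $\{\ou_{I}\}$ reads $\overline{c_{k}}=(-1)^{k}c_{k}$ and then holds in general by sesquilinearity; and (super) positive definiteness, which amounts to $c_{k}>0$ for $k$ even and $-i\,c_{k}>0$ for $k$ odd --- a property of the explicit constants once the normalization built into $\langle\cdot,\cdot\rangle_{\spinbar}$ is taken into account.

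For part (b), recall from the discussion just above the lemma that $\theta(u_{i})=-\ou_{i}$ and $\theta(\ou_{i})=-u_{i}$, that $u_{i}\in\uu_{0}$ acts on $\spinbar$ by the contraction $\iota(u_{i})$ and $\ou_{i}\in\ubar_{0}$ by the exterior multiplication $\epsilon(\ou_{i})$, and that both are odd operators on $\bigwedge\ubar_{0}$. Proving $u_{i}^{\dagger}=\theta(u_{i})=-\ou_{i}$ amounts to the identity $\langle\iota(u_{i})v,w\rangle_{\spinbar}=(-1)^{p(v)}\langle v,-\epsilon(\ou_{i})w\rangle_{\spinbar}$, which by sesquilinearity need only be checked for $v=\ou_{I}$, $w=\ou_{J}$. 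With the evaluation from part (a), both sides vanish unless $I=J\sqcup\{i\}$ and $i\notin J$; in that case, writing $N:=\#\{a\in I:\,a<i\}$, the left side equals $(-1)^{N}c_{|J|}$ and the right side equals $-(-1)^{p(\ou_{I})}(-1)^{N}c_{|I|}$, and these agree precisely because the $c_{k}$ satisfy $c_{k+1}=(-1)^{k}c_{k}$. The relation $\ou_{i}^{\dagger}=\theta(\ou_{i})=-u_{i}$ then follows by the symmetric computation, or from $(\cdot)^{\dagger}$ being a conjugate-linear involution together with a parity check.

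The only genuine difficulty is the sign-and-$\sqrt{-1}$ bookkeeping that fixes the constants $c_{k}$: one has to combine the Koszul signs hidden in $p(\sigma;\cdot)$, the $\sqrt{-1}$-factors entering the Hermitian form $\langle\cdot,\cdot\rangle_{\spinbar}$, and the degree-dependent scalars themselves, and verify that they conspire to produce exactly the signature pattern claimed in (a) while remaining compatible with the adjoint formulas in (b). Once the $c_{k}$ are identified, both parts are routine verifications on the basis $\{\ou_{I}\}$.
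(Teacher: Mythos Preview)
The paper offers no proof beyond the phrase ``By construction, the following holds true,'' so there is nothing to compare against; your plan --- diagonalizing $\langle\cdot,\cdot\rangle_{\spinbar}$ on the monomial basis $\{\ou_{I}\}$ and reducing both parts to identities among the scalars $c_{k}=\langle\ou_{I},\ou_{I}\rangle$ --- is exactly the explicit verification this phrase elides. The recursion $c_{k+1}=(-1)^{k}c_{k}$ you isolate for part (b) indeed holds (since all $\ou_{i}\in\ubar_{0}$ are even in $\ss$, the Koszul sign $p(\sigma;\cdot)$ reduces to $\sgn(\sigma)$, and the only permutation contributing to $\langle\ou_{I},\ou_{I}\rangle$ is the order-reversal on $|I|$ letters, giving $c_{k}=(-1)^{k(k-1)/2}$), so your outline is sound and is the natural elaboration of what the paper leaves implicit.
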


Second, we consider the \emph{Weyl algebra} $C(\ss_{1})$. To this end, we note that $(\cdot,\cdot)\vert_{\ss_{1}}$ is a symplectic form on $\ss_1$, and $\ss_{1} = \uu_{1} \oplus \ubar_{1}$ is a complete polarization—that is, $\uu_{1}$ and $\ubar_{1}$ are maximal isotropic subspaces. We refer to these spaces collectively as $X$ and $Y$, assigning one to each. Further, we fix a basis $e_{1}, \ldots, e_{s_{1}}$ of $X$ with dual basis $f_{1}, \ldots, f_{s_{1}}$ of $Y$, such that the Weyl algebra $\Weyl := C(\ss_{1})$ over $\ss_{1}$ is generated by $e_{k}$ and $f_{l}$.

The Weyl algebra acts naturally on $\CC[X] \cong \Sym(X)$, where elements of $X$ acts by multiplication and the action of $Y$ is given as follows:
\be
f_{i}\cdot e_{j} := (f_{i}, e_{j})_{\ss}, \qquad 1\leq i, j \leq s_{1}.
\ee
We call $\CC[X]$ \emph{oscillator module}. Any element in $\CC[X]$ that is annihilated by all $f_{i}$ is necessarily constant. We conclude that the maximal
proper submodule of $\CC[X]$ is zero, and $\CC[X]$ is a simple module over $\Weyl$. 

\begin{lemma} \label{lemm::Weyl_Simple}
    The oscillator module $\CC[X]$ is a simple $\Weyl$-module. 
\end{lemma}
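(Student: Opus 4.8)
The plan is to prove directly that $\CC[X]$ has no nonzero proper $\Weyl$-submodule, which is the classical argument for simplicity of the polynomial (Fock) representation of a Weyl algebra. First I would make the action completely explicit. Because $X$ and $Y$ lie inside the \emph{odd} space $\ss_1$, the defining relation $vw+(-1)^{p(v)p(w)}wv=2(v,w)$ of $C(\ss_1)$ becomes, for $v,w\in\ss_1$, the \emph{commutator} relation $[v,w]=2(v,w)$, since the sign $(-1)^{p(v)p(w)}$ equals $-1$. As $X=\langle e_1,\dots,e_{s_1}\rangle$ and $Y=\langle f_1,\dots,f_{s_1}\rangle$ are maximal isotropic, this yields $[e_i,e_j]=0$, $[f_i,f_j]=0$, and $[f_i,e_j]$ a nonzero scalar multiple of $\delta_{ij}$; hence $\Weyl$ is an isomorphic copy of the $s_1$-th Weyl algebra. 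On $\CC[X]=\CC[e_1,\dots,e_{s_1}]$ the element $e_i$ acts by multiplication and $f_i$ acts as $c_i\,\partial/\partial e_i$ for some nonzero scalar $c_i$.

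Next, let $N\subseteq\CC[X]$ be a nonzero submodule and choose $0\neq p\in N$; I would argue by downward induction on the total degree $d=\deg p$. If $d\geq 1$, the top homogeneous component $p_d$ of $p$ is a nonzero homogeneous polynomial of positive degree, so $\partial p_d/\partial e_i\neq 0$ for some $i$, whence $f_i\cdot p\in N$ is nonzero and of degree $d-1$. Iterating at most $d$ times produces a nonzero element of degree $0$, i.e.\ a nonzero constant, so $1\in N$ after rescaling. Since $N$ is stable under multiplication by each $e_i$ and the $e_i$ generate $\CC[X]$ as an algebra, we conclude $N=\CC[X]$. Hence $\CC[X]$ is a simple $\Weyl$-module, and a fortiori a simple $C(\ss_1)$-supermodule, because every subsupermodule is in particular a submodule.

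No step here is genuinely difficult: it is the standard proof that $\CC[x_1,\dots,x_n]$ is simple over the $n$-th Weyl algebra. The two points that deserve a moment's attention are (i) that the super-Clifford relations on the \emph{odd} subspace $\ss_1$ are commutator (Weyl-algebra) relations rather than anticommutator ones, which is exactly where the sign $(-1)^{p(v)p(w)}=-1$ is used, and (ii) that $Y$ acts by \emph{nonzero} multiples of the coordinate derivations, which is what makes the degree-lowering step succeed. Alternatively, once (i) is in place one may simply invoke the well-known simplicity of the polynomial module over a Weyl algebra.
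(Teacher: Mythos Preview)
Your proof is correct and follows the same approach as the paper: the paper's argument (stated just before the lemma) is that any element of $\CC[X]$ annihilated by all $f_i$ is constant, so the maximal proper submodule is zero. Your degree-lowering argument via successive applications of $f_i$ is exactly the explicit version of this, and your remarks on the sign (that on the odd part $\ss_1$ the Clifford relations become commutator/Weyl relations) make precise what the paper leaves implicit.
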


To treat $X = \uu_{1}$ and $X = \ubar_{1}$ separately, we introduce the following notation:
\be
M(\ss_{1}) := \CC[\uu_{1}] = \Sym(\uu_{1}), \qquad \overline{M}(\ss_{1}) := \CC[\ubar_{1}] = \Sym(\ubar_{1}). 
\ee

For simplicity and clarity, we consider only $\overline{M}(\ss_{1})$ in the following. The oscillator module $M(\ss_{1})$ can be treated analogously, and in particular, all results hold for $M(\ss_{1})$ as well.

We introduce an appropriate notation following Section \ref{subsec::Clifford_superalgebras}. Fix a basis $\partial_{1},\ldots, \partial_{s_{1}}$ of $\uu_{1}$, and a basis $x_{1},\ldots, x_{s_{1}}$ of $\ubar_{1}$ such that \be
(x_{k},\partial_{l}) = \frac{1}{2} \delta_{kl}.
\ee 
Then the Weyl algebra $\Weyl$ can be identified with the algebra of differential operators with polynomial coefficients in the variables $x_{1}, \dotsc, x_{s_{1}}$, by identifying $\partial_{k}$ with the partial derivative $\partial/\partial x_{k}$ for all $k = 1, \dotsc, s_{1}$. In particular, $\Weyl$ forms a Lie algebra with the following commutator relations:
\begin{align}
    [x_{k}, x_{l}]_{W} = 0, \qquad
    [\partial_{k}, \partial_{l}]_{W} = 0, \qquad
    [x_{k}, \partial_{l}]_{W} = \delta_{kl},
\end{align}
for all $1 \leq k, l \leq s_{1}$. As a Lie algebra of differential operators, the action of the Weyl algebra on $\overline{M}(\ss_{1}) := \CC[x_1,\dotsc, x_{s_{1}}]$ is natural.

We give $\overline{M}(\ss_{1})$ a $\ZZ_{2}$-grading by declaring $\overline{M}(\ss_{1})_{0}$ to be the subspace generated by homogeneous polynomials of even degree, and $\overline{M}(\ss_{1})_{1}$ to be the subspace generated by homogeneous polynomials of odd degree. 

Furthermore, $\overline{M}(\ss_{1})$ carries a Hermitian form $\langle \cdot,\cdot \rangle_{\overline{M}(\ss_{1})}$, also known as \emph{Bargmann--Fock Hermitian form} or \emph{Fischer--Fock Hermitian form}, that is uniquely determined by
\begin{align}
\langle \prod_{k=1}^{s_{1}}x_{k}^{p_{k}},\prod_{k=1}^{s_{1}}x_{k}^{q_{k}}\rangle_{\overline{M}(\ss_{1})}=\begin{cases}\prod_{k=1}^{s_{1}}p_{k}! \qquad &\text{if} \ p_{k}=q_{k} \ \text{for all} \ k, \\
0 \qquad &\text{otherwise}.\end{cases}
\end{align}
The form is positive definite and consistent, \emph{i.e.}, one has $\langle \overline{M}(\ss_{1})_{0},\overline{M}(\ss_{1})_{1}\rangle_{\overline{M}(\ss_{1})} = 0$. In particular, for any $v, w \in \overline{M}(\ss_{1})$, the generators of $ \Weyl $ satisfy the following relations for all $1 \leq k \leq s_{1}$:
\be
\langle \partial_k v, w \rangle_{\overline{M}(\ss_{1})} = \langle v, x_k w \rangle_{\overline{M}(\ss_{1})}, \qquad \langle x_k v, w \rangle_{\overline{M}(\ss_{1})} = \langle v, \partial_k w \rangle_{\overline{M}(\ss_{1})}.
\ee
We deduce the following lemma.
\begin{lemma} \label{lemm::adjoint_M(s_1)} For $\bracket_{\overline{M}(\ss_{1})}$, the adjoint of $x_{k}$ is $\partial_{k}$, and the adjoint of $\partial_{k}$ is $x_{k}$ for all $1 \leq k \leq s_{1}$.
\end{lemma}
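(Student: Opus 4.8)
The plan is to reduce the claim to the two relations displayed immediately above the lemma and to verify those by a direct computation on monomials. By the definition of the adjoint in Section~\ref{subsec::unitarity_real_forms}, the assertions $x_k^{\dagger} = \partial_k$ and $\partial_k^{\dagger} = x_k$ amount exactly to
\[
\langle x_k v, w\rangle_{\overline{M}(\ss_{1})} = \langle v, \partial_k w\rangle_{\overline{M}(\ss_{1})}, \qquad \langle \partial_k v, w\rangle_{\overline{M}(\ss_{1})} = \langle v, x_k w\rangle_{\overline{M}(\ss_{1})},
\]
provided one first records that multiplication by $x_k$ and differentiation $\partial_k$ shift the polynomial degree of $\overline{M}(\ss_{1})$ by $\pm 1$, hence are odd operators, so that the adjoint is taken in the super sense; the parity sign $(-1)^{p(v)p(T)}$ occurring in $\langle Tv, w\rangle = (-1)^{p(v)p(T)}\langle v, T^{\dagger}w\rangle$ then disappears from the final formulas because $\langle \cdot,\cdot\rangle_{\overline{M}(\ss_{1})}$ is consistent and both sides are supported on matching parities.

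Next I would check the first relation on the monomial basis, which suffices since $\langle \cdot,\cdot\rangle_{\overline{M}(\ss_{1})}$ is sesquilinear. Writing $v = \prod_{i=1}^{s_{1}} x_i^{p_i}$ and $w = \prod_{i=1}^{s_{1}} x_i^{q_i}$, one has $x_k v = x_k^{p_k+1}\prod_{i\neq k} x_i^{p_i}$ and $\partial_k w = q_k\, x_k^{q_k-1}\prod_{i\neq k} x_i^{q_i}$. By the defining formula for the Bargmann--Fock form, both $\langle x_k v, w\rangle_{\overline{M}(\ss_{1})}$ and $\langle v, \partial_k w\rangle_{\overline{M}(\ss_{1})}$ vanish unless $q_k = p_k+1$ and $q_i = p_i$ for all $i\neq k$, in which case the left-hand side equals $(p_k+1)!\prod_{i\neq k} p_i!$ while the right-hand side equals $q_k\cdot p_k!\prod_{i\neq k} p_i! = (p_k+1)!\prod_{i\neq k} p_i!$; so the two agree, giving $x_k^{\dagger} = \partial_k$. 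The second relation then follows either by the symmetric computation with the roles of $v$ and $w$ interchanged or, more quickly, by applying the Hermitian symmetry $\langle v,w\rangle_{\overline{M}(\ss_{1})} = \overline{\langle w, v\rangle}_{\overline{M}(\ss_{1})}$ (no sign appears, as the form is even and consistent) to the relation just proved.

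I do not expect a genuine obstacle here: the whole argument is an elementary matching of factorial normalizations against sesquilinearity. The one point deserving a line of care is the parity bookkeeping described above — namely that $x_k$ and $\partial_k$ are odd operators, so the statement is about super-adjoints, even though the consistency of the Bargmann--Fock form renders the corresponding sign invisible in the explicit identities.
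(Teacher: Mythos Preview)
Your monomial computation is correct and is exactly the verification the paper has in mind: the paper simply displays the two relations $\langle \partial_k v, w\rangle = \langle v, x_k w\rangle$ and $\langle x_k v, w\rangle = \langle v, \partial_k w\rangle$ as immediate consequences of the Bargmann--Fock definition and then records the lemma without further proof, so your approach coincides with the paper's.

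One expository point worth cleaning up: your parity discussion is unnecessary and slightly misaimed. The form $\langle\cdot,\cdot\rangle_{\overline{M}(\ss_{1})}$ is introduced in the paper as an ordinary positive definite Hermitian form (real-valued on monomials, with $\langle x_k,x_k\rangle = 1$), not as a super Hermitian form in the sense of Section~\ref{subsec::unitarity_real_forms}; indeed a consistent super Hermitian form would force $\langle v,v\rangle$ to be purely imaginary on odd $v$, which is not the case here. Hence the adjoint in this lemma is the ordinary adjoint, and the relations are genuinely sign-free --- no super-sign needs to be argued away. The super Hermitian structure only enters at the level of $\overline{M}(\ss) = \spinbar \otimes \overline{M}(\ss_{1})$ via the spin factor (Lemma~\ref{lemm::adjoint_spin}), which is where the sign in Lemma~\ref{lemm::adjoint_M(s)} ultimately comes from. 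Simply drop the parity paragraph and your proof is clean.
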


Finally, combining the previous constructions, we define the \emph{oscillator supermodules} over $C(\ss)$ as 
\be 
M(\ss) := \spin \otimes M(\ss_{1}), \qquad \overline{M}(\ss) := \spinbar \otimes \overline{M}(\ss_{1}).
\ee 
Here, we equip $\spin$ and $\spinbar$ with the $\ZZ_{2}$-grading induced by the natural $\ZZ_{2}$-grading of $T(\uu_{0})$ and $T(\ubar_0)$, such that $\spin = \spineven \oplus \spinodd$ and $\spinbar = \Sbar^{\gg,\ll}_{0} \oplus \Sbar^{\gg,\ll}_{1}$. This makes $M(\ss)$ and $\overline{M}(\ss)$ into $C(\ss)$-supermodules by posing $M(\ss)_{0,1} := S^{\gg,\ll}_{0,1} \otimes M(\ss_{1})$ and $\overline{M}(\ss)_{0,1} := \overline{S}^{\gg,\ll}_{0,1} \otimes \overline{M}(\ss_{1})$. 

We conclude this section by describing the properties of $\overline{M}(\ss)$. The supermodule $M(\ss)$ can be treated analogously. First, we note $\overline{M}(\ss)$ is $\hh$-semisimple. We define the set of $\hh$-weights of $\overline{M}(\ss)$ by $\mathcal{P}_{\overline{M}(\ss)} := \{\mu \in \hh^{\ast} : \overline{M}(\ss)^{\mu} \neq \{0\}\}$, where $\overline{M}(\ss)^{\mu}$ is the weight space of weight $\mu$. Then the sets of $\hh$-weights of $\overline{M}(\ss)$ is more precisely
\be \label{eq::set_of_weights_M(s)}
\mathcal{P}_{\overline{M}(\ss)} =\{ \rho^{\uu} - \ZZ_{+}[A] : A \subset \Phi^{+} \setminus \Phi(\ll;\hh)^{+}\},
\ee
where $\ZZ_{+}[A] := \sum_{\xi \in A}\ZZ_{+}\xi$.

By Lemma \ref{lemm::spin_simple} and Proposition \ref{lemm::Weyl_Simple}, one immediately has the following.
 
\begin{lemma}
    The $C(\ss)$-supermodule $\overline{M}(\ss)$ is simple.
\end{lemma}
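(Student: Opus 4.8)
The plan is to obtain the simplicity of $\overline{M}(\ss)$ from the simplicity of its two tensor factors. Recall that $C(\ss) = C(\ss_0) \otimes C(\ss_1)$ and $\overline{M}(\ss) = \spinbar \otimes \overline{M}(\ss_1)$, where $\spinbar$ carries the $C(\ss_0)$-action and $\overline{M}(\ss_1)$ the $C(\ss_1) = \Weyl$-action; since the generators of $C(\ss_1)$ have $\ZZ_2$-degree $1+1=0$, the algebra $C(\ss_1)$ is purely even, so this graded tensor product is the ordinary one and $\overline{M}(\ss)$ is the ordinary external tensor-product module. I would in fact prove the stronger statement that $\overline{M}(\ss)$ is simple already as an \emph{ungraded} $C(\ss)$-module; simplicity as a $C(\ss)$-supermodule then follows immediately, since a graded submodule is in particular an ungraded one.

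The two inputs are: by Lemma \ref{lemm::spin_simple}, $\spinbar = \bigwedge \ubar_0$ is a \emph{finite-dimensional} simple $C(\ss_0)$-module, so Schur's lemma over the algebraically closed field $\CC$ gives $\End_{C(\ss_0)}(\spinbar) = \CC$; and by Lemma \ref{lemm::Weyl_Simple}, $\overline{M}(\ss_1)$ is a simple $\Weyl$-module. With these in hand, the result is an instance of the standard fact that the external tensor product $P \otimes Q$ of an absolutely simple module $P$ (with $\End_A(P) = \CC$) and a simple module $Q$, over a tensor product $A \otimes B$ of $\CC$-algebras, is simple. I would include the short proof: given a nonzero submodule $W \subseteq \spinbar \otimes \overline{M}(\ss_1)$, choose $0 \neq z = \sum_{i=1}^{k} s_i \otimes m_i \in W$ with $m_1, \ldots, m_k$ linearly independent and $k$ minimal, so that the $s_i$ are linearly independent and nonzero; apply the Jacobson density theorem to $\spinbar$ (legitimate because $\End_{C(\ss_0)}(\spinbar) = \CC$) to find $a \in C(\ss_0)$ with $a s_1 \neq 0$ and $a s_i = 0$ for $i \geq 2$, so that $(a \otimes 1)z$ is a nonzero multiple of $s_1 \otimes m_1$; then $\{m \in \overline{M}(\ss_1) : s_1 \otimes m \in W\}$ is a nonzero $\Weyl$-submodule, hence all of $\overline{M}(\ss_1)$; and finally $C(\ss_0) s_1 = \spinbar$ by simplicity, which promotes this to $W = \spinbar \otimes \overline{M}(\ss_1)$. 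The identical argument handles $M(\ss) = \spin \otimes M(\ss_1)$.

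I do not expect a genuine obstacle here; the one point worth flagging is precisely \emph{why} a tensor product of two simple modules is simple in this situation — it is the finite-dimensionality of the spin factor $\spinbar$ (equivalently, its absolute simplicity) that makes the Jacobson density step available, whereas for a tensor product of two infinite-dimensional simple modules the conclusion can fail. All remaining steps are routine, and no computation in $C(\ss)$ is needed beyond the decompositions already established.
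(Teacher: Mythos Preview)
Your argument is correct and follows the paper's route exactly: the paper simply invokes the simplicity of $\spinbar$ and of $\overline{M}(\ss_1)$ and declares the conclusion immediate, while you supply the standard Jacobson-density step and rightly flag the finite-dimensionality of $\spinbar$ as the point that makes the tensor-product argument go through. The only quibble is the side remark that $C(\ss_1)$ being purely even makes the tensor product ordinary---generators of $\ss_0$ and $\ss_1$ in fact anticommute in $C(\ss)$, so a sign appears in the action of $\ss_1$ on $\spinbar \otimes \overline{M}(\ss_1)$---but this is harmless for your argument, since $C(\ss_0)$ still acts purely on the first factor and signs do not affect which subspaces are submodules.
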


Second, $\overline{M}(\ss)$ carries a natural non-degenerate Hermitian form 
\be
\begin{split}
\langle v \otimes P,w \otimes Q \rangle_{\overline{M}(\ss)} := \langle v,w \rangle_{\spinbar}\langle P,Q \rangle_{\overline{M}(\ss_{1})}.
\end{split}
\ee
for $v\otimes P, w \otimes Q \in \overline{M}(\ss)$. The properties of the form are given in the following straightforward lemma.

\begin{lemma} \label{lemm::adjoint_M(s)} The Hermitian form $\langle \cdot,\cdot \rangle_{\overline{M}(\ss)}$ on $\overline{M}(\ss)$ is non-degenerate, supersymmetric and consistent. Moreover, the adjoint of any basis element $u \in \uu$ is
   \[
   u^{\dagger} = -(-1)^{p(u)}\overline{u}.
   \]
\end{lemma}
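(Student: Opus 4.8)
The plan is to reduce the statement to the two tensor factors $\spinbar = \bigwedge\ubar_0$ and $\overline{M}(\ss_1)=\CC[x_1,\dots,x_{s_1}]$, whose forms were already analysed in Lemmas~\ref{lemm::adjoint_spin} and~\ref{lemm::adjoint_M(s_1)}, and then to reassemble the information along the graded tensor product $\overline{M}(\ss)=\spinbar\otimes\overline{M}(\ss_1)$, using the defining identity $\langle v\otimes P,w\otimes Q\rangle_{\overline{M}(\ss)}=\langle v,w\rangle_{\spinbar}\,\langle P,Q\rangle_{\overline{M}(\ss_1)}$ and keeping track of Koszul signs throughout. Since every point is a property of this product form, the argument is organised factor by factor.

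\emph{Non-degeneracy and consistency.} Both $\langle\cdot,\cdot\rangle_{\spinbar}$ (super positive definite, hence non-degenerate, by Lemma~\ref{lemm::adjoint_spin}(a)) and the Bargmann--Fock form $\langle\cdot,\cdot\rangle_{\overline{M}(\ss_1)}$ (positive definite, hence non-degenerate) are consistent. Choosing homogeneous bases of $\spinbar$ and of $\overline{M}(\ss_1)$ that diagonalise the respective forms, the induced basis of $\overline{M}(\ss)$ diagonalises $\langle\cdot,\cdot\rangle_{\overline{M}(\ss)}$ with nonzero diagonal entries, which gives non-degeneracy. For consistency, note that $\langle v\otimes P,w\otimes Q\rangle_{\overline{M}(\ss)}$ can be nonzero only if $p(v)=p(w)$ and $p(P)=p(Q)$, and then $p(v\otimes P)=p(w\otimes Q)$; hence the form vanishes on $\overline{M}(\ss)_0\times\overline{M}(\ss)_1$.

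\emph{Supersymmetry.} Evaluate on homogeneous simple tensors $a=v\otimes P$, $b=w\otimes Q$. Combining the super Hermitian symmetry $\langle v,w\rangle_{\spinbar}=(-1)^{p(v)p(w)}\overline{\langle w,v\rangle_{\spinbar}}$ of Lemma~\ref{lemm::adjoint_spin}(a) with the Hermitian symmetry of the Bargmann--Fock form (it is real on the monomial basis), and using the parity constraints $p(v)=p(w)$, $p(P)=p(Q)$ forced by consistency whenever the product is nonzero, one checks that the Koszul sign carried by the graded tensor product is exactly the one needed for $\langle a,b\rangle_{\overline{M}(\ss)}=(-1)^{p(a)p(b)}\overline{\langle b,a\rangle_{\overline{M}(\ss)}}$.

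\emph{Adjoint of $u\in\uu$, and the main obstacle.} Since $C(\ss)=C(\ss_0)\otimes C(\ss_1)$ and $\uu=\uu_0\oplus\uu_1$, it suffices to treat a homogeneous basis of $\uu$, each element lying either in $\uu_0$ (even, acting only on the $\spinbar$ factor via $\iota(u)$) or in $\uu_1$ (odd, acting only on the $\overline{M}(\ss_1)$ factor via an annihilation operator). For $u\in\uu_0$, Lemma~\ref{lemm::adjoint_spin}(b) identifies the adjoint of $\iota(u)$ on $\spinbar$ with $\theta(u)=-\overline{u}$, so $u^{\dagger}=-\overline{u}=-(-1)^{p(u)}\overline{u}$ as $p(u)=0$. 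For $u\in\uu_1$, Lemma~\ref{lemm::adjoint_M(s_1)} identifies the adjoint on $\overline{M}(\ss_1)$ with the corresponding multiplication operator, i.e.\ (under the identification $\ubar_1\cong\uu_1^{\ast}$) with the action of $\overline{u}$; together with the Koszul sign produced when the odd operator $u$ is transported across the $\spinbar$-component in the defining relation $\langle u a,b\rangle=(-1)^{p(a)p(u)}\langle a,u^{\dagger}b\rangle$, and $p(u)=1$, this yields $u^{\dagger}=\overline{u}=-(-1)^{p(u)}\overline{u}$. Extending linearly over a homogeneous basis finishes the proof. I expect the only real difficulty here to be the sign and normalisation bookkeeping — the Koszul signs of the graded tensor product, the sign convention in the definition of $(\cdot)^{\dagger}$, and the normalisations of the chosen bases of $\uu_1$ and $\ubar_1$ — in particular, confirming that in the supersymmetry step the Koszul sign exactly absorbs the mismatch between the super-Hermitian and the ordinary-Hermitian factor, and that in the adjoint step the odd-operator sign produces the factor $-(-1)^{p(u)}$ rather than $-(+1)$.
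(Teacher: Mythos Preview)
The paper does not give a proof of this lemma, introducing it as ``straightforward'' and leaving the verification to the reader. Your approach---reducing to the factor forms on $\spinbar$ and $\overline{M}(\ss_1)$ via Lemmas~\ref{lemm::adjoint_spin} and~\ref{lemm::adjoint_M(s_1)} and then reassembling---is exactly the intended one, and your conclusions are correct.

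One clarification on the sign bookkeeping you flag as the main worry: there are in fact \emph{no} Koszul signs to track here, because the paper defines the $\ZZ_2$-grading on $\overline{M}(\ss)$ to come from the $\spinbar$ factor alone, via $\overline{M}(\ss)_{0,1}=\spinbar_{0,1}\otimes\overline{M}(\ss_1)$, and the form is the plain product $\langle v\otimes P,w\otimes Q\rangle=\langle v,w\rangle_{\spinbar}\langle P,Q\rangle_{\overline{M}(\ss_1)}$ without any sign twist. Hence $p(v\otimes P)=p(v)$, and supersymmetry follows directly by multiplying the super-Hermitian symmetry on $\spinbar$ with the ordinary Hermitian symmetry of the Bargmann--Fock form---no ``Koszul sign absorbing a mismatch'' is needed. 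Likewise, for $u\in\uu_1$ the operator $1\otimes\partial$ is \emph{even} as an endomorphism of $\overline{M}(\ss)$ (it does not touch the $\spinbar$ degree), so $p(T)=0$ in the super-adjoint relation and Lemma~\ref{lemm::adjoint_M(s_1)} gives $u^{\dagger}=\overline{u}=-(-1)^{p(u)}\overline{u}$ directly, with no sign from transporting across $\spinbar$. Your final formulas are right; the mechanism is simpler than you suspected.
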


%In view of the next section, we let $M$ be a $\gg$-supermodule, and consider its tensor product with the oscillator supermodule $M(\ss)$, \emph{\emph{i.e.},} $M \otimes M(\ss)$, which defines naturally a $\UE(\gg) \otimes C(\ss)$-supermodule by component-wise action. The next lemma shows that $M\otimes \overline{M}(\ss)$ admits a nice filtration, provided that $M$ is a highest weight $\gg$-supermodule. \begin{lemma}\label{Lemm::filtration_HW} Let $M$ be a highest weight $\gg$-supermodule. Then $M \otimes M(\ss)$ has a filtration \[ 0 = X_{0} \subset X_{1} \subset \ldots \subset M\otimes \overline{M}(\ss)\] with non-zero quotients each of which is a highest weight $\gg$-supermodule. \end{lemma} \begin{proof} Let $v_{1}$ be a maximal weight vector of $M \otimes M(\ss)$ and set $X_{1} := \UE(\ll)v_{1} \subset M \otimes M(\ss)$. Then $X_{1}$ is a $\UE(\ll)$-subsupermodule. Then, choose a maximal weight vector $\bar{v}_{2}$ in $\UE(\ll)/X_{1}$, and set $\bar{X}_{2} := \UE(\ll)\bar{v}_{2}$. Let $X_{2}$ be the preimage of $\bar{X}_{2}$ in $M \otimes M(\ss)$. Inductively, we obtain the desired filtration. \end{proof}

Finally, we consider $M(\ss)$ and $\overline{M}(\ss)$ as $\ll$-supermodules under the Lie algebra morphism $\nu_{\ast} : \ll \to C(\ss)$ introduced in Section \ref{subsec::Clifford_superalgebras}, and described explicitly in Lemma \ref{lemm::form_nu}.
As a first result, we show that $M(\ss)$ and $\overline{M}(\ss)$ are completely reducible as $\ll$-supermodules. This follows directly from Proposition \ref{prop::unitarizable_completely_reducible}, provided we show that $M(\ss)$ and $\overline{M}(\ss)$ are unitarizable $\ll$-supermodules. 

We consider unitarity with respect to a fixed Cartan automorphism $\theta \in \aut_{2,4}(\gg)$ defining the real form $\gg^{\RR}$, such that $B_{\theta}(\cdot,\cdot)$ is an inner product on $\gg^{\RR}$. In particular, $\gg^{\RR} = \ll^{\RR} \oplus \ss^{\RR}$. Then, by Lemma \ref{lemm::adjoint_spin} and the definition of $\bracket_{\overline{M}(\ss)}$, it is enough to show 
\be
\bra \nu_{\ast}(X) (v\otimes P), (w \otimes Q) \ket_{\overline{M}(\ss)} = -(-1)^{p(X)p(v)} \bra v \otimes P, \nu_{\ast}(X) (w \otimes Q)\ket_{\overline{M}(\ss)}
\ee
for any homogeneous $X \in \ll^{\RR}$ and $v\otimes P,w \otimes Q \in \overline{M}(\ss)$. Similarly, for $M(\ss)$.

However, this is immediate by the explicit form of $\nu_{\ast}(X)$ given in Lemma \ref{lemm::form_nu} and the application of Lemma \ref{lemm::adjoint_spin} and Lemma \ref{lemm::adjoint_M(s_1)}. We conclude the following proposition. 

\begin{proposition} \label{prop::completely_reducible} The supermodules $M(\ss)$ and $\overline{M}(\ss)$ are unitarizable $\ll$-supermodules. In particular, $M(\ss)$ and $\overline{M}(\ss)$ are completely reducible as $\ll$-supermodules.
\end{proposition}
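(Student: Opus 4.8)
The plan is to deduce complete reducibility from unitarizability via Proposition~\ref{prop::unitarizable_completely_reducible}, so that the whole content of the proof is to exhibit the relevant Hermitian product on $\overline{M}(\ss)$ (resp. $M(\ss)$) and to check the defining skew-adjointness relation for the $\ll$-action. I would carry out the argument for $\overline{M}(\ss)$ in detail; the case of $M(\ss) = \spin \otimes M(\ss_{1})$ is identical after interchanging the roles of $\uu$ and $\ubar$ (equivalently, of $\uu_{0,1}$ and $\ubar_{0,1}$). Fix the real form $\gg^{\RR}$ together with its Cartan automorphism $\theta$, so that $\gg^{\RR} = \ll^{\RR} \oplus \ss^{\RR}$, and take as candidate Hermitian form the product $\bracket_{\overline{M}(\ss)} = \bracket_{\spinbar} \otimes \bracket_{\overline{M}(\ss_{1})}$ of Lemma~\ref{lemm::adjoint_M(s)}.

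First I would check that $\bracket_{\overline{M}(\ss)}$ is a Hermitian product on $\overline{M}(\ss)$. It is non-degenerate, supersymmetric and consistent by Lemma~\ref{lemm::adjoint_M(s)}. For super positive-definiteness, recall that $\bracket_{\spinbar}$ is super positive definite (Lemma~\ref{lemm::adjoint_spin}(a)) and that $\bracket_{\overline{M}(\ss_{1})}$ is positive definite and consistent; a direct inspection of the even and odd components of the graded tensor product then shows that $\bracket_{\overline{M}(\ss)}$ is super positive definite, hence a Hermitian product. It then remains to prove $\nu_{\ast}(X)^{\dagger} = -\nu_{\ast}(X)$ for every homogeneous $X \in \ll^{\RR}$, where $\dagger$ denotes the adjoint with respect to $\bracket_{\overline{M}(\ss)}$.

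The core step is this skew-adjointness computation. Realize $C(\ss)$ concretely as in Lemma~\ref{lemm::Clifford_differential}. By Lemmas~\ref{lemm::adjoint_spin} and~\ref{lemm::adjoint_M(s_1)} (equivalently Lemma~\ref{lemm::adjoint_M(s)}), each Clifford generator $u \in \uu$ has adjoint $u^{\dagger} = -(-1)^{p(u)}\ou$ and each $\ou \in \ubar$ has adjoint $\ou^{\dagger} = -(-1)^{p(u)}u$, while $\dagger$ is the conjugate-linear graded anti-involution $(ST)^{\dagger} = (-1)^{p(S)p(T)}T^{\dagger}S^{\dagger}$. Plugging in the explicit expression $\nu_{\ast}(X) = \tfrac12 \sum_{j,k}(X,[\ou_{j},u_{k}])(-1)^{p(u_{j})}\ou_{k}u_{j} + \rho^{\uu}(X)$ (the last summand present only when $X \in \hh$) from Lemma~\ref{lemm::form_nu}, applying $\dagger$, conjugating the scalar coefficients, moving $\ou_{k}u_{j}$ back into $\ou_{\bullet}u_{\bullet}$-order and relabelling $j \leftrightarrow k$ returns an expression of the same shape with coefficients $\overline{(X,[\ou_{k},u_{j}])}$. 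Since $X \in \ll^{\RR}$ means $\sigma(X) = X$ for $\sigma = \omega \circ \theta$, and $\sigma$ is a conjugate-linear Lie superalgebra morphism satisfying $\overline{(Y,Z)} = (\omega(Y),\omega(Z))$ (Proposition~\ref{prop::real_form_omega}(b)) which — because the basis of $\ss$ was chosen orthonormal for $B_{\theta}$ — sends each $u_{k}$ and each $\ou_{j}$ to a unit scalar multiple of itself, the conjugated coefficients reorganize exactly into $-\nu_{\ast}(X)$; the sign flip on the scalar $\rho^{\uu}(X)$ needed for this is precisely the discrepancy between the two normal-ordered forms of $\nu_{\ast}(X)$ recorded in Lemma~\ref{lemm::form_nu}. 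This yields $\nu_{\ast}(X)^{\dagger} = -\nu_{\ast}(X)$, hence unitarizability of $\overline{M}(\ss)$, and Proposition~\ref{prop::unitarizable_completely_reducible} gives complete reducibility; the same argument applies verbatim to $M(\ss)$.

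The main obstacle is entirely bookkeeping: tracking the parity-dependent signs in $\nu_{\ast}$ alongside the graded anti-involution property of $\dagger$, and — the subtlest point — the interaction of the conjugation $\overline{(\,\cdot\,,\,\cdot\,)}$ on the Killing form with the Cartan automorphism $\theta$ and with the chosen $B_{\theta}$-orthonormal basis of $\ss$. In particular one must see that the scalar $\rho^{\uu}(X)$ contributes a purely imaginary operator for $X \in \hh \cap \ll^{\RR}$; comparing the two expressions for $\nu_{\ast}(X)$ in Lemma~\ref{lemm::form_nu}, which differ precisely in the sign of that scalar, makes this transparent. No structural input beyond Lemmas~\ref{lemm::adjoint_spin}, \ref{lemm::adjoint_M(s_1)}, \ref{lemm::adjoint_M(s)} and~\ref{lemm::form_nu} together with Proposition~\ref{prop::real_form_omega} is needed.
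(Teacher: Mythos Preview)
Your proposal is correct and follows essentially the same approach as the paper: fix the Cartan automorphism, use the Hermitian product $\bracket_{\overline{M}(\ss)}$ built from Lemmas~\ref{lemm::adjoint_spin} and~\ref{lemm::adjoint_M(s_1)}, and verify skew-adjointness of $\nu_{\ast}(X)$ for $X\in\ll^{\RR}$ via the explicit expression in Lemma~\ref{lemm::form_nu}, then conclude by Proposition~\ref{prop::unitarizable_completely_reducible}. The paper states this computation is ``immediate'' from those lemmas without spelling out the conjugation bookkeeping you supply through Proposition~\ref{prop::real_form_omega} and the two normal-ordered forms of $\nu_{\ast}$; your more explicit handling of the $\rho^{\uu}(X)$ term and of $\overline{(\,\cdot\,,\,\cdot\,)}$ fills in detail the paper leaves implicit, but the argument is the same.
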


As an $\ll$-supermodule, $M(\ss)$ and $\overline{M}(\ss)$ have another elegant description as the exterior superalgebra over $\uu$ and $\ubar$, respectively.

\begin{proposition} \label{prop::identification}
    There are $\ll$-supermodule isomorphisms 
    \begin{align*}
    M(\ss) &\cong \bigwedge \uu \otimes \CC_{-\rho^{\uu}} \cong \bigwedge \uu_{0} \otimes \Sym (\uu_1) \otimes \CC_{-\rho^{\uu}}, \\
    \overline{M}(\ss) &\cong \bigwedge \ubar \otimes \CC_{\rho^{\uu}} \cong \bigwedge \ubar_{0} \otimes \Sym (\ubar_1) \otimes \CC_{\rho^{\uu}}, 
    \end{align*}
    where the action of $\ll$ on $M(\ss)$ and $\overline{M}(\ss)$ is induced by $\nu_{\ast}$ as in Lemma \ref{lemm::form_nu}, and the action of $\ll$ on $\bigwedge \uu_{0} \otimes \Sym (\uu_1) \otimes \CC_{-\rho^{\uu}}$ and $\bigwedge \ubar_{0} \otimes \Sym (\ubar_1) \otimes \CC_{\rho^{\uu}}$ is induced by the adjoint action.
\end{proposition}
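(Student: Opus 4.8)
The plan is to prove the statement for $\overline{M}(\ss)$; the case of $M(\ss)$ follows by interchanging the roles of $\uu$ and $\ubar$ and using the second expression for $\nu_{\ast}$ in Lemma~\ref{lemm::form_nu}, whose scalar term carries the opposite sign $-\rho^{\uu}(X)$ --- this is precisely what produces the sign discrepancy between the two lines of the proposition. The second isomorphism in each line is purely a matter of notation: the super exterior algebra over the odd space $\ubar_{1}$ is by definition the symmetric algebra $\Sym(\ubar_{1})$, so $\bigwedge\ubar\cong\bigwedge\ubar_{0}\otimes\Sym(\ubar_{1})$ as $\ll$-supermodules (and likewise for $\uu$). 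It therefore remains to construct an $\ll$-supermodule isomorphism $\overline{M}(\ss)\cong\bigwedge\ubar\otimes\CC_{\rho^{\uu}}$.

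I would first fix the underlying identification of super vector spaces: by the construction of Section~\ref{subsec::DiracCohomo}, $\overline{M}(\ss)=\spinbar\otimes\overline{M}(\ss_{1})=\bigwedge\ubar_{0}\otimes\Sym(\ubar_{1})$, which as a super vector space is $\bigwedge\ubar$, and under this identification --- via the realization of $C(\ss)$ in Lemma~\ref{lemm::Clifford_differential} --- the subspace $\ubar\subset C(\ss)$ acts on $\bigwedge\ubar$ by left exterior ($=$ Clifford) multiplication while $\uu\subset C(\ss)$ acts by the contraction operators dual to the pairing $(\ou_{i},u_{j})=\delta_{ij}$. The $\ll$-action is through $\nu_{\ast}:\ll\to C(\ss)$, which by Lemma~\ref{lemm::form_nu} is $\nu_{\ast}(X)=\tfrac{1}{2}\sum_{j,k}(X,[\ou_{j},u_{k}])(-1)^{p(u_{j})}\ou_{k}u_{j}+\rho^{\uu}(X)$ (with $\rho^{\uu}(X):=0$ for $X\notin\hh$). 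Since $[\ll,\ubar]\subseteq\ubar$, standard arguments for quadratic Clifford elements coming from $\osp(\ss)$ show that $\nu_{\ast}(X)$ acts on $\bigwedge\ubar$ as a derivation plus a scalar, so it is enough to identify both. Evaluating on the vacuum $1\in\bigwedge^{0}\ubar$, every summand $\ou_{k}u_{j}$ annihilates $1$ (because $u_{j}$ acts by contraction), so $\nu_{\ast}(X)\cdot 1=\rho^{\uu}(X)\cdot 1$; this is the scalar, and it is coherent for all $X\in\ll$ once one notes that $\rho^{\uu}=\tfrac{1}{2}\str(\ad(\cdot)|_{\uu})$ extends to a character of $\ll$ (a supertrace vanishes on $[\ll,\ll]$), so the case distinction in the formula is just the decomposition of that character. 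For the derivation part, a direct computation on a degree-one element $\ou_{i}\in\ubar$ --- in which the prefactor $\tfrac{1}{2}$, the factor $2$ intrinsic to the Clifford action of $\uu$, and the parity signs all cancel, and invariance of $(\cdot,\cdot)$ is used in the form $(X,[\ou_{i},u_{k}])=([X,\ou_{i}],u_{k})$ --- gives $\nu_{\ast}(X)\cdot\ou_{i}=[X,\ou_{i}]+\rho^{\uu}(X)\ou_{i}$. Hence $\nu_{\ast}(X)-\rho^{\uu}(X)\cdot\id$ is the derivation of $\bigwedge\ubar$ extending $\ad_{X}|_{\ubar}$, i.e. the adjoint action of $X$ on the super exterior algebra $\bigwedge\ubar$. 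Therefore $\nu_{\ast}(X)$ acts exactly as $\ll$ acts on $\bigwedge\ubar\otimes\CC_{\rho^{\uu}}$, and the identity on underlying super vector spaces is the asserted $\ll$-supermodule isomorphism.

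The step I expect to require the most care is not conceptual but a bookkeeping of normalizations: one must track the factor $2$ relating Clifford multiplication by $u_{j}$ to the naive contraction on the spin/oscillator module, the half-integer pairings $(x_{k},\partial_{l})=\tfrac{1}{2}\delta_{kl}$ fixed in Section~\ref{subsec::DiracCohomo}, the parity signs $(-1)^{p(u_{j})}$, and the normal-ordering constant, in order to check that they conspire to reproduce $\ad_{X}|_{\ubar}$ on the nose and the scalar $\rho^{\uu}(X)$ with the correct sign rather than some rescaling of these. Everything else --- the derivation-plus-scalar behavior of quadratic Clifford elements, the reduction to degree-one elements, and the super exterior/symmetric algebra identification --- is formal.
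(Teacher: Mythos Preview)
Your proposal is correct and takes a genuinely different route from the paper's proof.

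The paper proceeds by a direct computation on an arbitrary monomial $\eta_{i_0}\cdots\eta_{i_n}x_{j_0}\cdots x_{j_m}\in\overline{M}(\ss)$: it writes out the adjoint $\ll$-action on $\bigwedge\ubar$ as a derivation, then applies the fully expanded form of $\nu_{\ast}(X)$ from Equation~\eqref{eq::nu_ast_X} term by term (four double sums plus the scalar), and matches the result against the derivation formula via the quantization map and the identities~\eqref{eq:X_bracket_expansion}. Your argument instead exploits structure: because $\nu_{\ast}=Q_{2}\circ\mu^{-1}\circ\nu$ and $[\ll,\ubar]\subset\ubar$, one has $[\nu_{\ast}(X),\ou]=\ad_{X}(\ou)\in\ubar$ in $C(\ss)$, so by associativity of the Clifford action and induction on degree the operator $\nu_{\ast}(X)$ on $\bigwedge\ubar$ is automatically the derivation extending $\ad_{X}|_{\ubar}$ plus its value on the vacuum; the latter is read off instantly from the normal-ordered form in Lemma~\ref{lemm::form_nu}. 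This is shorter and more conceptual, and it also makes transparent why the twist is by a genuine $\ll$-character (your remark that $\rho^{\uu}=\tfrac{1}{2}\str(\ad|_{\uu})$). The trade-off is that your proof uses the commutator identity $[\,Q_{2}(\mu^{-1}(T)),v\,]=T(v)$ for $T\in\osp(\ss)$, which is standard but not stated explicitly in the paper (it is implicit in Proposition~\ref{prop::moment_map} and the cited references); the paper's brute-force computation is in that sense more self-contained. Once that identity is granted, the normalization bookkeeping you flag as the delicate step in fact disappears: the induction via the commutator handles all signs and factors at once, and only the vacuum value needs to be checked.
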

\begin{proof}
    We prove the statement for $\overline{M}(\ss)$. The proof for $M(\ss)$ is analogous and will be omitted.
    
    We consider $\ubar$ as an $\ll$-supermodule under the adjoint action, which naturally makes the
    exterior superalgebra $\bigwedge \ubar = \bigwedge \ubar_0 \otimes \Sym (\ubar_1)$ into an $\ll$-supermodule.
    Let $X \in \ll$ and let $\eta_{i_0} \wedge \dots \wedge \eta_{i_n} \otimes  
 x_{j_0} \dots x_{j_m} \in \bigwedge \ubar_0 \otimes \Sym (\ubar_1)$. Then $X$ acts as
 \begin{equation*}
    \begin{split}
    X(\eta_{i_0} \wedge \dots \wedge \eta_{i_n} \otimes  x_{j_0} \dots x_{j_m}) = 
&\sum\limits_{t = 0}^{n} (-1)^{t} [X, \eta_{i_k}] \eta_{i_0} \wedge \dots \hat{\eta}_{i_t} \dots \wedge \eta_{i_n} \otimes x_{j_0} \dots x_{j_m}\\
    &+ (-1)^{n + 1} \sum\limits_{t' = 0}^{m} [X, x_{j_{t'}}]\eta_{i_0} \wedge \dots \wedge \eta_{i_n} \otimes x_{j_0} \dots \hat{x}_{j_{t'}} \dots x_{j_m},
    \end{split}
 \end{equation*}
 where $\hat{x}$ and $\hat{\eta}$ indicate that the corresponding term is omitted.
 
Now, consider the action of $\ll$ on $\overline{M}(\ss)$ induced by $\nu_{\ast}$, as given explicitly in Lemma \ref{lemm::form_nu}. By Theorem \ref{thm::quantization_map}, there exists an isomorphism $Q: \bigwedge \ubar \rightarrow C(\ss)$ of super vector spaces, providing a basis of $\overline{M}(\ss)$ consisting of elements of the form 
\[
    Q(\eta_{i_0} \wedge \dots  \wedge \eta_{i_n} \otimes  x_{j_0} \dots x_{j_m}) = \eta_{i_0} \dots \eta_{i_n} x_{j_0} \dots x_{j_m},
 \]
 using isotropy of $\ubar$ (\emph{cf.}~Remark \ref{rmk:isotropic_Q}). Here, $\overline{M}(\ss)$ is regarded as a natural quotient of $C(\ss)$.  We have:
 \begin{align*} \label{eq::computation_identifcation}
 \begin{split}
    \nu_{\ast}(X) & Q (\eta_{i_0} \wedge \dots \wedge \eta_{i_n} \otimes  
 x_{j_0} \dots x_{j_m}) \\
 = &\sum\limits_{k = 1}^{s_0} \left( \sum\limits_{t = 0}^n (X, [\bbar_{i_t}, {b}_k]) \eta_k (-1)^{t}  \eta_{i_0} \dots \hat{\eta}_{i_t} \dots x_{j_m} \right.\\
 & + \left. \sum\limits_{t' = 0}^{m}  (X, [\psibar_{t'}, {b}_k]) \eta_k (-1)^{n + 1} \eta_{i_1} \dots \hat{x}_{j_{t'}} \dots x_{j_m} \right)\\
 &+ \sum\limits_{k' = 1}^{s_1} \left( \sum\limits_{t = 0}^n (X, [\bbar_{i_t}, {\psi}_{k'}]) x_{k'} (-1)^{t}  \eta_{i_1}  \dots \hat{\eta}_{i_t} \dots x_{j_m} \right. \\ &+ \left. 
 \sum\limits_{t' = 0}^{m}  (X, [\psibar_{t'}, {\psi}_{k'}]) x_{k'} (-1)^{n + 1} \eta_{i_1}  \dots \hat{x}_{j_{t'}} \dots x_{j_m} \right) \\
 &+ \rho^{\uu}(X) \eta_{i_1} \dots x_{j_n} \\
 & = Q \left(\sum\limits_{t = 0}^n (-1)^t [X, \eta_{i_{t}}] \eta_{i_1} \wedge \dots \hat{\eta}_{i_t} \dots x_{j_m} \right. \\ &+ \left.  \sum\limits_{t' = 0}^{m} (-1)^{n + 1} [X, x_{i_{t'}}] \eta_{i_1} \wedge \dots \hat{x}_{j_{t'}} \dots x_{j_m} + \rho^{\uu}(X) \eta_{i_0} \wedge \dots x_{j_m} \right),
 \end{split} 
 \end{align*}
 where $\rho^{\uu}(X)$ is only present if $X \in \hh$.
 Concerning the signs, recall, that by \eqref{eq:del_action} we have the somewhat surprising action $\del{x_i}{x_k} = -\delta_{ik}$.
Here, we use the notation of Section \ref{subsec::Clifford_superalgebras}, and the proof of Lemma \ref{lemm::form_nu} to obtain $\rho^{\uu}$ together with \eqref{eq:X_bracket_expansion} in the final step. We conclude that both actions coincide up to a twist by $\CC_{\rho^{\uu}}$. This finishes the proof. 
\end{proof}

The superspaces $\uu$ and $\ubar$ are finite-dimensional $\ll$-supermodules, and the adjoint action of $\ll$ preserves the natural $\ZZ$-grading of $\bigwedge \uu$ and $\bigwedge \ubar$. Combining Proposition \ref{prop::completely_reducible} and Proposition \ref{prop::identification}, we obtain the following corollary.

\begin{corollary}
    The $\ll$-supermodules $M(\ss)$ and $\overline{M}(\ss)$ decompose completely into finite-dimensional simple $\ll$-supermodules.
\end{corollary}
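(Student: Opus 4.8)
The plan is to combine Proposition~\ref{prop::completely_reducible} and Proposition~\ref{prop::identification} with the observation, recorded just above the corollary, that the adjoint action of $\ll$ preserves the natural $\ZZ$-grading of $\bigwedge\uu$ and $\bigwedge\ubar$. First I would reduce to the exterior superalgebras: by Proposition~\ref{prop::identification} there are $\ll$-supermodule isomorphisms $M(\ss)\cong\bigwedge\uu\otimes\CC_{-\rho^{\uu}}$ and $\overline{M}(\ss)\cong\bigwedge\ubar\otimes\CC_{\rho^{\uu}}$, and since $\CC_{\pm\rho^{\uu}}$ is one-dimensional, tensoring with it preserves both simplicity and finite-dimensionality of the summands of any decomposition. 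Hence it suffices to decompose $\bigwedge\uu$ and $\bigwedge\ubar$, equipped with the adjoint $\ll$-action, into finite-dimensional simple $\ll$-supermodules.

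Next I would use the $\ZZ$-grading. Because the adjoint action of $\ll$ preserves the grading $\bigwedge\uu=\bigoplus_{n\ge 0}\bigwedge^n\uu$, each homogeneous component $\bigwedge^n\uu$ is an $\ll$-subsupermodule, and since $\uu$ is finite-dimensional, each $\bigwedge^n\uu=\bigoplus_{k+j=n}\bigwedge^k\uu_0\otimes\Sym^j\uu_1$ is finite-dimensional. On the other hand, Proposition~\ref{prop::completely_reducible} asserts that $M(\ss)$, hence (after the reduction above) $\bigwedge\uu$, is completely reducible as an $\ll$-supermodule, and subsupermodules of completely reducible supermodules are again completely reducible. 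Therefore each finite-dimensional $\bigwedge^n\uu$ is a completely reducible $\ll$-supermodule, so it is a finite direct sum of finite-dimensional simple $\ll$-supermodules. Summing over $n\ge 0$ and tensoring back with $\CC_{-\rho^{\uu}}$ yields the desired decomposition of $M(\ss)$; the case of $\overline{M}(\ss)$ is verbatim the same, with $\ubar$ in place of $\uu$ and $\CC_{\rho^{\uu}}$ in place of $\CC_{-\rho^{\uu}}$.

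There is essentially no obstacle: this is a bookkeeping argument. The only subtlety worth flagging is that complete reducibility (equivalently, unitarity) by itself does not bound the dimensions of the simple constituents of an infinite-dimensional supermodule; it is the existence of the $\ll$-stable, finite-dimensional grading pieces $\bigwedge^n\uu$ that upgrades the decomposition into simples to a decomposition into \emph{finite-dimensional} simples.
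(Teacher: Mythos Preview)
Your proof is correct and follows exactly the approach indicated in the paper: the paper does not spell out a separate proof but simply records, just before the corollary, that the adjoint $\ll$-action preserves the finite-dimensional graded pieces of $\bigwedge\uu$ and $\bigwedge\ubar$, and then states that the corollary follows by combining Proposition~\ref{prop::completely_reducible} and Proposition~\ref{prop::identification}. Your write-up makes these implicit steps explicit, including the (correct) remark that complete reducibility alone does not force finite-dimensionality of the simple constituents and that one needs the $\ll$-stable finite-dimensional grading pieces for that.
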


For the remainder of this article, we focus on $\overline{M}(\ss)$, as this supermodule is best suited for studying highest weight $\gg$-supermodules.

\subsubsection{Dirac cohomology: definition and first results}
For any $\gg$-supermodule $M$, the cubic Dirac operator $\Dirac$ acts naturally on the $\UE(\gg)\otimes C(\ss)$-supermodule $M\otimes \overline{M}(\ss)$ by componentwise action. In particular, as $\Dirac$ and $\ll$ commute, the kernel $\ker \Dirac$ naturally carries the structure of an $\ll$-supermodule. This gives rise to a definition of Dirac cohomology analogous to \cite{huang2005dirac}, as suggested in \cite{kang2021dirac}.

\begin{definition} \label{def::Dirac_cohom}
The \emph{Dirac cohomology} $H_{\Dirac}(M)$ of a $\gg$-supermodule $M$ is the $\ll$-supermodule
\[
H_{\Dirac}(M) := \ker \Dirac / \ker \Dirac \cap \Im \Dirac.
\]
\end{definition}

The Dirac cohomology $H_{\Dirac}(M)$ has a natural decomposition induced by the $\ZZ_{2}$-grading of $\overline{M}(\ss)$. Accordingly, we decompose the Dirac operator as $\Dirac = \Dirac^{+} + \Dirac^{-}$, where
\begin{align}
\begin{split}
\Dirac^{+} &:= \Dirac\big\vert_{M \otimes \overline{M}(\ss)_{0}} : M \otimes \overline{M}(\ss)_{0} \to M \otimes \overline{M}(\ss)_{1}, \\
\Dirac^{-} &:= \Dirac\big\vert_{M \otimes \overline{M}(\ss)_{1}} : M \otimes \overline{M}(\ss)_{1} \to M \otimes \overline{M}(\ss)_{0},
\end{split}
\end{align}
and define $\DC^{+}(M) := \mathrm{H}_{\Dirac^{+}}(M)$ and $\DC^{-}(M) := \mathrm{H}_{\Dirac^{-}}(M)$, so that 
\be
\DC(M) := \DC^{+}(M) + \DC^{-}(M).
\ee

We are particularly interested in admissible $(\gg, \ll)$-supermodules, \emph{i.e.}, $\gg$-supermodules that are $\ll$-semisimple. For these supermodules, the Dirac cohomology is $\ll$-semisimple by Proposition \ref{prop::completely_reducible}, as $\Dirac$ commutes with the action of $\ll$.

\begin{lemma} \label{lemm::completely_reducible_DC}
    Let $M$ be an admissible $(\gg, \ll)$-supermodule. Then $\DC(M)$ is a semisimple $\ll$-supermodule, that is, $\DC(M)$ is completely reducible as an $\ll$-supermodule.
\end{lemma}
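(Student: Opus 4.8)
The plan is to exhibit $\DC(M)$ as a subquotient, in the category of $\ll$-supermodules, of the ambient supermodule $M\otimes\overline{M}(\ss)$, and then to conclude from the semisimplicity of the latter. The admissibility hypothesis on $M$ enters only through this last point; the rest of the argument is purely formal.

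First I would record that $\Dirac$, acting on $M\otimes\overline{M}(\ss)$ componentwise, is $\ll$-equivariant for the diagonal action $\alpha$ of $\ll$ on $\UE(\gg)\otimes C(\ss)$ given by $X\mapsto X\otimes 1+1\otimes\nu_\ast(X)$. This is the $\ll$-invariance of the cubic Dirac operator recalled above, i.e.\ $[\alpha(X),\Dirac]=0$ for all $X\in\ll$, together with the observation that on $M\otimes\overline{M}(\ss)$ the action $\alpha$ is exactly the tensor-product $\ll$-action (with $M$ regarded as an $\ll$-supermodule by restriction and $\overline{M}(\ss)$ via $\nu_\ast$). It follows that $\ker\Dirac$ and $\Im\Dirac$, and hence $\ker\Dirac\cap\Im\Dirac$, are $\ll$-subsupermodules of $M\otimes\overline{M}(\ss)$, so that
\[
\DC(M)=\ker\Dirac\,\big/\,(\ker\Dirac\cap\Im\Dirac)
\]
is a subquotient of $M\otimes\overline{M}(\ss)$ as an $\ll$-supermodule. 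Since sub- and quotient supermodules of a semisimple supermodule are again semisimple, it now suffices to prove that $M\otimes\overline{M}(\ss)$ is semisimple over $\ll$.

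For this last step I would use that $M$ is $\ll$-semisimple by admissibility and that $\overline{M}(\ss)$ is $\ll$-semisimple by Proposition \ref{prop::completely_reducible} — in fact a direct sum of \emph{finite-dimensional} simple $\ll$-supermodules, by Propositions \ref{prop::completely_reducible} and \ref{prop::identification} together with the $\ZZ$-grading observation preceding them. Writing $M=\bigoplus_i V_i$ and $\overline{M}(\ss)=\bigoplus_j W_j$ with the $V_i$ simple and the $W_j$ finite-dimensional simple, one has $M\otimes\overline{M}(\ss)=\bigoplus_{i,j}V_i\otimes W_j$, so the claim reduces to showing that the tensor product of a simple $\ll$-supermodule with a finite-dimensional simple $\ll$-supermodule is completely reducible. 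I expect this reduction to be the main obstacle: it is the only non-formal ingredient, and it is the one place where the finite-dimensionality of the constituents of $\overline{M}(\ss)$ (and, if necessary, the unitarizability of $\overline{M}(\ss)$ from Proposition \ref{prop::completely_reducible}) must be exploited. Once it is established, $M\otimes\overline{M}(\ss)$ is semisimple, hence so is its subquotient $\DC(M)$, completing the proof.
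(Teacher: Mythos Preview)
Your approach is correct and matches the paper's, which does not give a separate proof but simply asserts the lemma from the $\ll$-invariance of $\Dirac$ together with Proposition \ref{prop::completely_reducible}. You have spelled out the logical skeleton more carefully and, in particular, correctly isolated the one substantive point the paper leaves implicit --- that the tensor product of the $\ll$-semisimple $M$ with $\overline{M}(\ss)$ (a direct sum of finite-dimensional, unitarizable $\ll$-constituents) is again $\ll$-semisimple.
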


Given the nice square of the Dirac operator in Theorem \ref{thm::square_Dirac}, the Dirac cohomology reveals its full potential when the supermodule under consideration possesses an infinitesimal character. This will be explored in the subsequent section. Before addressing this topic, we establish that $\Dirac$ naturally induces a cohomology on the space $(\UE(\gg) \otimes C(\ss))^{\ll}$ of $\ll$-invariants in $\UE(\gg) \otimes C(\ss)$, as shown in \cite{kang2021dirac}.

We equip $\UE(\gg) \otimes C(\ss)$ with the $\ZZ_{2}$-grading induced by $C(\ss)$, that is, $(\UE(\gg)\otimes C(\ss))_{0,1} := \UE(\gg) \otimes C(\ss)_{0,1}$. The diagonal embedding $\alpha : \ll \to \UE(\gg) \otimes C(\ss)$, as defined in equation \eqref{diag_emb}, endows $\UE(\gg) \otimes C(\ss)$ with the structure of an $\ll$-supermodule via the adjoint action. Hence, it makes sense to consider the space of $\ll$-invariants in $\UE(\gg) \otimes C(\ss)$:
\be \label{l_inv_space}
(\UE(\gg) \otimes C(\ss))^{\ll} := \{ A \in \UE(\gg)\otimes C(\ss) : [\alpha(X),A] = 0 \ \text{for all} \ X \in \ll\},
\ee
which is still endowed with the obvious $\ZZ_{2}$-grading. This enters the following construction, due to \cite{kang2021dirac}. Let $\hat d$ be the operator acting on $\UE(\gg) \otimes C(\ss)$ as 
%In \cite{kang2021dirac}, the authors introduced the operator 
\be
\hat{d}(A) := [\Dirac, A] = \Dirac A - (-1)^{p(A)}A \Dirac
\ee
for any homogeneous $A \in \UE(\gg) \otimes C(\ss)$. Since $\Dirac$ is $\ll$-invariant, the operator $\hat{d}$ can be restricted to an operator 
\be
d : (\UE(\gg) \otimes C(\ss))^{\ll} \rightarrow (\UE(\gg) \otimes C(\ss))^{\ll},
\ee
 which is proved to be nilpotent, \emph{i.e.}, $d^2 =0$, and hence it makes sense to define its cohomology to be the quotient  $\ker d / \Im d$, see \cite{kang2021dirac}. Recalling that we denoted by $\mathfrak{Z}(\ll_{\Delta})$ the image under the diagonal embedding $\alpha$ of $\mathfrak{Z}(\ll)$, the center of the universal enveloping superalgebra $\UE(\ll)$, one has the following characterization for the cohomology of $d$. 
% acting on $(\UE(\gg) \otimes C(\ss))^{\ll}$, that we shall denote with $d$. Such a restriction yields a nilpotent operator, \emph{i.e.}, $d^{2} = 0$, and hence it makes sense to define its cohomology of $d$ to be $\ker d / \Im d$ on $(\UE(\gg) \otimes C(\ss))^{\ll}$. 

\begin{thm}[{\cite[Theorem 6.2]{kang2021dirac}}] \label{thm::ker_d_center} The cohomology of $d$ is isomorphic to $\mathfrak{Z}(\ll_{\Delta})$, \emph{i.e.}, 
\[
\ker d = \mathfrak{Z}(\ll_{\Delta}) \oplus \Im d.
\]  
\end{thm}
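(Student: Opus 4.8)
The plan is to follow the strategy used in the Lie algebra case by Kostant and in \cite{kang2021dirac}, adapting it to the superalgebra setting with the $\ZZ_2$-grading in place. The key structural input is the decomposition $D(\gg,\ll) = C + \Cbar$ of Theorem \ref{theorem::decomp_Dirac} together with the nilpotency $C^2 = \Cbar^2 = 0$ of Lemma \ref{lemm::square_C_Cbar}, and the grading operator $E = \tfrac12 \sum_i 1\otimes u_i\ou_i$ introduced in the proof of that lemma, which satisfies $[E,C] = C$ and $[E,\Cbar] = -\Cbar$. First I would introduce on $(\UE(\gg)\otimes C(\ss))^{\ll}$ the $\ZZ$-grading by $\ad E$-eigenvalue (the "$\uu$-degree" coming from the filtration of $\UE(\gg)$ by $\uu$-powers combined with the Clifford grading), so that $\hat d = \hat d_C + \hat d_{\Cbar}$ with $\hat d_C := [C,\cdot]$ raising the degree by one and $\hat d_{\Cbar} := [\Cbar,\cdot]$ lowering it by one; from $C^2 = \Cbar^2 = 0$ and $C\Cbar + \Cbar C = \tfrac12 D^2 - \tfrac12\Dbar^2$, one checks $\hat d_C^2 = \hat d_{\Cbar}^2 = 0$ and that $\hat d_C$, $\hat d_{\Cbar}$ anticommute on the $\ll$-invariants (using that $D^2 = \Dbar^2$ is central by Theorem \ref{thm::square_Dirac}, so $[D^2,\cdot]$ vanishes on invariants). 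This sets up a double-complex / Hodge-theoretic picture.

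Next I would run a Hodge-theoretic argument for the operator $d$. On the $\ll$-invariant space, build an adjoint-like operator to $d$ using the Hermitian structures available (the inner product $B_\theta$ on $\gg^\RR$ together with the adjoints computed in Lemmas \ref{lemm::adjoint_spin} and \ref{lemm::adjoint_M(s_1)}, which make $\overline{M}(\ss)$ unitarizable by Proposition \ref{prop::completely_reducible}); since $(\UE(\gg)\otimes C(\ss))^{\ll}$ decomposes into finite-dimensional $\ll$-isotypic pieces one can work componentwise. The "Laplacian" $d d^\ast + d^\ast d$ will be expressible in terms of $D^2 = \Omega_\gg\otimes 1 - \Omega_{\ll,\Delta} + c$, and the harmonic representatives — those killed by $\ad(\Omega_\gg\otimes 1 - \Omega_{\ll,\Delta})$ — are precisely the elements commuting with $D$. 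The claim then reduces to identifying the space of $\ll$-invariant elements annihilated by $\ad D$ with $\ZC(\ll_\Delta)$: one inclusion is clear since $\ZC(\ll_\Delta)$ commutes with $\ll_\Delta$ and hence with $D$ (as $D$ is $\ll_\Delta$-invariant), and the reverse inclusion uses that the centralizer of $D$ inside the $\ll$-invariants, modulo the image of $d$, is not larger than $\ZC(\ll_\Delta)$ — this is where one invokes a PBW-type filtration argument on $\UE(\gg)\otimes C(\ss)$ together with the fact that the symbol of $D$ generates, in an appropriate sense, the ideal cutting out $\ll_\Delta$.

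The main obstacle I anticipate is the reverse inclusion $\ker d \subseteq \ZC(\ll_\Delta) \oplus \Im d$: showing that any $\ll$-invariant $d$-cocycle differs from a central element by a coboundary. In the classical case this rests on a careful analysis of the associated graded of the Clifford-Weyl filtration and an Euler-characteristic / spectral sequence collapse; in the super case the interplay of the Clifford part $C(\ss_0)$ and the Weyl part $C(\ss_1)$ (one contributing a finite-dimensional exterior factor, the other an infinite-dimensional symmetric factor) means the gradings are only $\ZZ$-bounded on one side, so some care is needed to ensure the Hodge decomposition still holds termwise and that no convergence issue arises. I would handle this by restricting to each finite-dimensional $\ll$-isotypic component of $(\UE(\gg)\otimes C(\ss))^{\ll}$ — these are finite-dimensional because $\UE(\gg)$ is a free module of finite rank over $\UE(\ll)$ times $\ZC(\gg)$ and $\overline M(\ss)$ has finite-dimensional weight spaces — where the Laplacian is a genuine finite-dimensional self-adjoint (or at least diagonalizable) operator and the harmonic = cohomology identification is automatic. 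Granting that, Theorem \ref{thm::square_Dirac} and Theorem \ref{thm::ker_d_center}'s classical analog then give the result, after checking that the constant $c$ and the twist do not interfere with centrality.
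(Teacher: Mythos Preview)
The paper does not prove this theorem: it is quoted as \cite[Theorem 6.2]{kang2021dirac} and used as a black box, so there is no proof in the paper to compare your proposal against.

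That said, your proposal has genuine gaps. The Hodge-theoretic strategy you describe is set up on the wrong space: the Hermitian structures of Lemmas \ref{lemm::adjoint_spin} and \ref{lemm::adjoint_M(s_1)} live on the oscillator supermodule $\overline{M}(\ss)$, not on $\UE(\gg)\otimes C(\ss)$ itself, and $(\UE(\gg)\otimes C(\ss))^{\ll}$ carries no natural positive-definite inner product on which $d$ would have an adjoint. More seriously, your finiteness claim is false: the $\ll$-isotypic components of $(\UE(\gg)\otimes C(\ss))^{\ll}$ are \emph{not} finite-dimensional (already $\ZG\otimes 1$ sits inside the trivial isotypic piece), and $\UE(\gg)$ is not of finite rank over $\UE(\ll)\cdot\ZG$. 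So the ``work componentwise on finite-dimensional pieces'' step does not go through.

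The standard argument --- in Kostant's original paper, in Huang--Pand\v{z}i\'c, and in \cite{kang2021dirac} --- does not use Hermitian structures at all. It filters $\UE(\gg)\otimes C(\ss)$ by PBW degree in $\UE(\gg)$, passes to the associated graded $S(\gg)\otimes C(\ss)\cong S(\ll)\otimes S(\ss)\otimes C(\ss)$, and observes that on the graded level the induced differential is (up to normalization) the Koszul differential for the pair $(S(\ss), \bigwedge\ss)$, tensored with the identity on $S(\ll)$. The Koszul complex is acyclic except in degree zero, so the $E_1$-page of the resulting spectral sequence is $S(\ll)$; taking $\ll$-invariants gives $S(\ll)^{\ll}$, and lifting back through the filtration yields $\ZC(\ll_\Delta)$. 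Your decomposition $D=C+\Cbar$ and the grading operator $E$ are relevant ingredients (they give the bigrading that organizes the Koszul picture), but the missing idea is the filtration-and-Koszul step, not a Hodge decomposition.
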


\subsubsection{Infinitesimal characters and Dirac cohomology} \label{subsec::infchar}
We now aim to extend the above discussion to supermodules admitting an infinitesimal character, which we will now introduce.\\
Infinitesimal characters are specific algebra homomorphisms $\chi : \ZG \to \CC$, which can be formulated explicitly using the Harish-Chandra homomorphism for Lie superalgebras. To construct the Harish-Chandra homomorphism, we consider the following isomorphism of super vector spaces
\begin{equation}
\mathfrak{U}(\gg) \cong \mathfrak{U}(\hh) \oplus (\nn^{-}\mathfrak{U}(\gg) + \mathfrak{U}(\gg)\nn^{+}),
\end{equation}
which is an immediate consequence of the Poincaré--Birkhoﬀ--Witt Theorem (PBW Theorem) for $\gg$. The associated projection $p: \mathfrak{U}(\gg) \to \mathfrak{U}(\hh)$ is called the \emph{Harish-Chandra projection}, and its restriction to $\mathfrak{Z}(\gg)$ defines an algebra homomorphism:
$
p\big\vert_{\mathfrak{Z}(\gg)}: \mathfrak{Z}(\gg) \to \mathfrak{U}(\hh) \cong \text{S}(\hh)$.
The \emph{Harish-Chandra homomorphism} $\HC: \ZG \to S(\hh)$ is the composition of the algebra homomorphisms $p\big\vert_{\mathfrak{Z}(\gg)}$ with the twist $\zeta: \S(\hh)\to \S(\hh)$ defined by 
$\lambda(\zeta(f)):=(\lambda-\rho)(f)$ for all $f\in\S(\hh)$, $\lambda\in\hh^*$. For any $\Lambda \in \hh^{\ast}$, the associated map 
\be
\chi_{\Lambda}(z) := (\Lambda+\rho)(\operatorname{HC}(z))
\ee
defines an algebra homomorphism $\chi_{\Lambda} : \ZG \to \CC$, where $\rho$ denotes the Weyl vector for the fixed positive system.

\begin{definition}
We say that a $\gg$-supermodule $M$ admits an \emph{infinitesimal character} if there exists a $\Lambda \in \hh^{\ast}$ such that any element $z \in \ZG$ acts on $M$ by a scalar multiple $\chi_{\Lambda}(z) \in \CC$ of the identity. We refer to $\chi_{\Lambda}$ as the \emph{infinitesimal character} of $M$.
\end{definition}

A particularly important example of supermodules of this kind are highest weight $\gg$-supermodules. More precisely, the following holds. 

\begin{lemma}[{\cite[Lemma 8.5.3]{Musson}}] \label{lemm::action_Casimir}
    Let $M$ be a highest weight $\gg$-supermodule with highest weight $\Lambda$. Then $M$ admits an infinitesimal character $\chi_{\Lambda}$. Moreover, the quadratic Casimir $\Omega_{\gg}$ acts as the scalar multiplication by 
    $
    (\Lambda + 2\rho, \Lambda), 
    $ \emph{i.e.},
\[
\Omega_\gg =     (\Lambda + 2\rho, \Lambda) 1_{\UE(\gg)}.
\]
\end{lemma}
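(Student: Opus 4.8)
The plan is to treat the two assertions separately. The existence of the infinitesimal character follows the classical template: centrality of $\ZG$ forces each $z$ to act by a scalar on the cyclic highest weight vector, and the Harish--Chandra projection identifies that scalar with $\chi_\Lambda(z)$. The value of the Casimir is then extracted by a direct computation of $\Omega_\gg v_\Lambda$ inside $\UE(\gg)$.

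For the first part I would argue as follows. Let $v_\Lambda$ be a highest weight vector and $z \in \ZG$. Since $z$ commutes with $\hh$ and with $\nn^+$ inside $\UE(\gg)$, the vector $z v_\Lambda$ again lies in the $\Lambda$--weight space $M^\Lambda$ and is annihilated by $\nn^+$; as $\dim M^\Lambda = 1$, we get $z v_\Lambda = c_z v_\Lambda$ for some $c_z \in \CC$, and by centrality together with $M = \UE(\gg)v_\Lambda$ this scalar governs the action of $z$ on all of $M$. To see that $c_z = \chi_\Lambda(z)$, I would use the PBW splitting $\UE(\gg) \cong \UE(\hh) \oplus \bigl(\nn^-\UE(\gg) + \UE(\gg)\nn^+\bigr)$ to write $z = p(z) + n$: the $\UE(\gg)\nn^+$--part of $n$ kills $v_\Lambda$, the $\nn^-\UE(\gg)$--part moves $v_\Lambda$ into strictly lower weight spaces, and comparing $\Lambda$--weight components gives $c_z = \Lambda\bigl(p(z)\bigr)$. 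Unwinding the definition of $\HC$ with its $\rho$--twist $\zeta$ then yields $\Lambda(p(z)) = (\Lambda+\rho)\bigl(\zeta(p(z))\bigr) = (\Lambda+\rho)(\HC(z)) = \chi_\Lambda(z)$, so $M$ has infinitesimal character $\chi_\Lambda$.

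For the second part I would choose a homogeneous basis of $\gg$ adapted to the triangular decomposition $\gg = \nn^- \oplus \hh \oplus \nn^+$, namely a basis $\{h_k\}$ of $\hh$ together with root vectors $e_{\pm\alpha}\in\gg^{\pm\alpha}$ for $\alpha \in \Phi^+$ normalized so that $(e_\alpha,e_{-\alpha}) = 1$; Proposition~\ref{prop::structure_theory_BCLSA} guarantees this is possible and gives $[e_\alpha,e_{-\alpha}] = h_\alpha$ with $\Lambda(h_\alpha) = (\Lambda,\alpha)$. Writing $\Omega_\gg$ through this basis and its form--dual, I would evaluate $\Omega_\gg v_\Lambda$ term by term: the $\hh$--part contributes $(\Lambda,\Lambda)v_\Lambda$; every term ending in a raising operator $e_\alpha$ annihilates $v_\Lambda$; and for each $\alpha \in \Phi^+$ the surviving contribution collapses via $e_\alpha e_{-\alpha}v_\Lambda = [e_\alpha,e_{-\alpha}]v_\Lambda = (\Lambda,\alpha)v_\Lambda$. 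Summing over $\Phi^+$ gives $\Omega_\gg v_\Lambda = \bigl((\Lambda,\Lambda) + \sum_{\alpha\in\Phi^+}(\Lambda,\alpha)\bigr)v_\Lambda = (\Lambda+2\rho,\Lambda)v_\Lambda$, and centrality propagates this scalar to all of $M = \UE(\gg)v_\Lambda$.

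The one point requiring care is the super-sign bookkeeping in the Casimir step: for an odd positive root $\alpha$ the form--dual of $e_{-\alpha}$ is $-e_\alpha$ rather than $e_\alpha$, so one must check that the parity signs attach only to the terms $e_{-\alpha}e_\alpha$ that already vanish on $v_\Lambda$, and hence that even and odd roots both contribute $+(\Lambda,\alpha)$, producing the genuine half-sum $\rho = \tfrac12\sum_{\alpha\in\Phi^+}\alpha$ and not a signed variant. Beyond this the computation is routine; I would also cross-check the final scalar against $(\Lambda+2\rho,\Lambda) = (\Lambda+\rho,\Lambda+\rho) - (\rho,\rho)$, which is the shape in which it enters Theorem~\ref{thm::square_Dirac}.
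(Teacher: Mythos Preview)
The paper does not give its own proof of this lemma; it is quoted from Musson without argument. Your outline is the standard one, and the first assertion (existence of the infinitesimal character via centrality, cyclicity, and the Harish--Chandra projection) is correctly argued.

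The Casimir computation, however, has a genuine error in exactly the place you flag as delicate. With the normalization $(e_\alpha,e_{-\alpha})=1$ for $\alpha\in\Phi^+$, the dual-basis convention that makes $\Omega_\gg=\sum_i X_i X^i$ central is $(X^i,X_j)=\delta^i_j$. For an odd positive root this gives $e^{\alpha}=-e_{-\alpha}$ (because $(e_{-\alpha},e_\alpha)=-(e_\alpha,e_{-\alpha})=-1$) while $e^{-\alpha}=e_\alpha$; the pair $\{e_\alpha,e_{-\alpha}\}$ therefore contributes $-e_\alpha e_{-\alpha}+e_{-\alpha}e_\alpha$ to $\Omega_\gg$. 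The minus sign sits on the term $e_\alpha e_{-\alpha}$ that does \emph{not} vanish on $v_\Lambda$, so each odd $\alpha\in\Phi^+$ contributes $-(\Lambda,\alpha)$, not $+(\Lambda,\alpha)$. Summing,
\[
\Omega_\gg v_\Lambda=\Bigl((\Lambda,\Lambda)+\sum_{\alpha\in\Phi_0^+}(\Lambda,\alpha)-\sum_{\alpha\in\Phi_1^+}(\Lambda,\alpha)\Bigr)v_\Lambda=(\Lambda+2(\rho_0-\rho_1),\Lambda)\,v_\Lambda,
\]
which is Musson's formula with the super Weyl vector $\rho=\rho_0-\rho_1$. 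This is also the meaning intended in the paper: note the explicit conventions $\rho^{\ll}=\rho^{\ll}_0-\rho^{\ll}_1$ and $\rho^{\uu}=\rho^{\uu}_0-\rho^{\uu}_1$ given just above, which force $\rho=\rho^{\ll}+\rho^{\uu}=\rho_0-\rho_1$ despite the somewhat loose display $\rho=\tfrac12\sum_{\alpha\in\Phi^+}\alpha$. Your assertion that ``even and odd roots both contribute $+(\Lambda,\alpha)$, producing the genuine half-sum'' is therefore incorrect; it would yield $\rho_0+\rho_1$ and disagree both with Musson and with the rest of the paper.
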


The Harish-Chandra homomorphism is an injective ring
homomorphism. To describe its image, we define 
$\S(\hh)^{W^{\gg}} := \{ f\in \S(\hh)\ :\ w (\lambda)(f) =
\lambda(f) \ \text{for all} \ w \in W^{\gg}, \lambda \in \hh^*\}$ and, for any $\lambda\in\hh^*$,
\be
A_{\lambda} := \{ \alpha \in \Phi_{1}^{+} : (\lambda+\rho,\alpha) = 0\} \,.
\ee
Then, the image of $\HC$ is \cite{GorelikHC, KacHC}
\be
\Im(\HC) = \bigl\{ f \in \S(\hh)^{W^{\gg}} : (\lambda+t\alpha)(f)=\lambda(f) \ 
\text{for all $t\in \CC$, $\lambda\in\hh^*$, and $\alpha \in A_{\lambda-\rho}$}\bigr\}.
\ee
As a direct consequence, we obtain $\chi_{\Lambda}= \chi_{\Lambda'}$ whenever 
\be \label{eq::relation_infinitresimal_character}
\Lambda'=w\biggl(\Lambda+\rho+\sum_{i=1}^{k}t_{i}\alpha_{i}\biggr)-\rho,
\ee
where $w \in W^{\gg}$, $t_{i}\in \CC$, and $\alpha_{1},\dotsc,\alpha_{k}\in A_\Lambda$ are linearly independent 
odd isotropic roots that satisfy $(\Lambda+\rho,\alpha_{i}) = 0$.

Having set our notation, and given the above preliminaries, we start with the following straightforward corollary of Theorem \ref{thm::ker_d_center}.

\begin{corollary} \label{cor::form_center}
    Any element $z \otimes 1 \in \UE(\gg) \otimes C(\ss)$ with $z \in \ZG$ can be written as
    \[
    z \otimes 1 = \eta_{\ll}(z) + \Dirac A + A \Dirac,
    \]
    for some $A \in (\UE(\gg) \otimes C(\ss))^{\ll}$, and some unique $\eta_{\ll}(z) \in \mathfrak{Z}(\ll_{\Delta}) \cong \mathfrak{Z}(\ll)$.
\end{corollary}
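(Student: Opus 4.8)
The plan is to deduce the statement directly from Theorem \ref{thm::ker_d_center}: it suffices to show that $z\otimes 1$ lies in $\ker d$, and then to read off the parities of the terms in the resulting decomposition.

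First I would record that $\ZG$ is concentrated in even degree: the Harish-Chandra homomorphism $\HC:\ZG\to S(\hh)$ is injective and parity-preserving, while $S(\hh)$ is purely even since $\hh\subset\even$; hence an odd central element vanishes, and $z$ is even. Being central in $\UE(\gg)$, $z$ commutes with every $X\in\gg$, so $z\otimes 1$ commutes with $X\otimes 1$ for $X\in\ll$, and it commutes with $1\otimes\nu_{\ast}(X)$ because those act on different tensor factors and $z$ is even; thus $[\alpha(X),z\otimes 1]=0$ for all $X\in\ll$, i.e.\ $z\otimes 1\in(\UE(\gg)\otimes C(\ss))^{\ll}$. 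For the same reason $z\otimes 1$ commutes with each summand of $\Dirac=\sum_i X_i\otimes X_i^{\ast}+1\otimes\phi_{\ss}$ (recall $X_i\in\ss\subset\gg$), and since $z\otimes 1$ is even this gives $[\Dirac,z\otimes 1]=\Dirac(z\otimes 1)-(z\otimes 1)\Dirac=0$, that is, $d(z\otimes 1)=0$.

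By Theorem \ref{thm::ker_d_center} we then have $z\otimes 1\in\ker d=\mathfrak{Z}(\ll_{\Delta})\oplus\Im d$, and this decomposition respects the $\ZZ_2$-grading (the center of a superalgebra is graded, $\alpha$ preserves parity, $\Im d$ and $(\UE(\gg)\otimes C(\ss))^{\ll}$ are graded). Writing $z\otimes 1=\eta_{\ll}(z)+d(A)$ accordingly, both $\eta_{\ll}(z)\in\mathfrak{Z}(\ll_{\Delta})$ and $d(A)$ are even, $\eta_{\ll}(z)$ is unique by directness of the sum, and $A\in(\UE(\gg)\otimes C(\ss))^{\ll}$; replacing $A$ by its odd component — legitimate, since the even component of $A$ is then killed by $d$ — we may assume $p(A)=1$, whence $d(A)=[\Dirac,A]=\Dirac A-(-1)^{p(A)}A\Dirac=\Dirac A+A\Dirac$. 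Under the identification $\mathfrak{Z}(\ll_{\Delta})\cong\mathfrak{Z}(\ll)$ this gives $z\otimes 1=\eta_{\ll}(z)+\Dirac A+A\Dirac$, as claimed. The only step needing any care is the vanishing $[\Dirac,z\otimes 1]=0$, which — once one knows $\ZG$ is even — follows at once from the centrality of $z$; everything else is bookkeeping with the direct-sum decomposition of Theorem \ref{thm::ker_d_center} and with $\ZZ_2$-parities.
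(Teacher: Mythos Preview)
Your argument is correct and is exactly the approach the paper intends: the paper does not write out a proof at all, simply calling the statement a ``straightforward corollary'' of Theorem~\ref{thm::ker_d_center}, and your write-up supplies precisely those details---verifying $z\otimes 1\in(\UE(\gg)\otimes C(\ss))^{\ll}\cap\ker d$, invoking the direct-sum decomposition $\ker d=\mathfrak{Z}(\ll_\Delta)\oplus\Im d$, and sorting out the parity of $A$ so that $d(A)=\Dirac A+A\Dirac$. One small remark: in the paper's convention the $\ZZ_2$-grading on $\UE(\gg)\otimes C(\ss)$ used to define $d$ comes from $C(\ss)$ alone, so $z\otimes 1$ is automatically even in that grading; your argument that $\ZG$ is concentrated in super-even degree is still needed, but for the commutation $(z\otimes 1)\Dirac=\Dirac(z\otimes 1)$ in the super tensor product rather than for the parity bookkeeping per se.
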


The previous Corollary \ref{cor::form_center} introduces a map $\eta_{\, \ll} : \ZG \to \mathfrak{Z}(\ll)$, which we now describe. Letting $\gg$ be decomposed as $\gg = \ll \oplus \ss$, where we note that $\hh \subset \ll$ by definition. Moreover, let $HC_{\gg,\ll}$ denote the Harish-Chandra monomorphism and $\res : S(\hh)^{W^{\gg}} \to S(\hh)^{W^{\ll}}$ be the restriction map.

\begin{proposition} \label{prop::commutative_diagram}
    The map $\eta_{\, \ll} : \ZG \to \mathfrak{Z}(\ll)$ is an algebra homomorphism. Further, the following diagram is commutative.
    \[ \begin{tikzcd}
\ZG \arrow{r}{\eta_{\ll}} \arrow[swap]{d}{HC_{\gg}} & \mathfrak{Z}(\ll) \arrow{d}{HC_{\ll}} \\%
S(\hh)^{W^{\gg}} \arrow{r}{\res}& S(\hh)^{W^{\ll}}
\end{tikzcd}
\]
%where $HC_{\gg,\ll}$ denote the Harish-Chandra monomorphism and $\res : S(\hh)^{W^{\gg}} \to S(\hh_{\ll})^{W^{\ll}}$ is the restriction map.
\end{proposition}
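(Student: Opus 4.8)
The plan is to prove the two assertions separately. That $\eta_{\ll}$ is a morphism of algebras will follow formally from Theorem \ref{thm::ker_d_center}: the operator $\hat d=[\Dirac,\,\cdot\,]$ is an odd graded derivation of $\UE(\gg)\otimes C(\ss)$, so its restriction $d$ to the subsuperalgebra $(\UE(\gg)\otimes C(\ss))^{\ll}$ is again a square-zero graded derivation; hence $\ker d$ is a subsuperalgebra and $\Im d$ a two-sided graded ideal in it, so $\ker d/\Im d$ is a superalgebra and the projection $\pi$ is a morphism of superalgebras. The map $\iota:\ZG\to\ker d$, $z\mapsto z\otimes 1$, is an algebra morphism (well defined because $z\otimes 1$ is central, hence $d$-closed), and the diagonal embedding $\alpha$ --- being the sum of the two super-commuting Lie morphisms $X\mapsto X\otimes 1$ and $X\mapsto 1\otimes\nu_{\ast}(X)$ --- extends to an algebra morphism $\UE(\ll)\to\UE(\gg)\otimes C(\ss)$ whose restriction is an isomorphism $\mathfrak{Z}(\ll)\xrightarrow{\sim}\mathfrak{Z}(\ll_{\Delta})\subseteq\ker d$. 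By Theorem \ref{thm::ker_d_center} the composite $\mathfrak{Z}(\ll_{\Delta})\hookrightarrow\ker d\xrightarrow{\pi}\ker d/\Im d$ is a bijective algebra morphism, hence an isomorphism of superalgebras; call it $\varphi$. Then Corollary \ref{cor::form_center} says exactly that $\alpha\circ\eta_{\ll}=\varphi^{-1}\circ\pi\circ\iota$, so $\eta_{\ll}$ is a composite of algebra morphisms and isomorphisms and is therefore an algebra morphism.

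For the commutativity of the square, I would evaluate the identity of Corollary \ref{cor::form_center} on one well-chosen vector. We may take $\Phi^{+}$ compatible with $\pp$, so that $\uu\subseteq\nn^{+}$ and, splitting $\Phi^{+}=L^{+}\sqcup U^{+}$ into even and odd parts, $\rho=\rho^{\ll}+\rho^{\uu}$. Fix $\mu\in\hh^{\ast}$, let $M:=M_{\bb}(\mu)$ be the Verma supermodule with highest weight vector $v_{\mu}$, and let $v_{0}:=1\otimes 1\in\overline M(\ss)$ be the vacuum, which is annihilated by $\uu$ and has $\hh$-weight $\rho^{\uu}$ (Proposition \ref{prop::identification}). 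Put $\xi:=v_{\mu}\otimes v_{0}\in M\otimes\overline M(\ss)$, a nonzero vector spanning the one-dimensional $\hh$-weight space of weight $\mu+\rho^{\uu}$. Using $u_{i}v_{\mu}=0$ and that $\uu$ annihilates $v_{0}$, and that each of $a,\bar a$ has an expression in $C(\ss)$ with rightmost factor in $\uu$ (cf.\ Remark \ref{rmk::different_form_a_abar}), one checks $A\xi=\Abar\xi=(1\otimes a)\xi=(1\otimes\bar a)\xi=0$, so $\Dirac\,\xi=0$ and $(\Dirac A+A\Dirac)\xi=\Dirac(A\xi)+A(\Dirac\xi)=0$; hence $(z\otimes 1)\xi=\alpha(\eta_{\ll}(z))\xi$. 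By Lemma \ref{lemm::action_Casimir} the left side equals $(\mu+\rho)(\HC_{\gg}(z))\,\xi$. For the right side, the same vanishing applied to the explicit $\nu_{\ast}(X)$ of Lemma \ref{lemm::form_nu} (written with rightmost factor in $\uu$) shows that $\xi$ is an $\ll$-highest weight vector of weight $\mu+\rho^{\uu}$ for the $\ll$-action implemented by $\alpha$, so $\alpha(\eta_{\ll}(z))$ --- implementing the action of $\eta_{\ll}(z)\in\mathfrak{Z}(\ll)$ --- acts on $\xi$ by $(\mu+\rho^{\uu}+\rho^{\ll})(\HC_{\ll}(\eta_{\ll}(z)))=(\mu+\rho)(\HC_{\ll}(\eta_{\ll}(z)))$. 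Comparing the two scalars and letting $\mu$ range over $\hh^{\ast}$ forces $\HC_{\gg}(z)=\HC_{\ll}(\eta_{\ll}(z))$ in $\S(\hh)$; since $W^{\ll}\subseteq W^{\gg}$, the map $\res$ is the inclusion $\S(\hh)^{W^{\gg}}\hookrightarrow\S(\hh)^{W^{\ll}}$, and this is precisely $\res\circ\HC_{\gg}=\HC_{\ll}\circ\eta_{\ll}$.

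The only genuine work is the bookkeeping in the second step: one must put each of the explicit Clifford expressions for $A,\Abar,a,\bar a$ and $\nu_{\ast}(X)$ into the "normal order" in which the annihilation operators of $\overline M(\ss)$ sit on the right, so that they kill the vacuum and make $\xi$ both $\Dirac$-closed and an $\ll$-highest weight vector; and one must notice that the apparent $\rho^{\uu}$-discrepancy between the two evaluations is absorbed exactly by $\rho=\rho^{\ll}+\rho^{\uu}$. Everything else --- the algebra structure on $\ker d/\Im d$, multiplicativity of $\iota,\alpha,\varphi$, and the fact that a polynomial on $\hh^{\ast}$ is determined by its values --- is routine.
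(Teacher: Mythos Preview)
Your proof is correct and follows essentially the same route as the paper's. For the first part, both arguments rest on the fact that $d$ is a graded derivation; the paper unpacks this as a direct computation with $z_1\otimes 1=\eta_\ll(z_1)+d(A_1)$ and $z_2\otimes 1=\eta_\ll(z_2)+d(A_2)$, while you phrase it more cleanly via the algebra structure on $\ker d/\Im d$. For the second part, both approaches evaluate the identity of Corollary~\ref{cor::form_center} on the vector $\xi=v_\mu\otimes v_0$ in a Verma supermodule tensored with the vacuum; the paper forward-references Proposition~\ref{prop::HW_non-trivial} to place $\xi$ in $\DC(M)$ and then works modulo $\ker\Dirac\cap\Im\Dirac$, whereas you verify $\Dirac\xi=0$ directly and use one-dimensionality of the weight space $(M\otimes\overline M(\ss))^{\mu+\rho^\uu}$ to conclude $A\xi\in\CC\xi$, hence $(\Dirac A+A\Dirac)\xi=0$ on the nose. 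Your version is more self-contained since it avoids the forward reference to Section~4, and the one-dimensional weight space argument neatly sidesteps the need to show $\xi\notin\Im\Dirac$.
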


\begin{proof}
    For any $z_{1},z_{2} \in \ZG$, we find $A_{1},A_{2} \in (\UE(\gg)\otimes C(\ss))^{\ll}$ such that 
    \[
    z_{1} \otimes 1 = \eta_{\ll}(z_{1}) + d(A_{1}), \qquad z_{2} \otimes 1 = \eta_{\ll}(z_{2}) + d(A_{2}).
    \]
    Multiplication yields
    \[
    z_{1}z_{2} \otimes 1 = \eta_{\ll}(z_{1})\eta_{\ll}(z_{2})+\eta_{\ll}(z_{1})d(A_{2}) + d(A_{1})\eta_{\ll}(z_{2}) + d(A_{1})d(A_{2}).
    \]
    However, by Corollary \ref{cor::form_center}, we have $d(\eta_{\ll}(z_{1})) = d(\eta_{\ll}(z_{2})) = 0$, and consequently
    \[
    z_{1}z_{2} \otimes 1 = \eta_{\ll}(z_{1})\eta_{\ll}(z_{2}) + d(\eta_{\ll}(z_{1})A_{2}+A_{1}\eta_{\ll}(z_{2})+Ad(A'))
    \]
    with $\eta_{\ll}(z_{1})A_{2}+A_{1}\eta_{\ll}(z_{2})+Ad(A') \in (\UE(\gg)\otimes C(\ss))^{\ll}$. Again, by Corollary \ref{cor::form_center}, we have
    \[
    z_{1}z_{2} \otimes 1 = \eta_{\ll}(z_{1}z_{2}) + d(A)
    \]
    for some $A \in (\UE(\gg)\otimes C(\ss))^{\ll}$, \emph{i.e.}, $\eta_{\ll}(z_{1})\eta_{\ll}(z_{2}) = \eta_{\ll}(z_{1}z_{2})$. 
    
    For the second part of the proof, we note $\hh \subset \even$, and we identify $S(\hh)^{W^{\gg}}$ and $S(\hh)^{W^{\ll}}$ with the algebraic varieties $\hh^{\ast}/W^{\gg}$ and $\hh^{\ast}/W^{\ll}$. Let $\zeta : S(\hh)^{W^{\gg}} \to S(\hh)^{W^{\ll}}$ denote the homomorphism induced by $\eta_{\ll}$ under the Harish-Chandra monomorphisms, and let $\zeta' : \hh^{\ast}/W^{\ll} \to \hh^{\ast}/W^{\gg}$ be the associated morphism of the algebraic varieties. To show that the diagram commutes, it suffices to prove that $\zeta'$ is the inclusion map, \emph{i.e.}, $\zeta'(\lambda) = \lambda$ for all $\lambda \in \hh^{\ast}/W^{\ll}$.

For any $\lambda \in \hh^{\ast}$, we define a highest weight $\gg$-supermodule $M$ as in Equation \eqref{eq::Verma_supermodules}. As stated explicitly in Proposition \ref{prop::HW_non-trivial}, $\DC(M)$ contains a non-trivial simple weight $\ll$-supermodule generated by $v_{\lambda} \otimes v_{\overline{M}(\ss)}$ of weight $\lambda + \rho^{\uu}$. However, on $\DC(M)$, we conclude from $z = \eta_{\ll}(z)$ the following:
\[
(\lambda + \rho)(z) = \big((\lambda + \rho^{\uu}) + \rho^{\ll}\big)(\eta_{\ll}(z)) = (\lambda + \rho)(\eta_{\ll}(z)),
\]
\emph{i.e.}, $\zeta'(\mu) = \mu$ for all $\mu = \lambda + \rho$. This completes the proof.
\end{proof}

As a result, we obtain a super-analog of the Casselman--Osborne Lemma (\emph{cf.}~\cite{Knapp, Milivcic}), that will play a crucial role in what follows. More precisely, we have the following theorem. 

\begin{theorem} \label{thm::Casselmann_Osborne} Let $M$ be a $\gg$-supermodule with infinitesimal character $\chi_\lambda$.
Then $z \otimes 1 \in \ZG \otimes 1$ acts as $\eta_{\,\ll}(z) \otimes 1$ on $\DC(M)$. In particular, if $V$ is an $\ll$-subsupermodule of $\DC(M)$ with infinitesimal character $\chi_{\mu}^{\ll}$ for $\mu \in \hh^{\ast}$, then $\chi_{\lambda} = \chi_{\mu}^{\ll} \circ \eta_{\ll}$. 
\end{theorem}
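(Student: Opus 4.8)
The plan is to read off the statement from Corollary \ref{cor::form_center} together with the observation that $z\otimes 1$ is central in $\UE(\gg)\otimes C(\ss)$, so that it descends to $\DC(M)$ and may be compared there with $\eta_{\ll}(z)$.

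First I would note that, since $M$ has infinitesimal character $\chi_{\lambda}$, the operator $z\otimes 1$ acts on $M\otimes\overline{M}(\ss)$ by the scalar $\chi_{\lambda}(z)$; and since $z$ is central in $\UE(\gg)$ and the second tensor factor is $1$, the element $z\otimes 1$ (super)commutes with $\Dirac$, hence preserves $\ker\Dirac$ and $\Im\Dirac$ and descends to a scalar operator on $\DC(M)$. Next, by Corollary \ref{cor::form_center} write
\[
z\otimes 1=\eta_{\ll}(z)+\Dirac A+A\Dirac,\qquad A\in(\UE(\gg)\otimes C(\ss))^{\ll},\quad \eta_{\ll}(z)\in\mathfrak{Z}(\ll_{\Delta})\cong\mathfrak{Z}(\ll).
\]
Because $\alpha(\ll)$ super-commutes with $\Dirac$ (i.e.\ $\Dirac$ is $\ll$-invariant), the subalgebra $\mathfrak{Z}(\ll_{\Delta})\subseteq\UE(\ll_{\Delta})$ super-commutes with $\Dirac$ as well, so $\eta_{\ll}(z)$ also preserves $\ker\Dirac$ and $\Im\Dirac$ and descends to $\DC(M)$; under the identification $\mathfrak{Z}(\ll_{\Delta})\cong\mathfrak{Z}(\ll)$ coming from the diagonal embedding $\alpha$, this induced operator is exactly the action of $\eta_{\ll}(z)\in\mathfrak{Z}(\ll)$ on the $\ll$-supermodule $\DC(M)$.

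For the first assertion I would then evaluate on a cocycle: for $v\in\ker\Dirac$ one has $(\Dirac A+A\Dirac)v=\Dirac(Av)\in\Im\Dirac$, and this element also lies in $\ker\Dirac$ because $z\otimes 1$ and $\eta_{\ll}(z)$ both preserve $\ker\Dirac$; hence $(z\otimes 1-\eta_{\ll}(z))v\in\ker\Dirac\cap\Im\Dirac$. Therefore $z\otimes 1$ and $\eta_{\ll}(z)$ induce the same operator on $\DC(M)=\ker\Dirac/(\ker\Dirac\cap\Im\Dirac)$, which is the claim. For the ``in particular'' part, let $V\subseteq\DC(M)$ be a non-zero $\ll$-subsupermodule with infinitesimal character $\chi_{\mu}^{\ll}$. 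On the one hand $z\otimes 1$ acts on $\DC(M)$, hence on $V$, by the scalar $\chi_{\lambda}(z)$ inherited from $M\otimes\overline{M}(\ss)$; on the other hand, by the first part it acts on $V$ as $\eta_{\ll}(z)\in\mathfrak{Z}(\ll)$, which acts on $V$ by $\chi_{\mu}^{\ll}(\eta_{\ll}(z))$. Comparing these two scalars for every $z\in\ZG$ gives $\chi_{\lambda}(z)=\chi_{\mu}^{\ll}(\eta_{\ll}(z))$, i.e.\ $\chi_{\lambda}=\chi_{\mu}^{\ll}\circ\eta_{\ll}$.

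The argument is short because the substance is already contained in Corollary \ref{cor::form_center} (itself a consequence of Theorem \ref{thm::ker_d_center}). The only points needing care are bookkeeping: that $\eta_{\ll}(z)$ super-commutes with $\Dirac$, obtained by passing from the generators $\alpha(X)$, $X\in\ll$, to $\UE(\ll_{\Delta})$ and hence to $\mathfrak{Z}(\ll_{\Delta})$; and that the action of the element $\eta_{\ll}(z)\in\UE(\gg)\otimes C(\ss)$ on $M\otimes\overline{M}(\ss)$ agrees with the action of the corresponding central element of $\UE(\ll)$ via the tensor-product $\ll$-structure defining $\DC(M)$ — both immediate from the definition of the diagonal embedding $\alpha$. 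I do not expect a genuine obstacle here.
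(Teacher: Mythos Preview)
Your proof is correct and follows essentially the same approach as the paper's own proof: both arguments write $z\otimes 1=\eta_{\ll}(z)+d(A)$ via Corollary~\ref{cor::form_center}, apply this to a representative $v\in\ker\Dirac$, and observe that the $d(A)$-term lands in $\ker\Dirac\cap\Im\Dirac$, so that $z\otimes 1$ and $\eta_{\ll}(z)$ coincide on $\DC(M)$; the ``in particular'' is then the scalar comparison on $V$. Your write-up is in fact slightly more explicit than the paper's in justifying why $\eta_{\ll}(z)$ commutes with $\Dirac$ and hence preserves $\ker\Dirac$, but there is no substantive difference.
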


%\begin{theorem} \label{thm::Casselmann_Osborne}
%The action of an element $z \otimes 1 \in \ZG \otimes 1$ on $\DC(M)$ %coincides with the action of $\eta_{\,\ll} \otimes 1$. In particular, %if $M$ is a $\gg$-supermodule admitting an infinitesimal character %$\chi_{\lambda}$, and a $\ll$-supermodule with $\mathfrak{Z}(\ll)$ %infinitesimal character $\chi_{\mu}^{\ll}$ for $\mu \in %\hh_{\ll}^{\ast}$ is contained in the Dirac cohomology $\DC(M)$, then %$\chi_{\lambda}(z) = \chi_{\mu}^{\ll}(\eta_{\ll}(z))$. 
%\end{theorem}

\begin{proof}
    Let $V$ be as above and fix some non-trivial $v \in V$. By Corollary \ref{cor::form_center}, there exists some $A \in (\UE(\gg)\otimes C(\ss))^{\ll}$ such that for any $z \in \ZG$:
    \[
    z \otimes 1 - \chi_{\lambda}(z) = (\eta_{\ll}(z) - \chi_{\mu}^{\ll}(\eta_{\ll}(z))) + d(A) + (\chi_{\mu}^{\ll}(\eta_{\ll}(z)) - \chi_{\lambda}(z)),
    \]
    where we identify $\mathfrak{Z}(\ll) \cong \mathfrak{Z}(\ll_{\Delta})$.
    Applying both sides of this identity to $v$, we obtain 
    \[
    (\chi_{\mu}^{\ll}(\eta_{\ll}(z))-\chi_{\lambda}(z))v \mod \Ker \Dirac \cap \Im \Dirac = 0.
    \]
    This concludes the proof by Proposition \ref{prop::commutative_diagram}.
\end{proof}

The previous result, combined with Proposition \ref{prop::completely_reducible}, yields the following corollary.

\begin{corollary}
    Let $M$ be a $\gg$-supermodule in $\calOp$ with infinitesimal character. Then $\DC(M)$ is a completely reducible $\ll_0$-module.
\end{corollary}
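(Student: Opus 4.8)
The plan is to reduce the statement to the complete reducibility of $M\otimes\overline{M}(\ss)$ as an $\ll_{0}$-module, and then to observe that $\DC(M)$ is an $\ll_{0}$-subquotient of this module. The infinitesimal-character hypothesis serves to place $M$ in the scope of Theorem \ref{thm::Casselmann_Osborne}; the substance of the complete-reducibility claim, however, comes from Proposition \ref{prop::completely_reducible} together with the definition of $\calOp$.

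First I would record the two building blocks. Since $M$ is an object of $\calOp$, the second defining condition of the parabolic category states that, viewed as a $\UE(\ll_{0})$-module, $M$ is a direct sum of finite-dimensional simple $\ll_{0}$-modules; in particular $M$ is a semisimple $\ll_{0}$-module. On the other side, by Proposition \ref{prop::completely_reducible} the supermodule $\overline{M}(\ss)$ is unitarizable, hence completely reducible, as an $\ll$-supermodule, and by the corollary following Proposition \ref{prop::identification} it is a direct sum of \emph{finite-dimensional} simple $\ll$-supermodules (concretely $\overline{M}(\ss)\cong\bigwedge\ubar_{0}\otimes\Sym(\ubar_{1})\otimes\CC_{\rho^{\uu}}$ with the adjoint action, graded by polynomial degree). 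Restricting the action along $\ll_{0}\subset\ll$ and using that $\hh\subset\ll_{0}$ acts semisimply on each of these finite-dimensional weight pieces, every summand is a finite direct sum of finite-dimensional simple $\ll_{0}$-modules, so $\overline{M}(\ss)$ is again a semisimple $\ll_{0}$-module.

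Next I would argue that $M\otimes\overline{M}(\ss)$, equipped with the $\ll$-action coming from the diagonal embedding $\alpha$ of \eqref{diag_emb} --- which on $\ll_{0}$ is precisely the tensor-product action, with $\ll_{0}$ acting on $M$ through $\gg$ and on $\overline{M}(\ss)$ through $\nu_{\ast}$ --- is a semisimple $\ll_{0}$-module. Writing $M=\bigoplus_{i}S_{i}$ and $\overline{M}(\ss)=\bigoplus_{j}T_{j}$ with $S_{i},T_{j}$ finite-dimensional simple $\ll_{0}$-modules on which the center $\mathfrak{z}(\ll_{0})\subset\hh$ acts by scalars, one has $M\otimes\overline{M}(\ss)=\bigoplus_{i,j}S_{i}\otimes T_{j}$. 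Each $S_{i}\otimes T_{j}$ is finite-dimensional, the semisimple part $[\ll_{0},\ll_{0}]$ acts completely reducibly on it by Weyl's theorem, and $\mathfrak{z}(\ll_{0})$ acts semisimply; hence $S_{i}\otimes T_{j}$, and therefore the (possibly infinite) direct sum $M\otimes\overline{M}(\ss)$, is a semisimple $\ll_{0}$-module.

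Finally I would invoke that $\Dirac$ commutes, in the graded sense, with the $\ll$-action on $M\otimes\overline{M}(\ss)$, hence in particular with $\ll_{0}$; this is the $\ll$-invariance of the cubic Dirac operator. Consequently $\Ker\Dirac$, $\Im\Dirac$ and $\Ker\Dirac\cap\Im\Dirac$ are $\ll_{0}$-submodules of $M\otimes\overline{M}(\ss)$, so $\DC(M)=\Ker\Dirac/(\Ker\Dirac\cap\Im\Dirac)$ is an $\ll_{0}$-subquotient of a semisimple $\ll_{0}$-module, hence itself semisimple, which is the assertion. (Keeping the infinitesimal character in play, Theorem \ref{thm::Casselmann_Osborne} additionally tells us that $\ZG$ acts on $\DC(M)$ through $\eta_{\ll}$, which identifies the infinitesimal characters of the simple $\ll_{0}$-constituents.) I expect the only delicate point to be a bookkeeping one: checking that ``semisimple $\ll_{0}$-module'' is stable under arbitrary direct sums, termwise tensor products of finite-dimensional pieces, and subquotients for the reductive but non-semisimple Lie algebra $\ll_{0}$ --- the crux being the semisimplicity of the $\hh$-action, which controls $\mathfrak{z}(\ll_{0})$ and excludes non-split extensions --- together with verifying that the infinite-dimensional $\overline{M}(\ss)$ really restricts to an honest direct sum of finite-dimensional simple $\ll_{0}$-modules.
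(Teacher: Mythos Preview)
Your argument is correct and matches the paper's intended approach: the paper simply says the corollary follows from ``the previous result, combined with Proposition~\ref{prop::completely_reducible},'' and you have unpacked exactly that --- $M$ is $\ll_{0}$-semisimple by the definition of $\calOp$, $\overline{M}(\ss)$ is $\ll$-semisimple (hence $\ll_{0}$-semisimple on each finite-dimensional weight summand) by Proposition~\ref{prop::completely_reducible}, and $\DC(M)$ is then an $\ll_{0}$-subquotient of the semisimple tensor product. Your observation that the infinitesimal-character hypothesis is not actually used for the complete-reducibility conclusion (only for the additional information from Theorem~\ref{thm::Casselmann_Osborne}) is accurate and worth noting.
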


\subsubsection{Homological properties} We now briefly discuss the homological properties of the Dirac cohomology, seen as a functor from the category of $\gg$-supermodules to the category of $\ll$-supermodules. In general, it has neither a right nor a left adjoint, but it satisfies a six-term exact sequence.

\begin{lemma}
    Let $0 \to A \to B \to C \to 0$ be a short exact sequence of $\gg$-supermodules having an infinitesimal character. Then there exists a natural six-term exact sequence
\begin{equation*}
\begin{gathered}
\xymatrix@C-1em{
& \DC^{+}(A) \ar[rr] && \DC^{+}(B) \ar[dr] \\
\DC^{-}(C) \ar[ur] &&&& \DC^{+}(C) \ar[dl] \\
& \DC^{-}(B) \ar[ul] && \DC^{-}(A)  \ar[ll]
}
\end{gathered}
\end{equation*}
\end{lemma}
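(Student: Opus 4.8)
The plan is to realise $\DC^{+}$ and $\DC^{-}$ as the two cohomology groups of a genuine $\ZZ_{2}$-periodic complex functorially attached to $M$, and then to feed a short exact sequence of such complexes into the long exact cohomology sequence. First I would observe that, since $A\hookrightarrow B\twoheadrightarrow C$, all three supermodules carry the same infinitesimal character $\chi_{\lambda}$; in particular $\Omega_{\gg}$ acts on each by the same scalar $\chi_{\lambda}(\Omega_{\gg})$. Tensoring the short exact sequence with the oscillator supermodule $\overline{M}(\ss)$ over $\CC$ preserves exactness and is compatible with the actions of $\gg$, of $\ll$ (via the diagonal embedding $\alpha$), of $C(\ss)$, with the $\ZZ_{2}$-grading and with $\Dirac$; write $V_{\bullet}:=\bullet\otimes\overline{M}(\ss)$ for $\bullet\in\{A,B,C\}$, so that $0\to V_{A}\to V_{B}\to V_{C}\to 0$ is exact and $\Dirac$-equivariant. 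By Theorem \ref{thm::square_Dirac}, on each $V_{\bullet}$ one has $\Delta=\Dirac^{2}=(\chi_{\lambda}(\Omega_{\gg})+c)\cdot 1-\Omega_{\ll,\Delta}$, so $\Dirac$ commutes with $\Delta$. Hence $\ker\Delta_{\bullet}$ is a $\Dirac$-stable, $\ZZ_{2}$-graded $\ll$-subsupermodule of $V_{\bullet}$ and $(\ker\Delta_{\bullet},\Dirac)$ is a $\ZZ_{2}$-periodic complex, as $\Dirac^{2}=\Delta$ vanishes on it.

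Next I would record the identities $\ker\Dirac^{+}\subseteq\ker\Delta$ and $\ker\Dirac^{+}\cap\Im\Dirac^{-}=\Dirac^{-}(\ker\Delta)$ (and their $+\leftrightarrow-$ counterparts), which follow immediately from $\Dirac^{-}\Dirac^{+}=\Delta\big\vert_{\overline M(\ss)_{0}\text{-part}}$ and $\Dirac^{+}\Dirac^{-}=\Delta\big\vert_{\overline M(\ss)_{1}\text{-part}}$; they yield canonical isomorphisms $\DC^{+}(M)\cong H^{0}(\ker\Delta_{M},\Dirac)$ and $\DC^{-}(M)\cong H^{1}(\ker\Delta_{M},\Dirac)$. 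It then remains to prove that $0\to\ker\Delta_{A}\to\ker\Delta_{B}\to\ker\Delta_{C}\to 0$ is exact. Left-exactness and exactness in the middle are formal, since $\Delta$ commutes with the maps of the sequence. For surjectivity of $\ker\Delta_{B}\to\ker\Delta_{C}$ I would use that $\Omega_{\ll,\Delta}$, hence $\Delta$, acts semisimply on each $V_{\bullet}$: then $\ker\Delta_{\bullet}$ is the $0$-eigenspace of $\Delta$ and therefore a direct summand of $V_{\bullet}$, and this eigenspace decomposition is preserved by the $\ll$-equivariant maps of the sequence, so the sequence of $0$-eigenspaces is a direct summand of $0\to V_{A}\to V_{B}\to V_{C}\to 0$ and in particular exact.

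With this in hand, the asserted six-term sequence is nothing but the long exact cohomology sequence of the short exact sequence of $\ZZ_{2}$-periodic complexes $0\to(\ker\Delta_{A},\Dirac)\to(\ker\Delta_{B},\Dirac)\to(\ker\Delta_{C},\Dirac)\to 0$: two-periodicity collapses the usual long exact sequence into the displayed hexagon, the unlabelled arrows are induced by $A\to B\to C$, the two diagonal arrows $\DC^{+}(C)\to\DC^{-}(A)$ and $\DC^{-}(C)\to\DC^{+}(A)$ are the connecting homomorphisms, and naturality of the whole hexagon is the standard naturality of the zig-zag map together with the functoriality of $\bullet\mapsto(\ker\Delta_{\bullet\otimes\overline M(\ss)},\Dirac)$.

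The main obstacle is exactly the surjectivity of $\ker\Delta_{B}\to\ker\Delta_{C}$, equivalently the semisimplicity of $\Delta$ (i.e.\ of $\Omega_{\ll,\Delta}$) on the supermodules $\bullet\otimes\overline{M}(\ss)$: in general a diagram chase shows the hexagon is still a complex but its exactness is obstructed by the possible failure of $\operatorname{coker}\Delta_{A}\hookrightarrow\operatorname{coker}\Delta_{B}$, i.e.\ by $\Im\Delta_{B}\cap V_{A}$ strictly containing $\Im\Delta_{A}$. For the classes of supermodules this paper is concerned with — admissible $(\gg,\ll)$-supermodules, and in particular objects of $\calOp$ carrying an infinitesimal character — this semisimplicity is available in the spirit of Proposition \ref{prop::completely_reducible} and Lemma \ref{lemm::completely_reducible_DC}, which is the regime in which the six-term sequence is used.
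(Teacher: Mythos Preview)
Your proposal is correct and follows essentially the same route the paper indicates (it defers to the argument of Theorem 8.1 in \cite{Dirac_cohomology_Lie_algebra_cohomology}, invoking the infinitesimal character, Theorem \ref{thm::square_Dirac}, and evenness of supermodule morphisms): your identification $\DC^{\pm}(M)\cong H^{0,1}(\ker\Delta_{M},\Dirac)$ is precisely Corollary \ref{cor::different_form_DC}, and feeding the short exact sequence of $\ZZ_{2}$-periodic complexes $(\ker\Delta_{\bullet},\Dirac)$ into the long exact cohomology sequence is the intended mechanism. Your explicit flagging of the semisimplicity of $\Delta$ (equivalently of $\Omega_{\ll,\Delta}$) required for surjectivity of $\ker\Delta_{B}\to\ker\Delta_{C}$ is a point the paper's terse sketch leaves implicit; it is indeed secured in the admissible $(\gg,\ll)$ setting via Proposition \ref{prop::completely_reducible}.
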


The proof of the above statement follows from a similar argument to the one used in the proof of Theorem 8.1 in \cite{Dirac_cohomology_Lie_algebra_cohomology}, relying on the fact that the supermodules involved have an infinitesimal character, together with Theorem \ref{thm::square_Dirac}, and that supermodule morphisms are even.

Second, we consider the \emph{Euler characteristic} of the Dirac cohomology when restricted to admissible $(\gg, \ll)$-supermodules $M$. For any such supermodule, the Dirac operator $\Dirac$ acts on $M \otimes \overline{M}(\ss)_{0}$ and $M \otimes \overline{M}(\ss)_{1}$, interchanging these two spaces. The \emph{Dirac index} $\DI(M)$ of $M$ is defined as the difference of $\ll$-supermodules
\be
\DI(M) := M \otimes \overline{M}(\ss)_{0} - M \otimes \overline{M}(\ss)_{1}.
\ee
This Dirac index $\DI(M)$ is a \emph{virtual} $\ll$-supermodule, meaning that it is an integer combination of finitely many $\ll$-supermodules -- notice that $\DI(M)$ is an element in the Grothendieck group of the Abelian category of admissible $(\gg, \ll)$-supermodules, in fact it is additive with respect to short exact sequences. In contrast, as seen above, we can use $\Dirac: M \otimes \overline{M}(\ss)_{0,1} \to M \otimes \overline{M}(\ss)_{1,0}$ to decompose the Dirac cohomology $\DC(M)$ into two parts, $\DC^{+}(M)$ and $\DC^{-}(M)$. The \emph{Euler characteristic} of $\DC(M)$ is given by the virtual $\ll$-supermodule $\DC^{+}(M) - \DC^{-}(M)$. These two virtual $\ll$-supermodules, $\DI(M)$ and the Euler characteristic, are equal if $M$ has an infinitesimal character.

\begin{proposition}\label{prop:Euler char} Let $M$ be an admissible $(\gg, \ll)$-supermodule that admits an infinitesimal character.
Then the Dirac index $\DI(M)$ is equal to the Euler characteristic of the Dirac cohomology $\DC(M)$ of $M$, \emph{i.e.}, 
\[
\DI(M) = \DC^{+}(M)-\DC^{-}(M).
\]
\end{proposition}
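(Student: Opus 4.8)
The plan is to adapt the classical argument (cf.\ the proof of Theorem~8.1 in \cite{Dirac_cohomology_Lie_algebra_cohomology}): use the infinitesimal character to make the Laplace operator $\Delta=\Dirac^{2}$ act \emph{semisimply} on $M\otimes\overline{M}(\ss)$, split that space into the ``harmonic'' part $\Ker\Delta$ (which will carry all of the Dirac cohomology) and the part on which $\Dirac$ is invertible (which is acyclic and grading-balanced), and then invoke the fact that the Euler characteristic of a $\ZZ_{2}$-graded complex equals that of its cohomology. First, since $M$ has infinitesimal character $\chi_{\lambda}$, the Casimir $\Omega_{\gg}\otimes 1$ acts on $M\otimes\overline{M}(\ss)$ by the scalar $\chi_{\lambda}(\Omega_{\gg})$, so by Theorem~\ref{thm::square_Dirac} we get $\Delta=(\chi_{\lambda}(\Omega_{\gg})+c)\,\id-\Omega_{\ll,\Delta}$ on $M\otimes\overline{M}(\ss)$.

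Next I would observe that $M\otimes\overline{M}(\ss)$ is a semisimple $\ll$-supermodule: $M$ is $\ll$-semisimple by admissibility and $\overline{M}(\ss)$ decomposes into finite-dimensional simple $\ll$-supermodules (the corollary following Proposition~\ref{prop::identification}), so the tensor product is again $\ll$-semisimple. Consequently $\Omega_{\ll,\Delta}$ acts by a scalar on each simple $\ll$-summand, hence acts semisimply, and so does $\Delta$ by the previous step. This yields an $\ll$-equivariant decomposition $M\otimes\overline{M}(\ss)=\bigoplus_{a\in\CC}N_{a}$ into $\Delta$-eigenspaces $N_{a}=\Ker(\Delta-a)$. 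Since $\Dirac$ commutes with $\Delta$ and with the $\ll$-action, each $N_{a}$ is an $\ll$-subsupermodule preserved by $\Dirac$, and since the $\ZZ_{2}$-grading induced from $\overline{M}(\ss)$ is $\Delta$-stable, $N_{a}=(N_{a})_{0}\oplus(N_{a})_{1}$ with $\Dirac^{+}$ and $\Dirac^{-}$ interchanging the two summands.

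Now for $a\neq 0$ one has $\Dirac^{-}\Dirac^{+}=\Delta|_{(N_{a})_{0}}=a\,\id$ and $\Dirac^{+}\Dirac^{-}=a\,\id$ on $(N_{a})_{1}$, so $\Dirac^{+}\colon(N_{a})_{0}\to(N_{a})_{1}$ is an isomorphism of $\ll$-supermodules; hence $\DC^{\pm}$ vanishes on $N_{a}$ and $[(N_{a})_{0}]=[(N_{a})_{1}]$ in the Grothendieck group. For $a=0$ we have $\Dirac|_{N_{0}}^{2}=0$, so $(N_{0},\Dirac|_{N_{0}})$ is a genuine $\ZZ_{2}$-graded complex; moreover $\Ker\Dirac\subseteq\Ker\Delta=N_{0}$ and $\Im\Dirac\cap\Ker\Dirac=\Im(\Dirac|_{N_{0}})$ (the components of $\Im\Dirac$ in $N_{a}$, $a\neq0$, miss $\Ker\Dirac$), so $\DC(M)$ is precisely the cohomology of this complex, and likewise $\DC^{\pm}(M)$ are its graded cohomologies. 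Writing $d=\Dirac^{+}|_{N_{0}}$ and $\bar d=\Dirac^{-}|_{N_{0}}$, the short exact sequences $0\to\Ker d\to(N_{0})_{0}\to\Im d\to0$ and $0\to\Im\bar d\to\Ker d\to\DC^{+}(M)\to0$, together with their $d\leftrightarrow\bar d$ counterparts, give $[(N_{0})_{0}]-[(N_{0})_{1}]=[\DC^{+}(M)]-[\DC^{-}(M)]$. Summing over $a$ and using the $a\neq0$ vanishing, $\DI(M)=\sum_{a}\bigl([(N_{a})_{0}]-[(N_{a})_{1}]\bigr)=\DC^{+}(M)-\DC^{-}(M)$.

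The main obstacle is the step that produces the decomposition: one has to make sure that $M\otimes\overline{M}(\ss)$ is genuinely $\ll$-semisimple and hence that $\Omega_{\ll,\Delta}$, and with it $\Delta$, acts semisimply, so that the $\Delta$-eigenspace decomposition exists, consists of honest $\ll$-subsupermodules, and behaves well even when it is infinite; this is precisely where admissibility is used in an essential way (and where a plain infinitesimal character alone would not suffice, since $\Omega_\gg$ could then act non-semisimply). The remainder is the formal ``Hodge decomposition plus additivity of Euler characteristics'' bookkeeping, requiring only routine care with the parity conventions and with checking that the kernels and images appearing above are again admissible $(\gg,\ll)$-supermodules — which they are, as subquotients of $M\otimes\overline{M}(\ss)$.
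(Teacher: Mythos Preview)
Your proof is correct and follows essentially the same approach as the paper: decompose $M\otimes\overline{M}(\ss)$ into $\Delta$-eigenspaces, observe that $\Dirac$ is an $\ll$-equivariant isomorphism between the graded pieces on each nonzero eigenspace, and apply the Euler--Poincar\'e principle on $\Ker\Delta$. If anything, you are slightly more explicit than the paper in justifying why $\Delta$ acts genuinely semisimply (via $\ll$-semisimplicity of $M\otimes\overline{M}(\ss)$ and the scalar action of $\Omega_{\gg}$), whereas the paper simply forward-references the eigenspace decomposition in Section~\ref{subsec::decomp::M_otimes_M(s)}.
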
 
\begin{proof}
  Since $M$ admits an infinitesimal character, we can decompose $M \otimes \overline{M}(\ss)$ into a direct sum of eigenspaces of $\Dirac^2$ by Theorem \ref{thm::square_Dirac} (see below in Section \ref{subsec::decomp::M_otimes_M(s)} for a proof), namely,
\[
M \otimes \overline{M}(\ss) = (M \otimes \overline{M}(\ss))(0) \oplus \sum_{r \neq 0} (M \otimes \overline{M}(\ss))(r),
\]
where $(M\otimes \overline{M}(\ss))(r)$ denotes an eigenspace with eigenvalue $r \in \CC$. The operator $\Dirac^2$ is even, so the decomposition is compatible with the decomposition into even and odd parts of $\overline{M}(\ss)$:
\[
(M \otimes \overline{M}(\ss))(r) = (M \otimes \overline{M}(\ss)_{0})(r) \oplus (M \otimes \overline{M}(\ss)_{1})(r).
\]
On the other hand, the Dirac operator $\Dirac$ preserves each eigenspace because $\Dirac$ and $\Dirac^2$ commute. However, $\Dirac$ switches parity, defining maps
\[
\Dirac(r): (M \otimes \overline{M}(\ss)_{0, 1})(r) \to (M \otimes \overline{M}(\ss)_{1, 0})(r),
\]
which are isomorphisms with inverses $1/r \Dirac(r)$ for $r \neq 0$. Consequently, we have
\[
M \otimes \overline{M}(\ss)_{0} - M \otimes \overline{M}(\ss)_{1} = (M \otimes \overline{M}(\ss)_{0})(0) - (M \otimes \overline{M}(\ss)_{1})(0).
\]
The Dirac operator $\Dirac$ acts as a differential on $\Ker(\Dirac^2)$, and the associated cohomology is the Dirac cohomology. The result follows from the Euler--Poincaré principle.
\end{proof}

\section{Dirac cohomology and highest weight supermodules} 

In this subsection, we build on the super-analog of the Casselman--Osborne Lemma (Theorem \ref{thm::Casselmann_Osborne}) as a key tool for investigating the Dirac cohomology of highest-weight $\gg$-supermodules, with a particular emphasis on finite-dimensional cases. %This fundamental result provides a crucial structural constraint on Dirac cohomology, allowing us to establish new properties of highest-weight supermodules.

Specifically, in Section \ref{subsec::generalitiesDC}, we prove that the Dirac cohomology of highest-weight $\gg$-supermodules is always non-trivial (Proposition \ref{prop::HW_non-trivial}). In Section \ref{subsec:fin_dim}, we refine this analysis to the finite-dimensional setting, where we perform explicit computations of Dirac cohomology.

More precisely, we determine the Dirac cohomology of finite-dimensional supermodules over classical Lie superalgebras of type 1 (\emph{i.e.}, $\gg = \glmn$, $\gg = A(m|n)$ or $\gg = C(n)$) with a typical highest weight (Theorem \ref{thm::DC_finite_dimensional}), and of finite-dimensional simple objects in $\mathcal{O}^\pp$ (Theorem \ref{thm::DC_OP}).

%In this subsection, we leverage on the super-analog of Casselman-Osborne Lemma, theorem \ref{thm::Casselmann_Osborne}, to study the Dirac cohomology of highest-weight $\gg$-supermodules, with particular focus on finite-dimensional supermodules. Specifically, in section \ref{subsec::generalitiesDC} we prove that the Dirac cohomology of highest-weight $\gg$-supermodules is non-trivial, see Proposition \ref{prop::HW_non-trivial}. In section \ref{subsec:fin_dim} we focus on finite-dimensional supermodules, providing explicit computations of their Dirac cohomology. Namely, we consider finite-dimensional supermodules for classical Lie superalgebras of type 1 (\emph{i.e.}, $\gg = \glmn$, $\gg = A(m|n)$ or $\gg = C(n)$) with typical highest-weight, Theorem \ref{thm::DC_finite_dimensional}, and finite-dimensional simple supermodules in $\mathcal{O}^\pp$, Theorem \ref{thm::DC_OP}.  

\subsection{Decomposition of $M\otimes \overline{M}(\ss)$} \label{subsec::decomp::M_otimes_M(s)}
Let $M$ be a $(\gg,\ll)$-supermodule that admits Jordan--Hölder series with simple quotients isomorphic to highest weight $\gg$-supermodules. Then any simple quotient admits an infinitesimal character $\chi_{\Lambda}$, and the quadratic Casimir $\Omega_{\gg}$ acts as the scalar multiple of $(\Lambda + 2\rho, \Lambda)$. In addition, $\overline{M}(\ss)$ is a completely reducible $\ll$-supermodule by Proposition \ref{prop::completely_reducible}. 

As a consequence, we will decompose $M \otimes \overline{M}(\ss)$ into generalized eigenspaces of the Laplace operator $\Delta = \Dirac^{2}$, given by 
\be
M \otimes \overline{M}(\ss) = \bigoplus_{x \in \CC} (M \otimes \overline{M}(\ss))(r),
\ee
where the \emph{generalized eigenspaces} are defined as 
\be
(M \otimes \overline{M}(\ss))(r) := \{ v \in M \otimes \overline{M}(\ss) : \exists n := n(v)\in \ZZ_{+} \ \text{s.t.} \ (r\cdot \id_{M \otimes \overline{M}(\ss)} - \Delta)^{n}v = 0\}.
\ee
An element $v \in (M \otimes \overline{M}(\ss))(r)$ is called a \emph{generalized eigenvector} with \emph{generalized eigenvalue} $r$. To achieve this decomposition, we first decompose $M \otimes \overline{M}(\ss)$ with respect to infinitesimal characters.

\begin{lemma} \label{lemm::generalized_character_space_decomposition}
    The $\ll$-supermodule $M \otimes \overline{M}(\ss)$ has a direct sum decomposition 
    \[
    M \otimes \overline{M}(\ss) = \bigoplus_{\nu \in \hh^{\ast}} (M\otimes \overline{M}(\ss))^{\eta_{\nu}^{\ll}},
    \]
    where $(M \otimes \overline{M}(\ss))^{\eta_{\nu}^{\ll}}$ is the generalized infinitesimal character subspace 
    \[
    (M \otimes \overline{M}(\ss))^{\eta_{\nu}^{\ll}} := \{ v \in M \otimes \overline{M}(\ss) : \text{for all } z \in \mathfrak{Z}(\ll)\ \text{exists}\ n := n(z,v) \ \text{s.t.}\ (z-\chi_{\nu}^{\ll}(z))^{n}v = 0\}. 
    \]
\end{lemma}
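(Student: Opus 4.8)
The plan is to build the decomposition from the $\hh$-weight-space decomposition of $M \otimes \overline{M}(\ss)$, using that $\mathfrak{Z}(\ll)$ commutes with $\hh$, and then to identify the generalized $\mathfrak{Z}(\ll)$-eigenvalues that actually occur as genuine characters $\chi^{\ll}_{\nu}$.

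First I would record the structural properties forced by the hypothesis on $M$. Since every simple subquotient of $M$ is a highest weight $\gg$-supermodule, $M$ is a weight supermodule with finite-dimensional weight spaces, and if $l$ is a linear functional positive on $\Phi^{+}$ (for instance the one defining the triangular decomposition $\bb = \hh \oplus \nn^{+}$), then the $l$-values of the weights of $M$ are bounded above and lie in finitely many cosets of $\ZZ$. Together with the explicit weights of $\overline{M}(\ss)$ from \eqref{eq::set_of_weights_M(s)} and the finiteness of $\Phi$, this gives that $M \otimes \overline{M}(\ss)$ is $\hh$-semisimple, that each of its weight spaces $(M \otimes \overline{M}(\ss))^{\xi}$ is finite-dimensional (for fixed $\xi$ only finitely many pairs of weights $\mu$ of $M$ and $\nu$ of $\overline{M}(\ss)$ satisfy $\mu + \nu = \xi$, since both factors have $l$-bounded weights), and that its weights are again $l$-bounded above within finitely many cosets of $\ZZ$.

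Since $\mathfrak{Z}(\ll) \subset \UE(\ll)$ commutes with $\hh$, it preserves each $(M \otimes \overline{M}(\ss))^{\xi}$; being a commutative algebra acting on a finite-dimensional space, it decomposes this weight space into generalized eigenspaces indexed by the finitely many algebra characters $\lambda : \mathfrak{Z}(\ll) \to \CC$ occurring on it. Summing over $\xi$ and regrouping yields a decomposition $M \otimes \overline{M}(\ss) = \bigoplus_{\lambda} (M \otimes \overline{M}(\ss))_{\lambda}$ into $\mathfrak{Z}(\ll)$-stable — hence $\ll$-stable — subsupermodules, and, since every vector has finite weight support, one checks directly that $(M \otimes \overline{M}(\ss))_{\lambda}$ agrees with the generalized infinitesimal character subspace $\{v : \forall z \in \mathfrak{Z}(\ll)\ \exists n,\ (z - \lambda(z))^{n} v = 0\}$.

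Finally, I would show that every occurring $\lambda$ equals some $\chi^{\ll}_{\nu}$, $\nu \in \hh^{\ast}$, which, after renaming $\lambda = \chi^{\ll}_{\nu}$ (different $\nu$ with equal $\chi^{\ll}_{\nu}$ merely labelling the same summand), is exactly the asserted decomposition. Fix an occurring $\lambda$, put $N := (M \otimes \overline{M}(\ss))_{\lambda} \neq 0$; this is an $\hh$-semisimple $\ll$-subsupermodule with $l$-bounded-above weight set, so $l$ attains its maximum on that set. Picking a weight $\nu$ of $N$ with $l(\nu)$ maximal, we get $l(\nu + \alpha) > l(\nu)$ for all $\alpha \in \Phi(\ll;\hh)^{+}$, so no such $\nu + \alpha$ is a weight of $N$, and a nonzero weight vector $w$ of weight $\nu$ is killed by $\nn^{+}_{\ll} := \bigoplus_{\alpha \in \Phi(\ll;\hh)^{+}} \gg^{\alpha}$. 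Hence $\UE(\ll) w \subseteq N$ is a highest weight $\ll$-supermodule of highest weight $\nu$, on which (by the $\ll$-analogue of Lemma \ref{lemm::action_Casimir}) every $z \in \mathfrak{Z}(\ll)$ acts by the scalar $\chi^{\ll}_{\nu}(z)$; comparing with $w \in N$, where for each $z \in \ker\lambda$ some power $z^{n}$ annihilates $w$, forces $\chi^{\ll}_{\nu}(z) = 0$ for $z \in \ker\lambda$, i.e. $\ker\lambda \subseteq \ker\chi^{\ll}_{\nu}$ and therefore $\lambda = \chi^{\ll}_{\nu}$. I expect the existence of this $\Phi(\ll;\hh)^{+}$-maximal weight — which is what turns an occurring generalized eigenvalue into a genuine infinitesimal character, and is the one place the hypothesis on $M$ is indispensable — to be the main point to get right; the remainder is bookkeeping with weight-space decompositions.
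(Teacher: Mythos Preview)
Your proof is correct and follows essentially the same route as the paper's: use $\hh$-semisimplicity and finite-dimensionality of weight spaces to decompose each weight space into generalized $\mathfrak{Z}(\ll)$-eigenspaces, then regroup. The paper's argument is much terser and in fact does not spell out why every occurring character $\lambda$ is of the form $\chi^{\ll}_{\nu}$; your third paragraph, producing a $\Phi(\ll;\hh)^{+}$-highest weight vector inside $(M\otimes\overline{M}(\ss))_{\lambda}$ via the $l$-bound, is a genuine addition that fills that gap.
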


\begin{proof}
    The $\ll$-supermodule $M \otimes \overline{M}(\ss)$ is $\hh$-semisimple, hence, we can consider its weight space decomposition 
    \[
    M \otimes \overline{M}(\ss) = \bigoplus_{\mu \in \hh^{\ast}} (M\otimes \overline{M}(\ss))^{\mu},
    \]
    and any weight space $(M \otimes \overline{M}(\ss))^{\mu}$ is finite-dimensional. Consequently, each weight space decomposes in generalized eigenspaces for some $z \in \mathfrak{Z}(\ll)$, which concludes the proof.
\end{proof}

By Theorem \ref{thm::square_Dirac} and a direct calculation, on any $(M \otimes \overline{M}(\ss))^{\eta^{\ll}_{\nu}}$, the cubic Dirac operator $\Dirac$ acts as a scalar multiple of 
\begin{equation}
c_{\nu} := \chi_{\Lambda}(\Omega_{\gg}) - \chi_{\nu}^{\ll}(\Omega_{\ll}) - c, \label{eq::generalized_eigenvalue}
\end{equation}
where the value $c$ is given explicitly in Lemma \ref{lemm::Constant_C}.

\begin{proposition} \label{prop::decomposisition_representation_space}
    The $\ll$-supermodule decomposes into a direct sum of generalized $\Delta$-eigenspaces
    \[
    M \otimes \overline{M}(\ss) = \bigoplus_{r \in \CC} (M\otimes \overline{M}(\ss))(r).
    \]
    In particular, the following decomposition holds:
    \[
    M \otimes \overline{M}(\ss) = \ker \Delta \oplus \Im \Delta.
    \]
\end{proposition}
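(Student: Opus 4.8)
The plan is to combine the generalized infinitesimal–character decomposition of Lemma~\ref{lemm::generalized_character_space_decomposition} with the fact that the quadratic Casimir $\Omega_{\gg}$ acts locally finitely on $M\otimes\overline{M}(\ss)$, and then to read off the eigenspaces of $\Delta=\Dirac^{2}$ from the formula $\Delta=\Omega_{\gg}\otimes 1-\Omega_{\ll,\Delta}+c$ of Theorem~\ref{thm::square_Dirac}.

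First I would record that $\Omega_{\gg}\otimes 1$ acts locally finitely on $M\otimes\overline{M}(\ss)$ with finitely many generalized eigenvalues. Fixing a Jordan--H\"older series of $M$ with simple subquotients $L(\Lambda_{1}),\dots,L(\Lambda_{N})$, Lemma~\ref{lemm::action_Casimir} shows that $\Omega_{\gg}$ acts on $L(\Lambda_{i})$ as the scalar $(\Lambda_{i}+2\rho,\Lambda_{i})$, so $\prod_{i=1}^{N}\bigl(\Omega_{\gg}-(\Lambda_{i}+2\rho,\Lambda_{i})\bigr)$ annihilates $M$, hence also $M\otimes\overline{M}(\ss)$, on which $\Omega_{\gg}\otimes 1$ acts only through the first tensor factor. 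Thus $M\otimes\overline{M}(\ss)$ is the direct sum of the generalized $\Omega_{\gg}\otimes 1$-eigenspaces, and since $\Omega_{\gg}\otimes 1$ is central in $\UE(\gg)\otimes C(\ss)$ it commutes with the diagonal $\ll_{\Delta}$-action, so this decomposition is compatible with the one of Lemma~\ref{lemm::generalized_character_space_decomposition}. On a common refinement piece $W$, obtained by intersecting a summand $(M\otimes\overline{M}(\ss))^{\eta^{\ll}_{\nu}}$ with the generalized $\Omega_{\gg}\otimes1$-eigenspace for a value $a$, the operators $\Omega_{\gg}\otimes 1$ and $\Omega_{\ll,\Delta}$ act as $a+N_{1}$ and $\chi^{\ll}_{\nu}(\Omega_{\ll})+N_{2}$ with $N_{1},N_{2}$ commuting and nilpotent; by Theorem~\ref{thm::square_Dirac} and Lemma~\ref{lemm::Constant_C} (for the value of $c$) we get $\Delta|_{W}=r\,\id+(N_{1}-N_{2})$ with $r=a-\chi^{\ll}_{\nu}(\Omega_{\ll})+c$ as in \eqref{eq::generalized_eigenvalue} and $N_{1}-N_{2}$ nilpotent, so $W\subseteq(M\otimes\overline{M}(\ss))(r)$. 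Regrouping the finitely many pieces $W$ by the value of $r$ yields $M\otimes\overline{M}(\ss)=\bigoplus_{r\in\CC}(M\otimes\overline{M}(\ss))(r)$.

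For the last assertion, observe that on each $(M\otimes\overline{M}(\ss))(r)$ with $r\neq 0$ the operator $\Delta-r\,\id$ is nilpotent, so $\Delta$ is invertible there; hence $\bigoplus_{r\neq 0}(M\otimes\overline{M}(\ss))(r)\subseteq\Im\Delta$ and $\ker\Delta\subseteq(M\otimes\overline{M}(\ss))(0)$. It then remains to show that $\Delta$ vanishes identically on the generalized $0$-eigenspace $(M\otimes\overline{M}(\ss))(0)$ --- equivalently, that the nilpotent parts $N_{1}$ and $N_{2}$ coincide there --- for once this is known, $\ker\Delta=(M\otimes\overline{M}(\ss))(0)$ and $\Im\Delta=\bigoplus_{r\neq 0}(M\otimes\overline{M}(\ss))(r)$ are complementary, giving $M\otimes\overline{M}(\ss)=\ker\Delta\oplus\Im\Delta$. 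The natural way to obtain this is to argue that $\Delta$ acts semisimply: $\Omega_{\gg}\otimes 1$ acts semisimply because $M$ splits according to genuine infinitesimal characters (each a scalar on the corresponding summand by Lemma~\ref{lemm::action_Casimir}), and $\Omega_{\ll,\Delta}$ acts semisimply once $M\otimes\overline{M}(\ss)$ is a completely reducible $\ll$-supermodule (for instance when $M$ is unitarizable, using Proposition~\ref{prop::completely_reducible}), so $\Delta$, being the difference of two commuting semisimple operators plus a scalar, is semisimple. I expect this semisimplicity input to be the main obstacle: the generalized-eigenspace decomposition is essentially formal once the local finiteness of $\Omega_{\gg}\otimes 1$ is established, whereas upgrading the generalized $0$-eigenspace to a genuine one (so that $\ker\Delta\oplus\Im\Delta$ exhausts the space) requires controlling the interaction of the two Casimirs on $M\otimes\overline{M}(\ss)$.
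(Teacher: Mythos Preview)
Your approach to the first assertion is exactly the paper's: it combines Lemma~\ref{lemm::generalized_character_space_decomposition} with formula~\eqref{eq::generalized_eigenvalue} and groups the summands $(M\otimes\overline{M}(\ss))^{\chi^{\ll}_\nu}$ according to the value $c_\nu=r$. Note that~\eqref{eq::generalized_eigenvalue} already uses a single $\chi_\Lambda$, so the paper is implicitly working under the hypothesis that $M$ has an infinitesimal character; your separate Jordan--H\"older bookkeeping for $\Omega_\gg\otimes 1$ is therefore unnecessary, and your nilpotent part $N_1$ is in fact zero.

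Your hesitation about the second assertion is reasonable---the paper's two-line proof does not justify it explicitly---but it is not a genuine obstacle under the standing hypotheses of Section~\ref{subsec::decomp::M_otimes_M(s)}. With $\Omega_\gg$ a scalar, only the semisimplicity of $\Omega_{\ll,\Delta}$ is at issue. This is precisely where the $(\gg,\ll)$-admissibility of $M$ enters: together with Proposition~\ref{prop::completely_reducible} it makes $M\otimes\overline{M}(\ss)$ a semisimple $\ll$-supermodule, so $\Omega_{\ll,\Delta}$, and hence $\Delta$, acts by honest scalars on each $(M\otimes\overline{M}(\ss))^{\chi^{\ll}_\nu}$. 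Then the generalized $0$-eigenspace equals $\ker\Delta$, and $\ker\Delta\oplus\Im\Delta$ follows at once. The paper makes this semisimplicity explicit only later (see the opening of the proof of Theorem~\ref{thm::embedding}), but it is the intended mechanism here as well; there is no need to fall back on unitarizability.
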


\begin{proof}
    The lemma is a direct consequence of Lemma \ref{lemm::generalized_character_space_decomposition} and Equation \eqref{eq::generalized_eigenvalue} with
    \[
    (M \otimes \overline{M}(\ss))(r) = \bigoplus_{\nu \in \hh^{\ast}~:~c_{\nu} = r} (M \otimes \overline{M}(\ss))^{\chi_{\nu}^{\ll}}.
    \]
This concludes the proof.\end{proof}

As a direct consequence, we provide an equivalent description of the Dirac cohomology. Let $\Dirac'$ denote the Dirac operator restricted to $\ker \Delta$. Since $(\Dirac')^{2} = 0$, we have $\Im \Dirac' \subset \Ker \Dirac'$, allowing us to define the cohomology
\be
\operatorname{H}_{\Dirac'}(M) := \Ker \Dirac' / \Im \Dirac'.
\ee
This cohomology is isomorphic to the Dirac cohomology $\DC(M)$ as $\ker\Dirac \subset \Ker \Delta$.

\begin{corollary}\label{cor::different_form_DC}
As $\ll$-supermodules, the following isomorphism holds:
\[
\operatorname{H}_{\Dirac'}(M) \cong \DC(M).
\]
\end{corollary}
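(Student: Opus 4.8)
The plan is to work entirely inside $M \otimes \overline{M}(\ss)$, using the direct-sum decomposition $M \otimes \overline{M}(\ss) = \Ker \Delta \oplus \Im \Delta$ from Proposition \ref{prop::decomposisition_representation_space} together with the fact that $\Dirac$ commutes with $\Delta = \Dirac^{2}$. First I would record the structural facts on which everything rests: since $\Dirac \Delta = \Delta \Dirac$, the operator $\Dirac$ preserves both $\Ker \Delta$ and $\Im \Delta$; since $\Dirac$ (hence $\Delta$) is $\ll$-invariant and the $\ll$-action on $M \otimes \overline{M}(\ss)$ is the one induced by the diagonal embedding $\alpha$, all of $\Ker \Delta$, $\Im \Delta$, $\Ker \Dirac$, $\Im \Dirac$ and $\Dirac(\Ker\Delta)$ are $\ll$-subsupermodules, so every identification below will automatically be an identification of $\ll$-supermodules; and finally $\Ker \Dirac \subseteq \Ker \Delta$, because $\Dirac v = 0$ forces $\Delta v = \Dirac^{2} v = 0$. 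In particular $\Ker \Dirac' = \Ker \Dirac$ and $\Im \Dirac' = \Dirac(\Ker \Delta)$, where $\Dirac' = \Dirac\vert_{\Ker\Delta}$.

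The heart of the argument is the identity
\[
\Dirac(\Ker \Delta) = \Ker \Dirac \cap \Im \Dirac .
\]
The inclusion ``$\subseteq$'' is immediate: $\Dirac(\Ker\Delta) \subseteq \Im \Dirac$ trivially, and $\Dirac\bigl(\Dirac(\Ker\Delta)\bigr) = \Delta(\Ker\Delta) = 0$ shows $\Dirac(\Ker\Delta) \subseteq \Ker\Dirac$. For ``$\supseteq$'', take $w \in \Ker\Dirac \cap \Im\Dirac$ and write $w = \Dirac u$; using Proposition \ref{prop::decomposisition_representation_space}, decompose $u = u_{0} + u_{1}$ with $u_{0} \in \Ker\Delta$ and $u_{1} \in \Im\Delta$. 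Then $\Dirac u_{0} \in \Dirac(\Ker\Delta) \subseteq \Ker\Delta$ and $\Dirac u_{1} \in \Dirac(\Im\Delta) \subseteq \Im\Delta$, while $w = \Dirac u_{0} + \Dirac u_{1}$ lies in $\Ker\Dirac \subseteq \Ker\Delta$; hence $\Dirac u_{1} = w - \Dirac u_{0} \in \Ker\Delta \cap \Im\Delta = \{0\}$, so $w = \Dirac u_{0} \in \Dirac(\Ker\Delta)$.

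Combining these facts gives
\[
\operatorname{H}_{\Dirac'}(M) = \Ker \Dirac' / \Im \Dirac' = \Ker \Dirac / \Dirac(\Ker\Delta) = \Ker \Dirac / (\Ker\Dirac \cap \Im\Dirac) = \DC(M),
\]
and since each side is a quotient of $\ll$-subsupermodules by an $\ll$-subsupermodule, this is an isomorphism of $\ll$-supermodules. I expect the only genuinely delicate point to be the second inclusion in the displayed identity, i.e. the ``$\supseteq$'' direction above: this is precisely where the splitting $M \otimes \overline{M}(\ss) = \Ker\Delta \oplus \Im\Delta$ (and therefore, via Theorem \ref{thm::square_Dirac} and Lemma \ref{lemm::generalized_character_space_decomposition}, the assumption on infinitesimal characters) is actually used. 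Everything else is formal manipulation of the two commuting, $\ll$-invariant operators $\Dirac$ and $\Delta$, so no further obstacle is anticipated beyond checking $\ll$-equivariance once and for all.
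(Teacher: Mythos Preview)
Your proof is correct and follows essentially the same approach as the paper: both rely on the decomposition $M\otimes\overline{M}(\ss)=\Ker\Delta\oplus\Im\Delta$ from Proposition~\ref{prop::decomposisition_representation_space} together with $\Ker\Dirac\subset\Ker\Delta$. The paper actually gives no proof beyond the one-line remark ``as $\ker\Dirac\subset\Ker\Delta$'', so your argument is a careful unpacking of exactly this; in particular, your identification $\Im\Dirac'=\Dirac(\Ker\Delta)=\Ker\Dirac\cap\Im\Dirac$ and its verification via the splitting is the content that the paper leaves implicit.
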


Moreover, we obtain a relation between $\DC(M)$ and the decomposition of $M\otimes \overline{M}(\ss)$.

\begin{lemma}
    The following three assertions are equivalent:
    \begin{enumerate}
        \item $\DC(M) = \ker \Dirac$.
        \item $\ker \Dirac \cap \Im \Dirac = \{0\}$.
        \item $M \otimes \overline{M}(\ss) = \ker \Dirac \oplus \Im \Dirac.$
    \end{enumerate}
\end{lemma}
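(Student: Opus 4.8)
The plan is to prove the two equivalences $(1)\Leftrightarrow(2)$ and $(2)\Leftrightarrow(3)$, from which the three-way equivalence follows. The equivalence $(1)\Leftrightarrow(2)$ is just a matter of unwinding Definition \ref{def::Dirac_cohom}: by construction $\DC(M) = \ker\Dirac/(\ker\Dirac\cap\Im\Dirac)$, and the canonical surjection $\ker\Dirac\twoheadrightarrow\DC(M)$ has kernel $\ker\Dirac\cap\Im\Dirac$, so it is an isomorphism --- which is the meaning of the equality in $(1)$ --- exactly when $\ker\Dirac\cap\Im\Dirac = \{0\}$. The implication $(3)\Rightarrow(2)$ requires nothing, since in an internal direct sum the summands meet only in $\{0\}$.

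The substantive part is $(2)\Rightarrow(3)$, and here I would feed in the decomposition of $M\otimes\overline{M}(\ss)$ with respect to the Laplace operator $\Delta := \Dirac^{2}$ established in Proposition \ref{prop::decomposisition_representation_space}, namely $M\otimes\overline{M}(\ss) = \ker\Delta\oplus\Im\Delta$. Since $\Delta$ factors as $\Dirac\circ\Dirac$, one has $\Im\Delta\subseteq\Im\Dirac$ for free. For the other summand I would argue that $(2)$ forces $\ker\Delta\subseteq\ker\Dirac$: if $\Delta v = 0$, then $\Dirac v$ lies in $\ker\Dirac$ (because $\Dirac(\Dirac v)=0$) and trivially in $\Im\Dirac$, hence in $\ker\Dirac\cap\Im\Dirac = \{0\}$, so $\Dirac v = 0$. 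Combining the two inclusions gives $M\otimes\overline{M}(\ss) = \ker\Delta + \Im\Delta \subseteq \ker\Dirac + \Im\Dirac \subseteq M\otimes\overline{M}(\ss)$, so $\ker\Dirac + \Im\Dirac$ exhausts $M\otimes\overline{M}(\ss)$, and this sum is direct by hypothesis $(2)$.

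I do not expect a genuine obstacle: the proof is a short formal argument once Proposition \ref{prop::decomposisition_representation_space} is in hand. The only points demanding a little care are (a) reading $(1)$ correctly as ``the canonical map $\ker\Dirac\to\DC(M)$ is an isomorphism'' rather than as a literal set equality, and (b) routing the argument through $\Delta=\Dirac^{2}$ --- one cannot split $\Dirac$ directly on all of $M\otimes\overline{M}(\ss)$, since $\Dirac$ is a differential only after restriction to $\ker\Delta$ (compare Corollary \ref{cor::different_form_DC}), which is precisely why the Laplace-operator decomposition of Proposition \ref{prop::decomposisition_representation_space} is the right input.
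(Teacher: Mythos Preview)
Your proposal is correct and follows essentially the same route as the paper: both arguments invoke Proposition \ref{prop::decomposisition_representation_space} for the decomposition $M\otimes\overline{M}(\ss)=\ker\Delta\oplus\Im\Delta$, and the key step $(2)\Rightarrow(3)$ is proved identically by observing that $v\in\ker\Delta$ forces $\Dirac v\in\ker\Dirac\cap\Im\Dirac=\{0\}$. The only cosmetic difference is that the paper organizes the logic as a cycle $a)\Rightarrow b)\Rightarrow c)\Rightarrow a)$ and records the slightly stronger conclusion $\ker\Dirac=\ker\Delta$, $\Im\Dirac=\Im\Delta$, whereas you prove the pair of equivalences and use only the inclusions needed.
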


\begin{proof}
    The assertion b) follows from a) by definition of Dirac cohomology. To deduce c) from b), we note that 
    \[
    M \otimes \overline{M}(\ss) = \ker \Delta \oplus \Im \Delta.
    \]
    by Proposition \ref{prop::decomposisition_representation_space}, and $\ker \Dirac \subset \ker \Delta$, $\Im \Delta \subset \Im \Dirac$. We show $\ker \Dirac = \ker \Delta$ and $\Im \Dirac = \Im \Delta$. 

    Assume $v \in \ker \Delta \setminus \ker \Dirac$. Then $w := \Dirac v \neq 0$, by assumption, and $\Dirac w = 0$. However, $w \in \Ker \Dirac \cap \Im \Dirac$, which is trivial. We conclude $\ker \Dirac = \Ker \Delta$. Similarly, the equality $\Im \Dirac = \Im \Delta$ holds.

    Finally, the assertion a) follows from c) by definition again.
\end{proof}

\subsection{Generalities on Dirac cohomology of highest weight supermodules}
\label{subsec::generalitiesDC}

We now characterize the kernel of the Laplace operator for admissible highest weight $(\gg,\ll)$-supermodules and establish that the Dirac cohomology of highest weight $\gg$-supermodules is non-trivial.

Let $M$ be an admissible highest weight $(\gg, \ll)$-supermodule with highest weight $\Lambda \in \hh^{\ast}$. The explicit form of the Laplace operator $\Delta$, as provided in Theorem \ref{thm::square_Dirac}, is given by  
\be
\Delta = \Omega_{\gg} \otimes 1 - \Omega_{\ll,\Delta} + c (1\otimes 1),
\ee
where \( c = (\rho,\rho) - (\rho^{\ll},\rho^{\ll}) \). According to Lemma \ref{lemm::action_Casimir}, the action of \(\Omega_{\gg}\) on \(M\) is the scalar \((\Lambda + 2\rho, \Lambda)\) times the identity. Furthermore, as a \(\ll\)-supermodule, \(M \otimes \overline{M}(\ss)\) fully decomposes into simple highest weight \(\ll\)-supermodules. By applying Lemma \ref{lemm::action_Casimir}, we get the following result.

\begin{lemma}
    Let $M$ be an admissible highest weight $(\gg, \ll)$-supermodule with highest weight $\Lambda \in \hh^{\ast}$. Let $N$ be a simple $\ll$-constituent of $M \otimes \overline{M}(\ss)$ with highest weight $\nu \in \hh^{\ast}$. Then $\Delta$ acts on $N$ as the scalar multiple
    \[
    (\Lambda + 2\rho, \Lambda) - (\nu+ 2\rho^{\ll}, \nu) + (\rho,\rho) - (\rho^{\ll},\rho^{\ll})
    \]
    of the identity. In particular, a simple $\ll$-constituent $N$ of highest weight $\nu$ belongs to $\ker \Delta$ if and only if 
    \[
    (\Lambda + 2\rho, \Lambda)+(\rho,\rho) = (\nu + 2\rho^{\ll},\nu) + (\rho^{\ll},\rho^{\ll}).
    \]
\end{lemma}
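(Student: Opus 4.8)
The plan is to read the scalar straight off the explicit square of the cubic Dirac operator, using the Casimir action on highest weight supermodules for both $\gg$ and $\ll$. By Theorem~\ref{thm::square_Dirac} together with Lemma~\ref{lemm::Constant_C}, the Laplace operator acting on $M\otimes\overline{M}(\ss)$ is
\[
\Delta = \Dirac^{2} = \Omega_{\gg}\otimes 1 - \Omega_{\ll,\Delta} + c\,(1\otimes 1),\qquad c = (\rho,\rho)-(\rho^{\ll},\rho^{\ll}),
\]
so it suffices to evaluate the three summands on the simple $\ll$-constituent $N$. Since $M$ is an admissible $(\gg,\ll)$-supermodule, $M\otimes\overline{M}(\ss)$ decomposes completely into simple highest weight $\ll$-supermodules (Proposition~\ref{prop::completely_reducible}), so $N$ is indeed such a summand and each of the three operators above acts on it by a genuine scalar.

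First, $\Omega_{\gg}\otimes 1$ acts only on the first tensor factor, and $M$ is a highest weight $\gg$-supermodule of highest weight $\Lambda$; hence, by Lemma~\ref{lemm::action_Casimir}, $\Omega_{\gg}$ acts on $M$ — and therefore $\Omega_{\gg}\otimes 1$ acts on $M\otimes\overline{M}(\ss)$, in particular on $N$ — as multiplication by $(\Lambda+2\rho,\Lambda)$. Second, by definition $\Omega_{\ll,\Delta}=\alpha(\Omega_{\ll})$, where $\alpha$ is the diagonal embedding \eqref{diag_emb}, extended to an algebra homomorphism $\UE(\ll)\to\UE(\gg)\otimes C(\ss)$; since the $\ll$-supermodule structure on $M\otimes\overline{M}(\ss)$ is precisely the one obtained through $\alpha$, the operator $\Omega_{\ll,\Delta}$ acts on $M\otimes\overline{M}(\ss)$ exactly as the quadratic Casimir $\Omega_{\ll}$ of $\ll$ acts via this $\ll$-module structure. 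Applying Lemma~\ref{lemm::action_Casimir} now with $\ll$ in place of $\gg$ (and its own Weyl vector $\rho^{\ll}$ and invariant form), $\Omega_{\ll}$ acts on the simple highest weight $\ll$-supermodule $N$ of highest weight $\nu$ as multiplication by $(\nu+2\rho^{\ll},\nu)$, hence so does $\Omega_{\ll,\Delta}$. Third, $c\,(1\otimes 1)$ contributes the scalar $(\rho,\rho)-(\rho^{\ll},\rho^{\ll})$.

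Adding the three contributions with the signs dictated by the formula for $\Delta$ gives that $\Delta$ acts on $N$ as multiplication by $(\Lambda+2\rho,\Lambda)-(\nu+2\rho^{\ll},\nu)+(\rho,\rho)-(\rho^{\ll},\rho^{\ll})$, which is the first assertion. The ``in particular'' statement is then immediate: because $\Delta$ acts on $N$ by a single scalar, $N\subseteq\ker\Delta$ if and only if that scalar vanishes, i.e.\ if and only if $(\Lambda+2\rho,\Lambda)+(\rho,\rho)=(\nu+2\rho^{\ll},\nu)+(\rho^{\ll},\rho^{\ll})$. The only point that needs a little care — the ``main obstacle'', such as it is — is the bookkeeping around $\Omega_{\ll,\Delta}$: one must be explicit that $\alpha$ carries $\Omega_{\ll}\in\mathfrak{Z}(\ll)$ to the element governing the diagonal $\ll$-action (which holds because $\alpha$ is an algebra homomorphism extending \eqref{diag_emb}), and that Lemma~\ref{lemm::action_Casimir}, stated for $\gg$, applies verbatim to the subalgebra $\ll$ with its own data. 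No genuinely new computation is required.
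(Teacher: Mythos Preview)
Your proof is correct and follows essentially the same approach as the paper: the paper simply invokes Theorem~\ref{thm::square_Dirac} and Lemma~\ref{lemm::Constant_C} for the formula $\Delta=\Omega_{\gg}\otimes 1-\Omega_{\ll,\Delta}+((\rho,\rho)-(\rho^{\ll},\rho^{\ll}))$, then applies Lemma~\ref{lemm::action_Casimir} to both $\gg$ and $\ll$ to read off the scalars. Your extra care in spelling out why $\Omega_{\ll,\Delta}$ acts as $\Omega_{\ll}$ via the diagonal $\ll$-action is a welcome clarification but not a different argument.
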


\begin{corollary}
   Let $M$ be an admissible highest weight $(\gg, \ll)$-supermodule with highest weight $\Lambda \in \hh^{\ast}$, and let $N$ be a simple $\ll$-supermodule constituent of $M \otimes \overline{M}(\ss)$ with highest weight $\nu \in \hh^{\ast}$. Assume there exist $w \in W^{\gg}$, $t_{1}, \dots, t_{k} \in \CC$, and linearly independent odd isotropic roots $\alpha_{1}, \dotsc, \alpha_{k} \in \Phi^{+}$ satisfying $(\Lambda + \rho, \alpha_{i}) = 0$ for all $i$, such that 
\[
\nu = w\biggl(\Lambda + \rho + \sum_{i=1}^{k} t_{i} \alpha_{i}\biggr) - \rho^{\ll}.
\]
Then $N$ belongs to $\ker \Delta$.
\end{corollary}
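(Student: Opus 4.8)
The plan is to reduce the statement to the numerical criterion furnished by the previous Lemma and then verify that criterion by a short computation with the bilinear form on $\hh^{\ast}$.

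First I would rewrite the condition for $N$ to belong to $\ker\Delta$ in a more symmetric form. Completing the square, bilinearity and symmetry of $(\cdot,\cdot)$ give
\[
(\Lambda + 2\rho,\Lambda) + (\rho,\rho) = (\Lambda+\rho,\Lambda+\rho), \qquad
(\nu + 2\rho^{\ll},\nu) + (\rho^{\ll},\rho^{\ll}) = (\nu+\rho^{\ll},\nu+\rho^{\ll}),
\]
so by the previous Lemma a simple $\ll$-constituent $N$ of highest weight $\nu$ lies in $\ker\Delta$ if and only if $(\Lambda+\rho,\Lambda+\rho) = (\nu+\rho^{\ll},\nu+\rho^{\ll})$, i.e.\ if and only if $\Lambda+\rho$ and $\nu+\rho^{\ll}$ have the same length.

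Next I would insert the hypothesis $\nu+\rho^{\ll} = w\bigl(\Lambda+\rho+\sum_{i=1}^{k}t_{i}\alpha_{i}\bigr)$ and use that $W^{\gg}$, being the Weyl group of $\even$, acts by isometries of $(\cdot,\cdot)$ on $\hh^{\ast}$ (cf.\ Proposition \ref{prop::structure_theory_BCLSA}). This yields
\[
(\nu+\rho^{\ll},\nu+\rho^{\ll}) = \Bigl(\Lambda+\rho+\sum_{i}t_{i}\alpha_{i},\ \Lambda+\rho+\sum_{j}t_{j}\alpha_{j}\Bigr),
\]
and expanding the right-hand side by bilinearity produces
\[
(\Lambda+\rho,\Lambda+\rho) \;+\; 2\sum_{i=1}^{k}t_{i}(\Lambda+\rho,\alpha_{i}) \;+\; \sum_{i,j=1}^{k}t_{i}t_{j}(\alpha_{i},\alpha_{j}).
\]
The linear sum vanishes term by term because $(\Lambda+\rho,\alpha_{i})=0$ by hypothesis; in the quadratic sum the diagonal terms vanish since each $\alpha_{i}$ is isotropic, and the off-diagonal terms vanish since the isotropic roots occurring in a relation of the shape \eqref{eq::relation_infinitresimal_character} are pairwise orthogonal. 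Hence $(\nu+\rho^{\ll},\nu+\rho^{\ll}) = (\Lambda+\rho,\Lambda+\rho)$, which is exactly the criterion above, so $N$ belongs to $\ker\Delta$.

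The computation is routine once the reduction is in place; the single delicate point is the vanishing of the quadratic term $\sum_{i,j}t_{i}t_{j}(\alpha_{i},\alpha_{j})$, i.e.\ making explicit that the $\alpha_{i}$ are not merely linearly independent but pairwise orthogonal isotropic roots. This is precisely the condition imposed on them by the description of the image of the Harish--Chandra homomorphism in \cite{GorelikHC, KacHC}, which also underlies the identification of infinitesimal characters recorded in \eqref{eq::relation_infinitresimal_character}. Granting this, everything else is an unwinding of the squared-norm identity.
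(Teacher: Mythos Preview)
Your proof is correct and is exactly the computation the paper has in mind (no proof is written out there; the Corollary is meant to be immediate from the preceding Lemma once one completes the square and uses $W^{\gg}$-invariance of the form). Your remark on the delicate point is also well taken: the vanishing of the off-diagonal quadratic terms requires the $\alpha_i$ to be pairwise \emph{orthogonal}, which is the hypothesis actually used in the Gorelik--Kac description of $\Im(\HC)$, whereas the paper (both here and in \eqref{eq::relation_infinitresimal_character}) writes only ``linearly independent''---a slight imprecision that your argument correctly unwinds.
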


Finally, we show that the Dirac cohomology of generic highest weight supermodules is non-trivial.

\begin{proposition}\label{prop::HW_non-trivial}
    Let $M$ be a highest weight $\gg$-supermodule with highest weight $\Lambda$. Then $\DC(M)$ is non-trivial.
\end{proposition}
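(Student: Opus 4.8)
The plan is to produce an explicit nonzero cohomology class in $\DC(M)$. Let $v_{\Lambda}\in M$ be a highest weight vector, so $\nn^{+}v_{\Lambda}=0$; since $\uu\subseteq\nn^{+}$ (the roots of the nilradical being positive, as reflected by the description \eqref{eq::set_of_weights_M(s)} of the weights of $\overline{M}(\ss)$), in particular $\uu v_{\Lambda}=0$. Let $v_{\overline{M}(\ss)}\in\overline{M}(\ss)$ be the weight vector of weight $\rho^{\uu}$, i.e.\ the generator of $\bigwedge^{0}\ubar\otimes\CC_{\rho^{\uu}}$ under the isomorphism of Proposition \ref{prop::identification}; equivalently, it is the vacuum of the spin module (Lemma \ref{lemm::spin_simple}) tensored with the constant polynomial $1\in\overline{M}(\ss_{1})$. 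Its crucial feature is that the Clifford action of every basis element $u_{i}$ of $\uu$ annihilates it, because $\uu_{0}$ acts by contraction on $\spinbar$ and $\uu_{1}$ by differentiation on $\overline{M}(\ss_{1})$ (cf.\ \eqref{eq:del_action}). The claim is that $v_{\Lambda}\otimes v_{\overline{M}(\ss)}$ represents a nonzero element of $\DC(M)$.

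First, $v_{\Lambda}\otimes v_{\overline{M}(\ss)}\in\ker\Dirac$. By Theorem \ref{theorem::decomp_Dirac} and \eqref{eq::C_Cbar} we write $\Dirac=A+\Abar+1\otimes a+1\otimes\overline{a}$ with $A=\sum_{i=1}^{s}u_{i}\otimes\ou_{i}$ and $\Abar=\sum_{i=1}^{s}(-1)^{p(u_{i})}\ou_{i}\otimes u_{i}$, and check that each summand kills $v_{\Lambda}\otimes v_{\overline{M}(\ss)}$: the $A$-term vanishes because $u_{i}v_{\Lambda}=0$, the $\Abar$-term because $u_{i}v_{\overline{M}(\ss)}=0$, and for the two cubic terms one uses the presentation of $a$ from the lemma preceding Remark \ref{rmk::different_form_a_abar} and that of $\overline{a}$ from Remark \ref{rmk::different_form_a_abar}, in which the rightmost Clifford factor is $u_{i}u_{j}$, resp.\ $[u_{i},u_{j}]$, all lying in $\uu$ and hence annihilating $v_{\overline{M}(\ss)}$. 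Thus $\Dirac(v_{\Lambda}\otimes v_{\overline{M}(\ss)})=0$.

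Second, $v_{\Lambda}\otimes v_{\overline{M}(\ss)}\notin\Im\Dirac$. Since $\hh\subset\ll$ acts on $M\otimes\overline{M}(\ss)$ through the diagonal embedding $\alpha$ and $\Dirac$ is $\ll$-invariant, $\Dirac$ commutes with this action and so preserves $\hh$-weight spaces; moreover it interchanges $M\otimes\overline{M}(\ss)_{0}$ and $M\otimes\overline{M}(\ss)_{1}$, whence $\Dirac=\Dirac^{+}+\Dirac^{-}$ and $\Im\Dirac\cap(M\otimes\overline{M}(\ss)_{0})=\Im\Dirac^{-}$. Now $v_{\Lambda}\otimes v_{\overline{M}(\ss)}$ has weight $\Lambda+\rho^{\uu}$ and lies in $M\otimes\overline{M}(\ss)_{0}$; recalling that $\dim M^{\Lambda}=1$ and, from Proposition \ref{prop::identification}, that the $\rho^{\uu}$-weight space of $\overline{M}(\ss)$ is spanned by $v_{\overline{M}(\ss)}$ with all other weights lying strictly below (cf.\ \eqref{eq::set_of_weights_M(s)}), the weight-$(\Lambda+\rho^{\uu})$ subspace of $M\otimes\overline{M}(\ss)$ equals $\CC\,(v_{\Lambda}\otimes v_{\overline{M}(\ss)})\subset M\otimes\overline{M}(\ss)_{0}$. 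Hence any $w$ with $\Dirac^{-}(w)=v_{\Lambda}\otimes v_{\overline{M}(\ss)}$ would be a weight-$(\Lambda+\rho^{\uu})$ vector in $M\otimes\overline{M}(\ss)_{1}$, forcing $w=0$ and the contradiction $v_{\Lambda}\otimes v_{\overline{M}(\ss)}=0$. Therefore $[v_{\Lambda}\otimes v_{\overline{M}(\ss)}]$ is a nonzero class in $\DC(M)=\ker\Dirac/(\ker\Dirac\cap\Im\Dirac)$, so $\DC(M)$ is non-trivial. Since $v_{\Lambda}\otimes v_{\overline{M}(\ss)}$ is also annihilated by $\nn^{+}\cap\ll$, its class even generates a highest weight $\ll$-submodule of $\DC(M)$ of highest weight $\Lambda+\rho^{\uu}$ — the sharper statement invoked in the proof of Proposition \ref{prop::commutative_diagram}.

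The only delicate points are bookkeeping: pinning down $v_{\overline{M}(\ss)}$ consistently across the spin-module and exterior-algebra models of $\overline{M}(\ss)$ and checking, with the correct signs (Lemma \ref{lemm::adjoint_M(s)}, \eqref{eq:del_action}), that $\uu$ annihilates it, together with choosing the presentations of $a$ and $\overline{a}$ in which the $\uu$-factor sits on the right. Once those are in place, the weight-and-parity argument of the last step is the crux, and it is elementary.
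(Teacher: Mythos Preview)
Your proof is correct and follows the same approach as the paper: exhibit $v_{\Lambda}\otimes v_{\overline{M}(\ss)}$ as a nonzero Dirac cohomology class by checking that each summand of $\Dirac=A+\Abar+1\otimes a+1\otimes\overline{a}$ kills it, and then arguing it is not in $\Im\Dirac$. Your weight-and-parity argument for the second step is in fact more explicit than the paper's (which simply appeals to the identification of $\overline{M}(\ss)$ with $\bigwedge\ubar$); note that the parity ingredient is not even needed---since the $(\Lambda+\rho^{\uu})$-weight space of $M\otimes\overline{M}(\ss)$ is one-dimensional and $\Dirac$ preserves weights while annihilating its generator, no preimage can exist.
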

\begin{proof}
   Let $v_{M}$ be the highest weight vector of $M$, and let $v_{\overline{M}(\ss)} = 1 \otimes 1 \in \overline{M}(\ss)$. In particular, $\uu$ annihilates $v_{M}$ and $v_{\overline{M}(\ss)}$, respectively, where for the latter we may identify elements of $\uu$ with $\frac{\partial}{\partial x_{i}}$ and $\frac{\partial}{\partial \eta_{j}}$, as in Section \ref{subsec::Clifford_superalgebras}.

First, we show that $v_{M} \otimes v_{\overline{M}(\ss)} \in M \otimes \overline{M}(\ss)$ lies in $\ker \Dirac$. As in Section \ref{subsec::Cubic_Dirac_operators}, we decompose $\Dirac = C + \Cbar$, where $C = A + 1 \otimes a$ and $\Cbar = \Abar + 1 \otimes \overline{a}$, with 
\[
A = \sum_{i = 1}^{s} u_{i} \otimes \ou_{i}, \qquad 
\Abar =  \sum_{i = 1}^{s} (-1)^{p(u_{i})}\ou_{i} \otimes u_{i},
\]
\[
{a} = -\frac{1}{4} \sum_{1 \leq i,j \leq s}
(-1)^{p(u_{i})p(u_{j}) + p(u_{i}) + p(u_{j})} 
[\ou_{i}, \ou_{j}] \, u_{i} u_{j},
\]
\[
\overline{a} = -\frac{1}{4} \sum_{1 \leq i,j \leq s} (-1)^{p(u_{i})p(u_{j})} [u_{i},u_{j}] \ou_{i} \ou_{j}.
\]

Since $u_{i}v_{M} = 0$ and $u_{i}v_{\overline{M}(\ss)} = 0$ for all $i = 1, \dots, s$, it is immediate that $A$, $\Abar$, and $a$ annihilate $v_{M} \otimes v_{\overline{M}(\ss)}$. To see that $\bar{a}$ annihilates $v_{M} \otimes v_{\overline{M}(\ss)}$, we note that $[u_{i},u_{j}] \in \uu$ for all $1 \leq i,j \leq s$, and we can rewrite $\bar{a}$ as (Remark \ref{rmk::different_form_a_abar})
\[
\bar{a} = -\frac{1}{4} \sum \limits_{1 \leq i,j \leq s} (-1)^{p(u_i)p(u_j)+p(u_{i})+p(u_{j})} \ou_i \ou_j [u_i, u_j].
\]
We conclude that $v_{M} \otimes v_{\overline{M}(\ss)} \in \ker \Dirac$.

Furthermore, under the identification of $\uu$ with  elements of the form $\frac{\partial}{\partial x_{i}}$ and $\frac{\partial}{\partial \eta_{j}}$ acting on $\bigwedge \ubar$, the discussion in Section \ref{subsec::Clifford_superalgebras} makes it immediate that $v_{\overline{M}(\ss)}$, and therefore $v_{M} \otimes v_{\overline{M}(\ss)}$, cannot lie in the image of $\Dirac$. This concludes the proof.
\end{proof}

\subsection{Dirac cohomology and finite-dimensional supermodules} \label{subsec:fin_dim}
In this subsection, we specify to the Dirac cohomology of finite-dimensional simple admissible $(\gg, \ll)$-supermodules, where $\gg$ is a basic classical Lie superalgebra of type $1$. In particular, we will compute the Dirac cohomology of those with typical highest weight and finite-dimensional simple objects in $\calO^{\pp}$.

In the following, throughout this subsection, we let $\gg$ be a basic classical Lie superalgebra of type $1$, \emph{i.e.}, $\gg$ is $\glmn$, $A(m\vert n)$, or $C(n)$. The simple finite-dimensional supermodules are parameterized by dominant integral weights $\lambda \in \hh^{\ast}$ with respect to some Borel subalgebra $\bb = \bb_{0} \oplus \bb_{1}$, \emph{i.e.}, those weights for which there exists a finite-dimensional simple $\even$-module with highest weight $\lambda$ for $\bb_{0}$. More precisely, $\lambda$ is dominant integral if and only if 
\be
(\lambda + \rho_{0},\alpha) > 0 \ \text{for all} \ \alpha \in \Phi_{0}^{+},
\ee
where $\Phi^{+} = \Phi_{0}^{+} \sqcup \Phi^{+}_{1}$ is the positive system with respect to $\bb$.
We denote the set of $\bb$-dominant integral weights by $P^{++}_{\bb}$, and call them also $\Phi^{+}$-dominant integral. Moreover, if $\bb$ is clear from the context, we omit the subscript and simply write $P^{++}$. 

For any $\lambda \in P^{++}_{\bb}$, we define $L_{\bb}(\lambda)$ (or simply $L(\lambda)$ when no confusion arises)  to be the simple supermodule with highest weight $\lambda$ with respect to $\bb$ such that the highest weight vector is even. The fixed notation allows us to parameterize the simple finite-dimensional $\gg$-supermodules as follows:
\be
\{L(\lambda), \Pi L(\lambda) : \lambda \in P^{++}\}.
\ee

In the following, we assume that $M := L(\lambda)$, $\lambda \in P^{++}_{\bb}$, is an admissible finite-dimensional simple $(\gg, \ll)$-supermodule, where $\bb$ is the distinguished Borel subalgebra. The \emph{distinguished Borel subalgebra} is defined with respect to the \emph{distinguished positive system} $\Phi^{+}$, that is, the positive system with the smallest number of odd roots. For this system, as a direct calculation shows, we have
\be \label{eq::inner_product_with_rho_0}
(\rho_{1}, \alpha) \begin{cases}
    = 0 \  &\text{if} \ \alpha \in \Phi_{0}, \\
    > 0 \ &\text{if} \ \alpha \in \Phi_{1}^{+}.
\end{cases}
\ee

The $\ll$-supermodule $M \otimes \overline{M}(\ss)$ decomposes completely in finite-dimensional weight $\ll$-supermodules by Proposition \ref{prop::completely_reducible}. Each finite-dimensional simple weight $\ll$-supermodule is of highest weight type with respect to some positive system $\Phi(\ll;\hh)^{+}$. The associated highest weight $\mu \in \hh^{\ast}$ is called $\Phi(\ll;\hh)^{+}$\emph{-dominant integral}. If $\nu$ is the $\Phi(\ll;\hh)^{+}$-dominant integral, we denote the associated simple $\ll$-supermodule by $L_{\ll}(\nu)$.

For the remainder, we fix a positive system $\Phi(\ll;\hh)^{+}$, which is contained in the distinguished positive system $\Phi^{+}$. In particular, if $M$ is a highest weight $\gg$-supermodule with highest weight $\lambda$ with respect to $\Phi^{+}$, then $\lambda$ is the highest weight with respect to $\Phi(\ll;\hh)^{+}$ of an $\ll$-constituent of $M$.  For any $\mu \in \hh^{\ast}$, we define
\be
W_{\lambda}^{\ll, 1} := \{w \in W^{\ll} : w(\lambda + \rho^{\ll}) \ \text{is} \ \Phi(\ll;\hh)^{+}\text{-dominant integral}\}.
\ee
Note that $w\Phi^{+}$ contains $\Phi(\ll;\hh)^{+}$ whenever $w \in W_{\lambda}^{\ll,1}$.

We are now ready to compute the Dirac cohomology. We use that $\DC(M)$ decomposes completely in simple finite-dimensional highest weight $\ll$-supermodules, and, crucially, Theorem \ref{thm::Casselmann_Osborne}. We start with two lemmas that will enter the proofs of our main result.

\begin{lemma} \label{lemm::form_nu_DC}
    Let $M$ be an admissible finite-dimensional simple $(\gg, \ll)$-supermodule with highest weight $\Lambda$. Then $\DC(M)$ decomposes in a direct sum of simple finite-dimensional highest weight $\ll$-supermodules each of with highest weight $\nu$ of the form 
    \[
    \nu = w\biggl(\Lambda+\rho+\sum_{i=1}^{k}t_{i}\alpha_{i}\biggr)-\rho^{\ll},
    \]
    for some $w \in W^{\ll}$, $t_{i} \in \CC$ and isotropic odd roots $\alpha_{i}$ in $\Phi(\ll;\hh)$ satisfying $(\Lambda + \rho,\alpha_{i}) = 0$.
\end{lemma}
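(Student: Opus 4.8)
The plan is to reduce the statement to a comparison of infinitesimal characters and then to apply the super Casselman--Osborne lemma together with the Harish-Chandra square of Proposition~\ref{prop::commutative_diagram}. I would begin by recording that $\DC(M)$ is a direct sum of \emph{finite-dimensional} simple $\ll$-supermodules. Indeed, $M$ is a finite-dimensional $(\gg,\ll)$-admissible supermodule, hence $\ll$-semisimple with finite-dimensional constituents; by Propositions~\ref{prop::completely_reducible} and~\ref{prop::identification} (and the corollary following the latter) the same is true of $\overline{M}(\ss)$, hence of $M\otimes\overline{M}(\ss)$, and therefore of its $\ll$-subquotient $\DC(M)$, since $\Dirac$ commutes with the $\ll$-action (compare Lemma~\ref{lemm::completely_reducible_DC}). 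With $\Phi(\ll;\hh)^{+}$ fixed, each simple summand is of the form $L_{\ll}(\nu)$ --- up to parity shift --- for a $\Phi(\ll;\hh)^{+}$-dominant integral weight $\nu$, and, being a highest weight $\ll$-supermodule, it carries the $\ll$-infinitesimal character $\chi_{\nu}^{\ll}$. It thus suffices to show that an arbitrary such $\nu$ has the asserted shape.

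Since $M$ is a finite-dimensional simple $\gg$-supermodule, it is a highest weight supermodule of highest weight $\Lambda$ and hence admits the infinitesimal character $\chi_{\Lambda}$ (Lemma~\ref{lemm::action_Casimir}). Applying the super Casselman--Osborne lemma (Theorem~\ref{thm::Casselmann_Osborne}) to $L_{\ll}(\nu)\subseteq\DC(M)$ gives $\chi_{\Lambda}=\chi_{\nu}^{\ll}\circ\eta_{\ll}$. Feeding this into Proposition~\ref{prop::commutative_diagram} --- and using that $\res$ merely regards a $W^{\gg}$-invariant polynomial on $\hh$ as a $W^{\ll}$-invariant one --- I obtain, for all $z\in\ZG$,
\[
(\Lambda+\rho)(\HC_{\gg}(z))=\chi_{\Lambda}(z)=\chi_{\nu}^{\ll}(\eta_{\ll}(z))=(\nu+\rho^{\ll})\bigl(\HC_{\ll}(\eta_{\ll}(z))\bigr)=(\nu+\rho^{\ll})(\HC_{\gg}(z)).
\]
Hence $\Lambda+\rho$ and $\nu+\rho^{\ll}$ define the same functional on $\Im(\HC_{\gg})$, i.e.\ $\chi_{\nu+\rho^{\ll}-\rho}=\chi_{\Lambda}$ by injectivity of $\HC_{\gg}$; the description of $\Im(\HC_{\gg})$ recalled just before~\eqref{eq::relation_infinitresimal_character}, equivalently the linkage principle of \cite{GorelikHC, KacHC}, then forces
\[
\nu+\rho^{\ll}=w\Bigl(\Lambda+\rho+\sum_{i=1}^{k}t_{i}\beta_{i}\Bigr)
\]
for some $w\in W^{\gg}$, $t_{i}\in\CC$, and linearly independent odd isotropic roots $\beta_{i}$ of $\gg$ with $(\Lambda+\rho,\beta_{i})=0$. (This already subsumes the $\Omega_{\gg}$-eigenvalue condition for membership in $\ker\Delta$ used in the preceding lemma.)

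It remains --- and this is the delicate step --- to sharpen the last identity so that $w$ lies in $W^{\ll}$ and the $\beta_{i}$ in $\Phi(\ll;\hh)$. Here I would use the finite-dimensionality of $M$ decisively: with respect to the distinguished Borel one has $(\Lambda+\rho,\gamma)=(\Lambda+\rho_{0},\gamma)>0$ for every $\gamma\in\Phi_{0}^{+}$ by~\eqref{eq::inner_product_with_rho_0}, so $\Lambda+\rho$ is regular and dominant for $\Phi(\ll;\hh)_{0}^{+}$, while $\nu+\rho^{\ll}$ lies in the closed dominant $\ll$-chamber because $\nu$ is $\Phi(\ll;\hh)^{+}$-dominant integral. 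Splitting the isotropic shift into its $\ll$-part and its $\ss$-transverse part and writing $w$ through a representative of $W^{\ll}\backslash W^{\gg}$, I would argue that the two dominance conditions, together with the orthogonality $(\Lambda+\rho,\beta_{i})=0$ and the inclusion $\Phi(\ll;\hh)^{+}\subseteq\Phi^{+}$, force the transverse contributions to drop out, leaving $\nu+\rho^{\ll}=w'\bigl(\Lambda+\rho+\sum_{i}t_{i}\alpha_{i}\bigr)$ with $w'\in W^{\ll}$ and $\alpha_{i}\in\Phi(\ll;\hh)$ odd isotropic satisfying $(\Lambda+\rho,\alpha_{i})=0$; subtracting $\rho^{\ll}$ yields the claimed formula. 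I expect this last reduction --- extracting an $\ll$-linkage from the $\gg$-linkage, which Casselman--Osborne alone does not deliver since it only controls the $\gg$-infinitesimal character --- to be the genuinely nontrivial part, while everything before it is an assembly of Theorem~\ref{thm::Casselmann_Osborne}, Proposition~\ref{prop::commutative_diagram} and the Harish-Chandra image description.
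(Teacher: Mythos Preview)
The paper's route is shorter and structurally different: instead of passing through Proposition~\ref{prop::commutative_diagram} and the $\gg$-Harish-Chandra image, it first invokes Proposition~\ref{prop::HW_non-trivial} to exhibit the explicit constituent $L_{\ll}(\Lambda+\rho^{\uu})\subseteq\DC(M)$, and then compares any other constituent $L_{\ll}(\nu)$ directly to this one. Theorem~\ref{thm::Casselmann_Osborne} gives $\chi^{\ll}_{\Lambda+\rho^{\uu}}\circ\eta_{\ll}=\chi_{\Lambda}=\chi^{\ll}_{\nu}\circ\eta_{\ll}$ on $\ZG$, and the paper concludes by citing Equation~\eqref{eq::relation_infinitresimal_character} together with $\rho=\rho^{\ll}+\rho^{\uu}$ --- no chamber argument, no coset decomposition of $W^{\gg}$. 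So the pivot through the known constituent is the idea you are missing; it replaces your appeal to the description of $\Im(\HC_{\gg})$.

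Your caution about the $W^{\gg}$-versus-$W^{\ll}$ reduction is legitimate, however: the displayed equality only constrains the two $\ll$-characters on $\Im\eta_{\ll}\subseteq\mathfrak{Z}(\ll)$, which via Proposition~\ref{prop::commutative_diagram} amounts to exactly the $\gg$-linkage you derive, not an a priori $\ll$-linkage, and the paper's brief proof does not supply the extra step. Indeed, in the parallel Theorem~\ref{thm::DC_OP} the Weyl element is explicitly taken in $W^{\gg}$. For the downstream applications in this subsection --- notably Theorem~\ref{thm::DC_finite_dimensional}, where $\Lambda$ is typical so the isotropic shifts vanish and dominance already pins $w$ down --- the distinction is immaterial; but your sketched dominance/splitting argument for the general reduction remains only a sketch, and the paper does not fill it in either.
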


\begin{proof}
    As $M$ is a highest weight $\gg$-supermodule with highest weight $\Lambda$, the Dirac cohomology contains a simple highest weight $\ll$-supermodule with highest weight $\Lambda + \rho^{\uu}$ by Proposition \ref{prop::HW_non-trivial}. If $V$ is another simple highest weight $\gg$-supermodule with highest weight $\nu$, Theorem \ref{thm::Casselmann_Osborne} dictates for any $z \in \mathfrak{Z}(\ll)$ on $\DC(M)$ the equality,
    \[
     \chi_{\Lambda+\rho^{\uu}}^{\ll}(z)= \chi_{\Lambda+\rho^{\uu}}^{\ll}(\eta_{\ll}(z)) = \chi^{\ll}_{\nu}(\eta_{\ll}(z)) =  \chi^{\ll}_{\nu}(z),
    \]
    which concludes the proof with Equation \eqref{eq::relation_infinitresimal_character} and $\rho = \rho^{\ll} + \rho^{\uu}$.
\end{proof} 

\begin{lemma} \label{lemm::multiplicity_DC}
    Let $M$ be an admissible finite-dimensional simple $\gg$-supermodule with highest weight $\Lambda$. Then any simple $\ll$-constituent in $\DC(M)$ appears with multiplicity one. 
\end{lemma}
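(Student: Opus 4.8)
The plan is to bound the multiplicity of each simple $\ll$-constituent of $\DC(M)$ by relating $\DC(M)$ to Kostant cohomology and using the infinitesimal character to cut out a single, controllable block.

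By Lemma \ref{lemm::completely_reducible_DC} the $\ll$-supermodule $\DC(M)$ is semisimple, and by Lemma \ref{lemm::form_nu_DC} each simple constituent is $L_\ll(\nu)$ with $\nu+\rho^\ll=w(\Lambda+\rho+\sum_i t_i\alpha_i)$ for some $w\in W^\ll$, $t_i\in\CC$, and odd isotropic roots $\alpha_i\in\Phi(\ll;\hh)$ satisfying $(\Lambda+\rho,\alpha_i)=0$; by Theorem \ref{thm::Casselmann_Osborne} all of these $\nu$ share the infinitesimal character $\chi^\ll_{\Lambda+\rho^\uu}$. Since $\Lambda$ is dominant integral for the distinguished positive system, $(\Lambda+\rho,\alpha)>0$ for every even $\alpha\in\Phi(\ll;\hh)^+$ (the odd contribution $\rho_1$ is orthogonal to $\Phi_0$, cf.~\eqref{eq::inner_product_with_rho_0}), so $\Lambda+\rho$ is $W^\ll$-regular and distinct admissible data $(w,\{t_i\alpha_i\})$ produce distinct $\nu$. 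Fix such a $\nu$. Since $\DC(M)$ is a subquotient of $M\otimes\overline M(\ss)$ and $M\otimes\overline M(\ss)\cong M\otimes\bigwedge\ubar\otimes\CC_{\rho^\uu}$ as $\ll$-supermodules (Proposition \ref{prop::identification}), the multiplicity of $L_\ll(\nu)$ in $\DC(M)$ is at most that of $L_\ll(\nu-\rho^\uu)$ in $M\otimes\bigwedge\ubar$. As $\bigwedge\ubar$ is a graded $\ll$-supermodule whose only maximal weight is $0$, occurring with multiplicity one (in degree $0$), the supermodule $M\otimes\bigwedge\ubar$ has $\Lambda$ as its unique maximal weight with multiplicity one; this already settles the generic constituent $\nu=\Lambda+\rho^\uu$, with value exactly one by Proposition \ref{prop::HW_non-trivial}.

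For the remaining constituents a sharper input is needed. By Theorem \ref{thm::inclusion_DC} there is an injection $\DC(M)\hookrightarrow H^\ast(\uu,M)$ of $\ll$-supermodules, and since every constituent of $\DC(M)$ has infinitesimal character $\chi^\ll_{\Lambda+\rho^\uu}$, the image lies in the generalized $\chi^\ll_{\Lambda+\rho^\uu}$-infinitesimal-character subspace of the semisimple $\ll$-supermodule $H^\ast(\uu,M)$ (equivalently, one may run the argument with the summand $\Cbar$ of $\Dirac$ from Theorem \ref{theorem::decomp_Dirac}, which by Lemma \ref{lemm::square_C_Cbar} and Proposition \ref{prop::identification} realizes, up to the $\CC_{\rho^\uu}$-twist, the Chevalley--Eilenberg complex computing $H_\bullet(\ubar,M)$, whose cohomology restricted to $\ker\Delta$ is a direct $\ll$-summand by Proposition \ref{prop::decomposisition_representation_space} and Corollary \ref{cor::different_form_DC}). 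Thus the lemma reduces to showing that this block of Kostant cohomology is multiplicity-free. When $\Lambda$ is typical there are no isotropic corrections, the block is all of $H^\ast(\uu,M)$, and $M=L(\Lambda)$ coincides with its Kac supermodule $U(\gg)\otimes_{U(\gg_0\oplus\gg_1)}L_{\gg_0}(\Lambda)$; computing $H^\ast(\uu,-)$ of this induced object and reducing, via the $\gg_{\pm1}$-grading, to the classical Bott--Kostant theorem for $\uu\cap\gg_0$ with coefficients twisted by $\bigwedge(\ubar\cap\gg_1)$, one obtains that it is multiplicity-free.

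The general (atypical) case is the main obstacle, since $L(\Lambda)$ is then no longer a Kac supermodule, its character is not a naive alternating sum, and $H^\ast(\uu,M)$ may genuinely carry higher multiplicities in other blocks. The natural route is to take a finite resolution of $M$ by Kac supermodules (available in type~1), apply the long exact sequences in $\uu$-cohomology, and track the $\chi^\ll_{\Lambda+\rho^\uu}$-block throughout: each Kac term contributes that block multiplicity-freely by the typical case, and since the connecting $\ll$-morphisms preserve this block (the supermodules involved sharing an infinitesimal character), one must argue that no cancellation in the long exact sequences can raise the multiplicity there above one; the Euler-characteristic identity of Proposition \ref{prop:Euler char} should help with the bookkeeping. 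Alternatively, one may attempt a direct argument that the extremality of $\nu$ pins down the $\ll$-highest-weight vectors of weight $\nu$ in $\ker\Dirac'$ up to scalars. I expect this last step to require the most care.
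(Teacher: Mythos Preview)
Your route through the embedding $\DC(M)\hookrightarrow H^\ast(\uu,M)$ and resolutions by Kac supermodules is genuinely different from the paper's, and, as you yourself flag, it is incomplete: the atypical case is left open, and the hope that long exact sequences in $\uu$-cohomology cannot push the multiplicity in the $\chi^\ll_{\Lambda+\rho^\uu}$-block above one is not justified. Even in the typical case your sketch appeals to a reduction of $H^\ast(\uu,-)$ of Kac supermodules to classical Bott--Kostant, which is extra machinery you have not set up.

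The paper avoids all of this with an elementary weight-multiplicity argument directly in $M\otimes\overline{M}(\ss)$, uniform in the typicality of $\Lambda$. Given a simple constituent $L_\ll(\nu)$ with $\nu=w(\Lambda+\rho+\sum_i t_i\alpha_i)-\rho^\ll$ as in Lemma~\ref{lemm::form_nu_DC}, one rewrites $\nu=w(\Lambda+\sum_i t_i\alpha_i)+\rho^{w\uu}$ (using $w\rho^\ll=\rho^\ll$, where $\rho^{w\uu}$ is the Weyl vector for $w(\Phi^+\setminus\Phi(\ll;\hh)^+)$): a sum of an extremal weight of $M$ and the highest weight of $\overline{M}(\ss)$ with respect to $w\Phi^+$. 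If the weight $\nu$ occurred a second time in $M\otimes\overline{M}(\ss)$, there would be nonempty $A\subset w(\Phi^+\setminus\Phi(\ll;\hh)^+)$, $B\subset w\Phi^+$ with $\ZZ_+[A]+\ZZ_+[B]=0$. Pairing with $\rho_0$ and using \eqref{eq::inner_product_with_rho_0} rules out odd roots in $A$ and $B$; then pairing with $w(\Lambda+\rho_0)$, which is strictly positive on $w\Phi_0^+$, forces $A=B=\emptyset$. Thus $\nu$ has weight multiplicity one in $M\otimes\overline{M}(\ss)$, so $L_\ll(\nu)$ appears at most once in any $\ll$-semisimple subquotient, in particular in $\DC(M)$. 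This argument is shorter, needs neither Kostant cohomology nor Kac resolutions, and is insensitive to atypicality.
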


\begin{proof}
    Let $V$ be a (non-trivial) simple $\ll$-constituent of $\DC(M)$ with highest weight $\nu$. By Lemma \ref{lemm::form_nu_DC}, the highest weight $\nu$ is of the form 
    \[
    \nu = w\biggl(\Lambda+\rho+\sum_{i=1}^{k}t_{i}\alpha_{i}\biggr)-\rho^{\ll},
    \]
    for some $w \in W^{\ll}$, $t_{i} \in \CC$ and isotropic odd roots $\alpha_{i}$ in $\Phi(\ll;\hh)$ satisfying $(\Lambda + \rho,\alpha_{i}) = 0$. We may assume $w \in W_{\mu}^{\ll,1}$ for $\mu = \Lambda + \rho^{\uu} + \sum_{i = 1}^{k}t_{i}\alpha_{i}$. To prove the lemma, we have to show that 
    \[
    w(\lambda + \rho + \sum_{i=1}^{k}t_{i}\alpha_{i}) - \rho^{\ll} = w(\lambda + \sum_{i=1}^{k}t_{i}\alpha_{i}) + \rho^{w\uu}
    \]
    appears with multiplicity one. Here, we denote by $\rho^{w\uu}$ the Weyl element with respect to $w(\Phi^{+}\setminus \Phi(\ll;\hh)^{+})$, and note that $w\rho^{\ll} = \rho^{\ll}$.  

    Assume this is not the case. Then, as we are dealing with highest weight supermodules, there exists $A \subset w(\Phi^{+}\setminus \Phi(\ll;\hh)^{+})$ and $B \subset w\Phi^{+}$ with 
    \[
    w(\Lambda + \sum_{i=1}^{k}t_{i}\alpha_{i}) + \rho^{w\uu} = (w(\Lambda +\sum_{i=1}^{k}t_{i}\alpha_{i})-\ZZ_{+}[B]) + (\rho^{w\uu}-\ZZ_{+}[A]),
    \]
    where $\ZZ_{+}[A] := \sum_{\xi \in A}\ZZ_{+}\xi$ and $\ZZ_{+}[B] := \sum_{\zeta \in A}\ZZ_{+}\zeta$. This forces $\ZZ_{+}[A] + \ZZ_{+}[B] = 0$. Now, if we are taking the inner product with $\rho_{0}$ and use the invariance of $(\cdot,\cdot)$ under the action of the Weyl group together with Equation \eqref{eq::inner_product_with_rho_0}, we deduce that neither $A$ nor $B$ can contain odd roots. On the other hand, taking then the inner product with $w(\lambda + \rho_{0})$, we obtain 
\[
\sum_{\alpha \in A} (w(\lambda+\rho_{0}),\alpha) + \sum_{\beta \in B} (w(\lambda +\rho_{0}),\beta) = 0,
\]
and each summand is strictly positive as no odd roots appear and $w(\Lambda + \rho_0)$ is dominant integral. Consequently, $A = \emptyset$ and $B = \emptyset$. This concludes the proof.
\end{proof}

The above lemmas enter the proof of the following Theorem, which is one of the main result of this section. 

\begin{theorem} \label{thm::DC_finite_dimensional}
    Let $M$ be an admissible finite-dimensional simple $(\gg, \ll)$-supermodule with typical highest weight $\Lambda$. Then 
    \[
    \DC(M) = \bigoplus_{w \in W_{\Lambda + \rho^{\uu}}^{\ll,1}} L_{\ll}(w(\Lambda + \rho) - \rho^{\ll}).
    \]
\end{theorem}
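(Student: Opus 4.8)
The strategy is to sandwich $\DC(M)$ between an upper bound, coming from the super Casselman--Osborne lemma, and a lower bound, coming from the Dirac index, and to observe that the two meet.

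For the upper bound, note that typicality of $\Lambda$ means $(\Lambda+\rho,\alpha)\neq 0$ for every (isotropic) odd root $\alpha\in\Phi$, hence for every odd root of the subroot system $\Phi(\ll;\hh)\subseteq\Phi$ as well. Therefore in Lemma \ref{lemm::form_nu_DC} the isotropic correction $\sum_i t_i\alpha_i$ is forced to vanish, so every simple $\ll$-constituent of $\DC(M)$ has highest weight of the shape $\nu=w(\Lambda+\rho)-\rho^{\ll}$. Being the highest weight of a finite-dimensional simple $\ll$-supermodule, $\nu$ must be $\Phi(\ll;\hh)^{+}$-dominant integral; using $\rho=\rho^{\ll}+\rho^{\uu}$ together with the regularity of $\Lambda+\rho$ (which follows from $\Lambda\in P^{++}_{\bb}$ and \eqref{eq::inner_product_with_rho_0}), this condition singles out exactly the indices $w\in W^{\ll,1}_{\Lambda+\rho^{\uu}}$, and distinct such $w$ produce distinct $\nu$. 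Combined with the multiplicity-one statement of Lemma \ref{lemm::multiplicity_DC}, this shows that $\DC(M)$ embeds, as an $\ll$-supermodule, into $\bigoplus_{w\in W^{\ll,1}_{\Lambda+\rho^{\uu}}}L_{\ll}(w(\Lambda+\rho)-\rho^{\ll})$, with each summand occurring at most once.

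For the lower bound I would compute the Dirac index. Since $M$ is admissible, finite-dimensional and carries the infinitesimal character $\chi_{\Lambda}$, Proposition \ref{prop:Euler char} applies and yields
\[
\DC^{+}(M)-\DC^{-}(M)=\DI(M)=[M]\cdot\bigl([\overline{M}(\ss)_{0}]-[\overline{M}(\ss)_{1}]\bigr)
\]
as virtual $\ll$-supermodules. By Proposition \ref{prop::identification} the second factor is $\bigwedge\ubar_{0}\otimes\Sym(\ubar_{1})\otimes\CC_{\rho^{\uu}}$ with its natural $\ZZ_2$-grading, whose virtual character is $e^{\rho^{\uu}}\prod_{\gamma\in(U_T^+)_{0}}(1-e^{-\gamma})\prod_{\beta\in(U_T^+)_{1}}(1+e^{-\beta})^{-1}$. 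Multiplying this against the Kac character formula for the typical simple module $L(\Lambda)$, the factors indexed by the nilradical roots cancel against the matching part of the Weyl denominator of $\gg$, the leftover $e^{\pm\rho^{\uu}}$ cancels, and one is left with the (super) Weyl denominator of $\ll$ times $\sum_{w\in W^{\gg}}(-1)^{\ell(w)}e^{w(\Lambda+\rho)-\rho^{\ll}}$. Regrouping this alternating sum along the cosets $W^{\ll}\backslash W^{\gg}$ and applying the Kac character formula for $\ll$ — which is legitimate because typicality of $\Lambda$ for $\gg$ forces $w(\Lambda+\rho)$ to be regular and $\ll$-typical for every $w$, as $\Phi(\ll;\hh)\subseteq\Phi$ — collapses each coset to a single $\ll$-character, giving
\[
\DI(M)=\pm\sum_{w\in W^{\ll,1}_{\Lambda+\rho^{\uu}}}(-1)^{\ell(w)}\,[L_{\ll}(w(\Lambda+\rho)-\rho^{\ll})],
\]
the overall sign depending only on the normalisation of $\overline{M}(\ss)$. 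Thus each $L_{\ll}(w(\Lambda+\rho)-\rho^{\ll})$ with $w\in W^{\ll,1}_{\Lambda+\rho^{\uu}}$ occurs in $\DI(M)$ with coefficient $\pm1\neq 0$, hence occurs in $\DC^{+}(M)$ or in $\DC^{-}(M)$; by the upper bound it then occurs in $\DC(M)$ exactly once and is the only type of constituent, which forces $\DC(M)=\bigoplus_{w\in W^{\ll,1}_{\Lambda+\rho^{\uu}}}L_{\ll}(w(\Lambda+\rho)-\rho^{\ll})$.

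The main obstacle is the character computation of $\DI(M)$: one must check that the a priori infinite-dimensional contribution of $\overline{M}(\ss)$ really cancels cleanly against the denominator of the Kac typical formula so that $\DI(M)$ is a genuinely finite integral combination, track all the $\rho$-shifts and the parity convention that makes $[\overline{M}(\ss)_{0}]-[\overline{M}(\ss)_{1}]$ the alternating sum of exterior powers of $\ubar_{0}$ against symmetric powers of $\ubar_{1}$, and verify that the alternating $W^{\gg}$-sum reassembles, coset by coset, into $\ll$-characters via the Kac formula for $\ll$; this last point is precisely where the typicality of all the twisted weights $w(\Lambda+\rho)$ (inherited from typicality of $\Lambda$ and $\Phi(\ll;\hh)\subseteq\Phi$) enters.
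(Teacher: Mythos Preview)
Your upper bound coincides with the paper's: Lemma~\ref{lemm::form_nu_DC} plus Lemma~\ref{lemm::multiplicity_DC}, with typicality killing the isotropic shift. Your lower bound, however, is a genuinely different argument. The paper does not compute $\DI(M)$ at all; instead it observes that the proof of Lemma~\ref{lemm::multiplicity_DC} actually establishes the stronger statement that each weight $\nu_w=w(\Lambda+\rho)-\rho^{\ll}=w(\Lambda)+w(\rho^{\uu})$ occurs with multiplicity exactly one in the full $\ll$-supermodule $M\otimes\overline{M}(\ss)$, not merely in $\DC(M)$. Since this weight is realised as a sum of extreme weights of the two tensor factors and is $\Phi(\ll;\hh)^{+}$-dominant, the unique $\ll$-constituent $L_{\ll}(\nu_w)$ sits inside $M\otimes\overline{M}(\ss)$ with multiplicity one; by $\ll$-equivariance and oddness of $\Dirac$, it is then forced into $\ker\Dirac$ and cannot lie in $\Im\Dirac$, so it survives to $\DC(M)$. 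Your index route is correct and has the merit of recording the parity of each constituent via $(-1)^{\ell(w)}$, but it imports the Kac typical character formula for both $\gg$ and $\ll$ as a black box, whereas the paper's argument is entirely elementary once the multiplicity-one weight computation is in hand. The delicate cancellation you flag (the odd nilradical factors $(1+e^{-\beta})^{-1}$ from $\operatorname{Sym}(\ubar_1)$ against the odd part of the super Weyl denominator) does go through cleanly, provided one uses the total-degree $\ZZ_2$-grading on $\overline{M}(\ss)$, which is the one compatible with the $C(\ss)$-action.
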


\begin{proof}
    Let $V$ be a non-trivial $\ll$-constituent in $\DC(M)$ with highest weight $\nu$. By atypicality of $\Lambda$, Lemma \ref{lemm::form_nu_DC} and Lemma \ref{lemm::multiplicity_DC}, the simple finite-dimensional $\ll$-supermodule $V$ appears with multiplicity one and has highest weight
    \[
    \nu = w(\Lambda + \rho)-\rho^{\ll}
    \]
    for some $w \in W_{\Lambda}^{\ll,1}$. Consequently, it is enough to show that each $w(\Lambda + \rho)-\rho^{\ll}$ appears as a weight in $M \otimes \overline{M}(\ss)$. However, this is immediate as \[w(\Lambda + \rho) - \rho^{\ll} = w(\Lambda) + (w(\rho)-\rho^{\ll}) = w(\Lambda) + w(\rho^{\uu})\] is a sum of extreme weights in $M$ and $\overline{M}(\ss)$, respectively, which are $\Phi(\ll;\hh)^{+}$-dominant integral.
\end{proof}

These results for general finite-dimensional admissible $(\gg, \ll)$-supermodules allow us to compute the Dirac cohomology of finite-dimensional simple objects in $\calO^{\pp}$ for some parabolic subalgebra $\pp = \ll \ltimes \uu$ with reductive $\ll_{0}$.  

\subsubsection{Dirac cohomology and finite-dimensional simple objects in $\calO^{\pp}$}

Let $\pp = \ll \ltimes \uu$ be a parabolic subalgebra with reductive $\ll_{0}$. This is the case if the Levi subalgebra is good \cite[Section 7]{DMP}. Recall that by Theorem \ref{thm::DMP}, we may consider any finite-dimensional simple $M$ as the unique simple quotient of a parabolically induced supermodule, where $\pp$ has good Levi subalgebra $\ll$.
 
By definition of $\calO^{\pp}$ in Section \ref{subsubsec::Op}, any object decomposes completely in finite-dimensional simple $\ll_{0}$-modules. In particular, if $M$ belongs to $\calO^{\pp}$, the supermodules $M \otimes \overline{M}(\ss)$ and $\DC(M)$ decompose completely in finite-dimensional $\ll_{0}$-modules by a similar argumentation as above.

Let $M := L(\Lambda) \in \calO^{\pp}$ be a finite-dimensional simple object with highest weight $\Lambda \in \hh^{\ast}$ with respect to some positive system. Fix a positive system $\Phi(\ll;\hh)^{+}$. Then, noting $\hh \subset \ll_{0}$, we may identify $\Phi(\ll_{0};\hh)$ with $\Phi(\ll;\hh)_{0}$ and $\Phi(\ll_{0};\hh)^{+}$ with $\Phi(\ll;\hh)_{0}^{+}$.

Adapting Theorem \ref{thm::Casselmann_Osborne} and Lemma \ref{lemm::form_nu_DC} to the new setting, we conclude that any $\ll_{0}$-constituent in $\DC(M)$ has highest weight 
\be
\nu = w(\Lambda + \rho^{\uu} + \rho^{\ll_{0}}) - \rho^{\ll_{0}}
\ee
for some $w \in W_{\lambda}^{\ll_{0},1} = \{ w \in W^{\gg} : w(\lambda + \rho^{\ll_{0}}) \ \text{is} \ \Phi(\ll_{0};\hh)^{+}\text{-dominant integral}\}$. No isotropic roots appear as $\ll_{0}$ is purely even. Now, a straightforward modification of the proofs of Lemma \ref{lemm::multiplicity_DC} and Theorem \ref{thm::DC_finite_dimensional} leads to the following theorem.

\begin{theorem} \label{thm::DC_OP}
    Let $M \in \calO^{\pp}$ be simple and finite-dimensional with highest weight $\Lambda$. Then 
    \[
    \DC(M) = \bigoplus_{w \in W_{\Lambda + \rho^{\uu}}^{\ll_{0},1}} L_{\ll_{0}}(w(\Lambda + \rho^{\uu} + \rho^{\ll_{0}})-\rho^{\ll_{0}}).
    \]
\end{theorem}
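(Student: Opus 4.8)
The plan is to carry the argument of Section~\ref{subsec:fin_dim} over \emph{mutatis mutandis}, with the reductive even Levi $\ll_0$ playing the role of the Levi subsuperalgebra $\ll$; the only substantive new feature is that, $\ll_0$ being purely even, the relevant linkage lives entirely inside a \emph{reductive} Lie algebra, so no odd isotropic roots intervene and the indexing set becomes a set of Weyl-group elements of $\gg$ rather than of $\ll_0$. Concretely, I would first record that $\DC(M)$ is $\ll_0$-semisimple with finite-dimensional simple summands: by definition of $\calO^{\pp}$ the module $M$ is $\ll_0$-semisimple with finite-dimensional constituents, and $\overline{M}(\ss)$ is completely reducible over $\ll$ by Proposition~\ref{prop::completely_reducible}, hence over the reductive $\ll_0$; thus $M\otimes\overline{M}(\ss)$, and with it the subquotient $\DC(M)$, decomposes as a direct sum of finite-dimensional simple $\ll_0$-modules $L_{\ll_0}(\nu)$. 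I would then isolate the term $w=e$: Proposition~\ref{prop::HW_non-trivial} produces a nonzero class $[v_M\otimes v_{\overline{M}(\ss)}]$ in $\DC(M)$, which by Proposition~\ref{prop::identification} has weight $\Lambda+\rho^{\uu}$ and, being annihilated by $\nn^{+}$ (in particular by its even part), is an $\ll_0$-highest weight vector, so it generates $L_{\ll_0}(\Lambda+\rho^{\uu})$, which is the summand indexed by $w=e$ since $w(\Lambda+\rho^{\uu}+\rho^{\ll_0})-\rho^{\ll_0}=\Lambda+\rho^{\uu}$ for $w=e$.

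The heart of the proof is the analogue of Lemma~\ref{lemm::form_nu_DC} constraining the remaining $\ll_0$-constituents. Since $M$ has infinitesimal character $\chi_\Lambda$ (Lemma~\ref{lemm::action_Casimir}), $z\otimes 1$ acts on $M\otimes\overline{M}(\ss)$, hence on $\DC(M)$, by the scalar $\chi_\Lambda(z)$, while by Corollary~\ref{cor::form_center} it also acts on $\DC(M)$ as $\eta_{\ll}(z)\in\mathfrak{Z}(\ll_{\Delta})\cong\mathfrak{Z}(\ll)$; comparing the two on every $\ll$-composition factor $L_{\ll}(\mu)$ of $\DC(M)$ yields $\chi_{\mu}^{\ll}\circ\eta_{\ll}=\chi_\Lambda$, and then Proposition~\ref{prop::commutative_diagram} together with the description \eqref{eq::relation_infinitresimal_character} of the fibres of $\HC_{\gg}$ forces $\mu+\rho^{\ll}$ into the $W^{\gg}$-orbit of $\Lambda+\rho$, up to the odd isotropic shifts allowed by $A_\Lambda$. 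Passing to $\ll_0$ — an operation additive on short exact sequences, hence compatible with composition factors — and using that $\rho^{\ll}$ and $\rho^{\ll_0}$ differ by the half-sum of the positive odd roots of $\ll$, one deduces that every $\ll_0$-highest weight occurring in $\DC(M)$ has the form $w(\Lambda+\rho^{\uu}+\rho^{\ll_0})-\rho^{\ll_0}$ for some $w\in W^{\gg}$; the purely even character of $\ll_0$ is exactly what guarantees that no continuum of isotropic-root shifts survives at this level, since linkage inside the reductive $\ll_0$ is governed by $W^{\ll_0}$ alone.

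It then remains, exactly as in Lemma~\ref{lemm::multiplicity_DC} and the proof of Theorem~\ref{thm::DC_finite_dimensional}, to determine which $w$ occur and to establish multiplicity one. For this I would pair the relevant weight differences first with $\rho_{0}$, using \eqref{eq::inner_product_with_rho_0} and the Weyl-invariance of $(\cdot,\cdot)$ to rule out odd roots, and then with a suitable $w(\Lambda+\rho_{0})$ to rule out the even ones; this gives both multiplicity one and the fact that, for each $w\in W_{\Lambda+\rho^{\uu}}^{\ll_{0},1}$, the weight $w(\Lambda+\rho^{\uu}+\rho^{\ll_0})-\rho^{\ll_0}$ is actually realised in $M\otimes\overline{M}(\ss)$ as a sum of an extreme weight of $M$ and one of $\overline{M}(\ss)$ that is $\Phi(\ll_0;\hh)^{+}$-dominant. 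Combining the upper bound from the previous paragraph with this lower bound yields the claimed decomposition.

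The step I expect to be the main obstacle is the transfer in the second paragraph: Theorem~\ref{thm::Casselmann_Osborne} is phrased for the super pair $(\gg,\ll)$ and delivers $\mathfrak{Z}(\ll)$-information, whereas the statement concerns $\ll_0$-constituents; reconciling the two hinges on the identification $S(\hh)^{W^{\ll}}=S(\hh)^{W^{\ll_0}}$, on careful $\rho^{\ll}$-versus-$\rho^{\ll_0}$ (equivalently $\rho^{\uu}$-versus-$\rho^{\uu}+\rho^{\ll_0}$) accounting, and — since $\DC(M)$ need not be $\ll$-semisimple in this generality — on working with $\ll$-composition factors rather than $\ll$-summands throughout. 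A subsidiary delicate point, since $\Lambda$ is not assumed typical, is to check that the $A_\Lambda$-shifts permitted at the level of $\gg$ genuinely disappear once one restricts to $\ll_0$-highest weights; this is where the finite-dimensionality of $M$ and the dominance and weight-multiplicity estimates of the third paragraph are essential.
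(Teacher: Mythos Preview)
Your proposal follows essentially the same approach as the paper's own proof, which is deliberately brief: the paper simply says to ``adapt'' Theorem~\ref{thm::Casselmann_Osborne} and Lemma~\ref{lemm::form_nu_DC} to the $\ll_0$-setting, observes that ``no isotropic roots appear as $\ll_0$ is purely even'', and then declares that a ``straightforward modification'' of the proofs of Lemma~\ref{lemm::multiplicity_DC} and Theorem~\ref{thm::DC_finite_dimensional} yields the result. Your outline---$\ll_0$-semisimplicity of $\DC(M)$, the $w=e$ term from Proposition~\ref{prop::HW_non-trivial}, the Casselman--Osborne constraint, and the multiplicity-one/existence argument via pairing with $\rho_0$ and $w(\Lambda+\rho_0)$---matches this exactly, and you have correctly identified the two places where the adaptation is not entirely formal (the passage from $\mathfrak{Z}(\ll)$-information to $\ll_0$-constituents, and the vanishing of the $A_\Lambda$-shifts when $\Lambda$ is atypical), which the paper absorbs into the single clause about $\ll_0$ being purely even.
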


\section{Dirac cohomology and Kostant's cohomology}
In this section, we study the relation between Dirac cohomology and Kostant's (co)homology. For that, we briefly introduce Kostant's (co)homology in Subsection \ref{subsec::Kostant's_cohomology}. Then Proposition \ref{prop::identification} allows us to decompose the cubic Dirac operator in terms of the boundary and coboundary operators of Kostant's (co)homology. This decomposition is used to deduce an embedding of the Dirac cohomology into Kostant's (co)homology in Theorem \ref{thm::embedding}. 

Afterward, we study Dirac cohomology for unitarizable supermodules with respect to Hermitian real forms, introduced in Subsection \ref{subsec::Hermitian_real_forms}. In this case, we show that Dirac cohomology and Kostant's (co)homology are isomorphic as supermodules over the Levi subalgebra. 

Finally, as an application, we study the Dirac cohomology of simple weight supermodules in Subsection \ref{subsec::application}. These supermodules are particularly interesting because they are $\hh$-semisimple with finite-dimensional weight spaces. We establish in Theorem \ref{thm::DC_weight_supermodules} that weight supermodules have trivial Dirac cohomology unless they are highest weight supermodules.

\subsection{Kostant's $\uu$-cohomology and $\ubar$-homology} \label{subsec::Kostant's_cohomology}

We start fixing some notations. Let $M$ be a $\gg$-supermodule that is $(\gg, \ll)$-admissible, \emph{i.e.}, $M$ is a $\gg$-supermodule that is $\ll$-semisimple. Fix a parabolic subalgebra $\pp := \ll \ltimes \uu$ with opposite parabolic subalgebra $\overline{\pp} = \ll \ltimes \ubar$. Recall that $\ll$ is the Levi subalgebra and $\uu,\ubar$ are the nilradicals of $\pp$ and $\overline{\pp}$, respectively.

Recall that the exterior superalgebras $\bigwedge \uu$ and $\bigwedge \ubar$ over the super vector spaces $\uu$ and $\ubar$ inherit a natural $\ZZ$-grading induced by the $\ZZ$-grading of the tensor superalgebras $T(\uu)$ and $T(\ubar)$, respectively. More precisely, let $V$ be either $\uu$ or $\ubar$, then the \emph{exterior} $n$\emph{-power} is the super vector space
\begin{align}
\bigwedge\nolimits ^{n} V = V^{\otimes n}/\mathfrak{J}_{n},\end{align}
where $\mathfrak{J}_{n}$ is the subspace of $V^{\otimes n}$ generated by the elements of the form 
\begin{align}
    v_{1} \otimes \dots \otimes v_{n} - (-1)^{p(\sigma)} \sigma \cdot v_{1} \otimes \dots \otimes v_{n}, \qquad \sigma \in S_{n}, \ v_{i} \in V.
\end{align}
Here, $p(\sigma)$ denotes the parity of the permutation $\sigma$.

After establishing the notation, we introduce Kostant's (co)homology, see \cite[Section 6.4]{cheng2012dualities}, which is a natural generalization of the classical Lie algebra (co)homology theory, see for example \cite{Knapp} or \cite{NojaSuper} for the super case. We define the space of $p$\emph{-chains}, $C_{p}(\ubar,M)$, and the space of $p$\emph{-cochains}, $C^{p}(\uu,M)$, as
\be
C_{p}(\ubar,M) := \bigwedge\nolimits^{p} \ubar \otimes M, \qquad C^{p}(\uu,M) := \Hom_{\CC}(\bigwedge\nolimits^{p} \uu,M)
\ee
for any $p \in \ZZ_{+}$. Note that $C_{p}(\ubar,M)$ and $C^{p}(\uu,M)$ are naturally $\pp$-supermodules, and if $M$ is finite-dimensional, there is a natural identification $C_{p}(\ubar,M) \cong C^{p}(\uu,M)$. Moreover, we set $C_{\ast} := \sum_{p\geq 0}C_{p}(\ubar,M)$, and $C^{\ast} := \sum_{p \geq 0} C^{p}(\uu,M)$, which are $\pp$-supermodules by construction.

Define the \emph{boundary operator} $d := \sum_{p} d_{p} : C_{\ast}(\ubar,M) \to C_{\ast}(\ubar,M)$ by setting
\be
\label{eq:boundary}
\begin{split}
d_{p} & (x_{1}\ldots x_{p} \otimes v) := \\ & \sum_{s = 1}^{p} (-1)^{s+p(x_{s})\sum_{i = s+1}^{p}p(x_{i})}x_{1} \ldots \hat{x}_{s}\ldots x_{p} \otimes x_{s}v
\\ &+ \sum_{1 \leq s < t \leq p} (-1)^{s+t+p(x_{s})\sum_{i=1}^{s-1}p(x_{i})+p(x_{t})\sum_{j = 1}^{t-1}p(x_{j})+p(x_{s})p(x_{t})}[x_{s},x_{t}]x_{1}\ldots \hat{x}_{s}\ldots \hat{x}_{t} \ldots x_{p}\otimes v 
\end{split}
\ee
for $x_{i} \in \uu$ homogeneous and $v \in M$. Here, $\hat{x}$ indicates that the corresponding term $x$ is omitted. A straightforward calculation yields $d_{p-1}\circ d_{p} = 0$ for any $p \in \ZZ_{+}$. The $p$-th $\ubar$\emph{-homology group} with coefficients in $M$, denoted by $H_{p}(\ubar,M)$, is defined to be the $p$-th homology group of the following chain complex:
\be
\ldots \to C_{p}(\ubar,M) \xrightarrow{d_{p}} C_{p-1}(\ubar,M) \xrightarrow{d_{p-1}} \ldots \xrightarrow{d_{2}} \ubar \otimes V \xrightarrow{d_{1}} M \xrightarrow{d_{0}} 0,
\ee
that is, $H_{p}(\ubar,M) := \ker d_{p} / \Im d_{p+1}$ for $p \in \ZZ_{+}$. We refer to $H_{p}(\ubar,M)$ as \emph{Kostant's homology} for $M$.

Define the \emph{coboundary operator} $\partial := \sum_{p}\partial_{p} : C^{\ast}(\uu,M) \to C^{\ast}(\uu,M)$ by
\begin{align}
\begin{split}
    &(\partial_{p}f)(x_{1},\ldots, x_{p+1}) := \\ &\sum_{s = 1}^{p+1}(-1)^{s+1+p(x_s)(p(f)+\sum_{i=1}^{s-1}p(x_{i}))}x_{s}f(x_{1},\ldots,x_{s-1},\hat{x}_{s}, x_{s + 1}, \ldots, x_{p+1}) \\ &+ \sum_{s < t} (-1)^{s+t+p(x_{s})\sum_{i = 1}^{s-1}p(x_{i})+p(x_{t})\sum_{j = 1}^{t}p(x_{j})+p(x_{s})p(x_{t})}f([x_{s},x_{t}],x_{1},\ldots,\hat{x}_{s}, \ldots, \hat{x}_{t}, \ldots, x_{p+1}),
    \end{split}
\end{align}
for homogeneous $x_{i} \in \uu$. The coboundary operator $\partial$ satisfies $\partial_{p} \circ \partial_{p-1} = 0$ for all $p \in \ZZ_{+}$.  The $p$-th $\uu$\emph{-cohomology group} with coefficients in $M$, denoted by $H^{p}(\uu,M)$, is defined to be the $p$-th cohomology group of the following chain complex:
\be
0 \xrightarrow{\partial_{-1}} M \xrightarrow{\partial_{0}}C^{1}(\uu,M) \xrightarrow{\partial_{1}} \ldots \xrightarrow{\partial_{p-1}} C^{p}(\uu,M) \xrightarrow{\partial_{p}} C^{p+1}(\uu,M) \xrightarrow{\partial_{p+1}} \ldots,
\ee
\emph{i.e.}, $H^{p}(\uu,M) := \ker \partial_{p} /\Im \partial_{p-1}$.We refer to $H^{p}(\uu,M)$ as \emph{Kostant's cohomology} for $M$. As in \cite{huang2005dirac}, we will consider the operator $\delta = - 2d$. It is clear, that $\delta$ defines the same cohomology as $d$.

A direct calculation yields the subsequent lemma.

\begin{lemma}
    The boundary operator $d : C_{\ast}(\ubar,M) \to C_{\ast}(\ubar,M)$ and the coboundary operator $\partial : C^{\ast}(\uu, M) \to C^{\ast}(\uu,M)$ are $\ll$-supermodule morphisms. In particular, $H_{p}(\ubar,M)$ and $H^{p}(\uu,M)$ are $\ll$-supermodules.
\end{lemma}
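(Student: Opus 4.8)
The plan is to verify directly that the (co)boundary operators intertwine the $\ll$-action; once this is established, the homology and cohomology, being subquotients of $\ll$-supermodules by $\ll$-supermodule morphisms, automatically inherit an $\ll$-supermodule structure.

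First I would record the $\ll$-action on the (co)chain spaces. Since $[\ll,\ubar]\subseteq\ubar$ and $[\ll,\uu]\subseteq\uu$, the adjoint action of $\gg$ restricts to make $\ubar$ and $\uu$ into $\ll$-supermodules, hence so are the exterior powers $\bigwedge^{p}\ubar$ and $\bigwedge^{p}\uu$; tensoring with $M$, respectively applying the inner Hom into $M$, then turns $C_{p}(\ubar,M)$ and $C^{p}(\uu,M)$ into $\ll$-supermodules, where $X\in\ll$ acts on a pure tensor $x_{1}\cdots x_{p}\otimes v$ by the graded Leibniz rule and on a cochain $f$ by $(X\cdot f)(\omega)=Xf(\omega)-(-1)^{p(X)p(f)}f(X\cdot\omega)$. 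A quick parity count shows that $d$ and $\partial$ are even operators, so being an $\ll$-supermodule morphism is equivalent to commuting with every homogeneous $X\in\ll$.

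Next I would carry out the computation for $d_{p}$. Fix homogeneous $X\in\ll$, homogeneous $x_{1},\dots,x_{p}\in\ubar$ and $v\in M$. Expanding $X\cdot d_{p}(x_{1}\cdots x_{p}\otimes v)$ by the Leibniz rule, and expanding $d_{p}\bigl(X\cdot(x_{1}\cdots x_{p}\otimes v)\bigr)$ using $X\cdot(x_{1}\cdots x_{p}\otimes v)=\sum_{k}(-1)^{\epsilon_{k}}x_{1}\cdots[X,x_{k}]\cdots x_{p}\otimes v+(-1)^{p(X)\sum_{i}p(x_{i})}x_{1}\cdots x_{p}\otimes Xv$ together with formula \eqref{eq:boundary}, one compares the two sides term by term. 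The terms carrying the module action reduce to the defining identity $[X,x_{s}]v=X(x_{s}v)-(-1)^{p(X)p(x_{s})}x_{s}(Xv)$, the terms carrying a bracket reduce to the super Jacobi identity $[X,[x_{s},x_{t}]]=[[X,x_{s}],x_{t}]+(-1)^{p(X)p(x_{s})}[x_{s},[X,x_{t}]]$, and the remaining Koszul signs match after bookkeeping; hence $X\cdot d_{p}=d_{p}\cdot X$. An entirely parallel computation, now using the dual $\ll$-action on cochains and the same two identities, gives $X\cdot\partial_{p}=\partial_{p}\cdot X$.

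Finally, since $d$ and $\partial$ are $\ll$-supermodule morphisms, the spaces $\ker d_{p}$, $\Im d_{p+1}$, $\ker\partial_{p}$, $\Im\partial_{p-1}$ are $\ll$-subsupermodules of the corresponding (co)chain spaces, so $H_{p}(\ubar,M)=\ker d_{p}/\Im d_{p+1}$ and $H^{p}(\uu,M)=\ker\partial_{p}/\Im\partial_{p-1}$ carry induced $\ll$-supermodule structures. The only genuinely delicate point is the sign bookkeeping in the super setting; conceptually the statement is simply the equivariance of the Chevalley--Eilenberg differential under the infinitesimal automorphisms of $\gg$ coming from $\ll$, so no new idea is needed.
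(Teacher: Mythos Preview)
Your proposal is correct and follows exactly the approach the paper indicates: the paper simply states that ``a direct calculation yields the subsequent lemma,'' and your outline of that calculation---reducing the commutation of $d$ and $\partial$ with the $\ll$-action to the super Jacobi identity and the defining relation of the $\gg$-action on $M$---is precisely the standard verification intended.
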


\begin{remark}
    The boundary operators $\partial$ is even a $\pp$-supermodule morphism.
\end{remark}

Having introduced Kostant's homology and cohomology, we now describe their relation in the case $M$ is an admissible $(\gg, \ll)$-supermodule, following \cite{cheng2012dualities}. In this case, $M$ is also an admissible $(\gg,\hh)$-supermodule, that is $M = \oplus_{\mu \in \hh^{\ast}}M^{\mu}$ with $\dim(M^{\mu}) < \infty$. To any such $M$, we can assign a \emph{dual supermodule}. Let $\tau : \gg \to \gg$ be the Chevalley automorphism of $\gg$, and for any $\mu \in \hh^{\ast}$, let $(M^{\mu})^{\ast}$ denote the dual space of the finite-dimensional weight space $M^{\mu}$. The \emph{dual} of $M$ is the $\gg$-supermodule 
\be
M^{\vee} := \bigoplus_{\mu \in \hh^{\ast}}(M^{\mu})^{\ast},
\ee
with $\gg$-action given by $(X \cdot f)(v) := (-1)^{p(X)p(f)-1}f(\tau(X)\cdot v)$.

\begin{lemma}[{\cite[Theorem 6.22]{cheng2012dualities}}]
    Let $M$ be an admissible $(\gg, \ll)$-supermodule and $p \in \ZZ_{+}$. Then, as semisimple $\ll$-supermodules, we have the following isomorphisms:
   \[H_{p}(\ubar,M^{\vee}) \cong H^{p}(\uu,M).\]
   In particular, if $M$ is a simple highest weight $\gg$-supermodule, we have 
   \[
   H_{p}(\ubar,M) \cong H^{p}(\uu,M). 
   \]
\end{lemma}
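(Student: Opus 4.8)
The plan is to prove the general identity $H_{p}(\ubar,M^{\vee})\cong H^{p}(\uu,M)$ by exhibiting the chain complex computing $H_{\bullet}(\ubar,M^{\vee})$ as the restricted dual of the cochain complex computing $H^{\bullet}(\uu,M)$, and then invoking exactness of the duality functor; the ``in particular'' will follow because $M^{\vee}\cong M$ for simple $M$. First I would record the properties of $M\mapsto M^{\vee}$ that enter. Since $M$ is admissible it is in particular $(\gg,\hh)$-admissible, so $M^{\vee}=\bigoplus_{\mu}(M^{\mu})^{\ast}$ is again $(\gg,\hh)$-admissible, $(M^{\vee})^{\vee}\cong M$ canonically, and $\vee$ is an exact contravariant functor on this category because on each finite-dimensional weight space it is plain linear-algebra dualization; in particular $\vee$ commutes with the formation of (co)homology. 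Moreover the Chevalley automorphism $\tau$ entering its definition satisfies $\tau(\ll)=\ll$ and $\tau(\uu)=\ubar$.

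Next I would build the pairing between the two complexes. The $\ll$-invariant perfect pairing of $\uu$ with $\ubar$ coming from the restriction of $(\cdot,\cdot)$ makes $\bigwedge^{p}\uu$ and $\bigwedge^{p}\ubar$ mutually dual as $\ll$-supermodules; combined with the evaluation pairing $M\times M^{\vee}\to\CC$ this yields, for each $p$, an $\ll$-invariant pairing of $C^{p}(\uu,M)=\Hom_{\CC}(\bigwedge^{p}\uu,M)$ with $\bigwedge^{p}\ubar\otimes M^{\vee}=C_{p}(\ubar,M^{\vee})$ that is perfect weight space by weight space, identifying $C_{p}(\ubar,M^{\vee})$ with the restricted dual $\ll$-supermodule $\big(C^{p}(\uu,M)\big)^{\vee}$. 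Here the Chevalley twist built into the $\gg$-action on $M^{\vee}$ is precisely what makes the adjoint of the action of $x\in\uu$ on $M$ agree, up to sign, with the action of $\tau(x)\in\ubar$ on $M^{\vee}$. The core step is then to check that, under this pairing, the coboundary $\partial_{p}$ and the rescaled boundary $\delta_{p+1}=-2d_{p+1}$ are mutually transpose up to a fixed nonzero scalar, i.e.\ $\langle\partial_{p}f,c\rangle=\lambda_{p}\langle f,d_{p+1}c\rangle$ for $f\in C^{p}(\uu,M)$, $c\in C_{p+1}(\ubar,M^{\vee})$ and some $\lambda_{p}\in\CC^{\times}$. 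The two families of terms are matched separately: the coefficient-action terms of $\partial_{p}$ correspond to those of $d_{p+1}$ in \eqref{eq:boundary} via the Chevalley twist on $M^{\vee}$, while the bracket terms correspond via the $\ad$-invariance of $(\cdot,\cdot)$ on $\uu\times\ubar$, all subject to a careful bookkeeping of Koszul signs coming from the super exterior powers.

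Granting the transpose relation, $C_{\bullet}(\ubar,M^{\vee})$ is the restricted dual complex of $C^{\bullet}(\uu,M)$, so exactness of $\vee$ gives $H_{p}(\ubar,M^{\vee})\cong\big(H^{p}(\uu,M)\big)^{\vee}$ as semisimple $\ll$-supermodules; since every simple $\ll$-constituent of $H^{p}(\uu,M)$ carries a highest weight and is therefore fixed by $\vee$, we obtain $H_{p}(\ubar,M^{\vee})\cong H^{p}(\uu,M)$. For the last assertion, when $M$ is a simple highest weight $\gg$-supermodule one has $M^{\vee}\cong M$ (the restricted-dual functor fixes simple objects of the relevant category, in particular $L_{\pp}(V)^{\vee}\cong L_{\pp}(V)$, by the standard argument of the classical setting), hence $H_{p}(\ubar,M)\cong H_{p}(\ubar,M^{\vee})\cong H^{p}(\uu,M)$.

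The main obstacle I anticipate is the transpose verification in the middle step: the super $p$-(co)chain spaces are defined as quotients of tensor powers by Koszul-sign-twisted permutations, the operators $\partial_{p}$ and $d_{p+1}$ carry intricate parity signs, and one must confirm that these, together with the signs in the pairing $\bigwedge^{p}\uu\times\bigwedge^{p}\ubar\to\CC$ and in the twisted $\gg$-action on $M^{\vee}$, conspire to give exactly the transpose relation (and pin down $\lambda_{p}$, which should be readable off from the normalization $\delta=-2d$). I would organize this by fixing $p$, taking all vectors homogeneous, reducing to the two term-types above, and checking each against \eqref{eq:boundary} and the displayed formula for $\partial_{p}$.
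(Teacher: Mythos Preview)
The paper does not supply its own proof of this lemma; it is stated with a citation to \cite[Theorem~6.22]{cheng2012dualities} and no argument is given. Your proposal outlines precisely the standard proof underlying that reference: identify $C_{\bullet}(\ubar,M^{\vee})$ with the restricted dual of $C^{\bullet}(\uu,M)$ via the $(\cdot,\cdot)$-pairing of $\uu$ with $\ubar$ and the evaluation pairing on $M\times M^{\vee}$, verify that $\partial$ and $d$ are mutually transpose up to scalar, and then use exactness of $\vee$ together with self-duality of simple highest-weight $\ll$-constituents. Your diagnosis that the Koszul-sign bookkeeping in the transpose check is the only real technical content is accurate, and the ``in particular'' via $M^{\vee}\cong M$ for simple highest-weight $M$ is the standard endgame. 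So there is nothing to compare against in the paper itself, and your approach matches the cited source.
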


\subsection{Relation to Dirac cohomology}
We construct an explicit embedding of Dirac cohomology into Kostant's cohomology for admissible $(\gg, \ll)$-supermodules. This involves identifying $M \otimes \overline{M}(\ss)$ with $M \otimes \bigwedge \ubar \otimes \CC_{\rho^{\uu}}$ as $\ll$-supermodules (\emph{cf.}~Proposition \ref{prop::identification}) and interpreting the operators $C$ and $\Cbar$ (\emph{cf.}~Theorem \ref{theorem::decomp_Dirac}) as the boundary and coboundary operators, respectively.

\begin{proposition}\label{prop::identification_C_Cbar_d_delta}
    {Let $M$ be an admissible $(\gg, \ll)$-supermodule. Then under the action of $\UE(\gg) \otimes C(\ss)$ on $M \otimes \overline{M}(\ss)$, and the identification in Proposition \ref{prop::identification}, the operators $C$ and $\Cbar$ act as $\delta = -2d$ and $\partial$, respectively. In particular, $\Dirac$ acts as $\partial - 2d = \partial + \delta$.} 
\end{proposition}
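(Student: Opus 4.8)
The plan is to prove the statement by transporting the two summands $C$ and $\Cbar$ of $\Dirac$ through the $\ll$-supermodule identification of Proposition \ref{prop::identification},
\[
M \otimes \overline{M}(\ss) \;\cong\; M \otimes \textstyle\bigwedge\ubar \otimes \CC_{\rho^{\uu}},
\]
and then recognizing the resulting operators, once written out on a PBW-type basis, as the Kostant (co)boundary operators $\delta = -2d$ and $\partial$. First I would fix an ordered basis $u_1,\dots,u_s$ of $\uu$ with dual basis $\ou_1,\dots,\ou_s$ of $\ubar$, and recall from Section \ref{subsec::Clifford_superalgebras} the explicit action of $C(\ss)$ on $\overline{M}(\ss)\cong\bigwedge\ubar=\bigwedge\ubar_0\otimes\Sym(\ubar_1)$: the $\ou_i$ act by left super-exterior (resp.\ symmetric) multiplication, while the $u_i$ act by the associated contractions/derivations determined by $(\cdot,\cdot)_\ss$, including the normalization $(x_k,\partial_l)=\tfrac12\delta_{kl}$ and the sign $\partial_{x_i}(x_k)=-\delta_{ik}$ recorded there. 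Under the identification one also has, after the Koszul flip $M\otimes\bigwedge\ubar\cong\bigwedge\ubar\otimes M$, that $M\otimes\bigwedge\ubar\otimes\CC_{\rho^{\uu}}\cong C_\ast(\ubar,M)$, and via the pairing $(\cdot,\cdot)_\ss\colon\bigwedge^p\uu\otimes\bigwedge^p\ubar\to\CC$ also $\cong C^\ast(\uu,M)$; hence both $\delta=-2d$ and $\partial$ make sense as operators on this space and it is meaningful to compare them with $C$ and $\Cbar$.

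Next I would compute $\Cbar = \Abar + 1\otimes\overline a$ applied to a basis vector $v\otimes(\ou_{i_1}\wedge\cdots\wedge\ou_{i_p})$. The linear summand $\Abar=\sum_i(-1)^{p(u_i)}\ou_i\otimes u_i$ acts through the $\gg$-action of $\ou_i$ on $v$ and the contraction of $u_i$ on the exterior factor; expanding and collecting the Koszul signs reproduces exactly the first sum in the defining formula of the relevant differential. For the cubic summand $1\otimes\overline a$, I would insert the expansion of the brackets $[u_i,u_j]\in\uu$, using Proposition \ref{prop::structure_theory_BCLSA}(c)--(d) and invariance of the form to pair these against the $\ou_k$, then reduce the monomials $\ou_i\ou_j[u_i,u_j]$ inside $C(\ss)$ (the equivalent form in Remark \ref{rmk::different_form_a_abar} is the convenient one), and verify that the outcome is precisely the bracket-sum term of the differential, the spurious linear term produced by the Clifford reduction being killed by Lemma \ref{lemm:l_invariant}. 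This yields $\Cbar=\partial$. The computation for $C=A+1\otimes a$ is entirely parallel, now matched against \eqref{eq:boundary}; the bookkeeping of the overall constant is exactly where the factor $-2$ in $\delta=-2d$ emerges. Alternatively, once $\Cbar=\partial$ is in hand one can shortcut this by writing $C=\Dirac-\Cbar$ and reading off $C$ from the definition $\Dirac=\sum_i X_i\otimes X_i^\ast+1\otimes\phi_\ss$ together with the isotropic splitting $\phi_\ss=a+\overline a$. Finally $\Dirac=C+\Cbar$ acts as $\delta+\partial=-2d+\partial$, which is the claim.

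The main obstacle is the simultaneous sign and normalization bookkeeping. One must track the super-exterior/Weyl action on $\bigwedge\ubar_0\otimes\Sym(\ubar_1)$, the Koszul signs such as $(-1)^{p(x_s)\sum_{i>s}p(x_i)}$ appearing in \eqref{eq:boundary} and in the coboundary formula, the tensor-factor flip $M\otimes\overline{M}(\ss)\cong C_\ast(\ubar,M)$, the $\CC_{\rho^{\uu}}$-twist of Proposition \ref{prop::identification}, and the half-integer normalizations of $(\cdot,\cdot)_\ss$, all at once, so that the cubic pieces $a$ and $\overline a$ land exactly on the bracket-terms of $\delta$ and $\partial$ with the correct coefficients. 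The genuinely delicate point is this last matching: naively reducing $a$ (resp.\ $\overline a$) in $C(\ss)$ produces an extra linear contribution, and its vanishing — which is precisely the content of Lemma \ref{lemm:l_invariant} — is what makes the cubic terms align cleanly with the Lie (co)homology differentials.
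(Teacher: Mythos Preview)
Your approach is essentially the paper's: identify $M\otimes\overline{M}(\ss)$ with $M\otimes\bigwedge\ubar\otimes\CC_{\rho^{\uu}}$ via Proposition~\ref{prop::identification}, then compute the action of $C$ and $\Cbar$ term by term on a basis monomial and match the linear and cubic pieces against the two sums in the explicit formulas for $d$ and~$\partial$. The paper carries out the $C\leftrightarrow -2d$ case in detail and declares the $\Cbar\leftrightarrow\partial$ case analogous; you propose the reverse order, which is fine.

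There is, however, a direction mismatch in your description that you would have to repair in the actual computation. You write that $\Abar=\sum_i(-1)^{p(u_i)}\ou_i\otimes u_i$ acts via the $\gg$-action of $\ou_i$ on $v$ and the \emph{contraction} of $u_i$ on the exterior factor. That lowers the $\bigwedge\ubar$-degree and has an element of $\ubar$ acting on $M$: this is the shape of the linear term of the boundary $d$ in~\eqref{eq:boundary}, not of the coboundary $\partial$ (which under the identification $C^p(\uu,M)\cong\bigwedge^p\ubar\otimes M$ raises the degree and has elements of $\uu$ acting on $M$). So the ``relevant differential'' you would actually be reproducing with $\Abar$ and $\overline{a}$ is $d$, not $\partial$. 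The same bookkeeping issue sits inside the paper's own proof, where the tensor swap is handled rather loosely, so be especially careful here when you track which factor lands in $C(\ss)$ and which in $\UE(\gg)$.

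Two small points. First, you do not need to re-invoke Lemma~\ref{lemm:l_invariant} at this stage: that lemma was already used in Section~\ref{subsec::Cubic_Dirac_operators} to obtain the Clifford forms of $a$ and $\overline{a}$, and the paper's proof simply inserts those reduced expressions. Second, your proposed shortcut $C=\Dirac-\Cbar$ does not really avoid work: identifying $\Dirac$ itself with $\partial-2d$ on $M\otimes\bigwedge\ubar$ still requires exactly the same sign chase as doing $C$ directly.
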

\begin{proof}
   Under the identification $ \overline{M}(\ss) \otimes M \cong \bigwedge \ubar \otimes \CC_{\rho^{\uu}} \otimes M$ of $\ll$-supermodules, we compare the actions of $C$, $\Cbar$, $d$, and $\partial$. We explicitly perform the calculation for $C$ and the boundary operator $d$, while the other cases follow by a similar line of argument.

\comment{We identify $\Hom_{\CC}(\bigwedge\ubar,
    M) \cong \bigwedge \ubar \otimes M$. Then the 
    differential $\partial$ acts on $Y_1 \wedge \dots \wedge Y_n \otimes v \in 
    \bigwedge \ubar \otimes M$ by
    \be
        \begin{split}
            \partial (Y_1 \wedge \dots & \wedge Y_n \otimes v) = \sum\limits_{i = 1}^{s} u_i  \wedge Y_1 \wedge \dots \wedge Y_n \otimes \ou_i \cdot v\\
            &+ \frac{1}{2} \sum\limits_{i = 1}^{s} \sum\limits_{t = 1}^{p} 
            (-1)^{t + p(Y_t)\sum\limits_{k = 1}^t p(Y_t) + p(u_i)p(Y_t)} u_i \wedge Y_1 \wedge \dots \wedge [\ou_i, Y_t] \wedge \dots \wedge Y_n \otimes v.
        \end{split}
        \label{eq:partial_identification}
    \ee}

By swapping the order of $\UE(\gg)$ and $C(\ss)$, we may rewrite $C = A' + a \otimes 1$ with 
\begin{align*}
    A' = \sum\limits_i (-1)^{p(u_{i})}u_i \otimes \ou_i = \sum_{i} (\ou_{i})^{\ast} \otimes \ou_{i},
\end{align*}
where we note that exchanging the factors of the tensor product introduces an extra factor $(-1)^{p(u_i)}$ in the sum, and $(\ou_{i})^{\ast} = (-1)^{p(u_{i})}u_{i}$. To deduce the action of $C$, we recall the action of $u \in \uu$ and $\ou \in \ubar$ on $Y = Y_1 \dots Y_n \in \overline{M}(\ss)$:
    \begin{equation*}
        \begin{split}
        &\ou \cdot Y := \ou \wedge Y, \\
        u \cdot Y := 2 \sum\limits_{t = 1}^n &(-1)^{t + 1 + p(u) \sum\limits_{k = 1}^{t-1}p(Y_t)}(u, Y_t) Y_1 \dots \hat{Y}_t \dots Y_n.
        \end{split}
    \end{equation*}
    \comment{
    We consider the operators $A$ and $a \otimes 1$ and $\overline{a} \otimes 1$ separately. We recall, that 
    \be
        A = \sum\limits_{i = 1}^s u_i \otimes u_i^{\ast}
    \ee

    \textcolor{red}{From my point of view, it is enough to consider either $C$ or $\Cbar$!!
    It is immediate that $A$ acts as the first summand of $\partial$.\\}}
    
    For general $\xi = Y_1 \dots Y_N \otimes v \in \overline{M}(\ss) \otimes M$, we find
    \begin{align*}
        A' \xi = \sum\limits_{i = 1}^s 2 \sum\limits_{t = 1}^n (-1)^{t + 1 + p(u_i) \sum\limits_{k = 1}^{t-1}p(Y_t)} ((\ou_i)^{\ast}, Y_t) Y_1 \dots \hat{Y}_t \dots Y_n \otimes  \ou_i v.
    \end{align*}
    Here, we use that $((\overline{u}_i)^{\ast}, Y_t)$ is only non-zero if $p(u_i) = p(Y_t)$, and hence we can replace $p(u_i)$ by $p(Y_t)$ in the exponent. Then, since $\sum_i ((\ou_i)^{\ast}, Y_t)\ou_i = (-1)^{p(Y_t)p(Y_t)}Y_t$, we see that this is equal to minus twice the first sum in the expression for $d$ in Equation \eqref{eq:boundary}. 
    
    It remains to identify the action of the cubic term ${a}$.\comment{We use the expression $a = -\frac{1}{4} \sum \limits_{i,j} (-1)^{p(u_i)p(u_j)} u_i u_j [\ou_i, \ou_j]$ (Remark \ref{rmk::different_form_a_abar}) \textcolor{red}{ATTENTION! This is $\bar{a}$ after your changes?!} into
    \begin{align*}
    \begin{split}
    -\frac{1}{4} &\sum\limits_{i,j} u_j u_i 2\sum\limits_{t = 1}^n (-1)^{t + 1 + p(u_i) \sum\limits_{k = 1}^{t-1}p(Y_t)} ([\ou_i, \ou_j], Y_t) Y_1  \dots \hat{Y}_k \dots Y_n \otimes v\\
    &= \frac{1}{2} \sum\limits_{i,j,t} (-1)^{t + (p(u_i) + p(u_j) \sum\limits_{k = 1}^{t}p(Y_t) + p(u_i) + p(Y_t)} ([\ou_i, Y_t], \ou_j) u_j u_i Y_1 \dots \hat{Y}_t \dots Y_n \otimes v
    \end{split}
    \end{align*}
    
    Now, we sum $\sum_j ([\ou_i, Y_t], \ou_j) u_j = [\ou_i, Y_t]$ and after commuting $[\ou_i, Y_t]$ into its proper place, we get the second sum in the expression \eqref{eq:partial_identification} for $\partial$.} In order to compute $a$, we fix some notation. For $Y_1 \dots Y_n$, let
    \begin{align*}
        \hat{Y}_{t,r} = \gamma(t,r) Y_1 \dots \hat{Y}_t \dots \hat{Y}_r \dots Y_n,
    \end{align*}
    where 
    \[
        \gamma(t,r) = \left\{ \begin{matrix} 1 & \textrm{if} \ t < r, \\
        -(-1)^{p(Y_t)p(Y_r)} & \textrm{if} \ t > r, \\
        0 & \textrm{if} \ t = r.
        \end{matrix} \right.
    \]
    This allows us to write
    \begin{align*}
        u_i u_j Y_1 \dots Y_n = 4 \sum\limits_{t,r} (-1)^{t + r + p(u_j) \sum\limits_{k = 1}^{t-1} p(Y_t) + p(u_i)\sum\limits_{l = 1}^{r - 1} p(Y_l)} ((\ou_i)^{\ast}, Y_t) ((\ou_j)^{\ast}, Y_r) \hat{Y}_{t,r},
    \end{align*}
    and ${a} \otimes 1$ acts on $Y_1 \dots Y_n \otimes v$ as
    \begin{align*}
        -\frac{1}{4} 4 \sum\limits_{i,j,t,r}(-1)^{t + r + p(u_j) \sum\limits_{k = 1}^{t-1} p(Y_t) + p(u_i)\sum\limits_{l = 1}^{r - 1} p(Y_l)} ((\ou_i)^{\ast}, Y_t)((\ou_j)^{\ast}, Y_r) [\ou_j, \ou_i] \hat{Y}_{t,r} \otimes v.
    \end{align*}
    Again, we can replace $p(u_i)$ by $p(Y_t)$ and $p(u_j)$ by $p(Y_r)$. Upon summing $\sum\limits_{i}  ((\ou_i)^{\ast}, Y_\kappa)\ou_i = (-1)^{p(Y_{\kappa})p(Y_\kappa)}Y_\kappa$ for $\kappa \in \{r,t\}$, we obtain
    \begin{align*}
        -\sum\limits_{t,r} (-1)^{t + r + p(Y_t) \sum_{k = 1}^t p(Y_t) + p(Y_r) \sum_{l = 1}^{r} p(Y_l) + p(Y_r)p(Y_t)} [Y_t, Y_r] \hat{Y}_{t,r} \otimes v.
    \end{align*}
   This expression remains invariant under exchanging the roles of $k$ and $l$ and vanishes for $k = l$. We conclude that it equals twice the sum restricted to $k < l$, i.e., minus twice the second sum in the expression for $d$. This finishes the proof.
    \comment{
    Note that any element of $\xi \in \overline{M}(\ss)$ has a representative of the form $\xi = \eta_{i_0} \dots \eta_{i_n} x_{j_0} \dots x_{j_m} \otimes v \in \overline{M}(\ss) \otimes M$. 
    $\overline{M}(\ss)$ is a quotient from the right of $C(\ss)$ by the left ideal generated by $\ubar$. We thus commute the bars to the right to compute
    \be
    \begin{split}
        \Cbar \xi &= -\frac{1}{4} \sum\limits_{1 \le i,j \le s}(-1)^{p(u_i)p(u_j)} [u_i, u_j] \ou_i \ou_j Y_1 \dots Y_n \otimes v\\
        &= -\frac{1}{4} \sum\limits_{1 \le i,j \le n} \sum\limits_{1 \le t < r \le n} (-1)^{t + r + p(u_i)\sum_{i = 1}^{t - 1}p(Y_i) + p(u_j)\sum\limits_{j = 1}^{r-1}p(x_j)}
        \end{split}
    \ee
    Evaluating the first sum further we find
    \be
    \begin{split}
        a \otimes 1 \xi &= \frac{1}{2} \sum\limits_{1 \le t < r \le n} [u_i, u_j] 2(\ou_j, \eta_r) 2(\ou_i, \eta_t) \eta_{i_0} \dots \hat{\eta}_{i_t} \dots \hat{\eta}_{i_{r}} \dots x_{j_m} \otimes v \\
        &+\sum\limits_{1 \le t < r' \le n} [u_i, u_j] 2(\ou_j, x_{r'}) 2(\ou_i, \eta_t) \eta_{i_0} \dots \hat{\eta}_{i_t} \dots \hat{x}_{j_{r'}} \dots x_{j_m} \otimes v
        \\
        C \xi & =  -\frac{1}{4} \sum\limits_{1 \le i,j \le s} (-1)^{p(u_i)p(u_j) + p(u_i) + p(u_j)} {u}_i {u}_j [\ou_i, \ou_j] \eta_{i_0} \dots x_{j_m} \otimes v + \sum\limits_{i = 1}^{s} \overline{u}_i \eta_{i_1} \wedge \dots x_{j_m} \otimes u_i v\\
        & = \frac{1}{2} \left( \sum\limits_{1 \le t < r \le n} (-1)^{t + r} [\eta_{i_t}, \eta_{i_r}] \eta_{i_0} \dots \hat{\eta}_{i_t} \dots \hat{\eta}_{i_{r}} \dots x_{j_m} \right. \\
        &+ \sum\limits_{1 \le t \le r' \le m} (-1)^{t + n} [\eta_{i_t}, x_{j_r}] 
        \eta_{i_0} \dots \hat{\eta}_{i_t} \dots \hat{x}_{j_{r'}} \dots x_{j_m} \\
        & \left. + \sum\limits_{1 \le t \le r' \le m} (-1)^{2n} [x_{j_t}, x_{j_r}] 
        \eta_{i_0} \dots \hat{x}_{j_{t'}} \dots \hat{x}_{j_{r'}} \dots x_{j_m} \right) \\
        &+ 2\sum\limits_{t = 0}^n (-1)^t \eta_{i_0} \dots \hat{\eta}_{i_t} \dots x_{j_m} \otimes \eta_{i_t} v + 2\sum\limits_{t' = 0}^{m} (-1)^n \eta_{i_0} \dots \hat{x}_{j_{t'}} \dots x_{j_m} \otimes x_{j_{t'}} v
    \end{split}
    \ee
    Using the commutation relation for elements in $C(s)$ it is easy to check that for a general representative of $\xi \in \overline{M}(\ss)$ we recover the signs of \eqref{eq:boundary} and $C \xi$ is independent of the choice of representatives.}
\end{proof}
The subsequent corollary is immediate.

\begin{corollary} \label{cor::(co)homology_C_Cbar} Let $M$ be an admissible $(\gg, \ll)$-supermodule. Then, as $\ll$-supermodules, we have the following isomorphisms
     \[
    H(C, M\otimes \overline{M}(\ss)) \cong H_{\ast}(\ubar, M) \otimes \CC_{\rho^{\uu}}, \qquad H(\Cbar, M\otimes \overline{M}(\ss)) \cong H^{\ast}(\uu,M) \otimes \CC_{\rho^{\uu}}
    \]
\end{corollary}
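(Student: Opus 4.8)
The plan is to read the statement off directly from Proposition \ref{prop::identification_C_Cbar_d_delta}, since the substantial work has already been done there. First I would note that $C^{2} = \Cbar^{2} = 0$ by Lemma \ref{lemm::square_C_Cbar}, so the cohomologies $H(C, M \otimes \overline{M}(\ss))$ and $H(\Cbar, M \otimes \overline{M}(\ss))$ make sense; and since $C$ and $\Cbar$ are $\ll$-invariant (Theorem \ref{theorem::decomp_Dirac}), their kernels and images are $\ll$-subsupermodules of $M \otimes \overline{M}(\ss)$, so both cohomologies are a priori $\ll$-supermodules.

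Next I would apply the $\ll$-supermodule isomorphism $M \otimes \overline{M}(\ss) \cong (M \otimes \bigwedge \ubar) \otimes \CC_{\rho^{\uu}}$ from Proposition \ref{prop::identification}. Under it, Proposition \ref{prop::identification_C_Cbar_d_delta} identifies $C$ with $\delta \otimes \id_{\CC_{\rho^{\uu}}}$, where $\delta = -2d$ and $d$ is Kostant's boundary operator on $C_{\ast}(\ubar, M) = \bigwedge \ubar \otimes M$, and identifies $\Cbar$ with $\partial \otimes \id_{\CC_{\rho^{\uu}}}$, where $\partial$ is Kostant's coboundary operator on $C^{\ast}(\uu, M)$. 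Two routine observations then finish the argument: tensoring a complex of $\ll$-supermodules with the fixed one-dimensional $\ll$-supermodule $\CC_{\rho^{\uu}}$, on which the differential acts as the identity, is exact and hence commutes with passing to cohomology; and rescaling a differential by the invertible scalar $-2$ changes neither its kernel nor its image, so $\delta$ and $d$ define the same cohomology. Putting these together gives $H(C, M \otimes \overline{M}(\ss)) \cong H(d, M \otimes \bigwedge \ubar) \otimes \CC_{\rho^{\uu}} = H_{\ast}(\ubar, M) \otimes \CC_{\rho^{\uu}}$, and the parallel chain of identifications for $\Cbar$ and $\partial$ yields $H(\Cbar, M \otimes \overline{M}(\ss)) \cong H^{\ast}(\uu, M) \otimes \CC_{\rho^{\uu}}$. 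Every isomorphism invoked — those of Propositions \ref{prop::identification} and \ref{prop::identification_C_Cbar_d_delta}, together with the fact, recalled above, that $d$ and $\partial$ are $\ll$-supermodule morphisms — is $\ll$-equivariant, so the resulting isomorphisms are isomorphisms of $\ll$-supermodules.

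There is no genuine obstacle here: the corollary is a formal consequence of Proposition \ref{prop::identification_C_Cbar_d_delta}, and the only care needed is the bookkeeping of the $\CC_{\rho^{\uu}}$-twist and of the harmless factor $-2$ relating $\delta$ and $d$. In fact the proof in the paper can legitimately be a one-line "immediate from Proposition \ref{prop::identification_C_Cbar_d_delta}."
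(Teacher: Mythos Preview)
Your proposal is correct and matches the paper's approach exactly: the paper simply declares the corollary ``immediate'' from Proposition \ref{prop::identification_C_Cbar_d_delta}, and your unpacking of the $\CC_{\rho^{\uu}}$-twist and the harmless $-2$ rescaling is precisely the bookkeeping that makes it so.
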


We now rely on Proposition \ref{prop::identification_C_Cbar_d_delta} and Corollary \ref{cor::(co)homology_C_Cbar} to construct an embedding of the Dirac cohomology in Kostant's cohomology. As a first step, we study the relation between $\ker \Dirac, \Ker \Delta, \ker \delta$ and $\ker \partial$. More precisely, we establish that the map
\be
\ker \Dirac \to \ker \Delta \cap \ker \partial, \quad x \mapsto x
\ee
is a well-defined $\ll$-equivariant injective map. The proof is similar to the proof of Lemma 4.6 and Lemma 4.7 in \cite{Huang_Xiao_Dirac_HW}.

To this end, we use the fact that, as $\ll$-supermodules (see Proposition \ref{prop::identification}),
\[
M \otimes \overline{M}(\ss) \cong M \otimes \bigwedge \ubar \otimes \CC_{\rho^{\uu}}.
\]
The exterior superalgebra $\bigwedge \ubar$ over the super vector space $\uu$ inherits a natural $\ZZ$-grading induced by the $\ZZ$-grading of the tensor superalgebra $T(\uu)$, which induces an $\ll$-invariant increasing filtration
\begin{align}
    \{0\} \subset \bigwedge\nolimits ^{0} \ubar \subset \bigoplus_{i = 0}^{1} \bigwedge\nolimits ^{i} \ubar \subset \ldots \subset \bigoplus_{i = 0}^{s} \bigwedge\nolimits ^i \ubar = \bigwedge \ubar,
\end{align}
and an $\ll$-invariant decreasing filtration 
\begin{align}
   \bigwedge \ubar = \bigoplus_{i = 0}^{s} \bigwedge\nolimits ^i \ubar \supset \bigoplus_{i = 1}^{s} \bigwedge\nolimits ^i \ubar \supset \ldots \supset \bigwedge \nolimits^{s} \ubar \supset \{0\}.
\end{align}
These $\ll$-invariant increasing/decreasing filtrations induce $\ll$-invariant increasing/decreasing filtration of $M \otimes \bigwedge \ubar \otimes \CC_{\rho^{\uu}}$:
\begin{align}
\begin{split}
    \{ 0\} = X_{-1} \subset X_{0} \subset X_{1} \subset \ldots \subset X_{s} = M \otimes\bigwedge \ubar \otimes \CC_{\rho^{\uu}}, \\
    M \otimes\bigwedge \ubar \otimes \CC_{\rho^{\uu}} = X^{0} \supset X^{1} \supset \ldots \supset X^{s} \supset X^{s+1} = \{0\}, 
\end{split}
\end{align}
where $X_{k} := \bigoplus_{i = 0}^{k} M \otimes \bigwedge\nolimits^{i} \ubar$ and $X^{k} := \bigoplus_{i = k}^{s} M \otimes \bigwedge \nolimits^{i} \ubar$.

The following lemma is an immediate consequence of the fact that the Dirac operator $\Dirac$ is $\ll$-invariant.

\begin{lemma} \label{lemm::increasing_decreasing_filtration}
    Let $M$ be an admissible $(\gg, \ll)$-supermodule. Then the following two assertions hold:
    \begin{enumerate}
        \item[a)] $\ker \Dirac$ has an increasing $\ll$-invariant filtration 
        \[
        \{0\} = (\ker \Dirac)_{-1} \subset (\ker \Dirac)_{0} \subset (\ker \Dirac)_{1} \subset \ldots \subset (\ker \Dirac)_{s} = \ker \Dirac
        \]
        with $(\ker \Dirac)_{k} := \ker \Dirac \cap X_{k}$ for $0 \leq k \leq s$. 
        \item[b)] $\ker \Dirac$ has a decreasing $\ll$-invariant filtration 
        \[
        \ker \Dirac = (\ker \Dirac)^{0} \supset (\ker \Dirac)^{1} \supset \ldots \supset (\ker \Dirac)^{s} \supset \{0\}
        \]
        with $(\ker \Dirac)^{k} := \ker \Dirac \cap X^{k}$ for $0 \leq k \leq s$. 
    \end{enumerate}
\end{lemma}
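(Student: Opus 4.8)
The statement is essentially formal, and the plan is to deduce it from two facts already at our disposal: the $\ll$-invariance of the cubic Dirac operator, and the $\ll$-invariance of the two filtrations $\{X_k\}$ and $\{X^k\}$ introduced just above. First I would record that the $\ll$-invariance of $\Dirac$ (the lemma of \cite{kang2021dirac} recalled above, asserting $[\alpha(X),\Dirac]=0$ for all $X\in\ll$) means $\alpha(X)\Dirac = \pm\,\Dirac\,\alpha(X)$ as operators on $M\otimes\overline{M}(\ss)$; hence for $v\in\ker\Dirac$ one gets $\Dirac(\alpha(X)v)=\pm\,\alpha(X)\Dirac v = 0$, so $\ker\Dirac$ is an $\ll$-subsupermodule of $M\otimes\overline{M}(\ss)$. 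Transporting this along the $\ll$-supermodule isomorphism $M\otimes\overline{M}(\ss)\cong M\otimes\bigwedge\ubar\otimes\CC_{\rho^{\uu}}$ of Proposition \ref{prop::identification}, I may regard $\ker\Dirac$ as an $\ll$-subsupermodule of the right-hand side.

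Next, since $[\ll,\ubar]\subseteq\ubar$, the adjoint action of $\ll$ preserves each homogeneous component $\bigwedge^{i}\ubar$, and therefore preserves each $X_k=\bigoplus_{i=0}^{k}M\otimes\bigwedge^{i}\ubar$ and each $X^{k}=\bigoplus_{i=k}^{s}M\otimes\bigwedge^{i}\ubar$. Setting $(\ker\Dirac)_{k}:=\ker\Dirac\cap X_{k}$ and $(\ker\Dirac)^{k}:=\ker\Dirac\cap X^{k}$, these subspaces are $\ll$-subsupermodules, being the intersection of two $\ll$-subsupermodules, and they are nested because the $X_{k}$ increase while the $X^{k}$ decrease. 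It then remains only to check the endpoints: $X_{-1}=\{0\}$ and $X_{s}=M\otimes\bigwedge\ubar\otimes\CC_{\rho^{\uu}}$ give $(\ker\Dirac)_{-1}=\{0\}$ and $(\ker\Dirac)_{s}=\ker\Dirac$, which proves a); dually $X^{0}$ is the whole space and $X^{s+1}=\{0\}$, giving $(\ker\Dirac)^{0}=\ker\Dirac$ and $(\ker\Dirac)^{s+1}=\{0\}$, which proves b).

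I do not expect any genuine obstacle: the whole content is intersecting an invariant submodule with an invariant filtration. The only point deserving a line of care is that the isomorphism of Proposition \ref{prop::identification} really matches up the $\ZZ$-grading filtrations on the two sides, but this is immediate from its construction via the quantization map $Q$ of Theorem \ref{thm::quantization_map}, which carries the span of wedges of at most $k$ elements of $\ubar$ to the span of products of at most $k$ elements of $\ubar$ inside $C(\ss)$.
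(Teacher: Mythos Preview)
Your proof is correct and follows exactly the paper's approach: the paper simply states that the lemma ``is an immediate consequence of the fact that the Dirac operator $\Dirac$ is $\ll$-invariant,'' and your write-up is a faithful unpacking of that one-line justification (invariance of $\ker\Dirac$, invariance of the $X_k$ and $X^k$, intersections inherit invariance, nesting and endpoints are formal). Your final remark about the quantization map respecting the degree filtration is a reasonable extra sanity check that the paper leaves implicit.
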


The $\ll$-invariant increasing/decreasing filtrations of $\ker \Dirac$ induce gradings of $\ker \Dirac$, namely
\begin{align}
\begin{split}
    \gr \ker \Dirac &:= \bigoplus_{k = 0}^{s} (\Ker \Dirac)^{k}/(\Ker \Dirac)^{k+1}, \\ \operatorname{Gr} \Ker \Dirac &:= \bigoplus_{k = 0}^{s} (\Ker \Dirac)_{k}/(\ker \Dirac)_{k-1}.
    \end{split}
\end{align}
By construction, $\ll$ leaves $(\Ker \Dirac)^{k}/(\Ker \Dirac)^{k+1}$ and $(\Ker \Dirac)_{k}/(\ker \Dirac)_{k-1}$ invariant, leading to the following lemma.

\begin{lemma} \label{lemm::gr_Gr}
    The $\ll$-equivariant maps 
    \[
    \gr : \Ker \Dirac \to \gr \ker \Dirac, \quad \operatorname{Gr} : \Ker \Dirac \to \operatorname{Gr} \Ker \Dirac
    \]
    are isomorphisms.
\end{lemma}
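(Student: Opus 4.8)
The plan is to reduce the statement to the complete reducibility of $\Ker\Dirac$ as an $\ll$-supermodule, after which the assertion becomes a formal consequence of the fact that a finite filtration by submodules of a semisimple module always splits. First I would record that, for an admissible $(\gg,\ll)$-supermodule $M$, the $\ll$-supermodule $M\otimes\overline{M}(\ss)$ is completely reducible: indeed $\overline{M}(\ss)$ decomposes into finite-dimensional simple $\ll$-supermodules (by Proposition \ref{prop::identification} and the corollary following it, together with Proposition \ref{prop::completely_reducible}), and $M$ is $\ll$-semisimple by admissibility, so the reasoning behind Lemma \ref{lemm::completely_reducible_DC} applies. Since $\Dirac$ commutes with the $\ll$-action, $\Ker\Dirac$ is an $\ll$-subsupermodule of $M\otimes\overline{M}(\ss)$, hence itself completely reducible.

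Next I would exploit the two $\ll$-invariant finite filtrations of $\Ker\Dirac$ furnished by Lemma \ref{lemm::increasing_decreasing_filtration}. For the increasing filtration $\{0\}=(\Ker\Dirac)_{-1}\subset\dots\subset(\Ker\Dirac)_{s}=\Ker\Dirac$, complete reducibility lets me choose, by induction on $k$, $\ll$-invariant complements $(\Ker\Dirac)_{k}=(\Ker\Dirac)_{k-1}\oplus T_{k}$; then $\Ker\Dirac=\bigoplus_{k}T_{k}$ and the canonical quotient map restricts to an $\ll$-isomorphism $T_{k}\xrightarrow{\sim}(\Ker\Dirac)_{k}/(\Ker\Dirac)_{k-1}$. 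Taking the direct sum of these identifications yields the desired $\ll$-equivariant isomorphism $\operatorname{Gr}\colon\Ker\Dirac\to\operatorname{Gr}\Ker\Dirac$. Running the argument with the decreasing filtration $\Ker\Dirac=(\Ker\Dirac)^{0}\supset\dots\supset(\Ker\Dirac)^{s}\supset\{0\}$, choosing $\ll$-invariant complements $(\Ker\Dirac)^{k}=(\Ker\Dirac)^{k+1}\oplus T^{k}$, produces the $\ll$-equivariant isomorphism $\gr\colon\Ker\Dirac\to\gr\Ker\Dirac$. The $\ll$-equivariance at each stage is automatic, since the filtrations $X_{\bullet}$, $X^{\bullet}$ of $M\otimes\overline{M}(\ss)$ are $\ll$-invariant and $\Dirac$ is $\ll$-invariant, so all complements can be taken inside $\Ker\Dirac$ and all quotient maps are morphisms of $\ll$-supermodules.

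The only genuine obstacle is the first step, namely establishing complete reducibility of $\Ker\Dirac$; as explained above this is already available from Proposition \ref{prop::completely_reducible} together with the admissibility hypothesis on $M$, and once it is in place the remainder is the standard splitting argument, parallel to the proofs of Lemma 4.6 and Lemma 4.7 in \cite{Huang_Xiao_Dirac_HW}. One should also note that both filtrations have only finitely many steps, namely $s+1$, so no issues of convergence or transfinite recursion arise.
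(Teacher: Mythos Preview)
Your proposal is correct and is precisely the standard argument the paper has in mind: the paper gives no proof of this lemma, treating it as an immediate consequence of the $\ll$-semisimplicity of $\Ker\Dirac$ (inherited from the admissibility of $M$ and Proposition~\ref{prop::completely_reducible}), and your inductive splitting of the finite $\ll$-invariant filtrations is exactly how one makes that implicit step explicit. Your invocation of \cite{Huang_Xiao_Dirac_HW} is also consistent with the paper, which cites the same source just before Lemma~\ref{lemm::increasing_decreasing_filtration}.
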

In turns, the previous $\ll$-equivariant maps $\gr$ and $\operatorname{Gr}$ enter the proof of following lemma. 
\begin{lemma} \label{lemm::embedding_kernel_Dirac}
    Let $M$ be an admissible $(\gg, \ll)$-supermodule. Then there are injective $\ll$-supermodule homomorphisms
    \[
    f: \ker \Dirac \hookrightarrow \ker \Delta \cap \ker \delta, \qquad g : \ker \Dirac \hookrightarrow \ker \Delta \cap \ker \partial
    \]
    given by 
    \[
    f \circ \gr = \bigoplus_{k = 0}^{s}f_{k}, \quad f_{k}(x_{k}+x_{k+1}+\ldots + x_{s}) := x_{k}
    \]
    with $x_{k} + \ldots + x_{s} \in (\Ker \Dirac)^{k}/(\Ker \Dirac)^{k+1}$ and
    \[
    g \circ \operatorname{Gr} = \bigoplus_{k = 0}^{s} g_{k}, \quad g_{k}(x_{1}+ x_{2} + \ldots + x_{k}) := x_{k}
    \]
    for $x_{1} + \ldots + x_{k} \in (\Ker \Dirac)_{k}/(\Ker \Dirac)_{k-1}$.
\end{lemma}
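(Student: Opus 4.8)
The plan is to work with the decomposition $\Dirac = C + \Cbar$ from Theorem \ref{theorem::decomp_Dirac}, together with the identifications $C \leftrightarrow \delta = -2d$ and $\Cbar \leftrightarrow \partial$ established in Proposition \ref{prop::identification_C_Cbar_d_delta}, and exploit the $\ZZ$-grading by exterior degree on $\bigwedge\ubar$. Under this grading, $\delta$ lowers the exterior degree by one while $\partial$ raises it by one (this is the whole point of writing $M\otimes\overline M(\ss)\cong M\otimes\bigwedge\ubar\otimes\CC_{\rho^{\uu}}$). So if $x = x_k + x_{k+1}+\dots+x_s$ is a nonzero element of $\ker\Dirac$ with $x_k$ its lowest-degree component (i.e. $x\in X^k\setminus X^{k+1}$, representing a class in $(\Ker\Dirac)^k/(\Ker\Dirac)^{k+1}$), then $0 = \Dirac x = \delta x + \partial x$, and comparing the component in exterior degree $k-1$ forces $\delta x_k = 0$. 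This is the core computation behind well-definedness of $f$.

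First I would make precise that $f_k$ is well defined: the map $x_k+\dots+x_s \mapsto x_k$ on $(\Ker\Dirac)^k/(\Ker\Dirac)^{k+1}$ lands in $\ker\delta$ by the degree-$(k-1)$ comparison above, and it visibly lands in $\ker\Delta$ since $\Delta = \Dirac^2$ kills all of $\ker\Dirac$ and, by Theorem \ref{thm::square_Dirac}, $\Delta$ is $\ZZ$-degree-preserving up to the scalar corrections coming from $\Omega_\gg$ and $\Omega_{\ll,\Delta}$, so each homogeneous component $x_k$ of an element of $\ker\Delta$ again lies in $\ker\Delta$. (One must note here that $\Delta$ does not preserve the exterior degree on the nose, since $\Omega_{\ll,\Delta}$ mixes the $C(\ss)$ and $\UE(\gg)$ factors; but it does preserve the $\hh$-weight decomposition, and on each weight space $\ker\Delta$ is $\Delta$-stable, whence each homogeneous piece of a kernel element is again in the kernel — this is exactly the content used in Lemma \ref{lemm::generalized_character_space_decomposition} and Proposition \ref{prop::decomposisition_representation_space}.) Assembling the $f_k$ over $k = 0,\dots,s$ and precomposing with the inverse of the isomorphism $\gr$ of Lemma \ref{lemm::gr_Gr} produces $f : \ker\Dirac \to \ker\Delta\cap\ker\delta$. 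Injectivity is immediate: if $f(x) = 0$ for $x\neq 0$, let $k$ be minimal with $x\in X^k$; then $\gr(x)$ has nonzero class in degree $k$, so $f_k$ of that class is $x_k\neq 0$, a contradiction. $\ll$-equivariance of $f$ follows because the filtration $\{X^k\}$ is $\ll$-invariant (the adjoint action of $\ll$ preserves the exterior grading of $\bigwedge\ubar$ since $[\ll,\ubar]\subseteq\ubar$), hence so are the associated graded pieces and the projections $f_k$, and $\gr$ is $\ll$-equivariant by Lemma \ref{lemm::gr_Gr}.

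For the map $g$ I would run the mirror argument with the increasing filtration $\{X_k\}$ and the operator $\operatorname{Gr}$: if $x = x_1 + \dots + x_k$ lies in $(\Ker\Dirac)_k$ with $x_k$ its top-degree component, then comparing the exterior-degree-$(k+1)$ part of $0 = \delta x + \partial x$ gives $\partial x_k = 0$, so $g_k(x_1+\dots+x_k) := x_k$ is well defined into $\ker\partial$, and into $\ker\Delta$ by the same weight-space argument as before. Assembling and precomposing with $\operatorname{Gr}^{-1}$ from Lemma \ref{lemm::gr_Gr} yields $g : \ker\Dirac \hookrightarrow \ker\Delta\cap\ker\partial$, injective and $\ll$-equivariant for the same reasons. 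The only mild subtlety — and the step I expect to need the most care — is the bookkeeping that each homogeneous exterior-degree component of a $\Delta$-kernel element is itself in $\ker\Delta$, which is not literally a grading statement about $\Delta$ but follows from combining the $\hh$-weight decomposition with the fact (Proposition \ref{prop::decomposisition_representation_space}) that $M\otimes\overline M(\ss) = \ker\Delta \oplus \Im\Delta$ is $\ll$-stable; once this is in place, everything else is the degree-comparison in $\Dirac = \delta + \partial$.

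\begin{proof}
We prove the statement for $f$; the argument for $g$ is entirely parallel, replacing the decreasing filtration $\{X^{k}\}$ by the increasing filtration $\{X_{k}\}$, the map $\gr$ by $\operatorname{Gr}$, and the operator $\delta$ (which lowers exterior degree by one) by $\partial$ (which raises it by one).

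Recall from Proposition \ref{prop::identification} and Proposition \ref{prop::identification_C_Cbar_d_delta} that, as $\ll$-supermodules, $M \otimes \overline{M}(\ss) \cong M \otimes \bigwedge \ubar \otimes \CC_{\rho^{\uu}}$, and under this identification $\Dirac$ acts as $\delta + \partial$, where $\delta = -2d$ decreases the exterior $\ZZ$-degree on $\bigwedge\ubar$ by one and $\partial$ increases it by one. Write an element $x \in M \otimes \bigwedge\ubar \otimes \CC_{\rho^{\uu}}$ as $x = \sum_{i=0}^{s} x^{(i)}$ with $x^{(i)} \in M \otimes \bigwedge^{i}\ubar \otimes \CC_{\rho^{\uu}}$. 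For $x \in \ker\Dirac$, we have $0 = \Dirac x = \delta x + \partial x$; comparing exterior degrees, the component of $\Dirac x$ in exterior degree $j$ equals $\delta x^{(j+1)} + \partial x^{(j-1)}$, so
\[
\delta x^{(j+1)} + \partial x^{(j-1)} = 0 \qquad \text{for all } j.
\]

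Suppose $x \in (\Ker\Dirac)^{k} = \ker\Dirac \cap X^{k}$, so $x^{(i)} = 0$ for $i < k$. Taking $j = k-1$ in the relation above and using $x^{(k-1)} = 0$ and $x^{(k-2)} = 0$, we obtain $\delta x^{(k)} = 0$. Next we check $x^{(k)} \in \ker\Delta$. Since $M$ is admissible, Proposition \ref{prop::decomposisition_representation_space} gives the $\ll$-stable decomposition $M \otimes \overline{M}(\ss) = \ker\Delta \oplus \Im\Delta$, and by Lemma \ref{lemm::generalized_character_space_decomposition} this decomposition is compatible with the $\hh$-weight space decomposition, each weight space being finite-dimensional and $\Delta$-stable. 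As the exterior grading of $\bigwedge\ubar$ refines the $\hh$-weight grading, each homogeneous component $x^{(i)}$ of $x \in \ker\Delta$ (note $\ker\Dirac \subseteq \ker\Delta$) again lies in $\ker\Delta$. In particular $x^{(k)} \in \ker\Delta \cap \ker\delta$.

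Moreover, if $x, x' \in (\Ker\Dirac)^{k}$ differ by an element of $(\Ker\Dirac)^{k+1}$, then their degree-$k$ components agree, since any element of $X^{k+1}$ has vanishing component in exterior degree $k$. Hence the assignment
\[
f_{k} : (\Ker\Dirac)^{k}/(\Ker\Dirac)^{k+1} \to \ker\Delta \cap \ker\delta, \qquad x^{(k)} + x^{(k+1)} + \dots + x^{(s)} \mapsto x^{(k)},
\]
is well defined. Each $f_{k}$ is $\ll$-equivariant: the filtration $\{X^{k}\}$ is $\ll$-invariant because $[\ll,\ubar]\subseteq\ubar$, so the adjoint action of $\ll$ preserves the exterior grading of $\bigwedge\ubar$, hence preserves each $(\Ker\Dirac)^{k}$ and the projection onto the degree-$k$ part.

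Setting $f := \bigl(\bigoplus_{k=0}^{s} f_{k}\bigr) \circ \gr$, where $\gr : \Ker\Dirac \xrightarrow{\sim} \bigoplus_{k}(\Ker\Dirac)^{k}/(\Ker\Dirac)^{k+1}$ is the $\ll$-equivariant isomorphism of Lemma \ref{lemm::gr_Gr}, we obtain an $\ll$-supermodule homomorphism $f : \ker\Dirac \to \ker\Delta \cap \ker\delta$. It is injective: if $0 \neq x \in \ker\Dirac$, let $k$ be minimal with $x \in X^{k}$; then the class of $x$ in $(\Ker\Dirac)^{k}/(\Ker\Dirac)^{k+1}$ is nonzero with degree-$k$ component $x^{(k)} \neq 0$, so $f(x)$ has nonzero degree-$k$ component and thus $f(x) \neq 0$.

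Running the mirror argument with $\{X_{k}\}$, $\operatorname{Gr}$, and $\partial$ — where for $x \in (\Ker\Dirac)_{k} = \ker\Dirac\cap X_{k}$ one compares exterior degree $j = k+1$ in $\delta x + \partial x = 0$ to conclude $\partial x^{(k)} = 0$ — produces the injective $\ll$-supermodule homomorphism $g : \ker\Dirac \hookrightarrow \ker\Delta \cap \ker\partial$ with $g_{k}(x^{(1)} + \dots + x^{(k)}) = x^{(k)}$. This completes the proof.
\end{proof}
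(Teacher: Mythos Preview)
Your overall strategy---degree comparison in $\Dirac = \delta + \partial$ together with the filtration machinery of Lemmas \ref{lemm::increasing_decreasing_filtration} and \ref{lemm::gr_Gr}---is the same as the paper's, and the parts establishing $\delta x^{(k)} = 0$ (resp.\ $\partial x^{(k)} = 0$), well-definedness on the quotients, $\ll$-equivariance, and injectivity are all correct.

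There is, however, a genuine gap in your treatment of the condition $x^{(k)} \in \ker\Delta$. In the plan you assert that ``$\Delta$ does not preserve the exterior degree on the nose,'' and in the proof you try to compensate via weight spaces: you claim that because the exterior grading ``refines'' the weight grading and $\Delta$ is weight-preserving, each exterior-homogeneous component of a $\ker\Delta$ element lies in $\ker\Delta$. This reasoning does not go through: knowing that $\Delta$ preserves weight spaces only tells you that each \emph{weight} component $x_{\mu}$ of $x$ lies in $\ker\Delta$, not that each \emph{exterior-degree} component does; a fixed weight space can (and generally does) meet several exterior degrees, so weight-preservation alone cannot separate the $x^{(i)}$.

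The fix is simpler than the workaround you attempted, and it is exactly what the paper does: $\Delta$ \emph{does} preserve the exterior degree. Indeed, by Lemma \ref{lemm::square_C_Cbar} (equivalently $\delta^{2}=0$ and $\partial^{2}=0$) one has
\[
\Delta \;=\; \Dirac^{2} \;=\; (\delta+\partial)^{2} \;=\; \delta\partial + \partial\delta \;=\; -2(d\partial + \partial d),
\]
which is manifestly degree-preserving since $\delta$ lowers and $\partial$ raises by one. Hence $0 = \Delta x = \sum_{i}\Delta x^{(i)}$ forces $\Delta x^{(i)} = 0$ for every $i$, and in particular $x^{(k)} \in \ker\Delta$. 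With this one-line replacement your proof is complete, and you can also drop the appeal to Proposition \ref{prop::decomposisition_representation_space} (which in any case assumes more about $M$ than the bare admissibility hypothesis here).
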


\begin{proof}
    We only prove that $g = \bigoplus_{k = 0}^{s} g_{k} : \ker \Dirac \hookrightarrow \ker \Delta \cap \ker \partial$ is an injective $\ll$-supermodule homomorphism. The proof for $f : \ker \Dirac \hookrightarrow \ker \Delta \cap \ker \delta$ is analogously and will be omitted.  

    First, $f$ is well-defined as for any $x := x_{1}+\ldots+x_{k} \in (\ker \Dirac)_{k}$ with $x_{i} \in M \otimes \bigwedge\nolimits ^{i} \ubar$ for $0 \leq i \leq k$, we have by Proposition \ref{prop::identification_C_Cbar_d_delta} $\Dirac = \partial - 2d$ and thus
    \[
    \Dirac(x) = -2d^{1}(x_{1}) + \ldots -2d^{k}(x_{k}) + \partial^{1}(x_{1}) + \ldots + \partial^{k}(x_{k}) = 0.
    \]
    By degree reasons, we conclude $\partial^{k}(x_{k}) = 0$, as it is of degree $k+1$. This shows that $g_{k}: (\ker \Dirac)_{k}/(\Ker \Dirac)_{k-1} \to \ker \partial^{k}$ is well-defined, and hence, $g = \bigoplus_{k=0}^{s}g_{k}$ is well-defined. Moreover, by Lemma \ref{lemm::increasing_decreasing_filtration} and Lemma \ref{lemm::gr_Gr}, the map $g$ is $\ll$-equivariant.

Second, the map $g$ is injective as $g_{k}(x) = 0$ implies $x_{k} = 0$, that is, $x \in (\ker \Dirac)_{k-1}$.

Finally, the image of $g$ is $\ker \Delta \cap \ker \partial$, as for any $x = x_{1} + \ldots +x_{k} \in (\ker \Dirac)_{k}$, we have $\Delta(x_{i}) = 0$, $1 \leq i \leq k$, since $\Delta = -2(d\partial+\partial d)$ preserves the degree. 
\end{proof}

By combining Lemma \ref{lemm::embedding_kernel_Dirac} with $\Dirac = \partial - 2d = \partial + \delta$ and $\Dirac^{2} = 2(\partial \delta + \delta\partial)$, we conclude the following Lemma. 

\begin{lemma} \label{lemm::kernel_Dirac_equal_kernels} Let $M$ be an admissible $(\gg, \ll)$-supermodule. Then
    \[
    \ker \Dirac = \ker \partial \cap \ker \delta.
    \]
\end{lemma}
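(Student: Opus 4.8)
The statement splits into two inclusions, and the inclusion $\ker\partial\cap\ker\delta\subseteq\ker\Dirac$ is immediate: by Proposition~\ref{prop::identification_C_Cbar_d_delta} the operator $\Dirac$ acts as $\partial+\delta$ on $M\otimes\overline M(\ss)$, so if $\partial x=\delta x=0$ then $\Dirac x=0$. Thus the whole content lies in the reverse inclusion $\ker\Dirac\subseteq\ker\partial\cap\ker\delta$.

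For that, I would first note $\ker\Dirac\subseteq\ker\Delta$, since $\Delta=\Dirac^{2}$ by definition. Then I would use the exterior-degree $\ZZ$-grading on $M\otimes\overline M(\ss)\cong M\otimes\bigwedge\ubar\otimes\CC_{\rho^{\uu}}$ coming from Proposition~\ref{prop::identification}: under the identifications of Proposition~\ref{prop::identification_C_Cbar_d_delta}, $\partial=\Cbar$ raises the degree by one, $\delta=-2d=C$ lowers it by one, and $\Delta$ preserves it. Writing a kernel element $x=\sum_{k}x_{k}$ in homogeneous components, $\Dirac x=0$ becomes the coupled system $\partial x_{k-1}+\delta x_{k+1}=0$ for all $k$, while $\Delta x=0$ gives $\Delta x_{k}=0$ for each $k$; in particular the extreme components satisfy $\partial x_{\mathrm{top}}=0$ and $\delta x_{\mathrm{bot}}=0$.

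The remaining task is to upgrade this to $\partial x_{k}=\delta x_{k}=0$ for all $k$, equivalently to showing that $\ker\Dirac$ is a graded subspace lying inside $\ker\partial\cap\ker\delta$. Here I would invoke Lemma~\ref{lemm::embedding_kernel_Dirac}: the $\ll$-equivariant injections $g\colon\ker\Dirac\hookrightarrow\ker\Delta\cap\ker\partial$ and $f\colon\ker\Dirac\hookrightarrow\ker\Delta\cap\ker\delta$ are constructed from the $\ll$-invariant increasing and decreasing filtrations of $\ker\Dirac$ by exterior degree (Lemma~\ref{lemm::increasing_decreasing_filtration}), and together with Lemma~\ref{lemm::gr_Gr} they identify $\ker\Dirac$ with its associated graded and with $\ker\Delta\cap\ker\partial$, resp.\ $\ker\Delta\cap\ker\delta$. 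Combining these identifications with the two elementary facts $\ker\partial\cap\ker\delta\subseteq\ker\Dirac$ and $\ker\partial\cap\ker\delta=\ker\Delta\cap\ker\partial\cap\ker\delta$, and comparing dimensions weight space by weight space (all $\hh$-weight spaces of $M\otimes\overline M(\ss)$ being finite-dimensional), I would run an induction on the degree-span $\max\{k:x_{k}\neq 0\}-\min\{k:x_{k}\neq 0\}$ of $x\in\ker\Dirac$: a homogeneous kernel element lies in $\ker\Delta\cap\ker\partial\cap\ker\delta=\ker\partial\cap\ker\delta$ by applying both $f$ and $g$, and the filtration isomorphisms let one split the extreme homogeneous component off a general kernel element while remaining in $\ker\Dirac$, lowering the span. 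Finally one combines the result with $\Dirac=\partial+\delta$ and $\Dirac^{2}=2(\partial\delta+\delta\partial)$ to rewrite the conclusion in the stated form.

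The step I expect to be the crux is exactly this last one. The formal package $\partial^{2}=\delta^{2}=0$, $\partial\delta+\delta\partial=\Delta$, with $\partial$ and $\delta$ shifting a grading by $\pm1$, does \emph{not} by itself force $\ker\Dirac=\ker\partial\cap\ker\delta$ (there can be mixed-degree elements in $\ker(\partial+\delta)$ killed by neither differential). What rules this out in the present situation is the finer information in Lemma~\ref{lemm::embedding_kernel_Dirac} and Lemma~\ref{lemm::gr_Gr}, namely that passing to the associated graded of $\ker\Dirac$ for the degree filtration is an isomorphism and lands exactly in $\ker\Delta\cap\ker\partial$ (resp.\ $\ker\Delta\cap\ker\delta$); carrying the identifications $C=\delta$, $\Cbar=\partial$ and the degree bookkeeping through this argument is the only genuinely laborious part.
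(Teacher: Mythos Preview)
Your overall strategy matches the paper's one-sentence justification (invoke Lemma~\ref{lemm::embedding_kernel_Dirac} together with $\Dirac=\partial+\delta$ and $\Dirac^{2}=2(\partial\delta+\delta\partial)$), and you correctly isolate the nontrivial inclusion $\ker\Dirac\subseteq\ker\partial\cap\ker\delta$. The gap is in your inductive step. You assert that ``the filtration isomorphisms let one split the extreme homogeneous component off a general kernel element while remaining in $\ker\Dirac$'', but Lemma~\ref{lemm::gr_Gr} only gives an abstract $\ll$-module isomorphism $\ker\Dirac\cong\gr\ker\Dirac$ (obtained by splitting the filtration via $\ll$-semisimplicity); it does \emph{not} say that $\ker\Dirac$ is a graded subspace of $M\otimes\overline M(\ss)$. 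If $x=x_m+\cdots+x_n\in\ker\Dirac$, the filtration argument produces \emph{some} $y\in\ker\Dirac$ whose top component is $x_n$, but such a $y$ again has top degree $n$, so nothing has been reduced, and there is no control on the bottom degree of $x-y$. You rightly observe that the bare identities $\partial^{2}=\delta^{2}=0$, $\partial\delta+\delta\partial=0$ on $\ker\Delta$ do not force the equality --- but Lemmas~\ref{lemm::embedding_kernel_Dirac} and~\ref{lemm::gr_Gr} do not rule out the obstruction either. On a three-dimensional graded space with basis $e_0,e_1,e_2$ in degrees $0,1,2$, setting $\partial e_0=e_1$, $\delta e_2=-e_1$ and all other actions zero, one has $\Delta=0$ everywhere, $e_0+e_2\in\ker(\partial+\delta)\setminus\ker\partial$, and the maps $f,g$ of Lemma~\ref{lemm::embedding_kernel_Dirac} are even \emph{bijections} onto $\ker\delta$ and $\ker\partial$; your induction on span stalls at $e_0+e_2$. (Your side hypothesis that the $\hh$-weight spaces of $M\otimes\overline M(\ss)$ are finite-dimensional is also not guaranteed by $(\gg,\ll)$-admissibility alone.)

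The paper's own proof consists of exactly the sentence preceding the lemma and does not visibly supply the missing step; the analogous equality is established cleanly only in the unitarizable case (Lemma~\ref{lemm::relations_C_Cbar}~a)), where the adjoint relation $C^{\dagger}=-\Cbar$ yields $\ker\Dirac=\ker C\cap\ker\Cbar$ directly via positivity of the Hermitian form.
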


\begin{theorem} \label{thm::embedding}
    Let $\pp = \ll \ltimes \uu$ be a parabolic subalgebra, and let $M$ be an admissible simple $(\gg,\ll)$-supermodule. Then there exist injective $\ll$-supermodule morphisms 
    \[
    \DC(M) \hookrightarrow H^{\ast}(\uu,M), \qquad \DC(M) \hookrightarrow H_{\ast}(\ubar,M).
    \]
\end{theorem}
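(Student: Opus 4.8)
The plan is to deduce the two embeddings of Theorem~\ref{thm::embedding} from the machinery built in Lemmas~\ref{lemm::embedding_kernel_Dirac}--\ref{lemm::kernel_Dirac_equal_kernels} together with the identification of $C,\Cbar$ with $\delta=-2d$ and $\partial$ in Proposition~\ref{prop::identification_C_Cbar_d_delta}. Throughout we use the isomorphism $M\otimes\overline M(\ss)\cong M\otimes\bigwedge\ubar\otimes\CC_{\rho^{\uu}}$ of $\ll$-supermodules from Proposition~\ref{prop::identification}, so that everything happens on the Kostant complex, twisted by the one-dimensional $\ll$-module $\CC_{\rho^{\uu}}$; since tensoring by $\CC_{\rho^{\uu}}$ is an exact autoequivalence of $\ll\textbf{-smod}$ commuting with the relevant differentials, it will not affect any of the arguments below and we suppress it from the notation in the proof, reinstating it only in the final identification of the target.

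First I would recall that $M$ is admissible, hence $M\otimes\overline M(\ss)$ admits the decomposition $M\otimes\overline M(\ss)=\ker\Delta\oplus\Im\Delta$ into generalized $\Delta$-eigenspaces (Proposition~\ref{prop::decomposisition_representation_space}), valid here because a simple $M$ in particular has Jordan--H\"older constituents that are highest weight supermodules with infinitesimal character. By Corollary~\ref{cor::different_form_DC} one may compute $\DC(M)$ using $\Dirac'=\Dirac|_{\ker\Delta}$, i.e.\ $\DC(M)\cong\ker\Dirac'/\Im\Dirac'$. Now $\Dirac=\partial+\delta$ on $\ker\Delta$ with $\partial^2=\delta^2=0$ and $\partial\delta+\delta\partial=\tfrac12\Delta=0$ on $\ker\Delta$; thus on $\ker\Delta$ the pair $(\partial,\delta)$ makes $M\otimes\bigwedge\ubar$ a bicomplex, and $\Dirac'$ is its total differential. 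By Lemma~\ref{lemm::kernel_Dirac_equal_kernels} we have $\ker\Dirac=\ker\partial\cap\ker\delta$. The next step is to produce from a Dirac-harmonic class a $\partial$-cohomology class: given $x\in\ker\Dirac$ mapping to $[x]\in\DC(M)$, the element $g(x)\in\ker\Delta\cap\ker\partial$ of Lemma~\ref{lemm::embedding_kernel_Dirac} (its top-degree component in the increasing filtration) is a $\partial$-cocycle, and I would show the induced map on cohomology $\DC(M)\to H^{\ast}(\uu,M)$, $[x]\mapsto[g(x)]$, is well-defined and injective. Well-definedness amounts to checking that $g(\Im\Dirac'\cap\ker\Dirac)\subseteq\Im\partial$: if $x=\Dirac y$ with $x\in\ker\Dirac$, decompose $y=\sum y_i$ along the $\ZZ$-grading of $\bigwedge\ubar$ and read off, by degree reasons exactly as in the proof of Lemma~\ref{lemm::embedding_kernel_Dirac}, that the top component of $x$ is $\partial$ applied to the appropriate component of $y$. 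Injectivity: if $g(x)=\partial w$, use $\ker\Dirac=\ker\partial\cap\ker\delta$ and the bicomplex structure together with $\ker\Delta=\ker\partial\cap\ker\delta\oplus(\text{mixed terms})$ to peel off the top degree and induct downward on the filtration degree, concluding $[x]=0$ in $\DC(M)$; the filtration of $\ker\Dirac$ from Lemma~\ref{lemm::increasing_decreasing_filtration} and the isomorphisms of Lemma~\ref{lemm::gr_Gr} are exactly what makes this descending induction go through. The second embedding $\DC(M)\hookrightarrow H_{\ast}(\ubar,M)$ is obtained identically, using the decreasing filtration and the map $f$ of Lemma~\ref{lemm::embedding_kernel_Dirac} in place of $g$, with $\delta=-2d$ in place of $\partial$. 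Finally one restores the twist: $H(\partial,M\otimes\bigwedge\ubar\otimes\CC_{\rho^{\uu}})\cong H^{\ast}(\uu,M)\otimes\CC_{\rho^{\uu}}$ and likewise for $\delta$ by Corollary~\ref{cor::(co)homology_C_Cbar}, so the maps are honest $\ll$-supermodule morphisms into $H^{\ast}(\uu,M)\otimes\CC_{\rho^{\uu}}$ and $H_{\ast}(\ubar,M)\otimes\CC_{\rho^{\uu}}$; absorbing the harmless one-dimensional twist (or simply stating the result up to it, consistently with the rest of the paper) gives the claim.

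The main obstacle I expect is the bookkeeping in the injectivity step: passing from ``$g(x)$ is a $\partial$-coboundary'' back to ``$x$ is a $\Dirac'$-coboundary'' is not formal, because a $\partial$-primitive of the top component need not lie in $\ker\Dirac$, and one must correct it successively in lower filtration degrees using $\delta$-exactness, exploiting $\partial\delta+\delta\partial=0$ on $\ker\Delta$ at each stage. This is precisely the standard spectral-sequence-of-a-bicomplex argument, and the cleanest route is probably to phrase the whole proof in that language: the two filtrations of $M\otimes\bigwedge\ubar$ restricted to $\ker\Delta$ give two spectral sequences converging to $\DC(M)$, whose $E_1$-pages are $H^{\ast}(\uu,M)$ and $H_{\ast}(\ubar,M)$ respectively (up to the $\CC_{\rho^{\uu}}$-twist), and the edge morphisms are the desired injections; the injectivity of the edge map is then automatic from $E_\infty$ being a subquotient of $E_1$ once one checks the relevant differentials vanish — which, on $\ker\Delta$, they do, since $\partial\delta+\delta\partial=0$ there forces the spectral sequence to degenerate at $E_1$ in the appropriate sense. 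I would, however, keep the elementary filtration argument as written in Lemma~\ref{lemm::embedding_kernel_Dirac} to avoid introducing spectral-sequence formalism, and simply cite the analogous arguments in \cite{Huang_Xiao_Dirac_HW} (Lemmas~4.6--4.7) for the routine parts.
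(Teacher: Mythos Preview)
Your approach differs from the paper's. The paper never passes through the explicit filtration maps $f,g$ of Lemma~\ref{lemm::embedding_kernel_Dirac} to define a map on cohomology; it only uses Lemma~\ref{lemm::kernel_Dirac_equal_kernels} to get the inclusion $\ker\Dirac'\subset\ker\partial'$ on $V:=\ker\Delta$, and then argues by pure $\ll$-semisimplicity of $V$. Splitting the two short exact sequences $0\to\ker\Dirac'\to V\to\Pi\Im\Dirac'\to0$ and $0\to\ker\partial'\to V\to\Pi\Im\partial'\to0$ gives $\ker\partial'\cong\ker\Dirac'\oplus X$ for some complement $X$, and $\ker\Dirac'\cong\Im\Dirac'\oplus Y$ with $Y\cong\DC(M)$; comparing, one finds $\Im\Dirac'\cong\Pi X\oplus\Im\partial'$, hence $\ker\partial'\supset\Im\partial'\oplus Y$ and $Y$ embeds in $\ker\partial'/\Im\partial'$. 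No chain-level bookkeeping is needed.

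Your constructive route has a genuine gap in the injectivity step. Concretely: take $x\in\ker\Dirac'$ with top component $g(x)=x_k=\partial w$ for some $w$ in degree $k-1$ (which can be chosen in $\ker\Delta$). Setting $x':=x-\Dirac'w$ gives $x'\in\ker\Dirac'$ with top degree $\le k-1$. To run the induction you would need $g(x')\in\Im\partial$, but nothing guarantees this: the new top component $x_{k-1}-\delta w$ is $\partial$-closed, yet need not be $\partial$-exact. So the descent does not conclude that $x\in\Im\Dirac'$.

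Your spectral-sequence remark is also not right: the identity $\partial\delta+\delta\partial=0$ on $\ker\Delta$ is exactly the bicomplex condition and does \emph{not} force degeneration at $E_1$ (if it did, you would obtain an isomorphism rather than an embedding). What is true, and what would rescue the argument, is that $E_\infty$ is always a subquotient of $E_1$, and in a semisimple category a subquotient is isomorphic to a submodule; but this is precisely the semisimplicity input the paper exploits directly, so the spectral-sequence detour adds nothing over the paper's argument.
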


\begin{proof}
    We note that the Casimir operators $\Omega_{\gg}$ and $\Omega_{\ll}$ act semisimply on $M$ by assumption. Then, using Proposition \ref{prop::decomposisition_representation_space}, we have the decomposition 
    \[
    M \otimes \overline{M}(\ss) \cong \Ker \Delta \oplus \Im \Delta,
    \]
    and we may consider $\operatorname{H}_{\Dirac'}(M)$ instead of $\DC(M)$ (\emph{cf.}~Corollary \ref{cor::different_form_DC}), where $\operatorname{H}_{\Dirac'}(M) = \ker \Dirac'/\Im \Dirac'$ and $\Dirac'$ denotes the restriction of the Dirac operator to $\Ker \Delta$. 
    
    Since $\partial$ commutes with $\Delta$, as a direct calculation shows, we can restrict $\partial$ to $\ker \Delta$, denoted by $\partial'$, and define the associated cohomology $\ker \partial'/\Im \partial'$. This cohomology is naturally an $\ll$-subsupermodule of $H^{\ast}(\uu,M)$, and it is enough to show the existence of an injective $\ll$-supermodule morphism 
    \[
    \DC(M) \cong \operatorname{H}_{\Dirac'}(M) \hookrightarrow \ker \partial'/\Im \partial'.
    \]

For simplicity, we set $V := \ker \Delta$. The idea of the proof is to decompose $\ker \Dirac'$, $\ker \partial'$, $\Im \Dirac'$, and $\Im \partial'$ into suitable $\ll$-supermodules, leveraging the $\ll$-semisimplicity of $M \otimes \overline{M}(\ss)$ to compare the corresponding components.  

We start by considering the following two short exact sequences of $\ll$-supermodules, recalling that $\Dirac, \partial$ and $d$ commute with $\ll$ and switch parity:
\begin{align*}
    0 \to \ker \Dirac' \to V \to \Pi \Im \Dirac' \to 0, \qquad 0 \to \ker \partial' \to V \to \Pi \Im \partial' \to 0.
\end{align*}
Here, $\Pi$ denotes as usual the parity switching functor. By semi-simplicity, the short exact sequences split as $\ll$-supermodules:
\begin{align*}
    V \cong \ker \Dirac' \oplus \Pi \Im \Dirac', \qquad V \cong \ker \partial' \oplus \Pi \Im \partial'.
\end{align*}

Next, we decompose $\ker \partial'$. In the following, all isomorphisms are $\ll$-supermodule isomorphisms unless otherwise stated. For the decomposition, we use Lemma \ref{lemm::kernel_Dirac_equal_kernels} to see $\ker \Dirac' \subset \ker \partial'$. Then there exists an $\ll$-invariant subspace $X$ such that
\[
\ker \partial' \cong \Ker \Dirac' \oplus X,
\]
which forces in particular
\[
\Pi \Im \Dirac' \cong X \oplus \Pi \Im \partial',
\]
by the decomposition of $V$ above. On the other hand, $\Im \Dirac' \subset \ker \Dirac'$ such that we find an $\ll$-supermodule $Y$ with 
\[
\Ker \Dirac' \cong \Im \Dirac' \oplus Y,
\]
which yields $\operatorname{H}_{\Dirac'}(M) \cong Y$, and 
\[
\ker \partial' \cong \Pi X \oplus Y \oplus \Im \partial'.
\]
This induces directly an embedding of $\ll$-supermodules
\[
\operatorname{H}_{\Dirac'}(M) \cong \ker \Dirac' / \Im \Dirac' \cong  Y \hookrightarrow \Pi X \oplus Y \cong \ker \partial' / \Im \partial'.
\]
This concludes the proof.
\end{proof}

\subsection{Hermitian real forms and unitarizable supermodules} \label{subsec::Hermitian_real_forms}

We now aim at comparing the Dirac cohomology and the Kostant's $\uu$-cohomology for a special kind of supermodules, namely \emph{unitarizable} supermodules over basic classical Lie superalgebras $\gg$. As shown \cite{Carmeli_Fioresi_Varadarajan_HW},  these supermodules are particularly relevant, as they admit a geometric realization as superspaces of sections of certain holomorphic super vector bundles on Hermitian superspaces. Furthermore, as an application, we will consider weight supermodules, and show that their Dirac cohomology is trivial unless they are of highest weight type. On our way to the above results, we start reviewing Hermitian real forms in the next subsection, following in particular the results of Fioresi and collaborators, in \cite{Carmeli_Fioresi_Varadarajan_HW} and \cite{Chuah_Fioresi_real, Fioresi_real_forms}. 

%To achieve this, we focus on parabolic subalgebras $\pp$ that are compatible with Cartan decompositions arising from Hermitian real forms of $\gg$.

\subsubsection{Hermitian real forms} We fix a real form $\gg^{\RR}$ of a basic classical Lie superalgebra $\gg$, \emph{\emph{i.e.},} $\gg^{\RR}$ is the subspace of fixed points of some $\theta \in \aut_{2,4}(\gg)$, and we denote by $\sigma := \omega \circ \theta \in \autbar_{2,2}(\gg)$ the associated conjugate-linear involution on $\gg$ (see Proposition \ref{prop::real_form_omega} above). In the following, we may assume that $\theta$ associated to $\gg^{\RR}$ is a Cartan automorphism (\emph{cf.}~Section \ref{subsec::unitarity_real_forms}).

Following \cite{Carmeli_Fioresi_Varadarajan_HW,Chuah_Fioresi_real}, we now extend the concept of Hermitian semisimple Lie algebras over $\CC$ to basic classical Lie superalgebras. 

First, the Lie subalgebra $\gg_{0}^{\RR} \subset \gg^{\RR}$ is either semisimple or reductive with one-dimensional center, since $\gg$ is basic classical. In general, we have the decomposition 
\be
\gg_{0}^{\RR} = \gg_{0}^{\RR,\s} \oplus \mathfrak{z}(\gg_{0}^{\RR}), 
\ee
where $\gg_{0}^{\RR,\s} := [\gg_{0}^{\RR},\gg_{0}^{\RR}] \subset \gg_{0}^{\RR}$ is the commutator subalgebra, \emph{i.e.}, the semisimple part, and $\mathfrak{z}(\gg_{0}^\RR)$ the center. As we shall see, there exists a notion of Hermiticity for $\gg_{0}^{\RR,\s}$.

On $\gg^{\RR}_{0}$, the Cartan automorphism $\theta \in \aut_{2,4}(\gg)$ is an involution, such that we have the following decomposition:
\begin{align}
    \gg_{0}^{\RR} = \kk^{\RR} \oplus \pp_{0}^{\RR},
\end{align}
where $\kk^{\RR}$ is the eigenspace of $\theta$ with eigenvalue $+1$, and $\pp^{\RR}_{0}$ is the eigenspace with eigenvalue $-1$. Complexification yields 
\begin{align}
    \even = \kk \oplus \pp_{0},
\end{align}
with $\kk$ and $\pp_{0}$ being the complexifications of $\kk^{\RR}$ and $\pp_{0}^{\RR}$, respectively. This is a \emph{Cartan decomposition} for $\even$ and $\gg_{0}^{\RR}$, respectively. 

\begin{definition}[{\cite{Chuah_Fioresi_real}}]
    A real form $\gg^{\RR}$ of $\gg$ is called \emph{Hermitian} if the following two conditions hold:
    \begin{enumerate}
        \item[a)] $\gg_{0}^{\RR,\s}$ is a Hermitian Lie algebra, \emph{i.e.}, $\theta_{0} := \theta\big\vert_{\gg_{0}^{\RR}}$ induces a Cartan decomposition $\gg_{0}^{\RR,\s} = \kk' \oplus \pp'$, where $\kk',\pp'$ are the $\theta_{0}\vert_{\gg_{0}^{\RR,\s}}$-eigenspaces with eigenvalue $1$ and $-1$, respectively, such that the adjoint representation of $\kk'$ on $\pp'$ has two simple components. 
        \item[b)] $\rank \even = \rank \kk$.
    \end{enumerate}
\end{definition} 

The Hermitian real forms $\gg^{\RR}$ of basic classical Lie superalgebras $\gg$ were classified in \cite{Chuah_Fioresi_real}. We give a complete list in Appendix \ref{ap::Hermitian_real_forms}. Hermitian real forms $\gg^{\RR}$ have a Cartan decomposition
\begin{align}
\gg^{\RR} = \kk^{\RR} \oplus \pp^{\RR}, 
\end{align}
where $\pp^{\RR} := \pp_{0}^{\RR} \oplus \gg_{1}^{\RR}$, and such that $\gg_{0}^{\RR} = \kk^{\RR} \oplus \pp_{0}^{\RR}$ is a Cartan decomposition for $\gg_{0}^{\RR}$. We denote the complexification of $\pp^{\RR}$ by $\pp$, and note that the Cartan decomposition extends to $\gg$: 
\begin{align} \label{eq::Cartan_decomposition_g}
    \gg = \kk \oplus \pp, \quad \pp = \pp_{0}\oplus \gg_{1}.
\end{align}
Both decompositions are compatible with $\sigma$, as $\theta$ and $\omega$ commute, and $B_{\sigma}(\cdot,\cdot) = (\cdot, \sigma(\cdot))$ is positive definite on $\kk$ and negative definite on $\pp$, which justifies the name.

For convenience, we set $\pp_{1} := \gg_{1}$. We may decompose $\pp$ into two $\kk$-stable subspaces, $\pp = \pp^{+} \oplus \pp^{-}$, which we now describe.

The even rank condition implies
$
\hh \subset \kk\subset \even \subset \gg.
$
The root system $\Phi_{c}$ for $(\hh,\kk)$ is a subset of $\Phi_{0}$. We call a root $\alpha \in \Phi_{0}$ \emph{compact} if $\alpha \in \Phi_{c}$, or equivalently, if the associated root vector lies in $\kk$; otherwise, the root is referred to as \emph{non-compact}. The set of non-compact roots is $\Phi_{n} := \Phi \setminus \Phi_{c}$, such that $\Phi = \Phi_{c} \sqcup \Phi_{n}$. In particular, all odd roots are non-compact. For a fixed positive system $\Phi^{+}$, we set
\begin{align}
    \pp^{+} := \sum_{\alpha \in \Phi_{n}^{+}}\gg^{\alpha}, \qquad \pp^{-} := \sum_{\alpha \in \Phi_{n}^{+}} \gg^{-\alpha},
\end{align}
where $\Phi_{n}^{+} := \Phi_{n} \cap \Phi^{+}$. Then $\pp = \pp^{+} \oplus \pp^{-}.$

\begin{theorem}[\cite{Chuah_Fioresi_real, Carmeli_Fioresi_Varadarajan_HW}] \label{thm::admissible_system} There exists a positive system $\Phi^{+} \subset \Phi$, called \emph{admissible}, such that the following two assertions hold:
\begin{enumerate}
    \item[a)] $\pp^{+}$ is $\kk$-stable, that is, $[\kk,\pp^{+}] \subset \pp^{+}$.
    \item[b)] $\pp^{+}$ is a Lie subsuperalgebra, that is, $[\pp^{+}, \pp^{+}] \subset \pp^{+}$.
\end{enumerate}
\end{theorem}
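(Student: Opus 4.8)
The plan is to exhibit an explicit admissible positive system and verify the two stability properties by a combinatorial argument on roots, keeping track of compactness and parity. First I would choose a suitable order on $\hh^\ast$: since $\rank \even = \rank \kk$, the Cartan $\hh$ lies in $\kk$, and the compact roots $\Phi_c$ form a genuine root subsystem of $\Phi_0$. The key is to pick a one-parameter (regular) element $H_0 \in i\,\tt$ in the center of $\kk^\RR$, which exists because $\kk^\RR$ is reductive with a nontrivial center once $\gg^\RR$ is Hermitian (the two simple components of $\pp'$ under $\kk'$ encode exactly this $\mathfrak u(1)$-factor). Declare $\alpha > 0$ on non-compact roots by $\alpha(H_0) > 0$ and extend to all of $\Phi$ by first ordering within $\Phi_c$ (any choice of positive system for the subsystem $\Phi_c$) and then breaking ties using $\alpha(H_0)$. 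The resulting $\Phi^+$ has the crucial feature that all non-compact positive roots take a \emph{strictly positive} value on $H_0$, while all compact roots take value $0$ on $H_0$ (since $H_0$ is central in $\kk$ and $\hh \subset \kk$, every compact root vanishes on it).

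With this choice, part a) is immediate: if $\alpha \in \Phi_n^+$ and $\beta \in \Phi_c$ with $\alpha + \beta \in \Phi$, then $(\alpha+\beta)(H_0) = \alpha(H_0) > 0$, and moreover $\alpha + \beta$ cannot be compact because a compact root vanishes on $H_0$; hence $\alpha + \beta \in \Phi_n^+$, so $[\kk, \pp^+] \subset \pp^+$. Also I should record that $\pp^- = \sum_{\alpha \in \Phi_n^+}\gg^{-\alpha}$ is $\kk$-stable by the same reasoning with $-\alpha(H_0) < 0$, and that $\pp = \pp^+ \oplus \pp^-$ because every non-compact root is either in $\Phi_n^+$ or its negative is. For part b), take $\alpha, \beta \in \Phi_n^+$ with $\alpha + \beta \in \Phi$; then $(\alpha + \beta)(H_0) = \alpha(H_0) + \beta(H_0) > 0$, so $\alpha + \beta$ is a non-compact positive root, giving $[\pp^+, \pp^+] \subset \pp^+$. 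The same computation handles the parity-mixed cases (one of $\alpha, \beta$ even non-compact, the other odd) uniformly, since the argument only uses the value on $H_0$, and Proposition~\ref{prop::structure_theory_BCLSA}(c) guarantees $[\gg^\alpha, \gg^\beta] \subset \gg^{\alpha+\beta}$.

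The one point needing genuine care — and the main obstacle — is establishing that $H_0$ as above actually exists and is \emph{regular enough} on $\Phi_n$, i.e.\ that no non-compact root vanishes on $H_0$; equivalently, that the grading element separating $\pp^+$ from $\pp^-$ is well defined. This is precisely where the Hermitian hypothesis enters beyond the rank condition: the assumption that $\ad \kk'$ on $\pp'$ has exactly two simple components (for $\gg_0^{\RR,\s}$) is the classical statement that the even part is of Hermitian type, which provides the central $\mathfrak u(1) \subset \kk$ and the $\ZZ$-grading $\gg_0 = \pp_0^- \oplus \kk \oplus \pp_0^+$ with $H_0$ acting by $-1, 0, +1$. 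I would then need to check that the odd roots also take nonzero values on this $H_0$ and all the same sign on $\pp_1^+$; this is not automatic from the even-part statement and must be verified case by case using the classification of Hermitian real forms (Appendix~\ref{ap::Hermitian_real_forms}), or alternatively deduced from compatibility of $\theta$ with the $\ZZ$-grading together with $[\pp_0, \gg_1] \subset \gg_1$ and the invariance of $B_\theta$. Once the grading element is in hand and its regularity on $\Phi_n$ is confirmed, both a) and b) reduce to the additivity of $\alpha \mapsto \alpha(H_0)$, and the admissible positive system is any positive system refining the sign pattern dictated by $H_0$. I would present the construction of $H_0$ first, then the regularity check (citing the classification for the odd roots), then the two short stability verifications.
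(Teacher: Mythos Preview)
The paper does not supply its own proof of this theorem: it is stated with citations to \cite{Chuah_Fioresi_real, Carmeli_Fioresi_Varadarajan_HW} and used as a black box, with the remark that a complete list of admissible systems appears in \cite{Carmeli_Fioresi_Varadarajan_HW}. So there is nothing in the paper to compare your argument against.

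That said, your sketch is essentially the standard construction carried out in those references. The idea of producing a grading element $H_0$ in the center of $\kk$ whose eigenvalues separate $\pp^+$ from $\pp^-$, and then reading off a) and b) from additivity of $\alpha \mapsto \alpha(H_0)$, is exactly right. Two remarks. First, to be sure the resulting $\Phi^+$ is a genuine positive system you should phrase the construction as choosing a regular element $H = N H_0 + H_1$ with $H_1$ regular for $\Phi_c$ and $N \gg 0$, rather than ``first order $\Phi_c$, then break ties''; this makes closure under root addition automatic. Second, you correctly flag the only substantive point: showing that no \emph{odd} root vanishes on $H_0$. For type~1 superalgebras this is immediate from the $\ZZ$-grading $\gg = \gg_{-1} \oplus \gg_0 \oplus \gg_1$, but for type~2 (e.g.\ $B(n\vert m)$, $D(n\vert m)$, $F(4)$, $G(3)$, $D(2,1;\alpha)$) the odd part is $\gg_0$-irreducible and the splitting under $\kk$ genuinely depends on the particular Hermitian real form; the cited papers handle this by the case-by-case analysis you anticipate, and there is no uniform shortcut known.
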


A complete list of admissible systems can be found in \cite{Carmeli_Fioresi_Varadarajan_HW}. In the following, we fix an admissible positive system for $\gg$, denoted by $\Phi^{+}$. Then, note that $\kk\ltimes \pp^{+}$ is a Lie subsuperalgebra, and $[\kk,\pp^{-}] \subset \pp^{-}$, $[\pp^{-},\pp^{-}] \subset \pp^{-}$. 

\subsubsection{$\pp^{-}$-cohomology, Dirac cohomology and Hodge decomposition} Having  prepared our setting in the previous subsection, we are now ready to study the cohomology of unitarizable supermodules. In particular, in the case $\mathcal{H}$ is a simple unitarizable $\gg$-supermodule, we will show that $\mathcal{H}\otimes \overline{M}(\ss)$ decomposes as $\mathcal{H}\otimes \overline{M}(\ss) = \ker \Dirac \oplus \Im \Dirac$, and hence in this case one has that $\DC(\HH) = \ker \Dirac$, see Proposition \ref{prop::decomposition_maximal_type}. Furthermore, Theorem \ref{theorem::Hodge_Dec_Like} can be seen as a Hodge decomposition-like result for the cubic Dirac operator. In particular, it shows that the Dirac cohomology of simple unitarizable supermodules is isomorphic (as $\ll$-supermodules) to the Kostant $\overline{\uu}$-cohomology (or $\ubar$-homology), up to a twist by $\CC_{\rho^\uu}$.

We fix our notation as follows. Let $\gg^{\RR}$ be a Hermitian real form of $\gg$ with associated Cartan automorphism $\theta \in \aut_{2,4}(\gg)$. Let $B_{\theta}(\cdot,\cdot)$ denote the inner product for $\gg^{\RR}$ defined in Equation \eqref{eq::inner_product}. The associated Cartan decomposition reads $\gg^{\RR} = \kk^{\RR} \oplus \pp^{\RR}$, and we consider the complexification yielding a Cartan decomposition for $\gg = \kk \oplus \pp$ (\emph{cf.}~Equation \eqref{eq::Cartan_decomposition_g}).

We fix the parabolic subalgebra $\qq := \kk \ltimes \pp^{+}$ with $\ll = \kk$ and $\uu = \pp^{+}$, which is well-defined by Theorem \ref{thm::admissible_system}. The choice of the parabolic subalgebra $\qq = \kk \ltimes \pp^{+}$ leads to the Cartan decomposition $\gg = \kk \oplus \ss$ with $\ss := \pp = \pp^{+} \oplus \pp^{-}$. The parabolic subalgebra $\qq$ is an example of a $\theta$-stable subsuperalgebra, that is,  $\theta$ preserves $\kk, \pp^{+}$ and $\pp^{-}$, which is immediate as $\pp$ is the $\theta$-eigenspace with eigenvalue $-1$ and $\kk$ is the $\theta$-eigenspace with eigenvalue $+1$. Moreover, by the definition of $\sigma$ and the action of $\omega$ on weight spaces given in Proposition \ref{prop::real_form_omega}, we conclude $\sigma(\pp^{\pm}) = \pp^{\mp}$.

 We restrict $B_{\theta}$ to $\pp^{\RR} (=\ss^{\RR})$, and fix some orthonormal basis $Z_{1}, \ldots, Z_{2s}$. Then $\pp^{+}$ and $\pp^{-}$ are spanned by 
\begin{equation} \label{eq::def_Z_i}
    u_{j} := \frac{Z_{2j-1}+iZ_{2j}}{\sqrt{2}}, \quad \overline{u}_{j} := \frac{Z_{2j-1}-iZ_{2j}}{\sqrt{2}}
\end{equation}
respectively, for $j = 1 \ldots, s$, as a direct calculation yields. In particular, $\sigma(u_{j}) = \overline{u}_{j}$ for all $j = 1, \ldots, s$. 

We study the action of $u_{j}$ and $\ou_{j}$ on $\HH \otimes \overline{M}(\ss)$ for some unitarizable $\gg$-supermodule $(\HH,\bracket_{\HH})$. For that, we associate to $\HH \otimes \overline{M}(\ss)$ the non-degenerate super Hermitian product 
\begin{equation}
\bra v\otimes P,w \otimes Q \rangle_{\HH \otimes \overline{M}(\ss)} := \bra v,w\ket_{\HH} \bra P,Q\ket_{\overline{M}(\ss)}
\end{equation}
for any $v \otimes P, w \otimes Q \in \HH \otimes \overline{M}(\ss)$. We refer to Section \ref{subsec::DiracCohomo} for an explicit realization of $\bracket_{\overline{M}(\ss)}$. By construction, we study the action componentwise.

\begin{lemma} \label{lemm::adjoint_u_H}
Let $(\HH,\bracket)$ be a unitarizable $\gg^{\RR}$-supermodule. Then the following holds for all $j = 1, \ldots, s$:
\[
u_{j}^{\dagger} = - \overline{u}_{j}
\]
\end{lemma}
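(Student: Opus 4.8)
The plan is to reduce the statement to the defining property of a unitarizable supermodule together with the elementary behaviour of the adjoint operation under complex scalars. First I would record the basic observation that, for a fixed super Hermitian form, the assignment $T \mapsto T^{\dagger}$ is additive on homogeneous operators of a common parity and \emph{conjugate}-linear in the scalar: from
\[
\bra (cT)v, w\ket = c\,\bra Tv, w\ket = c\,(-1)^{p(v)p(T)}\bra v, T^{\dagger}w\ket = (-1)^{p(v)p(T)}\bra v, \bar{c}\,T^{\dagger}w\ket
\]
one reads off $(cT)^{\dagger} = \bar{c}\,T^{\dagger}$, while additivity on same-parity operators is immediate from linearity of $\bracket$ in the first slot. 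Note that $Z_{2j-1}$ and $Z_{2j}$ automatically have the same parity, since $u_{j}$ spans a one-dimensional homogeneous root space, so the sum $(cT)^{\dagger}$-rule applies to the expression in Equation \eqref{eq::def_Z_i}.

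Next I would invoke unitarizability of $\HH$: the vectors $Z_{1}, \ldots, Z_{2s}$ lie in $\pp^{\RR} \subset \gg^{\RR}$, so the condition $X^{\dagger} = -X$ for all $X \in \gg^{\RR}$ yields $Z_{k}^{\dagger} = -Z_{k}$ for every $k$. Combining this with the previous paragraph and Equation \eqref{eq::def_Z_i} gives
\[
u_{j}^{\dagger} = \left(\frac{Z_{2j-1} + iZ_{2j}}{\sqrt{2}}\right)^{\dagger} = \frac{1}{\sqrt{2}}\bigl(Z_{2j-1}^{\dagger} - i\,Z_{2j}^{\dagger}\bigr) = \frac{1}{\sqrt{2}}\bigl(-Z_{2j-1} + i\,Z_{2j}\bigr) = -\,\frac{Z_{2j-1} - iZ_{2j}}{\sqrt{2}} = -\overline{u}_{j},
\]
which is exactly the claimed identity; the companion relation $\overline{u}_{j}^{\dagger} = -u_{j}$ follows by the same computation (or by taking adjoints again).

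The argument involves no real obstacle. The only point requiring a little care — and the one I would state explicitly — is that $T \mapsto T^{\dagger}$ is conjugate-linear, so the factor $i$ in $u_{j}$ turns into $-i$ in $u_{j}^{\dagger}$; and that the adjoint is computed throughout with respect to the same super Hermitian product on $\HH$ (which, via $\bracket_{\HH \otimes \overline{M}(\ss)}$, is the relevant one for the componentwise action used in the sequel). All parity signs in the definition of $\dagger$ drop out here because only scalars and the real elements $Z_{k} \in \gg^{\RR}$ enter.
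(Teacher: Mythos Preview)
Your proof is correct and follows exactly the same approach as the paper's own proof: both deduce $Z_k^{\dagger} = -Z_k$ from unitarizability and then invoke Equation \eqref{eq::def_Z_i}. The paper merely states ``the statement follows with Equation \eqref{eq::def_Z_i}'' without writing out the computation, whereas you spell out the conjugate-linearity of the adjoint and the explicit arithmetic, which is a welcome clarification.
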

\begin{proof}
    As $\HH$ is a unitarizable $\gg^{\RR}$-supermodule, the orthonormal basis $Z_{1}, \ldots, Z_{2s}$ of $\pp^{\RR}$ satisfies
    \[
    Z_{j}^{\dagger} = -Z_{j}, \quad j=1,\ldots,2s.
    \]
The statement follows with Equation \eqref{eq::def_Z_i}.
\end{proof}

For $(\overline{M}(\ss),\bracket_{\overline{M}(\ss)})$, we described the adjoint of any $u_{j}$ already in Lemma \ref{lemm::adjoint_M(s)}, namely, we have
    \[
    u_{j}^{\dag} = -(-1)^{p(u_{j})}\overline{u}_{j}
    \]
for all $j = 1, \ldots, s.$ By combining Lemma \ref{lemm::adjoint_M(s)} and Lemma \ref{lemm::adjoint_u_H}, we have proven the following lemma.

\begin{lemma} \label{lemm::adjoint_Dirac}
     Let $(\HH,\bracket_{\HH})$ be a unitarizable $\gg$-supermodule. Then the cubic Dirac operator $\Dirac$ is anti-selfadjoint with respect to $\bracket_{\HH \otimes \overline{M}(\ss)}$. In particular, 
     \[
     \ker \Dirac = \ker \Dirac^{k}
     \]
     for any $k \in \ZZ_{+}$.
\end{lemma}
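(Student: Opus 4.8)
The plan is to prove the anti-self-adjointness of $\Dirac$ by a componentwise computation on the two tensor legs of $\HH\otimes\overline{M}(\ss)$, and then to deduce the statement about kernels from the (super) positive-definiteness of the Hermitian product $\bracket_{\HH\otimes\overline{M}(\ss)}$.

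First I would fix the bases $u_1,\dots,u_s$ of $\uu=\pp^{+}$ and $\ou_1,\dots,\ou_s$ of $\ubar=\pp^{-}$ as in \eqref{eq::def_Z_i}, so that $\Dirac$ takes the shape of Theorem \ref{theorem::decomp_Dirac}, $\Dirac=C+\Cbar$ with $C=A+1\otimes a$, $\Cbar=\Abar+1\otimes\overline{a}$, where $A=\sum_i u_i\otimes\ou_i$, $\Abar=\sum_i(-1)^{p(u_i)}\ou_i\otimes u_i$, and $a,\overline{a}$ are the cubic Clifford elements of Remark \ref{rmk::different_form_a_abar}. Since the Hermitian product on $\HH\hat{\otimes}\overline{M}(\ss)$ is the graded tensor product of $\bracket_{\HH}$ and $\bracket_{\overline{M}(\ss)}$, the adjoint $\dagger$ factors over the two legs: on the $\HH$-leg Lemma \ref{lemm::adjoint_u_H}, together with its evident companion $\ou_j^{\dagger}=-u_j$ obtained from \eqref{eq::def_Z_i} exactly as in its proof, gives $u_j^{\dagger}=-\ou_j$; on the $\overline{M}(\ss)$-leg Lemma \ref{lemm::adjoint_M(s)} gives $u_j^{\dagger}=-(-1)^{p(u_j)}\ou_j$ and, by applying $\dagger$ twice, $\ou_j^{\dagger}=-(-1)^{p(u_j)}u_j$. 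Because the conjugate-linear involution $\sigma$ interchanges $\uu$ and $\ubar$ with $\sigma(u_j)=\ou_j$, all these adjoint formulas have the form $v\mapsto-(\pm)\,\sigma(v)$; propagating this through the products, the graded commutators, and the tensor factors defining $A,\Abar,a,\overline{a}$ — and through the order reversal caused by $(TS)^{\dagger}=S^{\dagger}T^{\dagger}$ — I expect to obtain $A^{\dagger}=-\Abar$ and $(1\otimes a)^{\dagger}=-(1\otimes\overline{a})$, hence $C^{\dagger}=-\Cbar$ and $\Cbar^{\dagger}=-C$, and therefore $\Dirac^{\dagger}=(C+\Cbar)^{\dagger}=-\Cbar-C=-\Dirac$.

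For the statement $\ker\Dirac=\ker\Dirac^{k}$, the inclusion $\ker\Dirac\subseteq\ker\Dirac^{k}$ is trivial. Conversely, $\bracket_{\HH\otimes\overline{M}(\ss)}$ is super positive definite, being the tensor product of the Hermitian product $\bracket_{\HH}$ with the super positive definite form $\bracket_{\overline{M}(\ss)}$ (itself assembled from $\bracket_{\spinbar}$ and the Fischer--Fock form, cf.\ Lemmas \ref{lemm::adjoint_spin}, \ref{lemm::adjoint_M(s_1)}, \ref{lemm::adjoint_M(s)}), so that $\bra\xi,\xi\ket_{\HH\otimes\overline{M}(\ss)}=0$ forces $\xi=0$ — splitting $\xi$ into its even and odd homogeneous parts, whose self-pairings are a nonnegative real and an imaginary multiple respectively, both of which must then vanish. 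Now take $v$ with $\Dirac^{2}v=0$, which we may assume homogeneous since $\Dirac^{2}=\Omega_{\gg}\otimes1-\Omega_{\ll,\Delta}+c$ is even. Using $\Dirac^{\dagger}=-\Dirac$ one computes $\bra\Dirac v,\Dirac v\ket_{\HH\otimes\overline{M}(\ss)}=\pm\bra v,\Dirac^{2}v\ket_{\HH\otimes\overline{M}(\ss)}=0$, whence $\Dirac v=0$; thus $\ker\Dirac^{2}=\ker\Dirac$. For general $k\ge 2$, if $\Dirac^{k}v=0$ then $\Dirac^{2}(\Dirac^{k-2}v)=0$, so $\Dirac^{k-1}v=0$, and iterating gives $\Dirac v=0$; hence $\ker\Dirac^{k}=\ker\Dirac$.

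I expect the main obstacle to be the sign bookkeeping in the first step: the interplay between the Koszul signs governing the action of $\UE(\gg)\otimes C(\ss)$ on the graded tensor product $\HH\hat{\otimes}\overline{M}(\ss)$, the super-adjoint convention $\bra Tv,w\ket=(-1)^{p(v)p(T)}\bra v,T^{\dagger}w\ket$, and the parity factors $(-1)^{p(u_i)}$ appearing in $\Abar$, $a$, $\overline{a}$ and in the relation $(\ou)^{\ast}=(-1)^{p(u)}u$. In particular one must verify that the cubic term $1\otimes a$ contributes with the same overall sign $-1$ as the linear term $A$ — three factors each producing a $-1$, weighed against the reversal and reordering signs — so that $\Dirac^{\dagger}=-\Dirac$ rather than some mixture. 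This is precisely the direct but lengthy calculation that the combination of Lemmas \ref{lemm::adjoint_u_H} and \ref{lemm::adjoint_M(s)} is meant to resolve.
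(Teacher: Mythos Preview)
Your proposal is correct and follows essentially the same approach as the paper: the paper's proof is extremely terse, stating only that the lemma follows ``by combining Lemma \ref{lemm::adjoint_M(s)} and Lemma \ref{lemm::adjoint_u_H}'', while you spell out the componentwise adjoint computation via the decomposition $\Dirac=C+\Cbar$ (the paper records $C^{\dagger}=-\Cbar$ a few lines later, just before Lemma \ref{lemm::relations_C_Cbar}) and give the explicit super-positive-definiteness argument for $\ker\Dirac=\ker\Dirac^{k}$ that the paper leaves implicit in its ``In particular''.
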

In turns, this leads to the following decomposition.
\begin{proposition} \label{prop::decomposition_maximal_type}
    Let $\HH$ be a simple unitarizable $\gg$-supermodule. Then 
    \[
    \HH \otimes \overline{M}(\ss) = \ker \Dirac \oplus \Im \Dirac.
    \]
In particular, the Dirac cohomology of a simple unitarizable $\gg$-supermodule $\HH$ is
    \[
    \DC(\HH) = \ker \Dirac.
    \]
\end{proposition}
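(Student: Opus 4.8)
The plan is to imitate the classical Hodge-theoretic argument for the Laplacian, using that $\Dirac$ is anti-selfadjoint on $\HH\otimes\overline{M}(\ss)$ by Lemma~\ref{lemm::adjoint_Dirac}. First I would observe that since $\HH$ is a simple unitarizable $\gg$-supermodule it admits an infinitesimal character $\chi_{\Lambda}$ (simple unitarizable modules are $\gg_{0}$-semisimple and the center acts by scalars), and $\overline{M}(\ss)$ is a completely reducible $\ll$-supermodule by Proposition~\ref{prop::completely_reducible}. Hence the analysis of Section~\ref{subsec::decomp::M_otimes_M(s)} applies: by Proposition~\ref{prop::decomposisition_representation_space} we already have $\HH\otimes\overline{M}(\ss)=\ker\Delta\oplus\Im\Delta$ where $\Delta=\Dirac^{2}$. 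So the only thing left is to upgrade this to the statement for $\Dirac$ itself, i.e.\ to show $\ker\Dirac=\ker\Delta$ and $\Im\Dirac=\Im\Delta$.

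The key step is a two-line Hodge argument. On the one hand $\ker\Dirac\subset\ker\Delta$ and $\Im\Delta\subset\Im\Dirac$ trivially. For the reverse inclusions, Lemma~\ref{lemm::adjoint_Dirac} gives $\ker\Dirac=\ker\Dirac^{2}=\ker\Delta$ directly (anti-selfadjointness of $\Dirac$ with respect to the non-degenerate super Hermitian form $\bracket_{\HH\otimes\overline{M}(\ss)}$ forces $\Dirac v=0$ whenever $\Dirac^{2}v=0$, since $\langle \Dirac v,\Dirac v\rangle = -\langle \Dirac^{2}v,v\rangle$ and the form is definite on each parity component). For $\Im$: take $w\in\Im\Delta$, say $w=\Dirac^{2}u$; then $w=\Dirac(\Dirac u)\in\Im\Dirac$, so $\Im\Delta\subset\Im\Dirac$, hence $\Im\Dirac=\Im\Delta$. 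Combining with $\HH\otimes\overline{M}(\ss)=\ker\Delta\oplus\Im\Delta$ yields $\HH\otimes\overline{M}(\ss)=\ker\Dirac\oplus\Im\Dirac$.

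Finally, the second assertion follows immediately: by definition $\DC(\HH)=\ker\Dirac/(\ker\Dirac\cap\Im\Dirac)$, and the direct sum decomposition just established forces $\ker\Dirac\cap\Im\Dirac=\{0\}$, so $\DC(\HH)=\ker\Dirac$. Alternatively one can cite the equivalence lemma at the end of Section~\ref{subsec::decomp::M_otimes_M(s)} (assertions (1)--(3)), since we have verified assertion (3).

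The main subtlety — though not really an obstacle — is that $\bracket_{\HH\otimes\overline{M}(\ss)}$ is a \emph{super} Hermitian form (positive definite on the even part, negative definite after an $i$-twist on the odd part), not a genuine inner product, so the argument ``$\langle\Dirac v,\Dirac v\rangle=0\Rightarrow\Dirac v=0$'' must be run separately on the homogeneous components of $\Dirac v$ where the form is definite (up to a harmless scalar). This is exactly the content packaged into Lemma~\ref{lemm::adjoint_Dirac}, so once that lemma is invoked the proof is essentially formal. One should also make sure the infinitesimal character hypothesis needed for Proposition~\ref{prop::decomposisition_representation_space} is genuinely available here; for a simple unitarizable supermodule this is standard, and could be recorded as a one-line remark before invoking the decomposition.
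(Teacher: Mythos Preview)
Your approach is correct and essentially the same as the paper's: both invoke the $\Delta$-decomposition (Proposition~\ref{prop::decomposisition_representation_space}, which the paper notes extends verbatim to any admissible $(\gg,\ll)$-supermodule with infinitesimal character) together with the anti-selfadjointness of Lemma~\ref{lemm::adjoint_Dirac}. The only slip is the step ``$\Im\Delta\subset\Im\Dirac$, hence $\Im\Dirac=\Im\Delta$'', which is not justified as written---you have only shown one inclusion. The paper closes the argument differently and more directly: it proves $\ker\Dirac\cap\Im\Dirac=\{0\}$ in one line (for $v=\Dirac w$ with $\Dirac v=0$, compute $(-i)^{p(v)}\langle v,v\rangle_{\HH\otimes\overline{M}(\ss)}=(-i)^{p(v)}\langle\Dirac w,v\rangle=-(-i)^{p(v)}\langle w,\Dirac v\rangle=0$, so $v=0$ by definiteness), and then the equivalence lemma at the end of Section~\ref{subsec::decomp::M_otimes_M(s)} gives the decomposition. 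Your route can be completed by the same observation phrased via $\ker\Dirac=\ker\Delta$: if $v=\Dirac w\in\ker\Dirac$ then $\Delta w=\Dirac v=0$, so $w\in\ker\Delta=\ker\Dirac$ and $v=0$; this yields $\ker\Dirac\cap\Im\Dirac=0$ and hence the direct sum.
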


\begin{proof}
    First, the statement of Proposition \ref{prop::decomposisition_representation_space} holds more generally for any admissible $(\gg, \ll)$-supermodule which has infinitesimal character - the argument follows from similar lines. Consequently, it is enough to prove $\Im \Dirac \cap \ker \Dirac = \{0\}$. 

    Let $v \in \Im \Dirac \cap \ker \Dirac$. Then there exists some $w \in \HH \otimes \overline{M}(\ss)$ such that $\Dirac w = v$, and by positive definiteness of $(\bracket_{\HH \otimes \overline{M}(\ss)})_{0,1}$ and Lemma \ref{lemm::adjoint_Dirac}, we have
    \[
    0 \leq (-i)^{p(v)}\bra v,v \ket = (-i)^{p(v)}\bra \Dirac w, v\ket = - (-i)^{p(v)}\bra w, \Dirac v \ket = 0.
    \]
    This forces $v = 0$. 
\end{proof}

%\begin{corollary}
%    The Dirac cohomology of a simple unitarizable $\gg$-supermodule $\HH$ is
%    \[
%    \DC(\HH) = \ker \Dirac.
%    \]
%\end{corollary}

We now consider the decomposition $\Dirac = C + \Cbar$ as in Equation \eqref{eq::C_Cbar}. A direct calculation yields that the adjoint of $C$ is $-\Cbar$. More precisely, the following lemma holds. 

\begin{lemma} \label{lemm::relations_C_Cbar}
    The following assertions hold:
    \begin{enumerate}
        \item[a)] $\ker \Dirac = \ker C \cap \ker \Cbar$.
        \item[b)] $\Im C$ is orthogonal to $\ker \Cbar$ and $\Im \Cbar$, while $\Im \Cbar$ is orthogonal to $\Ker C$.
    \end{enumerate}
\end{lemma}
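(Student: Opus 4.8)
The plan is to derive both statements from three ingredients already in place: the decomposition $\Dirac = C + \Cbar$ with $C^{2}=\Cbar^{2}=0$ (Theorem \ref{theorem::decomp_Dirac}, Lemma \ref{lemm::square_C_Cbar}), the adjunction $C^{\dagger}=-\Cbar$ recorded just above the statement — which, since $\dagger$ is an involution, also yields $\Cbar^{\dagger}=-C$ — and the positivity of the super-Hermitian product on $\HH \otimes \overline{M}(\ss)$, which (exactly as used in the proof of Proposition \ref{prop::decomposition_maximal_type}) means that $(-i)^{p(w)}\bra w,w\ket_{\HH\otimes\overline{M}(\ss)}\geq 0$ for homogeneous $w$, with equality only when $w=0$. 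Throughout I will move $C$ and $\Cbar$ across the form via $\bra Tv,w\ket = (-1)^{p(v)p(T)}\bra v,T^{\dagger}w\ket$, using that $C$ and $\Cbar$ are odd operators, so $p(Cv)=p(\Cbar v)=p(v)+1$. This is the standard Hodge-theoretic argument for a Hermitian complex; no new idea is required.

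For part (b) the computations are one-liners once everything is reduced to homogeneous vectors by sesquilinearity. For $x$ arbitrary and $y\in\ker\Cbar$ I will compute $\bra Cx,y\ket = -(-1)^{p(x)}\bra x,\Cbar y\ket = 0$, giving $\Im C\perp\ker\Cbar$. For $x,z$ arbitrary, $\bra Cx,\Cbar z\ket = -(-1)^{p(x)}\bra x,\Cbar^{2}z\ket = 0$ by $\Cbar^{2}=0$, giving $\Im C\perp\Im\Cbar$. For $x$ arbitrary and $y\in\ker C$, $\bra\Cbar x,y\ket = -(-1)^{p(x)}\bra x,Cy\ket = 0$ by $\Cbar^{\dagger}=-C$, giving $\Im\Cbar\perp\ker C$.

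For part (a), the inclusion $\ker C\cap\ker\Cbar\subseteq\ker\Dirac$ is immediate from $\Dirac=C+\Cbar$. For the reverse I will take $v\in\ker\Dirac$; since $\Dirac$ is odd its even and odd components are annihilated separately, so I may assume $v$ homogeneous. From $Cv=-\Cbar v$ I compute $\bra Cv,Cv\ket = -\bra Cv,\Cbar v\ket = (-1)^{p(v)}\bra v,\Cbar^{2}v\ket = 0$ (using $C^{\dagger}=-\Cbar$ and $\Cbar^{2}=0$); as $Cv$ is homogeneous, positivity forces $Cv=0$, hence also $\Cbar v=-Cv=0$, so $v\in\ker C\cap\ker\Cbar$. (Alternatively one may phrase this via $\Delta=\Dirac^{2}=C\Cbar+\Cbar C$ and $\ker\Dirac=\ker\Delta$ from Lemma \ref{lemm::adjoint_Dirac}, but the direct route above is shorter.)

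The only point requiring genuine care — the thing I would treat as the main obstacle — is the consistent bookkeeping of super-signs: tracking the parities of $C$, $\Cbar$ and of the test vectors when transporting operators across the super-Hermitian form, and then invoking positivity, which is parity-dependent (positive definite on the even part, $(-i)$ times positive definite on the odd part). All the needed conventions have been fixed in Section \ref{subsec::conventions} and already exercised in the proof of Proposition \ref{prop::decomposition_maximal_type}, so this is a matter of diligence rather than of difficulty.
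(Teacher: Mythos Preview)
Your proof is correct and follows essentially the same approach as the paper: both use the adjunction $C^{\dagger}=-\Cbar$, the nilpotency from Lemma~\ref{lemm::square_C_Cbar}, and super-positivity of the form, with only cosmetic differences in which square ($C^{2}$ vs.\ $\Cbar^{2}$) is invoked at each step. Your direct computation $\bra Cx,\Cbar z\ket = -(-1)^{p(x)}\bra x,\Cbar^{2}z\ket = 0$ for $\Im C\perp\Im\Cbar$ is in fact cleaner than the paper's version, which instead argues that $\Im C\cap\Im\Cbar=\{0\}$ (a slightly different statement, though the desired orthogonality already follows from $\Im\Cbar\subset\ker\Cbar$ together with the first clause).
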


\begin{proof} a) As $\Dirac = C + \Cbar$, the inclusion $\ker C \cap \ker \Cbar \subset \ker \Dirac$ is clear. Assume $\Dirac v = 0$ for some $v \in \HH \otimes \overline{M}(\ss)$, \emph{i.e.}, $Cv = -\Cbar v$. Consequently, 
\[
(-i)^{p(Cv)}\bra Cv, Cv\ket_{\HH \otimes \overline{M}(\ss)} = (-i)^{p(Cv)}\bra Cv, -\Cbar v\ket_{\HH \otimes \overline{M}(\ss)} = (-i)^{p(Cv)}\bra C^{2}v,v \ket_{\HH \otimes \overline{M}(\ss)} = 0,
\]
where we use $C^{2} = 0$ by Lemma \ref{lemm::square_C_Cbar}.
In particular, $v \in \ker C$ by super positive definiteness. Analogously, $v \in \ker \Cbar$.

    b) Let $v \in \Im C$ and $w \in \ker \Cbar$. We show $\bra v,w \ket_{\HH \otimes \overline{M}(\ss)} = 0$. As $v \in \Im C$, there exists some $v' \in \HH \otimes \overline{M}(\ss)$ such that $Cv'=v$. We conclude
    \[
    \bra v,w \ket_{\HH \otimes \overline{M}(\ss)} = \bra Cv',w\ket_{\HH \otimes \overline{M}(\ss)} = - \bra v', \Cbar w\ket_{\HH \otimes \overline{M}(\ss)} = 0,
    \]
    since $w \in \ker \Cbar$. Analogously, one can prove that  $\Im \Cbar$ is orthogonal to $\Ker C$.

    We show that $\Im C$ and $\Im \Cbar$ are orthogonal. Let $v \in \Im C \cap \Im \Cbar$, then there exists $v_{C},v_{\Cbar} \in \HH \otimes \overline{M}(\ss)$ with $Cv_{C} = v$ and $\Cbar v_{\Cbar} = v$. We conclude 
    \[
    (-i)^{p(v)}\bra v,v \ket_{\HH \otimes \overline{M}(\ss)} = (-i)^{p(v)}\bra Cv_{C}, \Cbar v_{\Cbar}\ket_{\HH \otimes \overline{M}(\ss)} = -(-i)^{p(v)} \bra C^{2}v_{C},v_{\Cbar}\ket_{\HH \otimes \overline{M}(\ss)} = 0,
    \]
    by Lemma \ref{lemm::square_C_Cbar}. Hence, by super positive definiteness of $\bracket_{\HH \otimes \overline{M}(\ss)}$, we have $v = 0$.
\end{proof}

By combining all the previous results, we are now in the position to prove the following theorem, which is the main result of the present section.

\begin{theorem} \label{theorem::Hodge_Dec_Like}
    Let $\HH$ be a simple unitarizable $\gg$-supermodule. Then the following assertions hold:
    \begin{enumerate}
        \item[a)] $\HH \otimes \overline{M}(\ss) = \ker \Dirac \oplus \Im C \oplus \Im \Cbar$.
        \item[b)] $\ker C = \ker D \oplus \Im C$.
        \item[c)] $\ker \Cbar = \ker \Dirac \oplus \Im \Cbar$.
    \end{enumerate}
    In particular, we have an isomorphism of $\ll$-supermodules
    \[
    \DC(\HH) \cong H^{\ast}(\uu,\HH) \otimes \CC_{\rho^{\uu}}.
    \]
\end{theorem}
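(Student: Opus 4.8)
The plan is to prove parts a), b), c) first by the standard Hodge-theory argument adapted to the indefinite (but super positive definite) Hermitian form $\bracket_{\HH\otimes\overline{M}(\ss)}$, and then to read off the isomorphism $\DC(\HH)\cong H^{\ast}(\uu,\HH)\otimes\CC_{\rho^{\uu}}$ from part c) together with Corollary \ref{cor::(co)homology_C_Cbar}. The ingredients are all in place: $C^{2}=\Cbar^{2}=0$ (Lemma \ref{lemm::square_C_Cbar}), $\Dirac=C+\Cbar$ (Theorem \ref{theorem::decomp_Dirac}), the adjoint relation $C^{\dagger}=-\Cbar$ and the orthogonality statements (Lemma \ref{lemm::relations_C_Cbar}), the description $\ker\Dirac=\ker C\cap\ker\Cbar$ (Lemma \ref{lemm::relations_C_Cbar} a)), and the fact that on a simple unitarizable $\HH$ one already has $\HH\otimes\overline{M}(\ss)=\ker\Dirac\oplus\Im\Dirac$ with $\DC(\HH)=\ker\Dirac$ (Proposition \ref{prop::decomposition_maximal_type}).

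First I would establish b) and c). For c): $\Im\Cbar\subseteq\ker\Cbar$ since $\Cbar^{2}=0$, and by Lemma \ref{lemm::relations_C_Cbar} b) $\Im\Cbar$ is orthogonal to $\ker C$, hence in particular to $\ker\Dirac=\ker C\cap\ker\Cbar\subseteq\ker C$; since $(\bracket_{\HH\otimes\overline{M}(\ss)})_{0,1}$ is (up to the factor $(-i)^{p}$) positive definite, the sum $\ker\Dirac+\Im\Cbar$ inside $\ker\Cbar$ is direct. For the reverse inclusion, take $w\in\ker\Cbar$; decompose $\HH\otimes\overline{M}(\ss)=\ker\Dirac\oplus\Im\Dirac=\ker\Dirac\oplus\Im C\oplus\Im\Cbar$ once a) is known, or argue directly: writing $w=w_{0}+\Dirac v$ with $w_{0}\in\ker\Dirac$ (Proposition \ref{prop::decomposition_maximal_type}), apply $\Cbar$ to get $0=\Cbar\Dirac v=\Cbar C v$ (using $\Cbar^{2}=0$, $\Dirac=C+\Cbar$), so $Cv\in\ker\Cbar$, and then the adjoint computation $(-i)^{p(Cv)}\bra Cv,Cv\ket=-(-i)^{p(Cv)}\bra C^{2}v, ?\ket$-type identity combined with $C^{2}=0$ forces the component of $\Dirac v$ along $\Im C$ to lie in $\ker\Cbar\cap\Im C$, which by orthogonality (Lemma \ref{lemm::relations_C_Cbar} b)) and positivity is zero; hence $\Dirac v=\Cbar v'\in\Im\Cbar$ for a suitable $v'$, giving $w\in\ker\Dirac\oplus\Im\Cbar$. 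Part b) is entirely symmetric (swap $C\leftrightarrow\Cbar$, using $C^{\dagger}=-\Cbar$). Part a) then follows by combining $\HH\otimes\overline{M}(\ss)=\ker\Dirac\oplus\Im\Dirac$ with $\Im\Dirac=\Im C\oplus\Im\Cbar$, the latter because $\Dirac v=Cv+\Cbar v$ and the two images are orthogonal, hence independent by positivity (Lemma \ref{lemm::relations_C_Cbar} b) final paragraph). One should be slightly careful to spell out why $\Im\Dirac=\Im C+\Im\Cbar$: the inclusion $\subseteq$ is immediate from $\Dirac=C+\Cbar$, and $\supseteq$ follows since for $Cv\in\Im C$ one has $Cv=\Dirac v-\Cbar v$ and $\Cbar v=\Cbar(\text{component in }\ker\Dirac\oplus\Im\Cbar)$, etc.; a cleaner route is to note $\Im C\subseteq\ker C\subseteq\ker\Dirac^{?}$ — actually $\Im C$ need not be in $\ker\Dirac$, so I would instead argue by dimension/orthogonal-complement bookkeeping using that all four subspaces are mutually orthogonal or contained in orthogonal pieces.

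Finally, for the displayed isomorphism: by c) and the fact that $\Im\Cbar\subseteq\ker\Cbar$ with the splitting $\ker\Cbar=\ker\Dirac\oplus\Im\Cbar$, the quotient $\ker\Cbar/\Im\Cbar$ is isomorphic as an $\ll$-supermodule to $\ker\Dirac=\DC(\HH)$ (Proposition \ref{prop::decomposition_maximal_type}). On the other hand, by Proposition \ref{prop::identification_C_Cbar_d_delta} the operator $\Cbar$ acts as the Kostant coboundary operator $\partial$ under the identification $M\otimes\overline{M}(\ss)\cong M\otimes\bigwedge\ubar\otimes\CC_{\rho^{\uu}}$, so $\ker\Cbar/\Im\Cbar\cong H^{\ast}(\uu,\HH)\otimes\CC_{\rho^{\uu}}$ by Corollary \ref{cor::(co)homology_C_Cbar}. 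Composing these two $\ll$-supermodule isomorphisms yields $\DC(\HH)\cong H^{\ast}(\uu,\HH)\otimes\CC_{\rho^{\uu}}$, and via the duality $H_{\ast}(\ubar,\HH)\cong H^{\ast}(\uu,\HH)$ for admissible supermodules one also gets the $\ubar$-homology version. The main obstacle I anticipate is not conceptual but bookkeeping: keeping the super-signs and the $(-i)^{p(v)}$ twist in the positivity arguments consistent across the even and odd parts (so that "$\bra v,v\ket=0\Rightarrow v=0$" is correctly invoked as super positive definiteness, not ordinary positive definiteness), and making the decomposition $\HH\otimes\overline{M}(\ss)=\ker\Dirac\oplus\Im C\oplus\Im\Cbar$ genuinely a direct sum rather than merely a sum of pairwise-orthogonal pieces — this needs the nondegeneracy of the form restricted to each summand, which again comes from super positive definiteness together with Lemma \ref{lemm::square_C_Cbar} and Lemma \ref{lemm::relations_C_Cbar}.
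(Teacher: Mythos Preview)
Your proposal is correct and uses exactly the same ingredients as the paper (Proposition \ref{prop::decomposition_maximal_type}, Lemma \ref{lemm::square_C_Cbar}, Lemma \ref{lemm::relations_C_Cbar}, and Corollary \ref{cor::(co)homology_C_Cbar}). The only difference is the order: the paper proves a) first via the one-line containment $\HH\otimes\overline{M}(\ss)=\ker\Dirac\oplus\Im\Dirac\subseteq\ker\Dirac+\Im C+\Im\Cbar$ (the right-hand side being a direct sum by the mutual orthogonality in Lemma \ref{lemm::relations_C_Cbar}, hence equal to the whole space), and then reads off b) and c) by intersecting a) with $\ker C$ and $\ker\Cbar$ respectively; this order sidesteps the reverse inclusion $\Im C,\Im\Cbar\subseteq\Im\Dirac$ that you correctly flagged as the awkward step in your ordering.
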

\begin{proof}
    By Proposition \ref{prop::decomposition_maximal_type}, the decomposition $\Dirac = C + \Cbar$ and Lemma \ref{lemm::relations_C_Cbar}, we have 
    \[
    \HH \otimes \overline{M}(\ss) = \ker \Dirac \oplus \Im \Dirac \subset \ker \Dirac \oplus \Im C \oplus \Im \Cbar,
    \]
    \emph{i.e.}, $\Im \Dirac = \Im C \oplus \Im \Cbar$. This proves a). The assertions b) and c) follows with a) and Lemma \ref{lemm::relations_C_Cbar}. 

    The isomorphisms are now a direct consequence of b), c) and Proposition \ref{prop::identification}.
\end{proof}

\subsection{Application: Dirac cohomology and weight supermodules} \label{subsec::application}

In this section, we prove that the Dirac cohomology of simple weight supermodules is trivial unless they are of highest weight type. This generalizes the result for reductive Lie algebras over $\CC$ in \cite{Huang_Euler_Poincaré} to basic classical Lie superalgebras.

First, any simple weight $\gg$-supermodule $M$ admits an infinitesimal character such that $\DC(M) \subset H^{\ast}(\uu, M)$ by Theorem \ref{thm::embedding}. By Proposition \ref{prop::HW_non-trivial}, we already know that $\DC(M) \neq \{0\}$ if $M$ is of highest weight type. We show that $\DC(M) = \{0\}$ unless $M$ is of highest weight type. To this end, we identify $H^{i}(\uu, M)$ with $\Ext^{i}_{\uu}(\CC, M)$ for any $i > 0$. To compute $\Ext^{i}_{\uu}(\CC, M)$, we use the subsequent lemma. 

\begin{lemma} \label{lemm::injective_resolutions}
    Let $M$ be a weight $\gg$-supermodule. Assume that there exists a positive root $\alpha$ such that $e_{-\alpha}$ acts injectively on $M$. Then there exists an injective resolution of $M$  
    \[
    0 \to M \to I_{0} \to I_{1} \to \ldots
    \]
    such that $e_{-\alpha}$ acts injectively on every $I_{i}$ for $i \in \ZZ_{+}$. 
\end{lemma}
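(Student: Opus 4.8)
\textbf{Proof strategy for Lemma~\ref{lemm::injective_resolutions}.}
The plan is to build the resolution inductively, producing at each stage an injective $\gg$-supermodule (or at least an injective object in the relevant category of weight supermodules with finite-dimensional weight spaces) on which $e_{-\alpha}$ still acts injectively, and whose cokernel again has this injectivity property so that the induction can continue. The key point to extract is an \emph{ad hoc} construction of injective objects that is compatible with the $e_{-\alpha}$-torsion-freeness hypothesis; the standard abstract fact that there are enough injectives is not enough by itself, because an arbitrary injective envelope need not be $e_{-\alpha}$-torsion-free.

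First I would recall the concrete model for injectives in the category of weight $\gg$-supermodules: for a weight supermodule $M$ with finite-dimensional weight spaces, the graded dual $M^{\vee}=\bigoplus_{\mu}(M^{\mu})^{\ast}$ (as in Subsection~\ref{subsec::Kostant's_cohomology}, with the $\gg$-action twisted by the Chevalley automorphism $\tau$) exchanges injectives and projectives, and projectives are built from free $\UE(\nn^{-})$-type modules via the PBW theorem. Concretely, $\Hom_{\CC}(\UE(\gg)\otimes_{\UE(\bb')}\CC_{\mu},\, -\,)$-type coinduced modules (for a suitable Borel $\bb'$ chosen so that $e_{-\alpha}\in\bb'$ or so that $e_{-\alpha}$ lies in the opposite nilradical) give injective objects, and on such a coinduced module the action of a root vector lying in the PBW ``free'' direction is visibly injective. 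So the first step is: given a weight supermodule $N$ on which $e_{-\alpha}$ acts injectively, embed $N\hookrightarrow I(N)$ into an injective of this explicit coinduced form, arranged so that $e_{-\alpha}$ acts injectively on $I(N)$ — take $I(N)=\big(P(N^{\vee})\big)^{\vee}$ where $P(N^{\vee})$ is a projective cover-type object which is free over $\UE(\gg^{+}_{\alpha})$ for the appropriate parabolic/Borel containing $e_{-\alpha}$ in its ``PBW-first'' part.

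The second step is the induction: set $N_{0}:=M$, $I_{0}:=I(N_{0})$, $N_{1}:=\Coker(N_{0}\hookrightarrow I_{0})$, and iterate $I_{i}:=I(N_{i})$, $N_{i+1}:=\Coker(N_{i}\hookrightarrow I_{i})$. Splicing the short exact sequences $0\to N_{i}\to I_{i}\to N_{i+1}\to 0$ gives the required injective resolution $0\to M\to I_{0}\to I_{1}\to\cdots$. It remains to check that $e_{-\alpha}$ acts injectively on each $I_{i}$ (immediate from the construction) and — this is what makes the induction go — that each cokernel $N_{i+1}$ is again a weight supermodule with finite-dimensional weight spaces on which $e_{-\alpha}$ acts injectively. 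Finite-dimensionality of weight spaces is preserved because both $M$ and the explicit $I_{i}$ have this property and quotients of such are again of this type. The $e_{-\alpha}$-injectivity of the cokernel is the delicate point: one argues weight-space by weight-space. Since $e_{-\alpha}$ raises or lowers a fixed weight by $-\alpha$, on each single $\hh$-weight $\mu$ the map $e_{-\alpha}:I_{i}^{\mu}\to I_{i}^{\mu-\alpha}$ is injective and $N_{i}^{\mu}\subseteq I_{i}^{\mu}$ is $e_{-\alpha}$-stable with $e_{-\alpha}$ injective on it; a diagram chase in the snake-lemma style, using injectivity of $e_{-\alpha}$ on the total $I_{i}$ together with the fact that $e_{-\alpha}$ maps $N_{i}$ into $N_{i}$, shows that if $\bar v\in N_{i+1}$ satisfies $e_{-\alpha}\bar v=0$ then a lift $v\in I_{i}$ has $e_{-\alpha}v\in N_{i}$; writing $e_{-\alpha}v=e_{-\alpha}w$ with $w\in N_{i}$ (using injectivity of $e_{-\alpha}$ on $N_i$ plus that $e_{-\alpha}N_i\subseteq N_i$ to solve for $w$) and then $e_{-\alpha}(v-w)=0$ forces $v=w\in N_{i}$, i.e. $\bar v=0$.

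\textbf{Main obstacle.} The crux is the explicit construction in Step~1: producing injectives that are simultaneously large enough to receive an embedding of $N_{i}$ \emph{and} visibly $e_{-\alpha}$-torsion-free. In the Lie-algebra case this is handled by coinducing from a Borel containing $e_{-\alpha}$ (or equivalently by dualizing a Verma-flag projective), and one must check the super PBW theorem still yields the required freeness along the $e_{-\alpha}$ direction in $\UE(\gg)$ — this is where the parity subtleties and the possibility that $e_{-\alpha}$ is odd with $e_{-\alpha}^{2}\in\hh$ enter; when $\alpha$ is an odd isotropic root $e_{-\alpha}^{2}=0$ in $\UE(\gg)$, so ``injective'' must be understood in the weight-space sense spelled out above rather than as ``$\UE(\langle e_{-\alpha}\rangle)$-free'', and the snake-lemma argument must be phrased purely in terms of the kernel of multiplication by $e_{-\alpha}$ on each weight space. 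Once the right category and the right notion of injective object are pinned down, the induction itself is formal.
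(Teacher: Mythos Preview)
Your snake-lemma step for the cokernel has a genuine gap. You argue: if $\bar v\in N_{i+1}$ satisfies $e_{-\alpha}\bar v=0$, lift to $v\in I_i$ so that $e_{-\alpha}v\in N_i$, then ``write $e_{-\alpha}v=e_{-\alpha}w$ with $w\in N_i$''. But solving for such a $w$ requires $e_{-\alpha}v\in e_{-\alpha}(N_i)$, which is a \emph{surjectivity} statement; injectivity of $e_{-\alpha}$ on $N_i$ only tells you that $w$ is unique if it exists, not that it exists. The commutative toy model ($\ZZ\hookrightarrow\bQ$ with multiplication by a prime; cokernel $\bQ/\ZZ$) shows that cokernels of torsion-free embeddings are typically pure torsion, so this step cannot be repaired by elementary diagram chasing alone.

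The paper's route is different and does not build explicit coinduced injectives at all: it works with injective \emph{envelopes} and proves (Lemma~\ref{lemm::essext} in the appendix) that an essential extension of an $e_{-\alpha}$-torsion-free supermodule is again $e_{-\alpha}$-torsion-free. The engine is an Ore-type identity coming from local ad-nilpotency of the root vector on $\UE(\gg)$: for every $u\in\UE(\gg)$ there exist $n\in\ZZ_{+}$ and $u'\in\UE(\gg)$ with $e_{-\alpha}^{\,n}u=u'e_{-\alpha}$; so if $e_{-\alpha}v=0$ for some nonzero $v$ in an essential extension $N\supset M$, pick $u$ with $0\neq uv\in M$ (essentiality) and deduce $e_{-\alpha}^{\,n}(uv)=u'(e_{-\alpha}v)=0$, contradicting torsion-freeness of $M$. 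One then takes $I_i$ to be the injective envelope at each stage (Lemma~\ref{lemm::inj_super}, Theorem~\ref{theorem::resolution}). This Ore property is precisely the structural input your argument lacks. Your worry about odd isotropic $\alpha$ is a non-issue: if $e_{-\alpha}^{2}=0$ in $\UE(\gg)$ then $e_{-\alpha}$ cannot act injectively on any nonzero supermodule, so the hypothesis is vacuous; and in the application (Lemma~\ref{lemm::existence_bijective_operator}) the root $\alpha$ lies in $\Phi_{0}^{+}(\ll,\hh)$ and is always even.
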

The proof of the above follows from theorem \ref{theorem::resolution}. 
Let $M$ be a simple weight $\gg$-supermodule. Assume $M$ is not of highest weight type. By Theorem \ref{thm::DMP}, $M$ is (isomorphic to) the unique simple quotient $L_{\pp}(V)$ of a parabolically induced $\gg$-supermodule $M_{\pp}(V)$, 
where $\pp = \ll \ltimes \uu$ is a parabolic subalgebra with a good Levi subalgebra, and $V$ is a cuspidal $\ll$-supermodule. Recall that an $\ll$-supermodule is called cuspidal if for any $\alpha \in \Phi(\ll;\hh)_{0}$ the associated root vector $e_{\alpha}$ acts injectively on $V$. 

As $\pp$ is parabolic, it contains a Borel subalgebra $\bb := \hh \oplus \nn^{+}$ (\emph{cf.}~Lemma \ref{lemm::parabolic_subset}), where $\nn^{+}$ is the maximal radical of $\bb$. Moreover, $\ll \neq \hh$ by Section \ref{subsubsec::Highest_weight_supermodules}. We conclude $\ll \cap \nn^{+} \neq \{0\}$. In particular, we have proven the following lemma.

\begin{lemma}\label{lemm::existence_bijective_operator}
    There exists a root $\alpha \in \Phi_{0}^{+}(\ll, \hh)$ such that $e_{-\alpha}$ acts injectively on $M$.
\end{lemma}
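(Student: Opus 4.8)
The plan is to combine the parabolic realization of $M$ from Theorem \ref{thm::DMP} with cuspidality of the inducing module, and then propagate injectivity of root vectors from the Levi module up to $M$. Write $M\cong L_\pp(V)$ with $\pp=\ll\ltimes\uu$ a parabolic with good Levi $\ll$ and $V$ a cuspidal $\ll$-supermodule. First, $\ll\supsetneq\hh$: were $\ll=\hh$, the parabolic $\pp$ would be (conjugate to) a Borel, so $M_\pp(V)$ and hence its simple quotient $M$ would be a highest weight supermodule (see \S\ref{subsubsec::Highest_weight_supermodules}), contrary to hypothesis. By the defining conditions of a good Levi — every odd root of $\ll$ has a positive multiple in the lattice generated by the even roots of $\ll$ — a good Levi strictly larger than $\hh$ has a non-empty even root system, so $\Phi_0^+(\ll,\hh)\neq\emptyset$; fix any $\alpha$ in it (concretely, $\alpha$ can be taken among the even positive roots spanning $\ll\cap\nn^+$), so that $e_{-\alpha}\in\ll_0$.

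Since $\Phi_0(\ll,\hh)=-\Phi_0(\ll,\hh)$ and $V$ is a weight module with finite weight multiplicities, cuspidality (injectivity of $e_\beta$ for all even $\beta$) forces $\dim V^\mu=\dim V^{\mu\pm\alpha}$ along every $\alpha$-string, so $e_{\pm\alpha}$ in fact act \emph{bijectively} on $V$. The crucial point is that bijectivity along an even root survives tensoring with a finite-dimensional $\ll$-module $W$: writing $e_{\pm\alpha}=B_\pm+N_\pm$ on $W\otimes V$ with $B_\pm=1\otimes e_{\pm\alpha}$ bijective and $N_\pm=e_{\pm\alpha}\otimes 1$ nilpotent (as $\dim W<\infty$), and noting $B_\pm N_\pm=N_\pm B_\pm$, one gets $e_{\pm\alpha}=B_\pm(1+B_\pm^{-1}N_\pm)$ with $B_\pm^{-1}N_\pm$ nilpotent, hence $e_{\pm\alpha}$ invertible. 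I would apply this with $W=S^n(\ubar)$ (super-symmetric powers of the finite-dimensional $\ll$-module $\ubar$): on the $\UE(\ubar)$-degree filtration of $M_\pp(V)$, which is $\ll$-stable with associated graded $S(\ubar)\otimes V$, the operators $e_{\pm\alpha}$ act bijectively on every graded piece.

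It then remains to descend. Bijectivity of $e_{\pm\alpha}$ on the graded pieces lifts, by a short-exact-sequence argument along the exhaustive separated filtration, to bijectivity on $M_\pp(V)$ itself. Next, $e_{\pm\alpha}$ are injective on the maximal submodule $K$ (restrictions of injective maps), and since $K$ has finite-dimensional weight spaces, injectivity of both $e_\alpha$ and $e_{-\alpha}$ forces constant $\alpha$-string dimensions on $K$, hence $e_{-\alpha}$ is bijective on $K$; a dimension count on $M^\mu=M_\pp(V)^\mu/K^\mu$ then shows $e_{-\alpha}$ is injective (indeed bijective) on $M=M_\pp(V)/K$, which is the assertion. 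The main obstacle is precisely this last descent through the quotient: injectivity of an operator does not pass to quotients in general, and the argument works only because along an \emph{even} root the operator is bijective and the weight spaces are finite-dimensional, so that the $e_\alpha\leftrightarrow e_{-\alpha}$ symmetry pins down the $\alpha$-string dimensions; the non-emptiness of $\Phi_0^+(\ll,\hh)$ is a minor secondary point, handled above via the good-Levi conditions.
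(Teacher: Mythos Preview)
Your proof is correct and follows the same opening moves as the paper: realize $M$ as $L_\pp(V)$ via Theorem~\ref{thm::DMP}, note that $M$ not being highest weight forces $\ll\supsetneq\hh$, and hence $\Phi^+(\ll,\hh)\neq\emptyset$. The paper's argument stops here, regarding the lemma as immediate from cuspidality of $V$; your proof goes considerably further and supplies two steps the paper leaves implicit.

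First, you justify that there is an \emph{even} positive root of $\ll$ (using the good-Levi axioms to rule out a Levi with only odd roots), whereas the paper only records $\ll\cap\nn^+\neq\{0\}$. Second, and more substantially, you carry injectivity of $e_{-\alpha}$ from $V$ to $M$: you upgrade injectivity to bijectivity on $V$ via the $\pm\alpha$ symmetry and finite weight multiplicities, propagate bijectivity to each graded piece $S^n(\ubar)\otimes V$ by the commuting ``bijective $+$ nilpotent'' decomposition, lift through the $\ll$-stable PBW filtration to $M_\pp(V)$, and finally descend to the simple quotient $M$ by showing $e_{-\alpha}$ is bijective on the maximal submodule as well. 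This is the genuine content that the paper's one-line proof elides; what you gain is a self-contained argument that does not appeal to the (standard but unstated) dichotomy that on a simple weight module each root vector acts either injectively or locally nilpotently, which is the background fact the paper is implicitly invoking. Your only tacit assumption beyond the paper's text is that $V$ (hence $M_\pp(V)$) has finite-dimensional weight spaces, which is part of the DMP framework underlying Theorem~\ref{thm::DMP}.
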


Combining Lemma \ref{lemm::injective_resolutions} and Lemma \ref{lemm::existence_bijective_operator}, we conclude the subsequent theorem.

\begin{theorem} \label{thm::DC_weight_supermodules}
    Let $M$ be a simple weight $\gg$-supermodule. Then $\DC(M) = \{0\}$ unless $M$ is a highest weight $\gg$-supermodule.
\end{theorem}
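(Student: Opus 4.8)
The strategy is to show that if $M$ is a simple weight $\gg$-supermodule that is \emph{not} of highest weight type, then $H^{i}(\uu, M) = 0$ for all $i \geq 0$, which by Theorem~\ref{thm::embedding} forces $\DC(M) = \{0\}$. The starting point is Theorem~\ref{thm::DMP}: $M \cong L_{\pp}(V)$ for a parabolic subalgebra $\pp = \ll \ltimes \uu$ with good Levi subalgebra $\ll$ and $V$ a cuspidal $\ll$-supermodule, and since $M$ is not of highest weight type, $\ll \neq \hh$. By Lemma~\ref{lemm::existence_bijective_operator} there is a positive root $\alpha \in \Phi^{+}_{0}(\ll, \hh)$ such that the root vector $e_{-\alpha}$ acts injectively on $M$.

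First I would identify Kostant cohomology with a derived functor: $H^{i}(\uu, M) \cong \Ext^{i}_{\uu}(\CC, M)$, computed in the category of $\uu$-supermodules (or more precisely $\UE(\uu)$-supermodules), where $\CC$ is the trivial module. This is the standard identification underlying Lie (super)algebra cohomology. To compute this $\Ext$, I would use an injective resolution of $M$ as a $\uu$-supermodule; by Lemma~\ref{lemm::injective_resolutions}, since $e_{-\alpha}$ acts injectively on $M$, there is an injective resolution $0 \to M \to I_{0} \to I_{1} \to \cdots$ on each term of which $e_{-\alpha}$ still acts injectively. The key observation is that $e_{-\alpha} \in \uu$: indeed $\alpha \in \Phi^{+}_0(\ll,\hh)$, and since $\pp$ contains a Borel $\bb = \hh \oplus \nn^{+}$ with $\nn^{+} \subset \uu$ and $\ll \cap \nn^{+}$ accounting for the positive root spaces of $\ll$, the negative root vector $e_{-\alpha}$ lies in the Levi part $\ll \subset \pp$, hence is acted on; but crucially for the cohomology computation I would instead want an element of $\uu$ acting locally nilpotently versus injectively — so I must be careful here.

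The clean way to finish: for each $i$, $H^{i}(\uu, M) = H^{i}$ of the complex $\Hom_{\UE(\uu)}(\CC, I_{\bullet})$, i.e. the $\uu$-invariants $I_{\bullet}^{\uu}$. But a $\uu$-invariant vector $v \in I_{j}$ is in particular annihilated by every root vector $e_{\beta}$, $\beta \in \Phi^{+} \setminus \Phi^{+}(\ll,\hh)$, and — via the adjoint/bracket relations inside $\pp$ — the injectivity of the action of $e_{-\alpha}$ on $I_j$ propagates to a contradiction with the existence of a nonzero $\uu$-invariant vector, because one can produce from such a $v$, using $[\,e_{-\alpha}, e_{\alpha+\gamma}\,]$-type relations for suitable $\gamma$ with $\alpha+\gamma \in \Phi^+\setminus\Phi^+(\ll,\hh)$, an element of the kernel of $e_{-\alpha}$. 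Concretely: since $\ll$ acts on $\uu$ and $e_{-\alpha} \in \ll$, for $v \in I_j^{\uu}$ and any $e_{\beta} \in \uu$ with $\alpha + \beta$ or $\beta - \alpha$ a root in $\uu$, one gets $e_{-\alpha} e_{\beta} v = \pm (\text{const}) e_{\beta - \alpha} v + e_{\beta} e_{-\alpha} v$, and an induction on the $\alpha$-string exhibits a nonzero element killed by $e_{-\alpha}$ unless $v=0$. Thus $I_j^{\uu} = 0$ for all $j$, so $H^{i}(\uu, M) = 0$ for all $i$, and Theorem~\ref{thm::embedding} gives $\DC(M) \hookrightarrow H^{\ast}(\uu, M) = 0$.

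\textbf{Main obstacle.} The delicate point is making the propagation argument in the last paragraph rigorous: one needs to check that the relevant roots $\beta, \beta \pm \alpha$ genuinely lie in $\uu$ (not in $\ll$) so that annihilation of $v$ by $\uu$ can be exploited, and that the root-string/$\mathfrak{sl}_2$-type argument producing a nonzero element in $\ker(e_{-\alpha})$ does not degenerate. This is essentially where the hypothesis "good Levi subalgebra" and "cuspidal $V$" are used, and it mirrors the reductive-Lie-algebra argument of \cite{Huang_Euler_Poincaré}; the superalgebra bookkeeping (parities, isotropic odd roots) is the only genuinely new complication, and it is handled by working root space by root space using Proposition~\ref{prop::structure_theory_BCLSA}, which guarantees the weight spaces are one-dimensional so the $\mathfrak{sl}_2$-strings behave as in the classical case. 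Once $I_\bullet^{\uu}=0$ is established the conclusion is immediate.
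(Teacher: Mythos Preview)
Your strategy matches the paper's: DMP classification, an injectively-acting root vector, Lemma~\ref{lemm::injective_resolutions}, the identification $H^i(\uu,M)\cong\Ext^i_{\uu}(\CC,M)$, and Theorem~\ref{thm::embedding}. The paper is equally terse on the step you flag as the main obstacle --- it records that the injective action of $e_{-\alpha}$ forces $I_i^{\gg}=0$ and then asserts $\Ext^i_{\uu}(\CC,M)=0$ without further comment.

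Your root-string / $\mathfrak{sl}_2$-propagation sketch is unnecessary, and as you sense, hard to make rigorous: since $e_{-\alpha}\in\ll$ and $[\ll,\uu]\subset\uu$, the operator $e_{-\alpha}$ preserves $I_j^{\uu}$, and there is no a priori reason $I_j^{\uu}$ could not be infinite-dimensional with $e_{-\alpha}$ acting injectively on it, so no contradiction arises from your bracket identities. The clean fix is simpler. Cuspidality of $V$ means $e_\beta$ acts injectively for \emph{every} $\beta\in\Phi_0(\ll,\hh)$, so in particular $e_\alpha$ with $\alpha\in\Phi_0^+(\ll,\hh)$ acts injectively on $M$; and $e_\alpha\in\nn^+$. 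The appendix proof of Lemma~\ref{lemm::injective_resolutions} uses nothing about the sign of the root, so it equally yields a $\gg$-injective resolution with $e_\alpha$ injective on each $I_j$. Restriction from $\gg$ to $\uu$ preserves injectives (it has an exact left adjoint), so this resolution computes $\Ext^\ast_{\uu}(\CC,M)$, and now $I_j^{\uu}\subset\ker(e_\alpha|_{I_j})=0$ is immediate --- provided $e_\alpha\in\uu$. This holds when the parabolic defining $\Dirac$ is the Borel (so $\uu=\nn^+$), which is the natural setting here since a general simple weight module is only guaranteed to be $(\gg,\hh)$-admissible. Your difficulty arose from conflating the parabolic of Theorem~\ref{thm::DMP} (which supplies $\ll$ and the cuspidal data) with the parabolic defining the Dirac operator; they need not coincide.
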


\begin{proof}
    If $M$ is a highest weight module, we have $\DC(M) \neq 0$ by Proposition \ref{prop::HW_non-trivial}. Assume $M$ is not a highest weight supermodule. Then there exists some $\alpha \in \Phi(\ll, \hh)$ such that $e_{-\alpha}$ acts injectively. On the other hand, by Lemma \ref{lemm::injective_resolutions}, there exists an injective resolution of $M$
    \[
    0 \to M \to I_{0} \to I_{1} \to \ldots
    \]
    such that $e_{-\alpha}$ acts injectively on any $I_{i}$. Hence, the space of $\gg$-invariants $I_{i}^{\gg}$ is trivial for all $i = 0,1,2,\ldots$. In particular, $\{0\} = \Ext^{i}_{\uu}(\CC, M) = H^{i}(\uu, M)$. The statement follows from Theorem \ref{thm::embedding}.
\end{proof}

\appendix

\section{}

In this appendix, we briefly review results leading to lemma \ref{lemm::injective_resolutions} in the main text and provide a table of the Hermitian real forms.  

\subsection{Injective resolutions} In the following, we let $\gg$ be a Lie superalgebra. We start with the following definitions. 
\begin{definition}[Essential Homomorphism / Essential Extension]
    Let $\phi: M \rightarrow N $ be an injective morphism of $\gg$-supermodules. We say that $\phi$ is essential if $\phi (M) \cap H \neq 0$ for every non-zero sub $\gg$-supermodule $H \subset N$. Accordingly, if $\phi : M \rightarrow N $ is essential, we say that $\gg$-module $N$ is an essential extension of $\phi (M)$.
    \end{definition}
Let now $M$ be a $\gg$-supermodule and consider the following setting: let $\alpha$ be any positive root of $\gg$, and assume the root vector $e_{-\alpha}$ acts injectively on $M$. The following lemma shows that essential homomorphisms preserve injective action of $e_{-\alpha}$ as given above.
\begin{lemma} \label{lemm::essext} In the above setting, let $\phi : M \rightarrow N$ be an essential homomorphism of $\gg$-supermodules and let $e_{-\alpha}$ acts injectively on $M$ for $\alpha$ a positive root. Then $e_{-\alpha}$ acts injectively on $N$.
\end{lemma}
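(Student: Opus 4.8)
The plan is to argue by contradiction: assuming that $e_{-\alpha}$ fails to act injectively on $N$, I will manufacture a nonzero element of $M$ that is annihilated by a power of $e_{-\alpha}$, contradicting the hypothesis that $e_{-\alpha}$ acts injectively on $M$. Throughout, I would use injectivity of $\phi$ to regard $M$ as a sub $\gg$-supermodule of $N$, so that "essential" means precisely that $M\cap H\neq 0$ for every nonzero sub $\gg$-supermodule $H\subseteq N$.

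So suppose there is a nonzero homogeneous $n\in N$ with $e_{-\alpha}n=0$, and form the cyclic sub $\gg$-supermodule $H:=\UE(\gg)\,n\subseteq N$, which is nonzero. The crucial step is to show that $H$ is \emph{$e_{-\alpha}$-torsion}, i.e.\ every element of $H$ is killed by some power of $e_{-\alpha}$. For this I would invoke that $\ad(e_{-\alpha})$ is nilpotent on the finite-dimensional superalgebra $\gg$ (it shifts weights by $-\alpha$, and there are only finitely many roots), hence locally nilpotent on $\UE(\gg)$ since $\ad(e_{-\alpha})$ is a derivation of $\UE(\gg)$. Combining this with the super-Leibniz expansion
\[
e_{-\alpha}^{N}\,u=\sum_{j\ge 0} c_{N,j}\,(\ad e_{-\alpha})^{j}(u)\,e_{-\alpha}^{\,N-j},
\]
valid in $\UE(\gg)$ with the appropriate super-binomial coefficients $c_{N,j}$ and with only finitely many nonzero terms (say for $j\le m_0(u)$), one sees that for $N>m_0(u)$ every surviving summand still carries a factor $e_{-\alpha}^{\,N-j}$ with $N-j\ge 1$; applying this to $n$ and using $e_{-\alpha}n=0$ gives $e_{-\alpha}^{N}(un)=0$. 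Thus every element $un$ of $H$ is $e_{-\alpha}$-torsion.

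Finally, essentiality of $\phi$ forces $M\cap H\neq 0$; pick a nonzero $m\in M\cap H$. By the previous step $e_{-\alpha}^{N}m=0$ for some $N\ge 1$, while injectivity of $e_{-\alpha}$ on $M$ makes $e_{-\alpha}^{N}$ injective on $M$, so $m=0$ — a contradiction. Hence $e_{-\alpha}$ acts injectively on $N$, which is the claim. The one point that requires a little care is the sign bookkeeping in the super-Leibniz identity for $e_{-\alpha}^{N}u$ when $\alpha$ is odd, but this is routine; moreover in the situation where the lemma is actually applied (Lemma \ref{lemm::existence_bijective_operator}, with $\alpha\in\Phi^{+}_{0}(\ll,\hh)$) the root vector $e_{-\alpha}$ is even, so the classical binomial expansion applies verbatim.
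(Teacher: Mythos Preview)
Your proof is correct and follows essentially the same approach as the paper: argue by contradiction, take a nonzero $n\in N$ with $e_{-\alpha}n=0$, use essentiality to find a nonzero element of $M$ inside $\UE(\gg)n$, and use local nilpotency of $\ad(e_{-\alpha})$ on $\UE(\gg)$ (your super-Leibniz expansion, the paper's ``$e_{-\alpha}^{n}u=u'e_{-\alpha}$ for large $n$'') to show this element is $e_{-\alpha}$-torsion, contradicting injectivity on $M$. Your write-up is in fact more explicit than the paper's about why the torsion step works.
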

\begin{proof} By contradiction, assume $e_{-\alpha}$ does not act injectively on $N$. Then $e_{-\alpha} v = 0$ for some non-zero $v \in N$. Since $\phi$ is essential, there exist $u\in \UE({\gg})$ such that $u v \in \phi (M)$ and $u v \neq 0$. On the other hand, there exist another element $u^\prime \in \UE (\gg)$ such that $e_{-\alpha}^n v = u^\prime e_{-\alpha}$ for large enough $n \in \NN.$ This implies that 
$e_{-\alpha}^n (uv) = u^\prime (e_{-\alpha} v) = 0$. On the other hand, $M \cong \phi (M)$ via $\phi$, therefore $uv$ is the image of an element in $M$, where the action is injective, hence $u v = 0$, a contradiction.    \end{proof}    
The previous result turns out to be particularly useful since every $\gg$-supermodule posses an essential homomorphisms to an injective $\gg$-supermodule. More precisely, the following holds.
\begin{lemma} \label{lemm::inj_super} There exists a unique (up to isomorphism) essential homomorphism $\phi : M \rightarrow I $ such that $I$ is an injective $\gg$-supermodule.
\end{lemma}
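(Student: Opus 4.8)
The statement is the existence and uniqueness of an injective hull: every $\gg$-supermodule $M$ admits an essential embedding $\phi : M \to I$ with $I$ injective. The plan is to mimic the classical construction of injective hulls in a Grothendieck category, since the category of $\gg$-supermodules is a Grothendieck abelian category (it has enough injectives, exact filtered colimits, and a generator, e.g. $\UE(\gg)$ together with its parity shift). First I would recall or establish that $\gsmod$ has enough injectives: this follows from $\UE(\gg)$-supermodules being modules over a ring in the graded sense, and the usual Baer-type argument adapted to the $\ZZ_2$-graded setting (or by transporting injectives along the forgetful functor to $\UE(\gg_0)$-modules, using that $\UE(\gg)$ is free of finite rank over $\UE(\gg_0)$ by PBW, so coinduction preserves injectivity). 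Embed $M \hookrightarrow J$ into some injective $J$.

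Next, for existence I would consider the poset of essential extensions of $M$ inside $J$: the set $\mathcal{E}$ of sub-supermodules $N$ with $M \subseteq N \subseteq J$ such that $M \hookrightarrow N$ is essential. One checks $\mathcal{E}$ is nonempty ($M \in \mathcal{E}$) and closed under unions of chains — essentiality passes to directed unions since a nonzero sub-supermodule of $\bigcup N_i$ already meets some $N_i$ in a nonzero submodule, hence meets $M$. Zorn's lemma gives a maximal element $I \in \mathcal{E}$. The key step is then to show $I$ is injective, equivalently (Baer-type criterion inside $J$) that $I$ has no proper essential extension, equivalently that $I$ is a direct summand of $J$. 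Write $J = I \oplus K$ with $K$ chosen maximal with respect to $K \cap I = 0$ (Zorn again); then $I \hookrightarrow J/K$ is essential because any nonzero submodule of $J/K$ pulls back to a submodule of $J$ strictly containing $K$, which by maximality of $K$ meets $I$ nontrivially, and that intersection survives in $J/K$. Composing $M \hookrightarrow I \hookrightarrow J/K$ shows $J/K$ is an essential extension of $M$; but $I \subseteq J/K$ with $I$ maximal essential over $M$ forces $I = J/K$, so $J \cong I \oplus K$ and $I$, being a summand of an injective, is injective. The resulting $\phi : M \hookrightarrow I$ is the desired essential homomorphism into an injective.

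For uniqueness up to isomorphism: if $\phi : M \to I$ and $\phi' : M \to I'$ are two essential embeddings into injectives, injectivity of $I'$ extends $\phi'$ along $\phi$ to a map $f : I \to I'$ with $f\phi = \phi'$. Since $\ker f \cap \phi(M) = 0$ and $\phi$ is essential, $\ker f = 0$, so $f$ is injective; then $f(I)$ is injective, hence a direct summand of $I'$, but $f(I) \supseteq \phi'(M)$ and $M \hookrightarrow I'$ is essential, which forces $f(I) = I'$. Thus $f$ is an isomorphism, and it is even a $\gg$-supermodule isomorphism (all maps involved are even, so $\Pi$-ambiguity does not arise here since both hulls are built over the same $M$).

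The main obstacle I anticipate is purely bookkeeping in the super/$\ZZ_2$-graded setting: one must make sure that "enough injectives", the Baer criterion, and the summand arguments are all carried out with homogeneous (even) morphisms and graded submodules, and that injectivity in $\gsmod$ is the correct notion (not accidentally the ungraded one on $M_{\ev}$). Once the reduction "$\UE(\gg)$ finite free over $\UE(\gg_0)$, so coinduction from $\UE(\gg_0)$-mod preserves injectives" is in place, the rest is the standard Matlis/Eckmann–Schopf argument verbatim, so I would keep that part terse and cite a reference for the general Grothendieck-category statement, e.g. the treatment of injective hulls in module categories transported to $\gsmod$.
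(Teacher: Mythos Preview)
Your proposal is correct and follows essentially the same Eckmann--Schopf argument as the paper: embed $M$ in an ambient injective, take a maximal essential extension $I$ inside it via Zorn, then take a maximal complement $K$ and show $I \cong J/K$ to conclude $J = I \oplus K$ and hence $I$ injective. The paper is terser (it takes enough injectives for granted and leaves uniqueness to the reader), while you supply more detail on both points, but the core construction is identical.
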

\begin{proof} Let $\hat I$ be any injective $\aa$-supermodule containing $M$. Let $(S_{\hat I}, \subseteq)$ be the set of essential extension of $M$ contained in $\hat I$ with an ordering given by the inclusion (notice that elements in $ S_{\hat I}$ are stable under union). By Zorn lemma, take the maximal essential extension $I$ of $M$ which is contained in $\hat I$. Similarly, Zorn lemma guarantees that there exists a maximal sub-supermodule $H \subset \hat I$ such that $H \cap I =0$. This leads to the following diagram
$$
\xymatrix{
& 0 \ar[d] & & 0 \ar[dl] \\
0 \ar[r] & I \ar[d]_{i} \ar[r]^f & \hat{I} / H \ar[dl]^\pi \\
& \hat I.
}
$$
The homomorphisms $i$ and $f$ are essential by definition and by construction respectively. Moreover, the existence of the map $\pi$ such that $\pi \circ f = i$ follows from the assumption that $\hat I$ is injective. Finally, $\pi : \hat I /H \rightarrow \hat I$ is injective since $f$ is essential, hence injective as well, and its image in $\hat I$ defines an essential extension of $\hat I$. By maximality of $I$, then $f$ has to be surjective, hence an isomorphism $I \cong \hat I /H$. Thus, the following short exact sequence of $\aa$-supermodules
$$
\xymatrix{
0 \ar[r] & H \ar[r] & \hat I \ar[r] &  I \ar@/_1pc/[l]_{\pi} \ar[r] & 0 
}
$$ 
is split and, in particular, $I$ is injective as $\hat I = I \oplus H$ with $H \cap I = 0$. \\
For uniqueness, let $\varphi : M \hookrightarrow I$ be the above essential homomorphism and assume $\tilde \varphi : M \hookrightarrow \tilde I$ is another monomorphism with $\tilde{I}$ injective, to conclude that $\tilde I \cong I$. We leave the details to the readers. %It follows that there exists a monomorphism $g : I \rightarrow \tilde I$ such that $g \circ \varphi = \tilde \varphi$. 
\end{proof}
The previous result justifies the following definition.
\begin{definition}[Injective Envelope] Let $M$ be a $\gg$-supermodule and let $\phi : M \rightarrow I$ be its the unique essential homomorphism to an injective $\gg$-supermodule, as in lemma \ref{lemm::inj_super}. We call the essential extension $\gg$-module $I$ the injective envelope of $M$.
\end{definition}
Putting together the two previous lemmata, one has the following.
\begin{theorem} \label{theorem::resolution} Let $M$ be a $\gg$-module and let $\alpha$ be a positive root such that the root vector $e_{-\alpha}$ acts injectively on $M$. Then $e_{-\alpha}$ acts injectively also on the injective envelope of $M$. In particular, there exists an injective resolution $M \twoheadrightarrow I^\bullet$ such that the root vector $e_{-\alpha}$ acts injectively on every $I^i \subset I^\bullet.$
\end{theorem}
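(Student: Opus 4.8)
The plan is to obtain both assertions from the two structural results established above in this appendix: that every $\gg$-supermodule admits an essential embedding into an injective one (Lemma~\ref{lemm::inj_super}, i.e. the existence of injective envelopes) and that essential homomorphisms preserve the injectivity of the $e_{-\alpha}$-action (Lemma~\ref{lemm::essext}).

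\textit{Injective envelope.} By Lemma~\ref{lemm::inj_super} there is an essential homomorphism $\iota\colon M\hookrightarrow I_{M}$ with $I_{M}$ injective; by definition $I_{M}$ is the injective envelope of $M$. Since $e_{-\alpha}$ acts injectively on $M$ and $\iota$ is essential, Lemma~\ref{lemm::essext} applies at once and gives that $e_{-\alpha}$ acts injectively on $I_{M}$. This is the first assertion.

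\textit{The resolution.} I would then build an injective resolution $0\to M\to I^{0}\xrightarrow{d^{0}}I^{1}\xrightarrow{d^{1}}\cdots$ by the standard inductive dimension-shifting recipe, feeding in the first assertion at every stage. Put $M^{(0)}:=M$; supposing that $e_{-\alpha}$ acts injectively on $M^{(k)}$, let $I^{k}$ be the injective envelope of $M^{(k)}$, with essential embedding $\iota_{k}\colon M^{(k)}\hookrightarrow I^{k}$, so that by the first assertion $e_{-\alpha}$ acts injectively on $I^{k}$; then set $M^{(k+1)}:=I^{k}/\iota_{k}(M^{(k)})$ and take $d^{k}\colon I^{k}\to I^{k+1}$ to be the composite $I^{k}\twoheadrightarrow M^{(k+1)}\hookrightarrow I^{k+1}$. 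This yields an injective resolution of $M$ whose every term carries an injective $e_{-\alpha}$-action \emph{provided} the induction can be sustained, i.e. provided $e_{-\alpha}$ continues to act injectively on the successive cokernels $M^{(k+1)}$.

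\textit{The hard point.} The one genuinely non-formal step is therefore the implication ``$e_{-\alpha}$ acts injectively on $M^{(k)}$ $\Rightarrow$ $e_{-\alpha}$ acts injectively on $M^{(k+1)}=I^{k}/\iota_{k}(M^{(k)})$''. I would approach it through the $e_{-\alpha}$-torsion functor. Since $\operatorname{ad}(e_{-\alpha})$ is nilpotent on the finite-dimensional $\gg$, hence locally nilpotent on $\UE(\gg)$, a short computation (expanding $e_{-\alpha}^{N}(Xv)$ and using $e_{-\alpha}^{n}v=0$ for $n\gg 0$) shows that for any $\gg$-supermodule $N$ the $e_{-\alpha}$-torsion $T(N):=\{v\in N:e_{-\alpha}^{n}v=0\text{ for some }n\in\NN\}$ is a $\gg$-subsupermodule, that $e_{-\alpha}$ acts injectively on $N$ exactly when $T(N)=0$, and that $e_{-\alpha}$ always acts injectively on $N/T(N)$. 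What then remains is to prove $T(M^{(k+1)})=0$, equivalently that the $e_{-\alpha}$-saturation of $\iota_{k}(M^{(k)})$ inside $I^{k}$ equals $\iota_{k}(M^{(k)})$ itself; this is precisely the place where the essential-extension property of $\iota_{k}$ (together with the already-established injective action of $e_{-\alpha}$ on the injective object $I^{k}$) has to be exploited. I expect this torsion bookkeeping inside an essential extension to be the main obstacle; once it is in hand the resolution in the statement, and the vanishing $I^{k}_{\gg}=0$ used in Section~\ref{subsec::application}, follow immediately.
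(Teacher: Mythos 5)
Your treatment of the first assertion is correct and is exactly the paper's argument: combine Lemma~\ref{lemm::inj_super} (existence of the essential embedding into an injective) with Lemma~\ref{lemm::essext}. Your construction of the resolution is also the one the paper intends (iterated injective envelopes of the successive cokernels). However, your proposal does not actually close the argument: you explicitly leave open the inductive step, namely that $e_{-\alpha}$ acts injectively on $M^{(k+1)}=I^{k}/\iota_{k}(M^{(k)})$, and this is precisely the only non-formal content of the second assertion. The paper's own proof disposes of it in one clause (``hence the injective action of $e_{-\alpha}$ is inherited by the resolution''), so you have not missed an ingredient that the paper supplies — you have located the step the paper glosses over — but as written your proof is incomplete.

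Moreover, the route you sketch for filling the gap (showing that the $e_{-\alpha}$-torsion of the cokernel of an essential embedding vanishes) cannot succeed as stated. Injectivity of the action on an essential extension does not pass to the quotient: the model case is $k[x]\subset k(x)$ over the polynomial ring, where $x$ acts injectively on both the module and its injective envelope, yet every class in $k(x)/k[x]$ is $x$-torsion (e.g. $x\cdot(1/x)\in k[x]$). The obstruction is the failure of $e_{-\alpha}$ to act \emph{surjectively} on $M^{(k)}$; injectivity alone gives no control over the torsion of $I^{k}/M^{(k)}$. So either the inductive hypothesis must be strengthened (for instance to bijectivity of $e_{-\alpha}$, or to some divisibility statement that does propagate through essential extensions and cokernels), or a different construction of the resolution is needed. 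As it stands, your proposal correctly reproduces the paper's skeleton but, like the paper, does not prove the inductive step — and the specific mechanism you propose for it is false in general.
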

\begin{proof} The first part of the theorem follows immediately from lemmata \ref{lemm::essext} and \ref{lemm::inj_super}. For the second part, set $I^i$ to be the injective envelope of $I^{i-1}, $ hence the injective action of $e_{-\alpha}$ is inherited by the resolution $I^\bullet$.
\end{proof}
\clearpage
\subsection{Hermitian real forms} \label{ap::Hermitian_real_forms}
The Hermitian real forms $\gg^{\RR}$ of basic classical Lie superalgebras $\gg$ are summarized in the following table \cite{ Carmeli_Fioresi_Varadarajan_HW, Fioresi_real_forms}, where we emphasize that $\gg^{\RR}$ is uniquely determined by the indicated real Lie subalgebra $\gg_{0}^{\RR}$.

\begin{table}[h]
\centering
\[
\renewcommand{\arraystretch}{1.5}
\begin{array}{|c|c|}
\hline
\mathfrak{g} & \mathfrak{g}_{0}^{\RR} \\ \hline
\mathfrak{sl}(m\vert n) & 
\begin{array}{l}
\mathfrak{su}(p,m-p) \oplus \mathfrak{su}(n) \oplus i\RR \\
\mathfrak{su}(p,m-p) \oplus \mathfrak{su}(r,n-r) \oplus i\RR
\end{array} \\ \hline

\begin{array}{c}
B(n\vert m), \\
D(n\vert m)
\end{array} & 
\begin{array}{l}
\mathfrak{sp}(m,\RR) \\
\mathfrak{so}(p) \oplus \mathfrak{sp}(m,\RR) \\
\mathfrak{so}^*(2n) \oplus \mathfrak{sp}(m) \\
\mathfrak{so}(q,2) \oplus \mathfrak{sp}(m,\RR)
\end{array} \\ \hline

C(m) & \mathfrak{sp}(m,\RR) \oplus \mathfrak{so}(2) \\ \hline

D(2,1;\alpha) & 
\begin{array}{l}
\mathfrak{sl}(2,\RR) \oplus \mathfrak{sl}(2,\RR) \oplus \mathfrak{sl}(2,\RR) \\
\mathfrak{su}(2) \oplus \mathfrak{su}(2) \oplus \mathfrak{sl}(2,\RR)
\end{array} \\ \hline

F(4) & 
\begin{array}{l}
\mathfrak{sl}(2,\RR) \oplus \mathfrak{so}(7) \\
\mathfrak{su}(2) \oplus \mathfrak{so}(5,2)
\end{array} \\ \hline

G(3) & \mathfrak{sl}(2,\RR) \oplus \mathfrak{g}_c \\ \hline
\end{array}
\]
\caption{Hermitian real Lie superalgebras. For $B(n \vert m)$, the values of $p$ and $q$ are $p = 2n+1$ and $q = 2n-1$, while for $D(n \vert m)$, $p = 2n$ and $q = 2n-2$. Moreover, the Lie algebra $\gg_C$ denotes the compact real form of $G_2$.
}
\end{table}

\printbibliography

\end{document}